\DeclarePairedDelimiter\floor{\lfloor}{\rfloor}
\newcommand{\Hquad}{\hspace{0.3em}}
\newtheorem{theorem}{Theorem}[section]
\newtheorem{proposition}[theorem]{Proposition}
\newtheorem{definition}[theorem]{Definition}
\newtheorem{lemma}[theorem]{Lemma}
\newtheorem{sublemma}[theorem]{Sublemma}
\newtheorem{remark}[theorem]{Remark}
\def\A{\mathcal{A}}
\def\C{\mathcal{C}}
\def\Q{\mathcal{Q}}
\def\I{\mathcal{I}}
\def\D{\Delta}
\def\NN{\mathbb{N}}
\def\N{\mathcal{N}}
\def\PP{\mathbb{P}}
\let\eps=\varepsilon
\def\D{\mathcal{D}}
\def\NN{{\mathbb N}}
\newcommand*{\logeq}{\ratio\Leftrightarrow}
\title{Quenched decay of correlations for random contracting Lorenz maps}
\author{Andrew Larkin}
\address{Department of Mathematical Sciences, Loughborough University,
Loughborough, Leicestershire, LE11 3TU, UK}
\email{a.larkin2@lboro.ac.uk}
\author{Marks Ruziboev}
\address{Faculty of Mathematics, University of Vienna, Oskar Morgensternplatz 1, 1090 Vienna, Austria}
\email{marks.ruziboev@univie.ac.at}
\begin{document}

\maketitle
\begin{abstract}
\noindent In this work we consider i.i.d. random perturbations of contracting Lorenz maps sufficiently close to a Rovella parameter. We prove that the quenched correlations of the random dynamical system decays exponentially.
\end{abstract}

\section{Introduction}
\hspace{0.5cm } The Lorenz system
was introduced in \cite{Lo} as a simplified model for atmospheric convection.  Numerical simulations have shown that the Lorenz system admits a strange attractor, called the Lorenz attractor, which became one of the most iconic examples in the field. 

A rigorous mathematical approach was developed with the introduction of the so called geometric Lorenz flow by \cite{ABSh, GW79}, which mimicks simulation of the dynamics of the Lorenz flow, and which has a robust strange attractor under $C^1$ perturbations. Later in \cite{Tu1, Tu2} it was shown that the actual Lorenz attractor is indeed a singular hyperbolic attractor, further showing that the geometric Lorenz attractor represents the Lorenz attractor well. Moreover, it admits the so called Sinai-Ruelle-Bowen (SRB) measure, which is ergodic \cite{APPV}. Its statistical properties, such as mixing rates, limit theorems and their stability under various perturbations, were studied intensively (see for example, \cite{LMP, AMV,  AM2, AS2, BR, BMR, DO}). 

Another class of systems with similar properties was introduced in \cite{Rov} called the \textit{contracting Lorenz flow}. A fundamental difference between these is that the attractor of the system introduced by Rovella is not robust under perturbations, but still abundant in a measure theoretic sense. The set of measures for which the system is chaotic is called Rovella parameters and satisfies strong chaotic properties \cite{AS}; moreover, restricted to this set the system is stochastically stable \cite{M1, M2}. In \cite{PT}  the authors addressed thermodynamic formalism for it. Up to now, the contracting Lorenz flow and one dimensional maps with critical points remain a profound example of a truly nonuniformly hyperbolic systems, which is studied via construction of induced schemes. We refer to \cite{Jose} for a comprehensive account of these constructions. 

Recently, there has been increased interest in studying statistical properties of random dynamical systems, especially quenched (path-wise) properties. When the system has good uniformly hyperbolic properties, spectral techniques are still applicable and imply strong statistical properties; we refer to \cite{Buzzi, DFGV1, DFGV2, DS, DH}  and references therein for results on quenched decay of correlations, limit theorems and stability results in this case. 

For the non-uniformly expanding (or non-uniformly hyperbolic) case, spectral techniques are not applicable directly. In this regards, it is customary to employ inducing techniques, in particular randomised version of Young Tower \cite{Y} construction called random Young towers. This was first carried out in  \cite{BBMD} for random towers with exponential tails, where the results were applied to random compositions of unimodal maps.  Later, similar towers with polynomial tails were studied in \cite{BBR}. An alternative approach for random towers was given in \cite{Du}. More recently, random Young Towers for hyperbolic systems were introduced in \cite{ABR}, where the construction of such towers was carried out for nonuniformly hyperbolic random maps and in \cite{ABRV} for non-uniformly expanding systems with an ergodic base. As in the deterministic case, the construction of the Random Young Towers is highly non-trivial \cite{L}. 

In section 2, we introduce the setup of the random dynamical system we consider and state our main theorem, Theorem \ref{thm:1d}, and the strategy of its proof. In section 3, we prove Theorem \ref{thm:1d} in a series of steps.
 
\begin{figure}
\begin{center}
\begin{tikzpicture} 
	\draw (0,0) rectangle (8,8);
	\draw (4,0) -- (4, 8) ;
	\draw (0,4) -- (8, 4)  ;
	\draw (0,0) -- (8, 8);
	\draw[thick] (0,1) .. controls (2, 8) and (3, 8)  .. (4,8);
	\draw[thick] (4,0) .. controls (5, 0) and (6,0)  .. (8,7);
  	\draw (0,4) -- (0 ,4) node[anchor=east] {$-1$};
	\draw (4,4) -- (4,4) node[anchor=north west] {$0$};
	\draw (8,4) -- (8,4) node[anchor=west] {$1$};
	\draw (4,8)  node[anchor=south] {$1$};
	\draw (4,0)  node[anchor=north] {$-1$};
\end{tikzpicture}
\end{center}
\caption{A one-dimensional contracting Lorenz-like map.}\label{fig:2}
\end{figure}
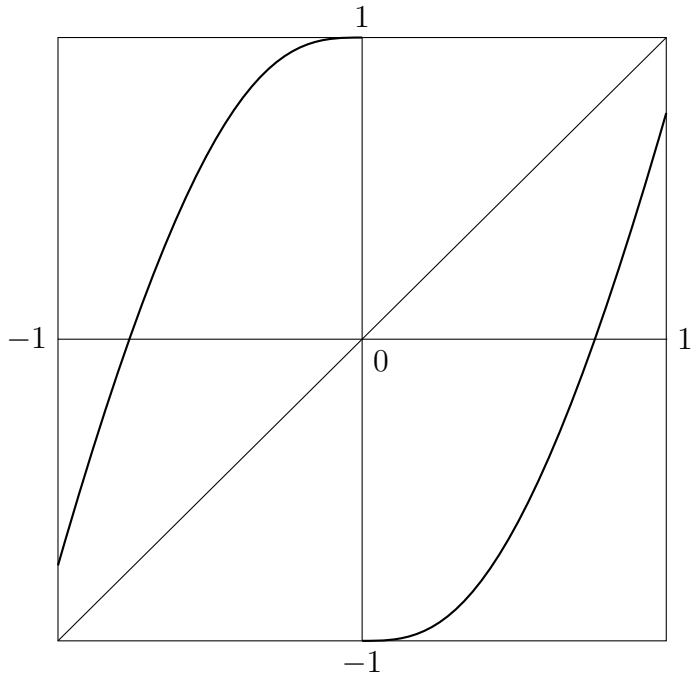
\section{Setup}
\subsection{Rovella maps}
	In the following section, we discuss two separate families of perturbed maps, the first one being obtained by perturbing some initial contracting Lorenz map (in the sense of \cite{Rov}), and the second one being obtained from perturbing an initial Rovella map taken from the first family of maps (in the sense of \cite{Shen}). This second family is the main focus of this paper, so for clarity, we denote maps in the first family using $f$ and maps in the second family by $T$.

The contracting Lorenz map $f: I \to I$, with $I = [-1,1]$, is piecewise $C^3$ which satisfies the following conditions:
\begin{itemize}
	\item[(C1)] $f$ has a singularity at the point $x=0$ with
	\begin{align}
		\lim_{x \to 0^+}f(x) = -1, \quad \lim_{x \to 0^-}f(x) = 1;
	\end{align} 
	\item[(C2)] $Df (x) > 0$ for all $x \neq 0$ with $\sup_{x \in (0,1]}Df(x) = Df(1)$, $\sup_{x \in [-1,0)}Df(x) = Df(-1)$ and
	\begin{align}
		\lim_{x \to 0}\frac{Df(x)}{|x|^{s-1}} \quad \text{exists and is nonzero for some } s > 1,
	\end{align}
	which implies there exist constants $K_1, K_2 > 0$ such that for all $x \in I \setminus \{0 \}$ we have
	\begin{align}
		K_1|x|^{s-1} \le Df(x) \le K_2|x|^{s-1};
	\end{align} 
	\item[(C3)] $f$ has negative Schwarzian derivative, i.e. for all $x \in I \setminus \{ 0 \}$ we have
	\begin{align}
		S(f) := D\Big( \frac{D^2f}{Df} \Big) - \frac{1}{2} \Big( \frac{D^2 f}{Df} \Big)^2 < 0.
	\end{align}

\end{itemize} 
Furthermore, in \cite{Rov} it was shown that there exists an interval $[0, a_*)  \subset \mathbb R$, where $a^*>0$ is small, and that there exists family of $C^3$ maps $\{f_a : I \to I \}_{a \in [0, a_*)}$, where $f_0$ corresponds to some initial unperturbed contracting Lorenz map satisfying conditions C1-C3, such that, for the remaining $a \in (0, a_*)$, all maps $f_a$ are close to the original $f_0$. Here, by `close' we mean their corresponding  flows from which they are projected are in a small $C^3$ neighborhood of the original contracting Lorenz flow. Moreover, there exists a Cantor-like set $E \subset (0, a^*)$ with positive Lebesgue measure such that, for all $a \in E$, $f_a$ is chaotic and satisfies the following conditions:
\begin{itemize}
\item[(R1)] there exists a constant $\lambda > 1$ such that for all $a \in E$, the points $x=1$ and $x=-1$ we have
\[
	Df_a^n(\pm 1) > \lambda^n, \quad \text{for all } n \ge 0;
\]
\item[(R2)] there exists a constant $\alpha >0$ such that for all $a \in E$ we have
\[
	|f_a^{n-1}(\pm 1)| > e^{-\alpha n}, \quad \text{for all } n \ge 1;
\]
\item[(R3)] the orbits of the points $x= \pm 1$ under $f_a$ are dense in $[-1, 1]$ for all $a \in E$.
\end{itemize} 
These maps are often referred to as \textit{Rovella maps}. Note that if $a \in (0, a_*) \setminus E$, then $f_a$ will not necessarily satisfy all of these conditions. Additionally, it is worth noting that property (R1) implies the following three conditions for any $a \in E$:
\begin{enumerate}
	\item \textbf{Large derivatives condition:} 
				$$
					\lim_{n \to \infty} | Df_a^n(T(0)) | = \infty;
				$$
	\item \textbf{Summability condition of exponent 1:} 
				$$
					\sum_{i=0}^\infty \frac{1}{| Df_a^n(T(0)) |} \le  \infty;
				$$
	\item \textbf{Collet-Eckmann condition:} 
				$$
					\liminf_{n \to \infty} \log \frac{| Df_a^n(T(0)) |}{n} >0.
				$$
\end{enumerate}

\subsection{Random dynamical systems setup}
Taking sufficiently small $\eps >0$, we assume that there exists a one-parameter  admissible family of maps $\{ T_t : I \to I  \}_{t \in [-\eps, \eps]}$, where we assume that for $t=0$ the map $T_0$ corresponds to some Rovella map $f_a$ with $a \in E$ (i.e. $T_0$ satisfies conditions C1-C3, R1-R3). We call a family \textit{admissible} if the following conditions hold:
\begin{enumerate}
	\item $|\partial_t F(t,x)| \le 1$ for all $x \in I$ and $t \in [-\eps,\eps]$, with $F(t,x) = T_t(x)$;\\
	\item for all $t \in [-\eps,\eps]$, the map $T_t$ will satisfy conditions C1 to C3;\\
	\item there exists a constant $C >0$ such that for all $x,y \in I$ and $t \in [-\eps,\eps]$, we have
				$$
					2 |x -y| < |x| \implies \Big| \log \frac{DT_t(x)}{DT_t(y)}   \Big| \le C \frac{|x - y|}{|x|}.
				$$
\end{enumerate}

Now, let $\nu_\eps$ denote the normalized Lebesgue measures on $[-\eps, \eps]$. Furthermore, we define $\Omega_\eps = [-\eps,\eps]^{\mathbb Z}$, and we take the product measure  $\mathbb P_\eps = \nu_\eps^{\mathbb Z}$ on $\Omega_\eps$. We define our random dynamical system with the usual skew product
	$$
		S : \Omega_\eps \times I \to \Omega_\eps \times I, \quad S(\omega, x) = (\sigma \omega, T_\omega (x)).
	$$

\subsection{Main Result}

Here we state the main result of this paper:

\begin{theorem}\label{thm:1d} 
Let $\eps >0$ be sufficiently small. The random contracting Lorenz system $\{T_\omega\}$ admits a unique equivariant family of absolutely continuous probability measures $\{\mu_\omega\}$. Moreover, there exists a $\mathbb P_\eps$-integrable constant $C(\omega)$, and there exists a constant $ b >0$ such, for almost every $\omega \in \Omega$, we have that for every $\varphi \in \C^{\eta}(I)$ with $\eta \in (0,1)$ and $\psi \in L^\infty(I)$, we have
\[
\left|\int(\varphi\circ T_\omega^n)\psi d\mu_\omega-\int \varphi d\mu_{\sigma^n\omega}\int \psi d\mu_\omega\right|\le C(\omega) C_{\varphi, \psi}  e^{-bn}
\] 
and 
\[
		\Big| \int (\varphi \circ T^n_{\sigma^{-n} \omega}) \psi d\mu_{\sigma^{-n}\omega} - \int \varphi d\mu_\omega \int \psi d\mu_{\sigma^{-n}\omega} \Big| \le C(\omega) C_{\varphi, \psi} e^{-bn}
\] 
for some $C_{\varphi, \psi}>0$ which only depends on $\varphi$ and $\psi$ and is uniform for all $\omega \in \Omega$. 

 \end{theorem}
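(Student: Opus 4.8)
The plan is to construct a random Young tower over the base $(\Omega_\eps, \mathbb P_\eps, \sigma)$ with exponentially small tails, and then invoke the abstract machinery of \cite{BBMD} (random towers with exponential tails) to deduce quenched exponential decay of correlations. The main work is entirely in building the tower, i.e. producing a random induced scheme with exponential return-time tails; once that is in place, existence and uniqueness of the equivariant family $\{\mu_\omega\}$ and the two correlation bounds (forward and pullback) follow from the known tower results, modulo checking that the admissibility hypotheses (1)--(3) guarantee the required distortion and aperiodicity conditions.

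First I would set up, for each $\omega$, a partition of a reference interval away from the critical point $0$ and define a random first-hyperbolic-time / first-return structure using the Rovella conditions (R1)--(R3) for $T_0$ and their persistence under small perturbations. Concretely, I would follow the deterministic contracting Lorenz inducing scheme (as in \cite{Jose, AS}) and randomize it: using the Collet--Eckmann and summability-of-exponent-$1$ consequences of (R1), together with admissibility condition (1) which controls the $t$-derivative of $F(t,\cdot)$, one shows the orbits of the singular values $T_\omega^{n-1}(\pm 1)$ stay exponentially away from $0$ with overwhelming probability, uniformly in $\omega$ in a large-measure set of sequences. This gives a random Markov (Young) tower $\Delta_\omega$ with base $\Delta_{\omega,0}$, a return time function $R_\omega$, and the bounded-distortion estimate coming from condition (3) (the Koebe-type inequality for $\log DT_t$) together with the negative Schwarzian derivative in (C3). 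The key quantitative estimate is that $\mathbb P_\eps \times \mathrm{Leb}\{(\omega, x) : R_\omega(x) > n\}$ decays like $e^{-c n}$; this is where the exponential recurrence estimates for Rovella maps, stable under the perturbation size $\eps$, must be pushed through in the random setting.

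Next, with the tower in hand, I would quote the abstract output of \cite{BBMD}: the existence of a unique equivariant absolutely continuous family $\{\mu_\omega\}$, a $\mathbb P_\eps$-integrable constant $C(\omega)$ (arising from the measure of the part of the tower not yet "matched" by a given time, hence tied to the tail of $R_\omega$), and exponential decay of $\int (\varphi\circ T_\omega^n)\psi \, d\mu_\omega - \int\varphi\, d\mu_{\sigma^n\omega}\int\psi\, d\mu_\omega$ for Hölder $\varphi$ and bounded $\psi$. The constant $C_{\varphi,\psi}$ depends only on $\|\varphi\|_{\C^\eta}$ and $\|\psi\|_\infty$ and is uniform in $\omega$, as is standard in the coupling argument. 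The second (pullback/backward) estimate follows by the same coupling run along $\sigma^{-n}\omega, \dots, \omega$, or equivalently by applying the forward statement and using equivariance $(T_{\sigma^{-n}\omega}^n)_*\mu_{\sigma^{-n}\omega} = \mu_\omega$ together with the tower built over the inverse orbit; I would spell this reduction out carefully since the roles of the reference fibre and the moving fibre are swapped.

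The main obstacle I expect is the verification of the exponential tail bound for the random return time $R_\omega$: in the deterministic Rovella case the recurrence estimates are delicate (they use the precise $|x|^{s-1}$ behavior of $Df$ near $0$ from (C2) and the exponential estimates (R1)--(R2) along the critical orbit), and in the random case one must control, simultaneously for $\mathbb P_\eps$-typical $\omega$, the positions $T_\omega^{k}(\pm 1)$ of \emph{all} the perturbed critical orbits. Admissibility condition (1), $|\partial_t F| \le 1$, is exactly what keeps these orbits within distance $O(\eps n)$ of the unperturbed one after $n$ steps, but reconciling this linear-in-$n$ drift with the exponential-in-$n$ closeness to $0$ required by (R2)-type bounds forces $\eps$ to be taken small in an $n$-dependent-looking way; the resolution is the usual large-deviation / Borel--Cantelli argument showing that bad excursions are exponentially rare, and packaging this cleanly into the tower tail estimate is the technical heart of the paper. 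A secondary point requiring care is that the tower must be built with a \emph{fixed} reference set independent of $\omega$ (so that the partition elements can be compared across fibres), which is possible here because all $T_\omega$ share the same singularity at $0$ and the same qualitative shape from (C1)--(C3).
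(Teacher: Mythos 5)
Your high-level strategy (build a random Young tower with exponentially small tails, then invoke the abstract machinery of \cite{BBMD, Du}) is exactly what the paper does, but the technical route you propose for obtaining the tower tails is genuinely different and, in this setting, would be substantially harder. You propose to randomize the Benedicks--Carleson/Rovella parameter-exclusion argument, controlling the positions of the perturbed critical orbits $T_\omega^k(\pm 1)$ and showing they satisfy (R1)--(R2)-type estimates for $\mathbb P_\eps$-typical $\omega$; this is the spirit of the original \cite{BBMD} approach to random unimodal maps. The paper instead follows \cite{Shen} and \cite{Du} and never establishes any random analogue of (R1)--(R3): the perturbed maps $T_t$ with $t\neq 0$ need not satisfy these conditions at all, and the Remark after Theorem \ref{thm:1d} says as much. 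What is controlled instead is the \emph{return depth} of the random orbit of a Lebesgue-generic point $x$ --- one shows (Proposition \ref{Shen_prop_7.1}) that the $(\mathbb P_\eps\times\mathrm{Leb})$-measure of the set of $(\omega,x)$ with $\sum_{j<n} r_\delta(T_\omega^j(x)) \ge cn$ decays exponentially, and then extracts hyperbolic return times via Pliss's lemma and builds the return partition from the resulting neighbourhoods $V_{x,n}^\omega$. The binding-period lemma (Lemma \ref{shen:lemma_2.5}) always binds the perturbed orbit of $x$ (once it enters the singular neighbourhood) to the \emph{deterministic} critical orbit of $T_0$, so the Rovella conditions are used only for $T_0$ itself and the quantity $W_0 = \sum_n 1/DT_0^n(T(0))$. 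This sidesteps precisely the linear-drift-versus-exponential-closeness tension you correctly flag as the main obstacle to your route; the paper avoids the problem rather than solving it. One smaller point: you need not deduce the pullback (second) correlation estimate from the forward one via equivariance --- the abstract Theorem \ref{thm:randexp} already provides both simultaneously.
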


\begin{remark}
	One should note that even though $T_0$ satisfies conditions (R1)-(R3), the other maps in the family do not necessarily satisfy these conditions. Because of this, for certain random compositions of maps, we may need to wait a long time before we see a map with the desired properties. This means that we will need to introduce a waiting time before we see the desired asymptotic behaviour for the tails of return times and the decay of correlations. We express this equivalently using a large constant factor $C(\omega)$ in the upper bound of the random correlation function for almost every $\omega \in \Omega_\eps$.
\end{remark}

\subsection{Strategy of the Proof}

We prove Theorem \ref{thm:1d} by showing that the random system $\{T_\omega\}$ admits a random Young tower structure in the sense of \cite{BBMD, Du} for almost every $\omega\in\Omega$ with exponential tails of the return times. For convenience, in the Appendix we include the definition of random Young towers as well as Theorem \ref{thm:randexp} from \cite{BBMD, Du}, which we use in the proof of Theorem \ref{thm:1d}. 

To build a random Young tower, we use a similar construction to that in Section 8.3 of \cite{Du}, where a random Young tower is constructed for a system of random unimodal maps. However, before we can use such a construction, we must prove that certain results hold for our random contracting Lorenz system that are analoguous to the results proved in \cite{Shen} for random non-uniformly expanding interval maps. In particular, we must prove results regarding random non-uniform expansion and the tails of `bad' $\omega$'s, where expansion is not guaranteed. Once we have these, then we move onto the tower construction, which involves the use of \textit{hyperbolic return times}, originating from the works of Alves.

Thus, the structure of the rest of the paper is as follows:

\begin{enumerate}
	\item Prove that if the random orbit stays outside of a particular neighborhood around the origin (which we will use as the base of our random tower), then we have exponential expansion. This will involve first proving some results regarding the deterministic dynamics;
	\item Prove that the set of `good' $\omega$'s grows to a full measure subset of $\Omega_\eps$ as $n \to \infty$, thereby guaranteeing that we eventually have expansion for almost all $\omega \in \Omega_\eps$;
	\item Prove that we have exponential tails of hyperbolic return times and that the hyperbolic return times of a point $x$ in the base give us a neighborhood around $x$ on which $T_\omega^n(x)$ is diffeomorphic, has bounded distortion, and uniform expansion. These neighborhoods will be used to define the return partition;
	\item Prove that we can construct a return partition using the neighborhoods from the previous step, and that we have exponential tails of return times along with the other necessary randon tower conditions to apply Theorem \ref{thm:randexp}.
\end{enumerate}

\section{Proofs}

\subsection{Random Expansion}

In \cite{AS}, Alves and Soufi already proved expansion results for a Rovella map with a `good' parameter $a \in E$ when the orbit stays outside of a neighbourhood of the form $(- \delta, \delta)$ for sufficiently small $\delta >0$. This can easily be further extended to random orbits using a continuity argument, which we will show later in this section. However, for our tower construction, we will require expansion results of a different form, which we state below.

Let us denote $\tilde B(\delta) = T^{-1}(B_\delta(1)) \cup T^{-1}(B_\delta(-1))$ and $\D(\delta) = \frac{|B_\delta(0)|}{|\tilde B(\delta)|}$. Additionally, throughout this chapter we assume that there exists a small constant $\delta_* >0$ such that for every $x \in \tilde B(\delta_*)$, $\delta = \max \{ x, \delta_* \}$ and $\omega \in \Omega_\delta$, we have
\begin{align}
		DT_\omega(x) \ge \D(\delta). \label{delta_star}
\end{align}

We prove the following Proposition, which is an adaption of Theorem 2.1 in \cite{Shen} and Lemma 8.1.6 in \cite{Du}:

\begin{proposition}\label{prop:nonuniform_expansion}
	There exists uniform constants $A, \kappa>0$, and for sufficiently small $\delta >0$ there exists a constant $\Lambda(\delta) >0$ satisfying $\lim_{\delta \to 0} \Lambda(\delta) = \infty$ such that for all sufficiently small $\eps >0$, all $\omega \in \Omega_\eps$ and all $x \in I$, we have
	\begin{enumerate}
		\item if $x, T_\omega(x), \dots, T_\omega^{n-1}(x) \notin \tilde B(\delta)$ and $T_\omega^n(x) \in \tilde B(2\delta)$, then 
			\begin{align}\label{ineq:nue2}
				|DT_\omega^n(x)| \ge A  e^{\kappa n};
			\end{align}
		\item if $|x - T(0)| \le 4 \delta$, $x, T_\omega(x), \dots, T_\omega^{n-1}(x) \notin \tilde B(\delta)$ and $T_\omega^n(x) \in \tilde B(2\delta)$, then
			\begin{align}\label{ineq:nue3}
				|DT_\omega^n(x)| \ge \frac{\Lambda(\delta)}{\D(\delta)} e^{\kappa n}.
			\end{align}
	\end{enumerate}
\end{proposition}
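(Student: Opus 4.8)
The proof should combine deterministic hyperbolic-expansion estimates for the unperturbed Rovella map $T_0 = f_a$ (already available from \cite{AS} and the Collet--Eckmann/summability conditions) with a perturbation/continuity argument that upgrades them to the random setting. The key analytic ingredient is the classical consequence of negative Schwarzian derivative and conditions (C1)--(C3): outside a neighbourhood $(-\delta,\delta)$ of the singularity, the map is uniformly expanding up to a bounded-distortion factor, and every orbit segment that returns close to $0$ (i.e. enters $\tilde B(2\delta)$) picks up definite exponential growth because the critical orbit expands exponentially (Collet--Eckmann) and because of the binding/bounded-recurrence mechanism. I would first isolate this as a deterministic statement for $T_0$ and then transfer it.

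\begin{proof}[Proof sketch]
\textbf{Step 1 (Deterministic expansion away from the singularity).} Fix the Rovella map $T_0 = f_a$, $a \in E$. Using negative Schwarzian derivative together with (C2), standard Koebe-type arguments (as in \cite{AS}) show there are constants $C_0 > 0$, $\kappa_0 > 0$ so that whenever $x, T_0(x), \dots, T_0^{n-1}(x) \notin (-\delta,\delta)$ one has $|DT_0^n(x)| \ge C_0 \delta^{s-1} e^{\kappa_0 n}$; the $\delta^{s-1}$ loss comes only from the last return near $0$. If additionally $T_0^n(x)$ lands in $\tilde B(2\delta)$, i.e. within $O(\delta)$ of $\pm 1$, one can further compose with the expansion along the critical orbit: by (R1), $Df_a^m(\pm 1) > \lambda^m$, and $\tilde B(2\delta)$ is the $T_0$-preimage of a $2\delta$-ball around $\pm 1$, so the final branch maps a definite-size interval onto that ball and the derivative there is bounded below by $\D(\delta)$ (this is exactly the content of the standing assumption \eqref{delta_star}). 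Combining these gives the deterministic versions of \eqref{ineq:nue2} and \eqref{ineq:nue3}, the second one carrying the improved factor $\Lambda(\delta)$ which blows up as $\delta \to 0$ because points $|x - T(0)| \le 4\delta$ shadow the critical orbit for a time $\sim |\log \delta|$ and hence accumulate expansion $\sim \lambda^{|\log \delta|} \to \infty$.

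\textbf{Step 2 (From deterministic to random by continuity).} For $\omega \in \Omega_\eps$ write $DT_\omega^n(x) = \prod_{j=0}^{n-1} DT_{\omega_j}(T_\omega^j(x))$. Admissibility condition (1), $|\partial_t F(t,x)| \le 1$, together with condition (3) controlling $\log (DT_t(x)/DT_t(y))$, lets me compare $T_\omega^j(x)$ with the corresponding deterministic orbit point and compare derivatives branch-by-branch. The subtlety is that a small additive perturbation of the orbit can be large \emph{relative to} $|T_\omega^j(x)|$ when the orbit is near $0$; but here the hypothesis $T_\omega^j(x) \notin \tilde B(\delta)$ for $j < n$ keeps the orbit a definite distance from the singularity, so condition (3) applies with $|x|$ bounded below and the distortion $\bigl|\log (DT_\omega^j/DT_0^j)\bigr|$ stays summable (geometrically, using the expansion from Step 1 to beat the $C|x-y|/|x|$ terms). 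Choosing $\eps$ small enough that the accumulated multiplicative error is at most a factor $2$, and halving $\kappa_0$ to absorb it, yields \eqref{ineq:nue2} and \eqref{ineq:nue3} with uniform $A,\kappa$ and with $\Lambda(\delta)$ the same (up to constant) as in the deterministic case. The $1/\D(\delta)$ in \eqref{ineq:nue3} is precisely the worst-case derivative at the re-entry into $\tilde B(\delta)$ that one must divide out; it cancels against the $\D(\delta)$ lower bound from \eqref{delta_star}, which is why the net gain is $\Lambda(\delta)/\D(\delta)$.

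\textbf{Main obstacle.} The genuinely delicate point is the binding/bounded-recurrence argument in Step 1 that produces $\Lambda(\delta) \to \infty$: one must track an orbit that starts within $4\delta$ of the critical value $T(0)$, follow it while it shadows the critical orbit of $T_0$ (controlling how the perturbation degrades the shadowing), detach at the right moment, and show the accumulated derivative grows like a positive power of $\delta^{-1}$ — all uniformly in $\omega$ once $\eps$ is small. Managing the interplay between ``distance to $0$'' and ``size of perturbation'' in condition (3), so that the telescoping distortion sum converges despite orbit points possibly being $O(\delta)$ from the singularity, is where the real work lies; everything else is bookkeeping with the constants $A$, $\kappa$, $\D(\delta)$.
\end{proof}
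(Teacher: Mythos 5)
You correctly identify the overall shape of the argument: establish deterministic Collet--Eckmann type expansion for the Rovella map $T_0$, and then transfer to the random compositions. You also correctly flag, as the ``main obstacle,'' the binding/shadowing analysis needed for Statement~2 with its $\Lambda(\delta)\to\infty$. However, your Step~2 is a naive continuity argument, and this is precisely where the proposal has a genuine gap rather than mere bookkeeping.

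The problem is that you write ``Choosing $\eps$ small enough that the accumulated multiplicative error is at most a factor $2$, and halving $\kappa_0$ to absorb it.'' This cannot work here for two reasons. First, the hypothesis places no upper bound on the orbit length $n$; the orbit may stay outside $\tilde B(\delta)$ for an arbitrary number of iterates before entering $\tilde B(2\delta)$, so the per-step perturbation error (condition (3) of admissibility, plus $|\partial_t F|\le 1$) accumulates over an unbounded horizon and no single $\eps$ controls the multiplicative deviation from the deterministic orbit uniformly in $n$. Second, and more importantly for Statement~2, the shadowing of the critical orbit lasts a time $M(\delta)\to\infty$ as $\delta\to 0$; any fixed absorption-by-a-factor-of-$2$ scheme breaks down as $\delta$ shrinks. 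The paper's resolution of both issues is the binding period machinery, not continuity. Lemma~\ref{shen:lemma_2.5} gives a quantitative binding criterion in terms of the product $A(\underline 0, v, N)\cdot W \le \theta_1/\eps$, where $A$ and $W$ are intrinsic to the deterministic critical orbit; the accumulated distortion is then bounded by a telescoping argument involving $\sum_i |I_i|/|v_i|$, which converges by the summability of the reciprocal derivatives along the critical orbit, not by smallness of $\eps$. The preferred binding period $M(\delta)$ is then chosen (Proposition~\ref{prop:Shen_4.1}) so that $A(\underline 0,0,M)\le \theta/\delta$, making $M$ a binding period of radius $\sim 4\delta$ --- which is precisely the radius appearing in the hypothesis of Statement~2 --- and this is what keeps the random orbit glued to the critical orbit. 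The perturbation size $\eps$ only needs to be dominated by $\delta$, which is permitted by the order of quantifiers in the proposition. Finally, the orbit may return to $\tilde B(\delta_0)$ several times and detach and re-bind repeatedly; chaining these segments is the content of Lemma~\ref{Shen:lemma_5.4} and Proposition~\ref{Shen:Proposition_5.1}, and the geometric bookkeeping there (factors $\Lambda_0(\rho_i)^{\zeta_3}$ along successive segments) is what actually delivers the uniform $\Lambda(\delta)/\D(\delta)$ lower bound. This multi-return structure is absent from your sketch: you only discuss a single binding period. For Statement~1, the combination of Lemma~\ref{Shen:Prop_2.7i} (which \emph{is} a continuity argument, but only for bounded-horizon or far-from-origin dynamics) with the deterministic Proposition~\ref{prop:Shen_4.2} in Lemma~\ref{Shen:Lemma_5.3} captures the correct split that your Step~1 gestures at, so that part of the sketch is closer to the mark; the essential missing content is all in Statement~2.
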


To prove this, we follow the steps used by Shen in \cite{Shen} to prove similar estimates for random non-uniformly expanding interval maps.

First, let us introduce the relevant notation and prove a series of smaller initial lemmas.  Set $I_+ = (0, 1]$ and $I_- = [-1, 0)$. Given some subset $J$, satisfying either $J \subset I_+$ or $J \subset I_-$ for every $\omega \in \Omega$, we define
$$
	\text{Dist}(T_\omega | J) := \sup_{x,y \in J} \log \frac{DT_\omega(x)}{DT_\omega(y)}
$$
as well as
$$
	\mathcal{N} (\varphi | J) := \sup_{J' \subset J} \text{Dist}(T_\omega | J') \frac{|J|}{|J'|}.
$$
Additionally, we set
$$
	A(\omega, x, n) = \sum_{i = 0}^{n-1} \frac{DT_\omega^i(x)}{|T_\omega^i(x)|}.
$$
\begin{lemma}\label{lemma:shen_2.3}
	For our family of maps $\{ T_\omega \}_{\omega \in \Omega}$, there exists a constant $\theta_0>0$ (independent of $\eps$) such that for all $\omega \in \Omega$ and $x \in I$, and for any $n \in \mathbb Z_+$ such that $A(\omega, x , n) < \infty$, setting
	$$
		J = \Big[ x - \frac{\theta_0}{A(\omega, x , n)},   x + \frac{\theta_0}{A(\omega, x , n)} \Big] \cap I_\pm,
	$$
we have that $T_\omega | J$ is a diffeomorphism and that $\mathcal N (T_\omega^n | J) \le 1$. Furthermore, for every $y \in J \setminus \{0\}$, we have
	\begin{align} \label{ineq:dist}
		e^{-1} \frac{DT_\omega^n(x)}{A(\omega, x, n)}  \le \frac{DT_\omega^n(y)}{A(\omega, y, n)} \le e \frac{DT_\omega^n(x)}{A(\omega, x, n)}.
	\end{align}
\begin{proof}
Let $\omega \in \Omega$ and let $L \subset I$ be some subinterval in either $I_-$ or $I_+$.  Recall the definition of $\mathcal N (T_\omega| L)$:
	$$
		\mathcal{N} (T_\omega | L) = \sup_{L' \subset L}\text{Dist}(T_\omega | L')  \frac{|L|}{|L'|} = \sup_{L' \subset L} \sup_{x_1,x_2 \in L'} \log \frac{DT_\omega(x_1)}{DT_\omega(x_2)} \frac{|L|}{|L'|}.
	$$
We want to show that for any $y \in L$, we have
\begin{align}\label{ineq:dist}
	2|L| \le |y| \implies \mathcal{N} (T_\omega | L) \le  C \frac{|L|}{|y|}.
\end{align}
First, we should confirm that $\mathcal{N} (T_\omega | L)$ actually has a finite value for all $L$ either in $I_-$ or $I_+$. For a given $L$, we may have the trivial case where there does exist a subinterval $L' \subset L$ of positive length that maximizes $\text{Dist}(T_\omega | L')/|L'|$, in which case $\mathcal{N} (T_\omega | L)$ is clearly finite. Now, consider the less trivial and more likely case that there exists no such $L'$. If so, we are effectively taking 
$$
	 \mathcal{N} (T_\omega | L) = |L| \cdot \Big( \lim_{h \to 0}  \sup_{\substack{ L' \subset L \\ |L'| = h }} \sup_{x_1,x_2 \in L'}  \log \frac{DT_\omega(x_1)}{DT_\omega(x_2)}  \frac{1}{h} \Big).
$$ 
Since the derivative is monotonically decreasing and increasing on $I_-$ and $I_+$ respectively, then the $x_1$ and $x_2$ that maximize the distortion are just the end points of $L'$, i.e. $|x_2| = |x_1| - h$. Thus, we have 
$$
	 \mathcal{N} (T_\omega | L) = |L| \cdot \Big( \lim_{h \to 0}  \sup_{x_1, \in L'} \big( \log DT_\omega(x_1) - \log DT_\omega(x_1 \pm h)  \big) \frac{1}{h} \Big).
$$ 
But this is just the derivative of $\log DT_\omega(x_1)$, and since we already know $T_\omega$ is $C^3$ for all $\omega \in \Omega$, we therefore have
\begin{align}\label{eq:dist}
 	\mathcal{N} (T_\omega | L) = \sup_{x_1 \in L} \frac{D^2T_\omega(x_1)}{DT_\omega(x_1)} |L|.
\end{align}
Notice that by using the order of singularity condition, we have
\begin{align*}
	\mathcal{N} (T_\omega | L) &\le \sup_{x_1 \in L} \frac{(s-1)K_2 |x_1|^{s-2}}{K_1 |x_1|^{s-1}} |L| 
	= \frac{K_2(s-1)}{K_1}  \sup_{x_1 \in L} \frac{1}{|x_1|}|L|.
\end{align*}
Now, clearly the $x_1 \in L$ that maximizes this expression is the one closest to $0$. Thus, we have $|x_1| \ge |y| - |L|$. Furthermore, using the assumption $2 |L| \le |y| $, we therefore have $|x_1| \ge 1/2|y|$. Thus, we have
\begin{align*}
	\mathcal{N} (T_\omega | L) &\le \frac{K_2(s-1)}{K_1} \frac{2}{|y|}|L|.
\end{align*}

We can assume that $C$ is small, say smaller than $1.5$. Therefore, we can choose some $\theta_0 \in (0, (6e)^{-1})$ such that
\begin{align*}
	2|L| \le |y| \implies \mathcal{N} (T_\omega | L) &\le   \frac{1}{4e\theta_0} \frac{|L|}{|y|}.
\end{align*}

Now, let us set $n_0$ as the largest integer in the set $\{ 1, 2, \dots , n  \}$ such that
	$$
		\sum_{i=0}^{n_0-1} \frac{|T_\omega^i(J)|}{|T_\omega^i(x)|} \le 2e \theta_0 < \frac 1 3.
	$$
Note that $|J| = 2\theta_0 / A(\omega, x, n)$. Also note that for any $\omega \in \Omega$ and $x \in I$, we have $A(\omega, x , 1) = 1$. Thus, the above inequality holds for $n_0=1$.

In fact, we want to show that $n_0=n$. First, we note that for all $0 \le i < n_0$, the above inequality implies that $2|T_\omega^i(J)| \le |T_\omega^i(x)|$, otherwise one of the terms in the sum would be equal or greater than $1/2$, thereby breaking the above inequality. Thus, for every $1 \le m \le n_0$, we have 
\begin{align*}
	\text{Dist}(T_\omega^m | J) &= \sup_{y, z \in J} \log \frac{DT_\omega^m(x)}{DT_\omega^m(y)} \\
	&=  \sup_{y, z \in J} \log \Big( \prod_{j=0}^{m-1 } \frac{DT_{\sigma^j \omega}(x)}{DT_{\sigma^j \omega}(y)} \Big)
	 = \sup_{y, z \in J}  \sum_{j=0}^{m-1} \log   \frac{DT_{\sigma^j \omega}(x)}{DT_{\sigma^j \omega}(y)} \\
	& \le  \sum_{j=0}^{m-1} \text{Dist}\big(T_{\sigma^j \omega} | T_\omega^j (J) \big)
	 \le \sum_{j=0}^{m-1} \mathcal{N} \big(T_{\sigma^j \omega} | T_\omega^j (J) \big) \\
	& \le  \frac{1}{4e\theta_0} \sum_{j=0}^{m-1} \frac{|T_\omega^i(J)|}{|T_\omega^i(x)|}
	 \le \frac 1 2.
\end{align*}
Clearly, this implies using the mean value theorem that there exists some $y \in J$ such that
$$
	DT_\omega^m(y) = \frac{|T_\omega^m(J)|}{|J|},
$$
and therefore $|T_\omega^m(J)| \le e (T_\omega^m)'(x) |J|$. Now, for contradiction, assume $n_0<n$. Consider
\begin{align*}
	\sum_{i=0}^{n_0} \frac{|T_\omega^i(J)|}{|T_\omega^i(x)|} &\le \sum_{i=0}^{n_0} \frac{e DT_\omega^m(x) |J|}{|T_\omega^i(x)|}\\
	&= e A(\omega, x, n_0 + 1)|J|
	\le 2 e \theta_0.
\end{align*}
But this contradicts the definition of $n_0$. Thus, it must be that $n_0 = n$.

For any subinterval $J' \subset J$, using the bounded distortion we have $|T_\omega^i(J')| \le e |J'| |T_\omega^i(J)|/|J|$. Thus, we have
\begin{align*}
	\text{Dist}(T_\omega^n | J') &\le \frac{1}{4e\theta_0} \sum_{j=0}^{m-1} \frac{|T_\omega^i(J')|}{|T_\omega^i(x)|}\\
	&  \le \frac{1}{4e\theta_0} \sum_{j=0}^{m-1} \frac{|J'|}{|J|} \frac{|T_\omega^i(J)|}{|T_\omega^i(x)|}
	\le \frac{|J'|}{2|J|}.
\end{align*}
Thus, we have
$$
	\mathcal{N} (\varphi | J) := \sup_{J' \subset J} \text{Dist}(\varphi | J') \frac{|J|}{|J'|} \le  \sup_{J' \subset J} \frac{|J'|}{2|J|} \frac{|J|}{|J'|} \le \frac 1 2 < 1.
$$

Now, notice that for any $y \in J$ and for $0 \le m < n$ we have
\begin{align*}
	\Big| \frac{|T_\omega^m(y)|}{|T_\omega^m(x)|} - 1   \Big| &=  \frac{|T_\omega^m(y) - T_\omega^m(x)|}{|T_\omega^m(x)|}    
	 \le  \frac{|T_\omega^m(J) |}{|T_\omega^m(x)|}  
	 \le \frac 1 2,
\end{align*}
which clearly implies $ 1 / 2 \le {|T_\omega^m(y)|}/{|T_\omega^m(x)| \le 3/2} $.  Using the bounded distortion, we have $e^{-1/2}(T_\omega^m)'(x) \le  (T_\omega^m)'(y) \le e^{1/2}(T_\omega^m)'(x)   $. Thus, combining these two, we obtain
$$
	e^{-1} \frac{DT_\omega^m(x)}{|T_\omega^m(x)|} \le \frac{DT_\omega^m(y)}{|T_\omega^m(y)|} \le e \frac{DT_\omega^m(x)}{|T_\omega^m(x)|}
$$
To prove inequality (\ref{ineq:dist}), we work with the reciprocal since it is slightly easier to work with:
\begin{align*}
	\frac{A(\omega, y, n)}{DT_\omega^n(y)} = \frac{\sum_{i =0}^{n-1} \frac{DT_\omega^i(y)}{|T_\omega^i(y)|}}{DT_\omega^n(y)} \le \frac{\frac{3}{2}e^{1/2}\sum_{i =0}^{n-1} \frac{DT_\omega^i(x)}{T_\omega^i(x)}}{e^{-1/2}|T_\omega^n(x)|} = \frac{3}{2}e\frac{A(\omega, x, n)}{DT_\omega^n(x)}.
\end{align*}
Likewise for the lower bound.
\end{proof}
\end{lemma}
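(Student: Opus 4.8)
The plan is to run the classical bounded‑distortion‑along‑orbits argument, expressing everything in terms of the ``sum of inverse distances'' $A(\omega,x,n)$ and choosing the normalising constant $\theta_0$ so small that all accumulated distortions remain negligible. The first ingredient is a one‑step estimate: for every $\omega$ and every subinterval $L$ contained in $I_+$ or in $I_-$ and containing a point $y$ with $2|L|\le|y|$, one has $\mathcal{N}(T_\omega|L)\le C'|L|/|y|$ with $C'$ depending only on $s,K_1,K_2$ (equivalently one may quote the admissibility condition (3) directly). The reason is that on such an $L$ every point has modulus comparable to $|y|$, so the logarithmic derivative $D^2T_\omega/DT_\omega$, which by (C2) can blow up near $0$ only like $1/|x|$, is $\lesssim 1/|y|$ on $L$. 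Having fixed $C'$, I then fix $\theta_0$ once and for all, small enough that $\theta_0<(6e)^{-1}$ (so $2e\theta_0<\tfrac13$), that $C'\le(4e\theta_0)^{-1}$, and that the further numerical inequalities below hold; crucially $\theta_0$ depends on none of the randomness data, in particular not on $\eps$.

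Next I would pass to iterates by telescoping, $\text{Dist}(T_\omega^m|J')\le\sum_{j=0}^{m-1}\mathcal{N}(T_{\sigma^j\omega}|T_\omega^j(J))\,|T_\omega^j(J')|/|T_\omega^j(J)|$, which reduces the whole lemma to controlling the sum $\sum_j|T_\omega^j(J)|/|T_\omega^j(x)|$; note that $A(\omega,x,n)<\infty$ forces $T_\omega^i(x)\ne0$ for $i<n$, so the iterates in question make sense. The heart of the proof is a bootstrap. Let $n_0\le n$ be maximal with $\sum_{i=0}^{n_0-1}|T_\omega^i(J)|/|T_\omega^i(x)|\le 2e\theta_0$; this holds at least for $n_0=1$ because $|J|\,A(\omega,x,n)\le 2\theta_0$ and $A(\omega,x,n)|x|\ge1$. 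For $i<n_0$ the bound forces $2|T_\omega^i(J)|\le|T_\omega^i(x)|$ (a single bad term would already overshoot $\tfrac13$), so the one‑step estimate applies at every step and gives $\text{Dist}(T_\omega^m|J)\le(4e\theta_0)^{-1}\sum_{j<m}|T_\omega^j(J)|/|T_\omega^j(x)|\le\tfrac12$ for all $m\le n_0$; the mean value theorem then yields $|T_\omega^m(J)|\le e\,DT_\omega^m(x)\,|J|$, hence $\sum_{i=0}^{n_0}|T_\omega^i(J)|/|T_\omega^i(x)|\le e|J|\,A(\omega,x,n_0+1)\le e|J|\,A(\omega,x,n)\le 2e\theta_0$. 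This contradicts the maximality of $n_0$ unless $n_0=n$, so the estimates hold for the whole orbit; in particular each $T_\omega^j(J)$ with $j<n$ avoids $0$, so $T_\omega^n|J$ is a diffeomorphism, and feeding the bounded‑distortion bound $|T_\omega^i(J')|\le e|J'|\,|T_\omega^i(J)|/|J|$ back into the telescoped inequality with $m=n$ gives $\text{Dist}(T_\omega^n|J')\lesssim|J'|/|J|$, i.e. $\mathcal{N}(T_\omega^n|J)\le1$ provided the extra smallness of $\theta_0$ is used.

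For the final comparison $(\ref{ineq:dist})$ I would compare the orbit of $y\in J$ with that of $x$ step by step: for $m<n$, $|T_\omega^m(y)-T_\omega^m(x)|\le|T_\omega^m(J)|\le\tfrac12|T_\omega^m(x)|$ gives $\tfrac12\le|T_\omega^m(y)|/|T_\omega^m(x)|\le\tfrac32$, while $\text{Dist}(T_\omega^m|J)\le\tfrac12$ gives $e^{-1/2}\le DT_\omega^m(y)/DT_\omega^m(x)\le e^{1/2}$; multiplying these termwise, summing over $i<n$, and dividing through by the comparable quantities $DT_\omega^n(y)$ and $DT_\omega^n(x)$ shows that $A(\omega,y,n)/DT_\omega^n(y)$ and $A(\omega,x,n)/DT_\omega^n(x)$ agree up to a bounded factor; taking $\theta_0$ small enough that all the distortions are as close to $0$ as desired pushes that factor below $e$, which is $(\ref{ineq:dist})$.

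The step I expect to be the genuine obstacle is the bootstrap in the second paragraph: it is the only place where the precise normalisation $|J|=2\theta_0/A(\omega,x,n)$ is exploited, and one has to check that the constant $2e\theta_0$ coming out of $e|J|\,A(\omega,x,n)$ matches \emph{exactly} the constant in the definition of $n_0$, and that a single choice of $\theta_0$ can simultaneously absorb $C'$, the factor $e$ from the mean value step, and the residual numerical constants needed for $\mathcal{N}(T_\omega^n|J)\le1$ and for the factor $e$ in $(\ref{ineq:dist})$.
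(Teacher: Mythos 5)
Your argument is essentially the paper's own proof: the same one‑step estimate $\mathcal{N}(T_\omega|L)\lesssim|L|/|y|$ from the order‑of‑singularity condition, the same choice $\theta_0<(6e)^{-1}$ with $C'\le(4e\theta_0)^{-1}$, the same bootstrap via the maximal $n_0$ with $\sum_{i<n_0}|T_\omega^i(J)|/|T_\omega^i(x)|\le 2e\theta_0$ and the contradiction $eA(\omega,x,n_0+1)|J|\le 2e\theta_0$, and the same termwise comparison of $A(\omega,y,n)/DT_\omega^n(y)$ with $A(\omega,x,n)/DT_\omega^n(x)$. The obstacle you flag at the end (absorbing all the numerical factors into one choice of $\theta_0$) is real but handled exactly as you suggest; it is the same bookkeeping the paper performs.
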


It will also be useful to introduce the concept of a \textit{binding period} along with a relevant lemma:

\begin{definition}[Binding Period]
	For a given $\eps >0$, $v \in I\setminus \{ 0 \}$ and $C >0$, we say that $N \in \mathbb Z_+$ is a $C$-binding period for $(v, \eps)$ if for every $y \in I\setminus \{ 0 \}$ such that $|v - y| \le \eps$ and for every $0 \le j < N$ and $\omega \in \Omega$, we have the following:
	\begin{align}
		2 |T_\omega^j(y) - T^j(v)| &\le |T^j(v)| \label{def:bind1}; \\
		e^{-1} |DT^{j+1}(v)| &\le  |DT_\omega^{j+1}(y)| \le e |DT^{j+1}(v)| \label{def:bind2}; \\
		C \eps |DT^{j+1}(v)| &\ge |T_\omega^{j+1}(y) - T^{j+1}(v)  | \label{def:bind3}.
	\end{align}
\end{definition}

\begin{lemma} \label{shen:lemma_2.5}
	There exists a constant $\theta_1 >0$ such that for sufficiently small $\eps>0$, for any point $v \in [-1,1]\setminus \{0\} $, if there exists $N \in \mathbb Z_+$ such that
$$
	W := \sum_{j=0}^N |DT_\omega^j(v)| < \infty \text{ and } A(0, v, N)W \le \delta_1/\eps.
$$ 
Then $N$ is a $\eps W$- binding period for $(v, \eps).$

\begin{proof}
	Using the same proof as in one of the previous lemmas, we know there exists some $\theta_1 \in (0, (4e)^{-1})$ such that for any interval $L \subset [-1, 1] \setminus \{0 \}$, $z \in L$ and $\omega \in \Omega$, we have
\begin{align}\label{ineq:dist2}
	2|L| \le |z| \implies \mathcal{N} (T_\omega | L) \le  \frac{1}{2 e \theta_1} \frac{|L|}{|z|}.
\end{align}
Now, given some $y \in [-1, 1] \setminus \{0 \}$ such that $|v - y| \le \eps$ and some $\omega \in \Omega$, we define $L_i = [v_i, y_i]$ (or alternatively $L_i = [ y_i, v_i]$), where $v_i = T^i(v)$ and $y_i = T_\omega^i(y)$. Trivially we have
$$
	|I_{i+1}| \le |T(I_i)| + \eps.
$$
Now, we want to show that the following three inequalities hold for all $n$:
\begin{align}
	|I_n| \le e W \eps DT^n(v); \label{ineq:bind1} \\
	\sum_{0 \le i < n} \frac{|I_i|}{|T^i(v)|} \le e \theta_1;  \label{ineq:bind2} \\
	\sum_{0 \le i < n} \text{Dist} (T | I_i) \le \frac 1 2. \label{ineq:bind3}
\end{align}

For $n=0$ this is trivial. We have 
$
	|I_0| = |v - y| \le \eps \le e W \eps; \quad
	0 \le e \theta_1; \quad
	0 \le \frac 1 2.
$
Now, we prove the statement by induction and assume that we have proved this up to some time $0 \le n_0 < N$. For $n = n_0 + 1$, the proof is straightforward:
\begin{align*}
	\sum_{i = 0}^{n_0} \frac{|I_i|}{|v_i|} &\le  e W \eps \sum_{i = 0}^{n_0} \frac{ DT^n(v)}{|v_i|}  \\
	& \le e W \eps A(0, \omega, n_0 + 1) 
	 \le e W \eps A(0, \omega, N) 
	 \le e \theta_1. 
\end{align*}
For the first inequality we have used the fact that inequality \ref{ineq:bind1} holds for $n_0$, and for the last inequality we have used our starting assumption. Note that this implies that for $0 \le i \le n_0$, we have $2|I_i|\le |v_i|$. Thus, we can apply inequality \ref{ineq:dist2} to obtain the following:
\begin{align*}
	\sum_{0 \le i < n_0 + 1} \text{Dist} (T | I_i) \le \frac{1}{2e \theta_1} \sum_{0 \le i < n_0 + 1} \frac{|I_i|}{|v_i|} \le \frac 1 2 < 1,
\end{align*}
where in the last step we just use the fact that inequality \ref{ineq:bind3} holds for $n_0$.

Finally, by the mean value theorem there exists some $\xi_i \in I_i$  for all $i = 0, 1 , \dots , n_0$ such that $|I_{i+1}| \le DT(x_i)|I_i| + \eps$. Iterating this, we have
\begin{align*}
	|I_{n_0 + 1}| \le \prod_{i=0}^{n_0} DT(x_i)|I_0| + \eps \Big(1 + \sum_{k=1}^{n_0} \prod_{j = k}^{n_0} DT(x_j)  \Big).
\end{align*}
As we have already shown that inequality \ref{ineq:bind3} holds for $n = n_0 + 1$, we thus have for all $0 \le k \le n_0$
\begin{align*}
	\prod_{i = k}^{n_0} DT(x_i) \le e  \prod_{i = k}^{n_0} DT(v_i) = e \frac{DT^{n_0 + 1}(v)}{DT^k(v)}
\end{align*}
Plugging this into the previous inequality, we have
\begin{align*}
	|I_{n_0+1}| &\le  \prod_{i = 0}^{n_0} e DT(v_i) |I_0| + \eps \Big( 1 + \sum_{k=1}^{n_0}  \prod_{j = k}^{n_0} DT(x_j) \Big) \\
	&= e DT^{n_0 + 1}(v) \Big( |I_0| + \frac{\eps}{e  DT^{n_0 + 1}(v)}  +   \sum_{k=1}^{n_0} \frac{\eps}{DT^k(v)}  \Big) \\
	& \le e \eps DT^{n_0 + 1}(v) \Big( 1 + \sum_{k=1}^{n_0+1}\frac{1}{(T^k)'(v)}  \Big) 
	 \le e W DT^{n_0 + 1}(v) \eps.
\end{align*}

Our focus now is to show these imply that $eW$ is a binding time for sufficiently small $\eps >0$. Condition \ref{def:bind1} is clearly implied by inequality \ref{ineq:bind2}, and likewise for condition \ref{def:bind2} and inequality \ref{ineq:bind3}. To prove the remaining condition, we need to show that for $0 \le j \le N-1$ we have
\begin{align}
	\log \Big( \frac{|DT_\omega^{j+1}(y)|}{|DT^{j+1}(v)|} \Big) \le 1.
\end{align}
We split the above in the following way:
\begin{align*}
	\log \Big( \frac{|DT_\omega^{j+1}(y)|}{|DT^{j+1}(v)|} \Big) &=  \sum_{i = 0}^j \log \Big( \frac{|DT_{\sigma^i \omega}(y_i)|}{|T'(v_i)|}  \Big) \\
	& = \sum_{i = 0}^j \log \Big( \frac{|DT_{\sigma^i \omega}(y_i)|}{|DT(v_i)|} \cdot \frac{|DT(y_i)|}{|DT(y_i)|} \Big)\\
	& = \sum_{i = 0}^j \log \Big( \frac{|DT(y_i)|}{|DT(v_i)|}  \Big) +  \sum_{i = 0}^j  \log \Big( \frac{|DT_{\sigma^i \omega}(y_i)|}{|DT(y_i)|}  \Big).
\end{align*}
For the first sum, we already know from \ref{ineq:bind3} that
\begin{align*}
	 \sum_{i = 0}^j \log \Big( \frac{DT(y_i)}{DT(v_i)}  \Big) \le  \sum_{i = 0}^j \text{Dist}(T | I_i) \le \frac 1 2.
\end{align*}
Thus, we need to show the same upper bound for the second sum. To this end, we use the fact that $\log(1 + x) \le x$ for $x \ge 0$ to obtain the following:
\begin{align*}
	 \sum_{i = 0}^j  \log \Big( \frac{|DT_{\sigma^i \omega}(y_i)|}{|T'(y_i)|}  \Big) &=   \sum_{i = 0}^j  \log \Big(  \frac{|DT_{\sigma^i \omega}(y_i)|}{|DT(y_i)|}  + \frac{|DT(y_i)|}{|DT(y_i)|} - \frac{|DT(y_i)|}{|DT(y_i)|} \Big)\\
	& \le \sum_{i = 0}^j  \log \Big(1 +  \frac{|DT_{\sigma^i \omega}(y_i) - DT(y_i)|}{|DT(y_i)|}  \Big) \\
	&\le \sum_{i = 0}^j \frac{|DT_{\sigma^i \omega}(y_i) - DT(y_i)|}{|DT(y_i)|} 
	 \le \sum_{i = 0}^{N-1} \frac{e \eps}{|DT(v_i)|},
\end{align*}
where in the last inequality we use bounded distortion and the assumption that for all $x \in I \setminus \{ 0 \}$ and al $\omega, \tilde \omega \in \Omega_\eps$, we have $|DT_{\omega}(x) - DT_{\tilde \omega}(x)| \le \eps$. Now, using this Lemma's assumptions and the definitions of $W$ and $A$,  we have
\begin{align*}
	\frac{\theta_1}{\eps} \ge A(0, v, N)W &= \Big( \sum_{i=0}^{N-1} \frac{DT^i(v)}{|v_i|}\Big) \Big( \sum_{i=0}^{N-1}  \frac{1}{DT^{i+1}(v)} \Big) \\
	& \ge \sum_{i=0}^{N-1} \frac{DT^i(v)}{|v_i|} \frac{1}{DT^{i+1}(v)} 
	= \sum_{i=0}^{N-1} \frac{1}{DT(v_i)|v_i|} 
	\ge \sum_{i=0}^{N-1} \frac{1}{DT(v_i)}.
\end{align*}
Thus, 
\begin{align*}
	\sum_{i = 0}^j  \log \Big( \frac{|DT(T_\omega^i(y))|}{|DT(T^i(y))|}  \Big) \le e \theta_1 < \frac 1 2,
\end{align*}
thus completing the proof.
\end{proof}
\end{lemma}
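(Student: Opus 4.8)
The plan is to compare the random orbit $y_i := T_\omega^i(y)$ with the \emph{unperturbed} orbit $v_i := T^i(v)$ (with $T=T_0$), and to control the error intervals $I_i := [v_i,y_i]$ (or $[y_i,v_i]$) by a single induction on $n\le N$ establishing the three estimates
\begin{align*}
|I_n|\le eW\eps\,DT^n(v),\qquad \sum_{0\le i<n}\frac{|I_i|}{|v_i|}\le e\theta_1,\qquad \sum_{0\le i<n}\text{Dist}(T\mid I_i)\le\tfrac12,
\end{align*}
where we read $W$ as the sum $\sum_{j=0}^{N}|DT^j(v)|^{-1}$ of reciprocal derivatives along the reference orbit (and the $\delta_1$ of the statement as the constant $\theta_1$). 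Two preliminary observations are needed first. One is a one-step distortion bound of exactly the type already obtained inside the proof of Lemma~\ref{lemma:shen_2.3}: the order-of-singularity hypothesis (C2) yields the identity $\mathcal N(T_\omega\mid L)=\sup_{x\in L}\frac{|D^2T_\omega(x)|}{|DT_\omega(x)|}|L|$ and, through it, a constant $\theta_1\in(0,(4e)^{-1})$, uniform in $\omega$, such that $2|L|\le|z|$ implies $\mathcal N(T_\omega\mid L)\le\frac{1}{2e\theta_1}\frac{|L|}{|z|}$ for every interval $L\subset I_\pm$ and $z\in L$. The other is the elementary recursion $|I_{i+1}|\le|T(I_i)|+\eps$, which is just the triangle inequality together with the admissibility bound $|\partial_tF|\le1$, giving $|T_{\sigma^i\omega}(y_i)-T(y_i)|\le\eps$.

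The induction is the core. For $n=0$ the estimates are immediate, since $|I_0|=|v-y|\le\eps\le eW\eps$. Assume them up to some $n_0<N$. Feeding $|I_i|\le eW\eps\,DT^i(v)$ into $\sum_{i\le n_0}|I_i|/|v_i|$ gives $eW\eps\,A(0,v,n_0{+}1)\le eW\eps\,A(0,v,N)\le e\theta_1$ by the hypothesis $A(0,v,N)W\le\theta_1/\eps$; this is the second estimate at time $n_0{+}1$. Since $e\theta_1<\tfrac14$, every summand is $<\tfrac12$, so $2|I_i|\le|v_i|$ for $i\le n_0$, the one-step distortion bound applies termwise, and summing gives $\sum_{i\le n_0}\text{Dist}(T\mid I_i)\le\frac{1}{2e\theta_1}e\theta_1=\tfrac12$, the third estimate. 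For the first, the mean value theorem gives $\xi_i\in I_i$ with $|I_{i+1}|\le DT(\xi_i)|I_i|+\eps$; iterating and using $\prod_{k\le i\le n_0}DT(\xi_i)\le e\,DT^{n_0+1}(v)/DT^k(v)$ (a consequence of the third estimate just proved) produces
\begin{align*}
|I_{n_0+1}|\le e\eps\,DT^{n_0+1}(v)\Big(1+\sum_{k=1}^{n_0+1}\frac{1}{DT^k(v)}\Big)\le eW\eps\,DT^{n_0+1}(v)
\end{align*}
by the definition of $W$. This closes the induction.

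It remains to read off the three defining properties of an $eW$-binding period for $(v,\eps)$. Property \eqref{def:bind1} is the inequality $2|I_j|\le|v_j|$ already extracted from the second estimate, and \eqref{def:bind3} is literally the first estimate. For the derivative comparison \eqref{def:bind2} one splits
\begin{align*}
\log\frac{|DT_\omega^{j+1}(y)|}{|DT^{j+1}(v)|}=\sum_{i=0}^{j}\log\frac{|DT(y_i)|}{|DT(v_i)|}+\sum_{i=0}^{j}\log\frac{|DT_{\sigma^i\omega}(y_i)|}{|DT(y_i)|}.
\end{align*}
The first sum is bounded in modulus by $\sum_i\text{Dist}(T\mid I_i)\le\tfrac12$. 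For the second, $\log(1+x)\le x$ and the standing assumption $|DT_\omega(x)-DT_{\tilde\omega}(x)|\le\eps$ bound it by $\sum_{i<N}e\eps/|DT(v_i)|$; and the elementary inequality $\sum_{i<N}\frac{1}{DT(v_i)}\le\sum_{i<N}\frac{DT^i(v)}{|v_i|\,DT^{i+1}(v)}\le A(0,v,N)W\le\theta_1/\eps$ (dropping the cross terms in the product $A\cdot W$ and using $|v_i|\le1$) makes this at most $e\theta_1<\tfrac14$. Hence the logarithm lies in $[-1,1]$, which is \eqref{def:bind2}.

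I expect the main obstacle to be the bookkeeping in the inductive step, where two independent error sources must be tracked simultaneously: the geometric drift of $y_i$ off $v_i$ (the ``$+\eps$'' terms, which get summed against $1/DT^k(v)$) and the discrepancy between the random maps $T_{\sigma^i\omega}$ and the reference $T$. The whole scheme works only because the single hypothesis $A(0,v,N)W\le\theta_1/\eps$, thanks to the product structure of $A\cdot W$, at once dominates the distortion mass $\sum|I_i|/|v_i|$ and the perturbation mass $\sum e\eps/DT(v_i)$. Fixing $\theta_1$ once from the singularity condition and checking that $e\theta_1<\tfrac14$ suffices everywhere keeps all constants uniform in $\eps,v,\omega,N$; the only loose end is the ``$\eps W$'' in the statement versus the ``$eW$'' that the argument delivers, which I would settle in favour of $eW$.
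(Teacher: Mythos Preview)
Your proposal is correct and follows essentially the same route as the paper: the same one-step distortion constant $\theta_1\in(0,(4e)^{-1})$, the same three inductive estimates on $|I_n|$, $\sum|I_i|/|v_i|$, and $\sum\text{Dist}(T\mid I_i)$ proved in the same order (second, third, first), and the same split of $\log\bigl(|DT_\omega^{j+1}(y)|/|DT^{j+1}(v)|\bigr)$ to verify \eqref{def:bind2}. You have also correctly flagged the typos: $W$ is meant to be the sum of reciprocal derivatives, $\delta_1$ should read $\theta_1$, and the conclusion is an $eW$-binding period, not $\eps W$.
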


Now, we recall that condition H2. above, proved in \cite[Lemma 2.1]{AS} by Alves and Soufi, tells us that the deterministic system $T_0$ has expansion when the orbit stays outside a small neighbourhood of $0$. Analogously to \cite[Proposition 2.7 (i)]{Shen}, we prove that the same holds true for our perturbed system as well:

\begin{lemma} \label{Shen:Prop_2.7i}
There exists a constant $\eta >0$, and for sufficiently small  $\delta > 0$ there exists a constant $C=C(\delta)>0$, such that, for all sufficiently small $\eps >0$, all $\omega \in \Omega_\eps$, all $n\ge 1$ and all $x \in I \setminus\{0\}$, if $x , T_\omega(x), \dots, T_\omega^{n-1}(x) \notin (-\delta, \delta)$, then $DT_\omega^n(x) \ge C e^{\eta n}$.

\begin{proof}
Assume we have $\delta>0$ already fixed sufficiently small. For the first result, we already know from condition H2) that if $x, T(x), \dots, T^{n-1}(x) \notin (-\delta, \delta)$, then $DT^n(x) \ge c(\delta) \lambda^n$. Now, depending on how small our constant $c(\delta)$ is, let us fix a sufficiently large iterate M. Clearly, for sufficiently small $\eps$, for each $\omega \in \Omega_\eps$ the orbit $\big\{ x, T_\omega(x), \dots, T_\omega^{M-1}(x) \big\} $ will not deviate too much from the unperturbed orbit, especially since we are only considering finite iterates. Thus, if $DT^M(x) \ge c(\delta) \lambda^M$, then by continuity we can say that $DT_\omega^M(x) \ge  c(\delta) \hat \lambda^M$, where $\lambda > \hat \lambda > 1$ is some smaller suitably chosen constant. 

Now, suppose $n \le M$, and suppose $x, T_\omega(x), \dots, T_\omega^{n-1}(x) \notin (-\delta, \delta)$. We have already assumed that $\eps$ is small enough so that all perturbed orbits are close to the unperturbed orbit up to iterate $M-1$.  However, one must note that $x, T_\omega(x), \dots, T_\omega^{n-1}(x) \notin (-\delta, \delta)$ does not necessarily imply $x, T(x), \dots, T^{n-1}(x) \notin (-\delta, \delta)$ because it may be that there exists some iterate $1\le j \le n-1$ such that $T^j(x) \in (-\delta, \delta)$, but $T^j(x)$ lands close enough to edge of $(-\delta, \delta)$ that the perturbed orbit can land outside. But this can be handled, as we can simply decompose the unperturbed orbit, with $x, T(x), \dots, T^{j-1}(x) \notin (-\delta, \delta)$ and $T^j(x) \in (-\delta, \delta)$, implying $DT^j(x) \ge \lambda^j$, and then  $T^{j+1}(x), \dots, T^{n-1}(x) \notin (-\delta, \delta)$ implying $DT^{n-(j+1)}(T_\omega^{j+1}(x)) \ge c(\delta) \lambda^j$, and by using continuity again, we have $x, T_\omega(x)$, $\dots$, $T_\omega^{n-1}(x) \notin (-\delta, \delta)$ implies $DT_\omega^n(x) \ge c(\delta) \hat \lambda^n$. Additionally, if $T_\omega^n(x) \in (-\delta, \delta)$, again this does not imply that $T^n(x) \in (-\delta, \delta)$ because it may be that $T^n(x)$ lands just outside $(-\delta, \delta)$ but the perturbation is large enough that $T^n(x) \in (-\delta, \delta)$. Thus, we cannot claim $DT^n(x) \ge \lambda^n$.  Nonetheless, by the size of $\eps$, $DT^n(x)$ must be extremely close to $(-\delta, \delta)$, so by continuity we can instead say $DT^n(x) \ge \tilde \lambda^n$ for some $\lambda > \tilde \lambda > 1$, and thus $DT_\omega^n(x) \ge \hat \lambda^n$ for $\tilde \lambda > \hat \lambda > 1$.

Now assume that $M <n \le 2 M$. After $M-1$ iterates, we lose that the perturbed orbits of our original $x$ are close to the unperturbed orbit of $x$, but this is not an issue. Take $x$ such that $x, T_\omega(x), \dots, T_\omega^{n-1}(x) \notin (-\delta, \delta)$, and let us decompose its orbit into $x, T_\omega(x), \dots, T_\omega^{M-1}(x) \notin (-\delta, \delta)$ and $T_\omega^M(x), \dots, T_\omega^{n-1}(x) \notin (-\delta, \delta)$. Along the first part of the perturbed orbit, clearly we have $DT_\omega^M(x) \ge c(\delta) \hat \lambda^M$. As for the second part, let us take $y = T_\omega^M(x)$ and then consider its unperturbed orbit $\{y, T(y) , \dots , T^{n-1 -M}(y)\}$. We can then use the exact same continuity argument as before to show that $DT^{n-M}(y) \ge c(\delta)\lambda^{n-M}$ implies $DT_{\sigma^M \omega}^{n-M}(T_\omega^M(x)) \ge c(\delta) \hat \lambda^{n-M}$. Thus, we have $DT_\omega^n(x) = \big[DT_\omega^M(x) \big] \cdot \big[ DT_{\sigma^M \omega}^{n-M}(T_\omega^M(x))  \big] \ge c(\delta)^2  \hat \lambda^n$.  Furthermore, we can use the same argument as above to show that if $T_\omega^n(x) \in (-\delta, \delta)$, then $DT_\omega^n(x) \ge c(\delta) \hat \lambda^n$.

For any $\ell  M < n \le (\ell +1) M$, for $x, T_\omega(x), \dots, T_\omega^{n-1}(x) \notin (-\delta, \delta)$ we simply decompose the orbit as before, giving us $DT_\omega^n(x)  \ge (c(\delta) \hat \lambda^M)^\ell c(\delta) \hat \lambda^{n- \ell M} =  c(\delta)^{\ell+1} \hat \lambda^n$. Clearly this $c(\delta)^{\ell +1}$ may be an issue if it shrinks too quickly, so we choose $M$ large enough and $\eps$ small enough (which prevents $\hat \lambda$ from becoming too small) so that $c(\delta) \hat \lambda^M \ge 1$. Thus, choosing an even smaller $\hat \lambda > \hat \lambda_0 > 1$ so that $ \hat \lambda_0^{\ell M} \le (c(\delta) \hat \lambda^M)^\ell$, we have $DT_\omega^n(x)  \ge  c(\delta) \hat \lambda_0^n$. Furthermore, using this and the argument above, if $T_\omega^n(x) \in (-\delta, \delta)$, then $DT_\omega^n(x) \ge \hat \lambda_0^n$.
\end{proof}
\end{lemma}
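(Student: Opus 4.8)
The plan is to promote the deterministic expansion estimate of Alves and Soufi (\cite[Lemma~2.1]{AS}, the property H2 recalled above: there are $c(\delta)>0$ and a fixed $\lambda>1$ with $DT^n(x)\ge c(\delta)\lambda^n$ whenever $x,T(x),\dots,T^{n-1}(x)\notin(-\delta,\delta)$) to the random cocycle in two stages. Fix $\delta>0$ small. I would first establish a \emph{finite-time} version, valid up to some large iterate $M=M(\delta)$, and then globalize it by chopping an arbitrarily long admissible orbit into blocks of length $M$. Throughout, $\eps$ is taken small in terms of $M$ and $\delta$, hence ultimately in terms of $\delta$ alone since $M$ depends only on $\delta$.

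For the finite-time estimate, suppose $n\le M$ and $x,T_\omega(x),\dots,T_\omega^{n-1}(x)\notin(-\delta,\delta)$. Using that each admissible $T_t$ differs from $T_0$ by at most $\eps$ in sup norm (admissibility condition (1)), is Lipschitz with a fixed constant on each branch $I_\pm$, and satisfies the distortion bound of condition (3), a short induction on $i$ shows that for $\eps$ small the orbits $\{T_\omega^i(x)\}$ and $\{T^i(x)\}$ stay within $\delta_M$ of each other for $i\le n$, where $\delta_M\to 0$ as $\eps\to 0$, and remain on the same monotonicity branch at each step (here one uses $|T_\omega^i(x)|\ge\delta>\delta_M$). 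In particular the unperturbed orbit avoids the slightly smaller gap $(-\delta',\delta')$ with $\delta'=\delta-\delta_M\ge\delta/2$, so H2 with parameter $\delta'$ gives $DT^n(x)\ge c'(\delta)\lambda^n$ where $c'(\delta):=\inf_{\delta/2\le t\le\delta}c(t)>0$. Transferring this bound to $T_\omega$ one step at a time, the multiplicative error $DT_{\sigma^i\omega}(T_\omega^i(x))/DT(T^i(x))$ is controlled by condition (3) — the two arguments lie within $\delta_M$ and are bounded away from $0$ by $\delta/2$ — together with the $C^1$-closeness of $T_{\sigma^i\omega}$ to $T_0$, and is therefore within $e^{\pm\kappa_\eps}$ with $\kappa_\eps\to 0$. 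Hence $DT_\omega^n(x)\ge c'(\delta)\hat\lambda^n$ for all $n\le M$, with $\hat\lambda:=\lambda e^{-\kappa_\eps}\ge\sqrt\lambda$ once $\eps$ is small enough. (An alternative at this point, closer to Shen's argument, is to decompose the unperturbed orbit at its excursions into $(-\delta,\delta)$ and apply H2 piecewise; the version above sidesteps tracking the transition factors.)

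To globalize, write $n=\ell M+r$ with $0\le r<M$ and split $x,\dots,T_\omega^{n-1}(x)$ into $\ell$ consecutive blocks of length $M$ followed by one of length $r$; each sub-orbit still avoids $(-\delta,\delta)$, so applying the finite-time estimate along the shifted cocycles $DT_{\sigma^{kM}\omega}^{M}$ and multiplying gives $DT_\omega^n(x)\ge\bigl(c'(\delta)\hat\lambda^M\bigr)^{\ell}c'(\delta)\hat\lambda^r$. Now choose $M=M(\delta)$ large enough that $\tfrac{1}{M}\log c'(\delta)\ge-\tfrac{1}{4}\log\lambda$, i.e.\ $c'(\delta)\hat\lambda^M\ge e^{\eta M}$ with $\eta:=\tfrac{1}{4}\log\lambda$; note that $\eta$ is independent of $\delta$, as the statement requires, since $\hat\lambda\ge\sqrt\lambda$. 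Then $DT_\omega^n(x)\ge e^{\eta\ell M}c'(\delta)\hat\lambda^r\ge c'(\delta)e^{-\eta M}e^{\eta n}=:C(\delta)e^{\eta n}$, and since this bound involves only the derivatives at $x,T_\omega(x),\dots,T_\omega^{n-1}(x)$ it holds no matter where $T_\omega^n(x)$ falls. I expect the finite-time comparison of the second paragraph to be the main obstacle: one must ensure that the mismatch between the perturbed and unperturbed itineraries relative to $(-\delta,\delta)$ costs only a negligible shrinkage of the safe gap (so H2 still applies with essentially the same constant), and that the distortion transfer survives near the edge of the gap, where $DT\asymp|x|^{s-1}$ is small and $D\log DT\asymp 1/|x|$ is large — both are paid for by taking $\eps$ small relative to $\delta$ and $M$, which is exactly why $M$ must be fixed before $\eps$ is sent to zero.
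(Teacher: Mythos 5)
Your proposal follows the paper's own two-stage strategy: establish a finite-time bound up to some large $M=M(\delta)$ by comparing the perturbed orbit to the unperturbed one and invoking condition (R1)/H2, then globalize by chopping an arbitrary orbit into blocks of length $M$ and multiplying, with $M$ chosen large enough to absorb the $c(\delta)$ prefactor on each block. The one place you diverge is in how the itinerary mismatch is handled in the finite-time step: the paper treats the case where $T^j(x)\in(-\delta,\delta)$ while $T_\omega^j(x)\notin(-\delta,\delta)$ by splitting the unperturbed orbit at such excursions and re-applying H2 to each piece, whereas you shrink the gap to $(-\delta',\delta')$ with $\delta'=\delta-\delta_M$ and apply H2 directly with the smaller gap — a cleaner formulation of the same continuity idea, which you explicitly flag as an alternative. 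You are also more careful than the written proof about the requirement that $\eta$ be independent of $\delta$: by pinning $\hat\lambda\ge\sqrt{\lambda}$ and taking $\eta=\tfrac14\log\lambda$, you make explicit something the paper leaves implicit when it selects $\hat\lambda_0$ in terms of $c(\delta)$, $\hat\lambda$ and $M$.
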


Additionally, we want to prove the tails of perturbed return times decays exponentially as well:

\begin{lemma}\label{lemma:tail_of_return_times}
There exists a constant $\eta >0$, and for sufficiently small  $\delta > 0$ there exists a constant $C=C(\delta)>0$, such that, for all sufficiently small $\eps >0$, all $\omega \in \Omega_\eps$, and all $n\ge 1$, we have
		$$
			\big|\big\{ x \in I: \Hquad T_\omega^j(x) \notin (- \delta, \delta) \text{ for } 0 \le j \le n-1 \big\}\big| \le C e^{- \eta n}
		$$
\begin{proof}
Let $\delta>0$ and $\eps>0$ already be fixed sufficiently small. Setting $U=(-\delta, \delta)$, for every $\omega \in \Omega_\eps$ and $n \ge 1$ we define
$$
	\Lambda_n^\omega (U) = \big\{ x \in I  : \Hquad T_\omega^j(x) \notin U \text{ for } 0 \le j \le n-1 \big\}
$$
as well as $\Lambda_\infty^\omega (U) = \cap_{n=1}^\infty \Lambda_n^\omega (U)$. Furthermore, let $U_0 \subset U$ also be an open neighbourhood of the critical point at $0$.

We note that there exists constants $C_1, C_2 >0$ such that for all $x \in \Lambda_n^\omega (U)$, we have
$$
	C_1 DT_\omega^n(x) \le A(\omega, x, n) \le C_2  DT_\omega^n(x).
$$
Indeed, recalling the definition of $A(\omega, x, n)$, we have
\begin{align*}
	A(\omega, x, n)& = \sum_{i = 0}^{n-1} \frac{DT_\omega^i(x)}{|T_\omega^i(x)|} 
	 = \frac{1}{|x|} + \frac{DT_\omega(x)}{|T_\omega(x)|} + \dots + \frac{DT_\omega^{n-1}(x)}{|T_\omega^{n-1}(x)|}.
\end{align*}
Since by definition of $x \in \Lambda_n^\omega (U)$ we have $x, T_\omega(x), \dots , T_\omega^{n-1}(x) \notin U$, we clearly have
\begin{align*}
	A(\omega, x, n) &\le \frac 1 \delta \Big( 1 + DT_\omega(x) + \dots + DT_\omega^{n-1}(x) \Big) \\
	& \le \frac 1 \delta \Big( 1 + (n-1)\sup DT_\omega  \Big) \\
	& \le C_1 c(\delta) e^{\eta n}
	 \le C_1 DT_\omega^n(x)
\end{align*}
The second-last inequality holds because the line above grows linearly with $n$, while the last line grows exponentially, thus we can just choose some large constant $C_1$ and put it in front. The last inequality holds by the previous lemma. The lower bound is trivial:
\begin{align*}
	\Big( \frac{1}{|x|} + \frac{DT_\omega(x)}{|T_\omega(x)|} + \dots + \frac{DT_\omega^{n-1}(x)}{|T_\omega^{n-1}(x)|} \Big) &\ge \Big( 1 + DT_\omega(x) + \dots + DT_\omega^{n-1}(x) \Big) \\
	& \ge DT_\omega^{n-1}(x) 
	 \ge C_2 DT_\omega^n(x),
\end{align*}
where for the last inequality we just use the fact that $DT_\omega$ is bounded for all $\omega \in \Omega_\eps$, i.e. $C_2 = 1/\sup_{\omega \in \Omega_\eps}\sup T_\omega'$.

Now, by Lemma \ref{lemma:shen_2.3}, we can choose some constant $\tau >0$ such that for all $x \in I \setminus \{0 \}$ and $\omega \in \Omega$, there exists some neighbourhood of $x$, $J_n(x)$, which $T_\omega^n$ maps diffeomorphically onto some interval of length $\tau$ and such that and such that   $\mathcal{N} ( T_\omega^n | J_n(x)) \le 1$. By choosing a sufficiently small $\tau$, we have $J_n(x) \subset \Lambda_n^\omega (U_0)$ for all $x \in \Lambda_n^\omega (U)$.

Let $\Lambda_n^0$ correspond to the unperturbed orbit. Since $\Lambda_\infty^0 (U_0)$ is of measure zero, then clearly for every $\rho >0$ there exists some $N=N(\rho) \in \mathbb Z_+$ such that $|\Lambda_N^0 (U_0)|< \rho$. Now, if $\eps >0$ is sufficiently small, then for every $\omega \in \Omega$ we have that $\Lambda_N^\omega (U_0)$ is contained in a small neighbourhood of $\Lambda_N^0 (U_0)$. Thus we also have $|\Lambda_N^\omega (U_0)|< \rho$ for all $\omega \in \Omega$.

Let us define $\eta_k = \sup_{\omega \in \Omega} |\Lambda_{kN}^\omega (U)|$. In order for us to obtain the desired result, clearly we must show that $\eta_{k+1} < \frac 1 2 \eta_k$.  Recall that the elements of the set $\A = \big\{ J_N(x) | x \in \Lambda_N^\omega (U) \big\}$ form a cover of $\Lambda_N^\omega (U)$. Thus, using Besicovitch's covering lemma, we can say the following: there exists a constant $c_1$ and there exist subsets $A_1, \dots , A_{c_1}$ of $I$ such that for each $i = 1 , \dots , c_1$ the elements of the set $\big\{ J_N(x) | x \in A_i \big\}$ are all disjoint. Furthermore, we can say 
$$
	\Lambda_N^\omega (U) \subset \bigcup_{i = 1}^{c_N} \Big( \bigcup_{x \in A_i} J_N(x) \Big) .
$$
Note that this implies each point $x \in \Lambda_N^\omega (U) $ can only be covered by at most $c_1$ open neighbourhoods from these $c_1$ sub-collections. 

Clearly $\Lambda_{(k+1)N}^\omega(U) \subset \Lambda_N^\omega(U)$, thus $\A$ is also an open cover of $\Lambda_{(k+1)N}^\omega(U)$, thus we can write
\begin{align}\label{ineq:intersect}
	|\Lambda_{(k+1)N}^\omega(U)| \le \sum_{i=1}^{c_1} \sum_{x \in A_i} |J_N(x) \cap \Lambda_{(k+1)N}^\omega (U)|
\end{align}
Therefore, we need to find an upper bound for $|J_N(x) \cap \Lambda_{(k+1)N}^\omega (U)|$.

Trivially, we have 
$$
	|J_N(x) \cap \Lambda_{(k+1)N}^\omega (U) | = \frac{|T_\omega^N(J_N(x))|}{|T_\omega^N(J_N(x))|} \cdot \frac{|T_\omega^N(J_N(x) \cap \Lambda_{(k+1)N}^\omega (U) )|}{|T_\omega^N(J_N(x) \cap \Lambda_{(k+1)N}^\omega (U) )|} \cdot |J_N(x) \cap \Lambda_{(k+1)N}^\omega (U) |
$$
Recall that we have $1/|T_\omega^N(J_N(x))| = \tau^{-1}$ by definition. Additionally, we have $T_\omega^{N} (\Lambda_{(k+1)N}^\omega(U)) \subset \Lambda_{kN}^{\sigma^N \omega}(U)$ for all $\omega \in \Omega$, thus 
$$|T_\omega^N(J_N(x) \cap \Lambda_{(k+1)N}^\omega (U) )| \le |T_\omega^N(\Lambda_{(k+1)N}^\omega (U) )| \le |\Lambda_{kN}^{\sigma^N \omega} (U) )| \le \eta_k. $$
Now, for the remaining $|T_\omega^N(J_N(x))|/ |T_\omega^N(J_N(x) \cap \Lambda_{(k+1)N}^\omega (U) )|$, we want to use bounded distortion. However, $J_N(x) \cap \Lambda_{(k+1)N}^\omega (U) $ is not necessarily a connected interval, so we cannot directly use the usual mean value theorem combined with bounded distortion technique on it. Instead, one needs to split $J_N(x) \cap \Lambda_{(k+1)N}^\omega (U) $ into its connected components and then apply the technique and recombine the results. Thus, let $\{ A_i \}_i$ be the set of connected components of $J_N(x) \cap \Lambda_{(k+1)N}^\omega (U) $. We can write
\begin{align*}
	&\frac{|T_\omega^N(J_N(x) \cap \Lambda_{(k+1)N}^\omega (U) )|}{|T_\omega^N(J_N(x))|} = \sum_i \frac{|T_\omega^N(A_i)|}{|T_\omega^N(J_N(x))|} \\
	& \ge \sum_i e^{-1}\frac{|A_i|}{|J_N(x)|} 
	 = e^{-1}\frac{|J_N(x) \cap \Lambda_{(k+1)N}^\omega (U) |}{|J_N(x)|}.
\end{align*}	
Taking the reciprocal of this and using the preceding two statements, we have
\begin{align}
	|J_N(x) \cap \Lambda_{(k+1)N}^\omega (U) | &\le e \tau^{-1} \eta_k \frac{|J_N(x)|}{|J_N(x) \cap \Lambda_{(k+1)N}^\omega (U) |} \cdot |J_N(x) \cap \Lambda_{(k+1)N}^\omega (U) | \nonumber \\
	&= e \tau^{-1} \eta_k |J_N(x)| \label{ineq:return_contraction}
\end{align}

Now, define
$$
	\hat \Lambda^\omega(U) = \bigcup_{x \in \Lambda_{N}^\omega (U)} J_N(x).
$$
Recall that $J_n(x) \subset \Lambda_n^\omega (U_0)$ for all $x \in \Lambda_n^\omega (U)$. Thus, $|\hat \Lambda^\omega(U)| \le |\Lambda_N^\omega (U_0)| < \rho$
Plugging in the previous inequality into (\ref{ineq:intersect}), we have
\begin{align*}
	|\Lambda_{(k+1)N}^\omega(U)| &\le \sum_{i=1}^{c_1} \sum_{x \in A_i} e \tau^{-1} \eta_k |J_N(x)|\\
	& = e \tau^{-1} \eta_k c_1 |\hat \Lambda^\omega(U)| 
	 < e \tau^{-1} \eta_k c_1 \rho,
\end{align*}
where in the second line we use the fact that any point $x \in \Lambda_N^\omega (U)$ is covered by at most $c_1$ open neighbourhoods $J_N(y)$ with $y \in A_1 \cup \dots \cup A_{c_1}$.

To finish the proof, all we need to do is take $\rho$ to be small enough such that $e \tau^{-1} c_1 \rho \le 1/2$. Since the above inequality holds for all $\omega$, it also holds for $\eta_{k+1}$, thus giving us our desired result.
\end{proof}

\end{lemma}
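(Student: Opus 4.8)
The plan is to run a renewal-type argument in blocks of a fixed length $N$: I will show that passing from $kN$ to $(k+1)N$ iterates multiplies the measure of the surviving set by a definite factor, uniformly in $\omega$, so that the measures along multiples of $N$ decay geometrically and an interpolation handles intermediate $n$. Throughout, write $U=(-\delta,\delta)$, fix a slightly smaller neighbourhood $U_0\subset U$ of the critical point, put $\Lambda_n^\omega(U)=\{x\in I:T_\omega^j(x)\notin U,\ 0\le j\le n-1\}$, $\Lambda_\infty^0(U_0)=\bigcap_{n\ge 1}\Lambda_n^0(U_0)$ for the unperturbed map, and set $\eta_k=\sup_{\omega\in\Omega_\eps}|\Lambda_{kN}^\omega(U)|$. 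The goal of the block step is $\eta_{k+1}\le\tfrac12\eta_k$.

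First I would record a comparison $A(\omega,x,n)\asymp DT_\omega^n(x)$ valid on $\Lambda_n^\omega(U)$: the lower bound is immediate since $DT_\omega^{n-1}(x)$ is one of the summands of $A(\omega,x,n)$, and the upper bound follows because $|T_\omega^i(x)|\ge\delta$ gives $A(\omega,x,n)\le\delta^{-1}\sum_{i<n}DT_\omega^i(x)$, which grows only polynomially in $n$ while $DT_\omega^n(x)\ge C(\delta)e^{\eta n}$ grows exponentially by Lemma \ref{Shen:Prop_2.7i}. Feeding this into Lemma \ref{lemma:shen_2.3}, the interval $J$ of radius $\theta_0/A(\omega,x,n)$ about $x$ is mapped diffeomorphically by $T_\omega^n$ with $\mathcal N(T_\omega^n\mid J)\le 1$, and bounded distortion gives $|T_\omega^n(J)|\asymp DT_\omega^n(x)\cdot\theta_0/A(\omega,x,n)\asymp\theta_0$. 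Consequently there is a fixed $\tau>0$ so that every $x\in\Lambda_N^\omega(U)$ has a neighbourhood $J_N(x)$ mapped by $T_\omega^N$ diffeomorphically onto an interval of length $\tau$, with $\mathcal N(T_\omega^N\mid J_N(x))\le 1$; and since the intermediate images $T_\omega^j(J_N(x))$ stay near the orbit of $x$ (using the intermediate distortion bounds inside the proof of Lemma \ref{lemma:shen_2.3} together with the expansion of Lemma \ref{Shen:Prop_2.7i}), shrinking $\tau$ ensures $J_N(x)\subset\Lambda_N^\omega(U_0)$.

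For the contraction step I would cover $\Lambda_N^\omega(U)\supset\Lambda_{(k+1)N}^\omega(U)$ by $\{J_N(x):x\in\Lambda_N^\omega(U)\}$ and apply Besicovitch's covering lemma to extract a universal number $c_1$ of subfamilies $A_1,\dots,A_{c_1}$ with pairwise disjoint members still covering, so that
\[
|\Lambda_{(k+1)N}^\omega(U)|\le\sum_{i=1}^{c_1}\sum_{x\in A_i}|J_N(x)\cap\Lambda_{(k+1)N}^\omega(U)|.
\]
Equivariance gives $T_\omega^N(\Lambda_{(k+1)N}^\omega(U))\subset\Lambda_{kN}^{\sigma^N\omega}(U)$, hence $|T_\omega^N(J_N(x)\cap\Lambda_{(k+1)N}^\omega(U))|\le\eta_k$; applying bounded distortion of $T_\omega^N$ component-by-component on $J_N(x)\cap\Lambda_{(k+1)N}^\omega(U)$ and using $|T_\omega^N(J_N(x))|=\tau$ yields $|J_N(x)\cap\Lambda_{(k+1)N}^\omega(U)|\le e\tau^{-1}\eta_k|J_N(x)|$. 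Summing, and using that within each $A_i$ the $J_N(x)$ are disjoint and all lie in $\Lambda_N^\omega(U_0)$, I get $|\Lambda_{(k+1)N}^\omega(U)|\le c_1 e\tau^{-1}\eta_k\,|\Lambda_N^\omega(U_0)|$. Now $\Lambda_\infty^0(U_0)$ has zero Lebesgue measure (a standard property of Rovella maps), so $|\Lambda_N^0(U_0)|<\rho$ for $N=N(\rho)$ large, and a finite-time closeness argument as in Lemma \ref{Shen:Prop_2.7i} upgrades this to $|\Lambda_N^\omega(U_0)|<\rho$ for all $\omega$ once $\eps$ is small enough; choosing $\rho$ with $c_1 e\tau^{-1}\rho\le\tfrac12$ gives $\eta_{k+1}\le\tfrac12\eta_k$, so $\eta_k$ decays geometrically. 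Finally, for arbitrary $n$ write $n=kN+r$ with $0\le r<N$ and use $\Lambda_n^\omega(U)\subset\Lambda_{kN}^\omega(U)$ to conclude $|\Lambda_n^\omega(U)|\le C(\delta)e^{-\eta n}$, with $\eta$ a fixed multiple of $1/N$ (and if one wants $\eta$ independent of $\delta$, one takes $\rho=\rho_N$ decaying exponentially, which is available because the deterministic tail is exponentially small at a rate bounded below).

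The main obstacle is the uniformity in $\omega$: the single block length $N$ must work for \emph{every} $\omega\in\Omega_\eps$ at once, which forces the quantifier order $\delta,\tau,\rho,N,\eps$ (in this order) and rests on transferring the smallness of the deterministic surviving set $\Lambda_N^0(U_0)$ to all nearby random compositions via a careful finite-iterate continuity argument. The secondary technical points are producing the neighbourhoods $J_N(x)$ with image of a fixed size $\tau$ (via the comparison $A\asymp DT^N$) and carrying out the distortion estimate on $J_N(x)\cap\Lambda_{(k+1)N}^\omega(U)$ component-by-component, since this set is a countable union of intervals rather than a single interval.
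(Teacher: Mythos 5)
Your proposal reproduces the paper's argument essentially step by step: the comparison $A(\omega,x,n)\asymp DT_\omega^n(x)$ on the surviving set feeding into Lemma \ref{lemma:shen_2.3} to produce the fixed-size neighbourhoods $J_N(x)$, the Besicovitch covering with bounded multiplicity, the equivariance $T_\omega^N(\Lambda_{(k+1)N}^\omega(U))\subset\Lambda_{kN}^{\sigma^N\omega}(U)$, the component-by-component distortion estimate, and finally the choice of $\rho$ forcing $\eta_{k+1}\le\tfrac12\eta_k$. The only additions beyond the paper are the explicit interpolation $n=kN+r$ and the parenthetical remark on making $\eta$ independent of $\delta$, both of which are minor and do not change the substance of the proof.
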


Now, let us prove certain results for the deterministic dynamics of the initial unperturbed map $T_0$. Specifically, we wish to prove the following two propositions:

\begin{proposition}\label{prop:Shen_4.1}
	For any fixed constants $L>0$, $\theta \in (0,1)$ and $\zeta > 0$, and for sufficiently small $\delta > 0$, there exists a positive integer $M=M(\delta)$, satisfying $M(\delta) \to \infty $ as $\delta \to 0$, such that
	\begin{align}
		&A( \underline{0}, 0, M(\delta)) \le \theta/\delta, \label{ineq:Shen_(4.1)}  \\
		&T_0(0), \dots , T_0^{M(\delta) - 1}(0) \notin \tilde B(L \delta), \label{ineq:Shen_(4.2)}
	\end{align}
	and
	\begin{align}
		DT_0^{M(\delta) + 1}(T_0(0)) \ge \Big( \frac{ \max \big\{ |T_0^{M+1}(0) |, \delta  \big\}  }{\delta}  \Big)^{1 - \zeta}.  \label{ineq:Shen_(4.3)}
	\end{align}
\end{proposition}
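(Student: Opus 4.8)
The plan is to define $M(\delta)$ to be the largest positive integer for which both \eqref{ineq:Shen_(4.1)} and \eqref{ineq:Shen_(4.2)} hold; these two then hold by construction, and the real work is to show that \eqref{ineq:Shen_(4.3)} is forced. Before that I would record a few elementary facts about the Rovella map $T_0$. By (R2) the critical orbit stays off $0$, indeed $|T_0^i(0)|>e^{-\alpha(i+1)}$, so every term $DT_0^i(T_0(0))/|T_0^i(T_0(0))|$ of $A(\underline 0,0,\cdot)$ is finite and $A(\underline 0,0,m)$ is strictly increasing with $A(\underline 0,0,m)\ge DT_0^{m-1}(T_0(0))\ge\lambda^{m-1}$ by (R1); in particular $A(\underline 0,0,m)\nearrow\infty$. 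Moreover $T_0$ has no fixed endpoint ($T_0(\pm 1)\neq\pm 1$, since the orbit of $\pm 1$ is dense by (R3)), so $\pm 1$ is attained by $T_0$ only as a one-sided limit at $0$, and hence for small $\rho$ the set $\tilde B(\rho)$ is a two-sided neighbourhood of $0$ of radius comparable to $\rho^{1/s}$ by the order-$s$ condition (C2). Consequently $T_0^i(0)\notin\tilde B(L\delta)$ holds whenever $e^{-\alpha(i+1)}\gtrsim (L\delta)^{1/s}$.

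With $M(\delta)$ so defined, it is well defined (condition \eqref{ineq:Shen_(4.1)} eventually fails as $A(\underline 0,0,m)\nearrow\infty$) and tends to $\infty$: for any fixed $m_0$, \eqref{ineq:Shen_(4.1)} holds at $m_0$ once $\delta<\theta/A(\underline 0,0,m_0)$ and \eqref{ineq:Shen_(4.2)} holds at $m_0$ once $\tilde B(L\delta)$ misses the finitely many nonzero points $T_0(0),\dots,T_0^{m_0-1}(0)$, so $M(\delta)\ge m_0$ for all small $\delta$. The estimates of the previous paragraph moreover give the quantitative two-sided control $c\,|\log\delta|\le M(\delta)\le 1+\log_\lambda(\theta/\delta)$ with $c>0$ depending only on $\lambda,\alpha,s,K_1,K_2,\theta,L$.

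For \eqref{ineq:Shen_(4.3)}: since $|T_0^{M+1}(0)|\le 1$ the right-hand side is at most $\delta^{-(1-\zeta)}$ and equals $1$ when $|T_0^{M+1}(0)|\le\delta$, in which case \eqref{ineq:Shen_(4.3)} follows from $DT_0^{M+1}(T_0(0))\ge\lambda^{M+1}\ge1$. Otherwise I would use the maximality of $M=M(\delta)$: at step $M+1$ one of \eqref{ineq:Shen_(4.1)}, \eqref{ineq:Shen_(4.2)} fails. If \eqref{ineq:Shen_(4.2)} fails then $T_0^{M}(0)\in\tilde B(L\delta)$, so $e^{-\alpha(M+1)}<|T_0^{M}(0)|\lesssim (L\delta)^{1/s}$, whence $M\gtrsim|\log\delta|/(\alpha s)$; feeding this into $DT_0^{M+1}(T_0(0))\ge\lambda^{M+1}$ and comparing exponents gives \eqref{ineq:Shen_(4.3)}, provided $\alpha s<\log\lambda$ — a relation one may assume for the Rovella parameter at hand (the exponent $\alpha$ in (R2) being small relative to $\log\lambda$). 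If instead \eqref{ineq:Shen_(4.1)} fails, so $A(\underline 0,0,M+1)>\theta/\delta$, I would invoke the sharp estimate for the critical-orbit sum, namely that $A(\underline 0,0,m)$ is comparable to its last term $DT_0^{m-1}(T_0(0))/|T_0^{m-1}(T_0(0))|$ — each close return of the critical orbit to $0$ together with its binding period contributing only a bounded multiplicative factor, which is where the summability condition of exponent $1$ and the deterministic versions of Lemmas~\ref{lemma:shen_2.3} and~\ref{shen:lemma_2.5} enter. This gives $DT_0^{M}(T_0(0))\gtrsim\delta^{-1}|T_0^{M+1}(0)|$, hence, by the chain rule and (C2), $DT_0^{M+1}(T_0(0))\ge DT_0^{M}(T_0(0))\,DT_0(T_0^{M+1}(0))\gtrsim\delta^{-1}|T_0^{M+1}(0)|^{s}$, which beats $(|T_0^{M+1}(0)|/\delta)^{1-\zeta}$ once $|T_0^{M+1}(0)|\ge\delta^{\zeta/(s-1+\zeta)}$; for the leftover range $\delta<|T_0^{M+1}(0)|<\delta^{\zeta/(s-1+\zeta)}$ one again uses $DT_0^{M+1}(T_0(0))\ge\lambda^{M+1}$ and the lower bound $M(\delta)\ge c|\log\delta|$.

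The crux — and the step I expect to be the main obstacle — is exactly this simultaneous reconciliation of the three conditions: \eqref{ineq:Shen_(4.3)} forces $M(\delta)$ to be at least a definite multiple of $|\log\delta|$, while \eqref{ineq:Shen_(4.1)} pins $M(\delta)$ near $\log_\lambda(1/\delta)$ and \eqref{ineq:Shen_(4.2)} must remain valid up to that scale. Overlapping these windows relies on both the sharp (not merely crude exponential) control of $A(\underline 0,0,M)$ along the critical orbit and the quantitative relations among the Rovella constants $\lambda,\alpha,s$ furnished by the constructions in \cite{Rov, Shen} (in particular $\alpha s<\log\lambda$ and the resulting size of the constant $c$); once these are available the remainder is bookkeeping with the binding-period estimates already in place, and $M(\delta)\to\infty$ is automatic.
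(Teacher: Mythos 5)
Your proposal diverges from the paper's route in ways that open two genuine gaps.

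First, the branch where \eqref{ineq:Shen_(4.2)} is the one that fails at step $M+1$ forces you to verify $\lambda^{M+1}\ge\delta^{-(1-\zeta)}$ (up to constants), and you obtain a lower bound $M\gtrsim\tfrac{1}{\alpha s}|\log\delta|$ from (R2) and the order-$s$ scaling of $\tilde B$. Closing the loop therefore requires $\alpha s < \log\lambda/(1-\zeta)$, which you acknowledge but cannot derive from (R1)--(R3): those conditions only assert the \emph{existence} of some $\alpha>0$, not a quantitative comparison with $\log\lambda$ and $s$. The paper sidesteps this entirely by introducing a $\delta$-dependent auxiliary scale $L(\delta)=\delta^{-\eps}$ with $\eps$ chosen depending on $\alpha,\lambda,\zeta$. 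The key point (inequality labelled \eqref{ineq:Shen_(4.10)} in the paper) is that $T_0^n(0)\in\tilde B(L(\delta)\delta)$ already forces $n$ large enough that $\lambda^{n+1}>L(\delta)^{1-\zeta}$ — and because $L(\delta)^{1-\zeta}$ can be made to grow arbitrarily slowly by shrinking $\eps$, no relation among $\alpha,s,\lambda$ is needed. Your fixed $L$ has no such slack, and it is precisely the freedom to choose the exponent $\eps$ that makes the conclusion unconditional. (Relatedly, (4.3) as proved in the paper uses the distance of the orbit point from the critical value, not from $0$; with that reading the right-hand side after a close return is $L(\delta)^{1-\zeta}$ rather than the much larger $\delta^{-(1-\zeta)}$ your argument must beat — another place where the auxiliary scale does real work.)

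Second, in the branch where \eqref{ineq:Shen_(4.1)} fails at $M+1$, your key step asserts that $A(\underline 0,T_0(0),m)$ is ``comparable to its last term.'' This is not established in the paper and is false in general without additional structure: a close return of the critical orbit at an intermediate time $k<m$ makes the term $DT_0^k(T_0(0))/|T_0^k(T_0(0))|$ enormous, and nothing in the summability condition or in Lemmas~\ref{lemma:shen_2.3}, \ref{shen:lemma_2.5} gives a geometric domination by the last term. The paper does not need such comparability: it decomposes $A$ over the nested scales $V_k=\tilde B(2^{-k}\delta)$ using the return-time indices $\nu_k^*$ and Lemma~\ref{lemma:Shen_4.5}, and in the ``no close return'' case it chooses $M=\nu_k^*$ (a potentially smaller value than your $N$) precisely so that the needed derivative bound comes from a single scale. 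So the ``sharp estimate'' you invoke is a missing lemma, not a known fact, and the paper's argument is structured specifically to avoid having to prove it.
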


\begin{proposition}\label{prop:Shen_4.2}
	There exists a constant $\kappa_0 >0$ such that, for all sufficiently small $\delta>0$, if $x \in I \setminus \{ 0 \}$ and $x , T_0 (x), \dots, T_0^{n-1}(x) \notin \tilde B(\delta)$ and $T_0^n(x) \in \tilde B(\delta)$ for some $n \in \mathbb N$, then we have
	\begin{align}
		DT_0^n(x) \ge \frac{\kappa_0}{\D(\delta)} \Big( \frac{\delta}{ \max \{ |x - T_0(0)|, \delta \}}  \Big)^{1 - 1/s}.
	\end{align}
\end{proposition}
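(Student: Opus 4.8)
My plan is to treat $x$ as a perturbation of the critical value $c_1:=T_0(0)$ and run a binding argument, feeding in the critical‑orbit estimate \eqref{ineq:Shen_(4.3)} of Proposition~\ref{prop:Shen_4.1}; the regime in which $x$ is far from $c_1$ is handled separately by the deterministic expansion‑away‑from‑$0$ estimate behind Lemma~\ref{Shen:Prop_2.7i}. Write $c_j=T_0^{j}(0)$, $x_j=T_0^{j}(x)$, $\rho=|x-T_0(0)|$ and $r=\max\{\rho,\delta\}$. By condition~(C2) one has $|\tilde B(\delta)|\asymp\delta^{1/s}$, hence $\D(\delta)\asymp\delta^{1-1/s}$, so the asserted inequality is equivalent to $DT_0^{n}(x)\gtrsim r^{-(1-1/s)}$, and I aim for it in that form.

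\emph{Far regime.} If $\rho$ exceeds a fixed small constant, the target is $O(1)$ and it suffices to bound $DT_0^{n}(x)$ below by a constant. Since $x,\dots,x_{n-1}$ avoid $\tilde B(\delta)$ and hence a neighbourhood of $0$, while the orbit \emph{returns} to $\tilde B(\delta)$ at time $n$, the argument in the proof of Lemma~\ref{Shen:Prop_2.7i}, run with the (possibly asymmetric) punctured neighbourhood $\tilde B(\delta)$ in place of $(-\delta,\delta)$, gives the bound with no $\delta$‑dependent constant --- that constant is needed only while the orbit is about to leave the expanding region, not while it is heading back toward $0$ --- so $DT_0^{n}(x)\ge\hat\lambda_0^{\,n}\ge1$.

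\emph{Binding regime.} If $\rho$ is small, apply Proposition~\ref{prop:Shen_4.1} with $r$ in place of $\delta$ to get $M=M(r)$ with $A(\underline0,0,M)\le\theta/r$, with $c_1,\dots,c_{M-1}\notin\tilde B(Lr)$, and with $DT_0^{M+1}(c_1)\ge(\max\{|c_{M+1}|,r\}/r)^{1-\zeta}$. Using the summability and Collet--Eckmann properties of $T_0$ together with $A(\underline0,0,M)\le\theta/r$, verify the hypothesis $A(\underline0,c_1,N)\sum_{j\le N}DT_0^{j}(c_1)\le\delta_1/\rho$ of Lemma~\ref{shen:lemma_2.5} for a suitable binding length $N$ (close to $M$ but $\le n$, since $N>n$ would force $x_n\notin\tilde B(\delta)$), obtaining $e^{-1}DT_0^{j}(c_1)\le DT_0^{j}(x)\le e\,DT_0^{j}(c_1)$ and $|x_j-c_{1+j}|\lesssim\rho\bigl(\sum_{j'\le N}DT_0^{j'}(c_1)\bigr)DT_0^{j}(c_1)$ for $0\le j\le N$, with the shadowed critical orbit keeping $x_0,\dots,x_{N-1}$ outside $\tilde B(\delta)$. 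Then split $DT_0^{n}(x)=DT_0^{N}(x)\,DT_0^{n-N}(x_N)$: the first factor is $\ge e^{-1}DT_0^{N}(c_1)$, which is related to $r^{-(1-1/s)}$ via \eqref{ineq:Shen_(4.3)} and the binding's comparison between $N$, the return scale, and $DT_0^{N}(c_1)$ (using $\sum_{j'\le N}DT_0^{j'}(c_1)\asymp DT_0^{N}(c_1)$); the second factor is bounded below by a constant exactly as in the far regime. Taking $\theta,\zeta$ small and $L$ large reconciles the implied constants and fixes $\kappa_0$; alternatively, when $\rho$ is small enough that $x$ lies in the interval $J$ of Lemma~\ref{lemma:shen_2.3} attached to $c_1$ and $n$, the distortion bound there gives directly $DT_0^{n}(x)\asymp DT_0^{n}(c_1)\ge\lambda^{n}$.

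\textbf{Main obstacle.} The crux is not a single estimate but the calibration: one must choose the binding length $N$ so that \emph{simultaneously} the shadowing orbit is provably outside $\tilde B(\delta)$, $N\le n$, and the accumulated derivative $DT_0^{N}(c_1)$ has reached precisely the scale $r^{-(1-1/s)}$; a naive use of the binding lemma only yields a square‑root of this, so the exponent $1-1/s$ --- which originates from the critical order $s$ of $T_0$ at $0$ through $|\tilde B(\delta)|\asymp\delta^{1/s}$ and \eqref{ineq:Shen_(4.3)} --- has to be recovered by matching the binding scale to the geometry of $\tilde B$ and, where needed, by also exploiting the tail excursion. The second delicate point is keeping the tail factor $DT_0^{n-N}(x_N)$ bounded below \emph{uniformly in $\delta$}: this rests on the orbit returning to $\tilde B(\delta)$ at time $n$ (so no $\delta$‑dependent constant is lost) and on a separate elementary treatment of the borderline case of a tail of only one or two iterates, where $x_{n-1}$ lies near a preimage of $0$ bounded away from $0$.
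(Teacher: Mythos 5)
Your proposal takes a genuinely different route from the paper. The paper proves this proposition by a backward-contraction decomposition in the style of \cite{BLSvS}: it breaks the orbit at returns $n_1<\dots<n_m=n$ to $\tilde B(\delta_0)$, records the (essentially dyadic) depths $\rho_i=|T_0^{n_i+1}(x)-T_0(0)|$, applies the Koebe/backward-contraction estimate of Lemma~\ref{lemma:Shen_4.4} to each segment $T_0^{n_i+1}(x),\dots,T_0^{n_{i+1}}(x)$, and telescopes the resulting quotients of $\D(\rho_i)$'s; the factor $(\delta/\max\{|x-T_0(0)|,\delta\})^{1-1/s}$ comes out of that telescoping because $\D(\cdot)\asymp(\cdot)^{1-1/s}$. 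Neither the binding lemma~\ref{shen:lemma_2.5} nor Proposition~\ref{prop:Shen_4.1} appears in that argument. You instead shadow the critical orbit via binding and import Proposition~\ref{prop:Shen_4.1} -- this is the paper's own strategy, but one the paper reserves for the \emph{random} estimate in Lemma~\ref{Shen:prop_5.2}; your plan is in effect a deterministic specialization of that later argument rather than a variant of the paper's proof of this proposition.

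The far regime in your plan is sound -- it is exactly \cite[Lemma~2.1]{AS}, which is what makes the constant $\delta$-independent when the orbit re-enters $\tilde B(\delta)$ at time $n$. But the binding regime has a genuine gap, and it is precisely the one you flag as the main obstacle: \eqref{ineq:Shen_(4.3)} with $r$ in place of $\delta$ gives $DT_0^{M(r)+1}(T_0(0))\ge\big(\max\{|T_0^{M(r)+1}(0)|,r\}/r\big)^{1-\zeta}$, whose exponent is $1-\zeta$, not $1-1/s$, and which degenerates to $1$ when $|T_0^{M(r)+1}(0)|\le r$. The ingredient that converts this to $r^{-(1-1/s)}$ is the critical-order bound at the shadowed point, $DT_0(T_0^{M}(0))\lesssim \D(\delta_1)$ with $\delta_1=\max\{|T_0^M(0)|,\delta\}$ (this is \eqref{ineq:shen_5.6}), together with the identity $(\delta_1/r)^{1-1/s}/\D(\delta_1)=1/\D(r)$ -- exactly the computation carried out in Lemma~\ref{Shen:prop_5.2}. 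Without it, the chain $DT_0^{N}(x)\gtrsim DT_0^{N}(T_0(0))$ followed by \eqref{ineq:Shen_(4.3)} does not deliver $r^{-(1-1/s)}$. Your fallback via Lemma~\ref{lemma:shen_2.3} (``$DT_0^{n}(x)\asymp DT_0^{n}(T_0(0))\ge\lambda^n$'') does not save it either: when $n$ is small compared with $\log(1/\delta)$, $\lambda^n$ is far below the target $\kappa_0\,\delta^{-(1-1/s)}$. If you splice the $DT_0(T_0^{M}(0))\lesssim\D(\delta_1)$ step into your binding regime the route does close; as written, it stalls exactly where you said it would.
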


We have the following backwards contraction lemma, which is an adaption of Lemma 1 in \cite{BLSvS}:
\begin{lemma}\label{Shen:lemma_4.3}
	Assume that for sufficiently small $\delta >0$, we have a constant $\hat v(\delta)>1$, satisfying $\hat v(\delta) \to \infty$ and $\hat v(\delta) \delta \to 0$ as $\delta \to 0$. Then for every $n \in \mathbb Z_+$ and every component $W$ of $(T_0^n)^{-1}(\tilde B(\hat v(\delta) \delta))$, if $\textup{dist}(W, T_0(0) ) \le \delta$, then this implies $|W| < \delta$.

\end{lemma}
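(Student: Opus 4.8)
The plan is to argue by contradiction and play the exponential expansion coming from conditions (R1)--(R2) against the quantity $\D(\hat v(\delta)\delta)$, with distortion kept under control by the Koebe principle (available because of the negative Schwarzian derivative, condition (C3)). Fix $\delta>0$ small, let $W$ be a component of $(T_0^n)^{-1}(\tilde B(\hat v(\delta)\delta))$ with $\dist(W,T_0(0))\le\delta$, and suppose for contradiction that $|W|\ge\delta$. Write $c_1=T_0(0)$, $c_{k+1}=T_0^k(c_1)=T_0^{k+1}(0)$ for the critical orbit, $W_k=T_0^k(W)$, and fix $\xi\in W$ with $|\xi-c_1|\le\delta$. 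Since $W$ is a component of the preimage, each $T_0^k|_W$ ($0\le k\le n$) is a diffeomorphism onto $W_k$; in particular $0\notin W_k$ for $0\le k\le n-1$, while $W_n\subseteq\tilde B(\hat v(\delta)\delta)$.

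The heart of the argument is the lower bound
\[
|DT_0^n(\xi)|\ \ge\ \frac{c\,\lambda^{\,n}}{\D(\hat v(\delta)\delta)}
\]
for a uniform $c>0$. I would obtain it by decomposing the orbit $\xi,T_0\xi,\dots,T_0^{n-1}\xi$ into free segments — those lying outside a fixed neighbourhood $\tilde B(\delta_*)$ of $0$, along which Lemma~\ref{Shen:Prop_2.7i} gives genuine exponential growth — and binding segments following each close return, controlled by Lemma~\ref{shen:lemma_2.5} and Proposition~\ref{prop:Shen_4.1} (along a binding segment $\xi$ shadows the critical orbit, hence by (R1) still accumulates a factor $\asymp\lambda^{\text{length}}$); the net growth over the $n$ steps is then at least $c\lambda^n$, and the final entrance $T_0^n(\xi)\in\tilde B(\hat v(\delta)\delta)$ contributes the extra boost $\D(\hat v(\delta)\delta)^{-1}$ by Proposition~\ref{prop:Shen_4.2}, while intermediate returns into $\tilde B(\delta_*)$ only help, each contributing $\D(\delta_*)^{-1}>1$. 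Combining this with the bounded distortion of $T_0^n|_W$ (Koebe, after enlarging $W$ to the component of $(T_0^n)^{-1}(\tilde B(2\hat v(\delta)\delta))$ containing it), with $|W_n|\le|\tilde B(\hat v(\delta)\delta)|$, and with the identity $|\tilde B(r)|\,\D(r)=|B_r(0)|=2r$, gives
\[
\delta\ \le\ |W|\ \le\ C\,\frac{|W_n|}{|DT_0^n(\xi)|}\ \le\ \frac{C}{c}\,|\tilde B(\hat v(\delta)\delta)|\,\D(\hat v(\delta)\delta)\,\lambda^{-n}\ =\ \frac{2C}{c}\,\hat v(\delta)\,\delta\,\lambda^{-n},
\]
so $\lambda^{\,n}\le (2C/c)\,\hat v(\delta)$, i.e. $n\le n_0(\delta):=\log_\lambda\!\big((2C/c)\hat v(\delta)\big)$.

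It remains to rule out $n\le n_0(\delta)$, and here I would show that no component $W$ as in the statement can exist at all. Because $\hat v(\delta)\to\infty$ and $\hat v(\delta)\delta\to0$, $n_0(\delta)$ grows at most like a fixed multiple of $\log(1/\delta)$; in particular $n_0(\delta)$ is within the $\delta$-binding period of $(c_1,\delta)$ (Lemma~\ref{shen:lemma_2.5}, since $A(\underline 0,c_1,N)\sum_{j\le N}|DT_0^j(c_1)|$ is only exponential in $N$), so by the defining inequality \eqref{def:bind1} of a binding period, $|T_0^n(\xi)|\ge\tfrac12|c_{n+1}|$ for all $n\le n_0(\delta)$, while (R2) gives $|c_{n+1}|>e^{-\alpha(n+1)}\ge e^{-\alpha n_0(\delta)}$. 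Taking the constant $\alpha$ of (R2) small (as Rovella's construction permits) this positive power of $\hat v(\delta)$ dominates $(\hat v(\delta)\delta)^{1/s}$, i.e. the size of the part of $\tilde B(\hat v(\delta)\delta)$ near the critical point, once $\delta$ is small; hence $T_0^n(\xi)$ cannot lie in that central part, and if instead $W_n$ meets a non-central part (of size $\asymp\hat v(\delta)\delta$) the estimate of the preceding paragraph already forces $|W|<\delta$. Either way we reach the desired contradiction.

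The main obstacle is the estimate of the second paragraph: one must make the free/binding decomposition of the orbit of $\xi$ quantitative and \emph{uniform in $n$}, using Lemma~\ref{shen:lemma_2.5}, Proposition~\ref{prop:Shen_4.1} and the distortion Lemma~\ref{lemma:shen_2.3} carefully, so that the exponential factor $\lambda^n$ really survives across every near-critical return; and then one has to check that the threshold $n_0(\delta)$ this produces stays inside the range on which the binding period of $(c_1,\delta)$ and condition (R2) keep the critical orbit far from $0$ — which is precisely the balance that the two hypotheses $\hat v(\delta)\to\infty$, $\hat v(\delta)\delta\to0$ are designed to make possible, the interesting case being when $\hat v(\delta)$ grows polynomially in $1/\delta$.
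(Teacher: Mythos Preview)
Your argument is circular. You invoke Proposition~\ref{prop:Shen_4.1} and Proposition~\ref{prop:Shen_4.2} to obtain the derivative lower bound $|DT_0^n(\xi)|\ge c\lambda^n/\D(\hat v(\delta)\delta)$, but in the paper both of those propositions are proved \emph{after} Lemma~\ref{Shen:lemma_4.3}, and their proofs go through Lemma~\ref{lemma:Shen_4.4}, which in turn rests directly on the backwards contraction statement you are trying to establish. So the expansion estimate you take as input is not available at this point in the logical order; the whole free/binding decomposition for $\xi$ collapses without it. A secondary problem: in your final paragraph you write ``taking the constant $\alpha$ of (R2) small (as Rovella's construction permits)'', but $\alpha$ is fixed once the Rovella parameter $a\in E$ is fixed---it is not a free parameter you can shrink to make $e^{-\alpha n_0(\delta)}$ dominate $(\hat v(\delta)\delta)^{1/s}$.

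The paper avoids any appeal to quantitative expansion along the random orbit of $\xi$. Instead it first proves a sublemma (Sublemma~\ref{prop:BLSvS_Lemma_1}): if the component $W$ of $(T_0^n)^{-1}(\tilde B(\hat v(\delta)\delta))$ actually contains the critical point $0$, then $W\subset\tilde B(\delta)$. This is done by a pull-back argument with nested chains $G_j\supset H_j$ and the one-sided Koebe principle applied at the last time $G_{n_1}$ contains $0$, using only (R1), (C2) and (C3). The lemma itself is then proved by induction on $n$: one tracks the chain $G_j$ backwards, and either some $G_{n_1}$ contains $0$ (invoke the sublemma, then the induction hypothesis for the remaining $n_1$ steps), or no $G_j$ contains $0$, in which case $T_0^{n-1}:G_1\to G_n$ is a diffeomorphism and Koebe plus $T_0(0)\notin G_1$ forces $|W_1|<\delta$. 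No derivative growth estimate of the form $\lambda^n$ is needed anywhere.
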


We will prove this proposition using the following sublemmas:

\begin{sublemma}\label{prop:BLSvS_Lemma_1}
	Assume that for sufficiently small $\delta >0$, we have a constant $\hat v(\delta)>1$, satisfying $\hat v(\delta) \to \infty$ and $\hat v(\delta) \delta \to 0$ as $\delta \to 0$. Then for all $n \in \mathbb Z_+$ such that $T_0^n(0) \in \tilde B (\delta)$, if $W$ is the component of $(T_0^n)^{-1}(\tilde B (\hat v(\delta) \delta))$ containing $0$, then $W \subset \tilde B( \delta)$.

\begin{proof}
	First, to show that there exists such a $\hat v(\delta)$ to guarantee the statement of this sublemma is not empty, just consider the function $\hat v(\delta) = \delta^{-1/2}$ for example. Now, let us consider the two sets of intervals $\big\{ G_j \big\}_{j=0}^n$ and $\big\{ H_j \big\}_{j=0}^n$ satisfying $G_n = \tilde B(\hat v(\delta)\delta ) \supset H_n = \tilde B(\delta)$,  $G_0 \supset H_0 = W$, and for each $j =  0, 1, \dots, n-1$, $G_j$ is a connected component of $T_0^{-1}(G_{j+1})$ and $H_j$ is a connected component of $T_0^{-1}(H_{j+1})$. 

Let $n_1 < n$ denote the largest time such that $G_{n_1}$ contains the point 0, and let $H_{n_1 + 1}'$ denote the convex hull of $H_{n_1 + 1} \cup \big\{ T_0^{n_1}(0)  \big\}$, i.e. since $T_0^{n_1+1}(0) \in G_{n_1 + 1}$, either $H_{n_1 + 1}' = H_{n_1 + 1}$ or $H_{n_1 + 1}'$ is the smallest interval containing $H_{n_1 + 1}$ and $\big\{ T_0^{n_1}(0)  \big\}$. Clearly, this implies $H_{n_1 + 1}' \subset G_{n_1 + 1}$.

Now, we want to show that for sufficiently small $\delta >0$, we have $H_{n_1} \subset \tilde B(\hat v (\delta) \delta)$. To this end, first we notice that $T_0^{n - n_1 - 1}: G_{n_1 + 1} \to G_n$ is a diffeomorphism for sufficiently small $|G_n|$, and thus by applying the one-sided Koebe principle to the connected components of $G_{n_1 + 1 } \setminus \big\{ T_0(0) \big\} $ that also intersect $H_{n_1+1}'$, we know that there exists a uniform constant $C >0$ such that,for every $x \in H_{n_1 + 1}'$, we have
\begin{align}
	DT_0^{n - n_1 -1}(x) \ge C |DT_0^{n - n_1 -1}(T_0(x))|. \label{ineq:Koebe}
\end{align}
Furthermore, we by property H1. (a), we know that
$$
	DT_0^{n - n_1} (T_0(x)) \ge \lambda.
$$
Furthermore, by using the order of singularity condition, we have that
\begin{align*}
	DT_0(T_0^{n - n_1}(0)) \le |(T_0^{n - n_1}(0)|^{s -1}  \le K_2 \frac{|T_0(G_n)|}{|G_n|}
\end{align*}
Combining these all together, we have
$$
	DT_0^{n - n_1 - 1}(T(0)) = \frac{DT_0^{n - n_1} (T_0(x))}{DT_0(T_0^{n - n_1}(0))} \ge \lambda K_2^{-1} \frac{|G_n|}{|T_0(G_n)|}.
$$
Combining this with inequality (\ref{ineq:Koebe}), we obtain
$$
	\frac{|G_n|}{|H_{n_1 + 1}'|} \ge C K_1^{-1} \lambda \frac{|G_n|}{|T_0(G_n)|},
$$
and thus we have
$$
	|H_{n_1 + 1}'| \le   \delta.
$$

When $n_1=0$, then this completes the proof. If this is not the case, we just repeat the above process until we obtain the desired result until we reach $0$.
\end{proof}
	
\end{sublemma}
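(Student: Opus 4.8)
The plan is a pull-back argument along the orbit of the critical point, adapting Lemma~1 of \cite{BLSvS}. Write $x_j=T_0^j(0)$, so that $x_n\in\tilde B(\delta)$ by hypothesis. I pull the two concentric sets $\tilde B(\delta)\subset\tilde B(\hat v(\delta)\delta)$ back along this orbit: put $G_n=\tilde B(\hat v(\delta)\delta)$ and $H_n=\tilde B(\delta)$, and for $j=n-1,\dots,0$ let $G_j$ (resp.\ $H_j$) be the connected component of $T_0^{-1}(G_{j+1})$ (resp.\ of $T_0^{-1}(H_{j+1})$) that contains $x_j$; then $W=G_0$ and $H_j\subseteq G_j$ for every $j$. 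Since by definition $\tilde B(\delta)=T_0^{-1}(B_\delta(1))\cup T_0^{-1}(B_\delta(-1))$, the assertion $W\subset\tilde B(\delta)$ is equivalent to $T_0(W\setminus\{0\})\subset B_\delta(1)\cup B_\delta(-1)$, i.e.\ to the statement that the first pull-back $G_1$, which is an interval around the critical value $T_0(0)$, has length at most $\delta$ on the side facing $0$. I will also use that, by the order-of-singularity estimate in (C2), $T_0$ carries the $\hat v(\delta)\delta$-neighbourhood of $\pm 1$ onto a neighbourhood of $0$ of radius $\asymp(\hat v(\delta)\delta)^{1/s}$, which is the scale at which the pull-backs $G_j$ sit whenever they reach the singularity.

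The only obstruction is that the chain $(G_j)$ may repeatedly hit the singularity $0$, where $T_0^n$ is not a diffeomorphism; I isolate the last occurrence. Let $n_1<n$ be the largest index with $0\in\overline{G_{n_1}}$ (this set is nonempty because $0\in G_0$). On the block $[n_1+1,n]$ the map $T_0^{n-n_1-1}\colon G_{n_1+1}\to G_n$ is a diffeomorphism, and since $T_0$ — hence every iterate of $T_0$ on an interval of monotonicity — has negative Schwarzian derivative by (C3), I may invoke the one-sided Koebe principle on the component of $G_{n_1+1}\setminus\{T_0(0)\}$ that meets the convex hull $H'_{n_1+1}$ of $H_{n_1+1}\cup\{x_{n_1+1}\}$. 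This gives a uniform $C>0$ with $DT_0^{n-n_1-1}(y)\ge C\,DT_0^{n-n_1-1}(x_{n_1+1})$ for all $y\in H'_{n_1+1}$, and then, by the mean value theorem and $T_0^{n-n_1-1}(H'_{n_1+1})\subseteq G_n$, one gets $|H'_{n_1+1}|\le |G_n|/\big(C\,DT_0^{n-n_1-1}(x_{n_1+1})\big)$. Writing $x_{n_1+1}=T_0^{n_1}(T_0(0))$, the expansion $DT_0^m(T_0(0))\gtrsim\lambda^m$ along the critical orbit (R1), the slow-recurrence bound (R2) — which keeps $x_{n_1+1}$ a definite distance from $0$ and forces $n-n_1$ to be large whenever $G_{n_1}$ reaches the singularity — and the deterministic expansion estimates (the $\omega=\underline{0}$ cases of Lemmas~\ref{shen:lemma_2.5} and \ref{Shen:Prop_2.7i}) together yield $DT_0^{n-n_1-1}(x_{n_1+1})\ge |G_n|/(C\delta)$, hence $|H'_{n_1+1}|\le\delta$.

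It remains to close the argument by induction on $n$. If $n_1=0$, then $0\in\overline{G_0}$ and $T_0$ maps $W=G_0$ into $G_1$, which by the previous paragraph is contained in a $\delta$-neighbourhood of the critical value $T_0(0)$; this is exactly $T_0(W\setminus\{0\})\subset B_\delta(1)\cup B_\delta(-1)$, i.e.\ $W\subset\tilde B(\delta)$. If $n_1>0$, then $0\in\overline{G_{n_1}}$ and the interval $H'_{n_1+1}$, of length $\le\delta$, plays exactly the role that $\tilde B(\hat v(\delta)\delta)$ played at time $n$, so I restart the whole construction with the shorter orbit $0,x_1,\dots,x_{n_1}$; since $n_1<n$ strictly, the procedure terminates, necessarily in the case $n_1=0$. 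I expect the only genuine difficulty to be the distortion bookkeeping across the singularity: one must keep the Koebe constant $C$ and all multiplicative errors uniform in $n$ and $\delta$, and check that the combination of the Koebe estimate with the (R1)--(R2) expansion lands at the sharp scale $\delta$ (rather than $O(\delta)$) — this is precisely what dictates the use of the one-sided rather than the two-sided Koebe principle and the particular choice of $H'_{n_1+1}$ as a one-sided convex hull.
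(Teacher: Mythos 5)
Your proposal follows the same strategy as the paper's proof: pull back the pair $\tilde B(\delta)\subset\tilde B(\hat v(\delta)\delta)$ along the critical orbit, isolate the last time $n_1$ the pull-back chain hits $0$, apply the one-sided Koebe principle to the block $T_0^{n-n_1-1}\colon G_{n_1+1}\to G_n$, combine with the order-of-singularity bound and the expansion along the critical orbit to get $|H'_{n_1+1}|\le\delta$, and recurse. The two proofs coincide in structure; you correctly take the convex hull around $x_{n_1+1}=T_0^{n_1+1}(0)$ (the paper writes $T_0^{n_1}(0)$ there, which appears to be a typo) and you invoke the slow-recurrence condition (R2) and the deterministic cases of Lemmas~\ref{shen:lemma_2.5} and~\ref{Shen:Prop_2.7i} where the paper argues directly from (R1) and the singularity exponent — a slightly heavier toolkit for the same expansion estimate, but not a different route.
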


\begin{proof}[Proof of Lemma \ref{Shen:lemma_4.3}]
Notice that, since we are assuming $\textup{dist}(W, T_0(0) ) \le \delta$, for sufficiently small $\delta$ and by the order of singularity condition, $W$ will be close to the boundary of $I$ and thus will only be experiencing expansion. Therefore, if we have $|T_0(W)| < \delta$, then clearly also $|W| < \delta.$ We therefore focus on the image of $W$.

Let $x \in W$, and let $W_k$ be the component of $(T_0^{n - k})^{-1}(\tilde B (\hat v(\delta)\delta))$ containing $T_0^k(x)$. To complete the proof, we simply need to show that $|W_1| < \delta$. To do this, we use proof by induction. For the case that $n = 1$, then plugging this and $k=1$ into the above preimage we simply have $|W_1| =|\tilde B (\hat v(\delta)\delta)| < \delta $. 

Now, assume that we have proved up to iterate $n_0$ that the desired result holds for all $n < n_0$. It is then a question of proving the result also holds for $n = n_0$. Let us again consider a set of intervals $\big\{ G_j  \big\}_{j = 0}^{n}$ satisfying $G_n = \tilde B(\hat v(\delta) \delta)$, $x \in G_0$, and each $G_j$ is a connected component of $T_0^{-1}(G_{j+1})$. 

For this set, there are two possible cases, the first being that there exists some $0 \le n_1 < n$ such that $0 \in G_{n_1}$. Then from Sublemma \ref{prop:BLSvS_Lemma_1}, this therefore implies $G_{n_1} \subset \tilde B(\delta)$. If $n_1 = 0$, then $W_0 \subset G_0 \subset \tilde B(\delta)$, thus proving the desired result. If $n_1 >0$, then we simply repeat this process as we did in the proof of the previous lemma, giving us the desired result.

Alternatively, the second possibility is that for any $0 \le k < n$, we have $ 0 \notin G_k$. But this immediately implies that $T_0^{n - 1}: G_1 \to G_n$ is a diffeomorphism. Again, using the Koebe principle, we have that $W_1 \subset G_1$, and thus since $0 \notin G_0$, $T_0(0) \notin G_1$ and $T_0(x) \in B(T_0(0), \delta)$, we therefore have that $|W_1| < \delta$.

\end{proof}

\begin{lemma}\label{lemma:Shen_4.4}
For all sufficiently small $\delta >0$, there exists a constant $v(\delta) > 1$, satisfying $v(\delta) \to \infty$ and $v(\delta)\delta \to 0$ as $\delta \to 0$, such that for all $x \in (- \delta, \delta)$ satisfying $x , T_0(x), \dots , T_0^{n-1}(x) \notin \tilde B(\delta)$ and $T_0^n(x) \in \tilde B(2 \delta)$ for some $n \in \mathbb Z_+$, then we have
	\begin{align}
		DT_0^n(x) \ge \frac{v(\delta)}{\mathcal D(\delta)}.
	\end{align}

\begin{proof}
	Let us take $\delta >0$ small enough so that $\hat v(\delta) > 4$. Since we have already assumed that $|x| \le \delta$, then there exists a a component $W$ of $(T_0^{n})^{-1}(\tilde B(\hat v (\delta) \delta))$ such that $x \in W$, and therefore by the above lemma, we know that $|W|<\delta$. Furthermore, we know that $T_0^n(x)$ is close to the centre of $\tilde B(\hat v (\delta) \delta)$, we therefore know that by the Koebe principle that there exists some constant $C>0$ such that
\begin{align*}
	DT_0^n(x) &\ge C \frac{|\tilde B(\hat v (\delta) \delta)|}{\delta}
	 \ge  C \frac{(\hat v (\delta) \delta)^{1/s}}{\delta} \\
	& \ge C \frac{\hat v (\delta)^{1/s}|\tilde B( \delta)|}{|B_\delta(0)|} 
 	 = C \frac{\hat v(\delta)^{1/s}}{\mathcal D (\delta)}.
\end{align*}
Setting $v(\delta) =C \hat v(\delta)^{1/s}$ completes the proof.
\end{proof}
\end{lemma}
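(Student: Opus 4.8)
The plan is to deduce the estimate from the backward-contraction Lemma \ref{Shen:lemma_4.3} together with the Koebe distortion principle, in the same spirit as the proofs of that lemma and of Sublemma \ref{prop:BLSvS_Lemma_1}. First fix an auxiliary magnification function $\hat v(\delta)>1$ with $\hat v(\delta)\to\infty$ and $\hat v(\delta)\delta\to 0$ as $\delta\to 0$ (for instance $\hat v(\delta)=\delta^{-1/2}$), chosen small enough that $\hat v(\delta)>4$ and that Lemma \ref{Shen:lemma_4.3} applies.

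Since $T_0^n(x)\in\tilde B(2\delta)\subset\tilde B(\hat v(\delta)\delta)$, there is a connected component $W$ of $(T_0^n)^{-1}(\tilde B(\hat v(\delta)\delta))$ containing $x$, and $T_0^n$ maps $W$ onto $\tilde B(\hat v(\delta)\delta)$; the no-return hypothesis $x,\dots,T_0^{n-1}(x)\notin\tilde B(\delta)$ keeps the orbit of $W$ away from the singularity, so $T_0^n|_W$ is a diffeomorphism with bounded distortion. The crucial point is the bound $|W|<\delta$. Since $x\in(-\delta,\delta)$ sits near the critical point, the order-of-singularity condition (C2) gives $|T_0(x)-T_0(0)|\le C\delta^s\le\delta$ for $\delta$ small, so $T_0(x)$ lies within $\delta$ of the critical value $T_0(0)$; hence the component of $(T_0^{n-1})^{-1}(\tilde B(\hat v(\delta)\delta))$ containing $T_0(x)$ satisfies the hypothesis of Lemma \ref{Shen:lemma_4.3}, which bounds its length by $\delta$. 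As $T_0$ is strongly expanding near the boundary points $\pm1=T_0(0)$, this forces $|W|<\delta$ as well.

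Once $|W|<\delta$ is known, the Koebe principle applied along the non-returning pullback chain from $W$ to $\tilde B(\hat v(\delta)\delta)$ produces a uniform constant $C>0$ with
\[
DT_0^n(x)\;\ge\;C\,\frac{|\tilde B(\hat v(\delta)\delta)|}{|W|}\;\ge\;C\,\frac{|\tilde B(\hat v(\delta)\delta)|}{\delta}.
\]
Non-flatness of the critical point gives $|\tilde B(t)|\asymp t^{1/s}$, hence $|\tilde B(\hat v(\delta)\delta)|\gtrsim\hat v(\delta)^{1/s}|\tilde B(\delta)|$; combining this with $|\tilde B(\delta)|/\delta\asymp 1/\mathcal D(\delta)$, which follows from $\mathcal D(\delta)=|B_\delta(0)|/|\tilde B(\delta)|$ and $|B_\delta(0)|=2\delta$, we obtain $DT_0^n(x)\gtrsim\hat v(\delta)^{1/s}/\mathcal D(\delta)$, and we set $v(\delta)=c\,\hat v(\delta)^{1/s}$. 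This is $>1$ for small $\delta$, tends to $\infty$, and with $\hat v(\delta)=\delta^{-1/2}$ satisfies $v(\delta)\delta\asymp\delta^{1-1/(2s)}\to0$ because $s>1$. I expect the middle step to be the crux: checking cleanly that the pullback component really does lie within $\delta$ of the critical value so that Lemma \ref{Shen:lemma_4.3} applies — i.e. correctly separating the passage through the critical point (from $x$ to $T_0(x)$) from the expanding chain that follows — and verifying that the distortion stays bounded along the whole pullback; the genuinely hard analysis has already been carried out in Lemma \ref{Shen:lemma_4.3} and Sublemma \ref{prop:BLSvS_Lemma_1}.
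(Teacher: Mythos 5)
Your proof is correct and follows essentially the same route as the paper's: backward contraction (Lemma~\ref{Shen:lemma_4.3}) to bound the pullback component by $\delta$, then the Koebe principle plus the order-of-singularity estimate $|\tilde B(t)|\asymp t^{1/s}$ to convert $|\tilde B(\hat v(\delta)\delta)|/\delta$ into $\hat v(\delta)^{1/s}/\mathcal D(\delta)$. In fact you are slightly more careful than the paper at the one delicate step: the paper invokes Lemma~\ref{Shen:lemma_4.3} directly on the component $W$ containing $x\in(-\delta,\delta)$, whereas that lemma's hypothesis asks for proximity to the critical value $T_0(0)=\pm1$; your detour through $T_0(x)$ and the component of $(T_0^{n-1})^{-1}(\tilde B(\hat v(\delta)\delta))$ is the cleaner way to make the hypothesis hold.
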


\begin{lemma}\label{lemma:Shen_4.5}
	For all sufficiently small $\delta > 0$, there is a constant $ \theta(\delta) > 0$, satisfying $\theta(\delta) \to 0$ as $\delta \to 0$, such that for all $n \in \mathbb Z_+$ and $x \in I$ with $x , T_0(x), \dots , T_0^{n-1}(x) \notin \tilde B(\delta)$ and $T_0^n(x) \in \tilde B(2 \delta)$, we have
	$$
		A( \underline 0 , x, n) \le \theta(\delta) \frac{DT_0^n(x)}{|\tilde B(\delta)|}.
	$$ 

\begin{proof}
	First, let us start with some fixed $\delta > 0$. We already know from  [\cite{AS}, Lemma 2.1] that for each $x \in I$ such $x , T_0(x), \dots , T_0^{n-1}(x) \notin \tilde B(\delta)$ and $T_0^n(x) \in \tilde B(2 \delta)$, we have $DT_0^n(x) \ge \lambda^n$. Thus, we know there exists some minimal constant $\theta(\delta)$ such that the above inequality holds. To show the same inequality holds for smaller values of $\delta$, it remains to show that there exist constants $\tau(\delta)$, satisfying $\tau(\delta) \to 0$ as $\delta \to 0$, and $\kappa<1$, where
$$
	\kappa^2 = \sup_{\text{small } \delta > 0} \frac{\tilde B (\delta /2)}{\tilde B(\delta)},
$$
such that we have
	\begin{align*}
		\theta(\delta/2) \le \kappa (\theta(\delta) + \tau(\delta)).
	\end{align*}

Now, consider an $x \in I$ whose orbit up to time $n$ satisfies  $x , T_0(x), \dots , T_0^{n-1}(x) \notin \tilde B(\delta /2)$ and $T_0^n(x) \in \tilde B( \delta)$. Let $0 \le \nu_1 < \nu_2 < \dots < \nu_m = n$ denote all of the iterates where $T_0^{\nu_i}(x) \in \tilde B(\delta) $, and let $d_i = |T_0^{\nu_i}(x)|$. Clearly, for every $i = 1, 2, \dots m-1,$ we have $d_i \ge \delta/2$, and furthermore by Lemma \ref{shen:lemma_2.5}, we have $DT_0(T_0^{\nu_i}(x)) \ge \mathcal D(d_i)$. Thus, by Lemma \ref{lemma:Shen_4.4}, we have
	\begin{align*}
		DT_0^{\nu_{i+1} - \nu_i}(T_0^{\eta_i}(x)) \ge \frac{v (\delta) \mathcal D(d_i)}{\mathcal D(\delta)} = \frac{v(\delta) d_i}{\delta} \frac{|\tilde B(\delta)|}{|\tilde B(d_i)|} \ge \frac{v(\delta) }{2} \frac{|\tilde B(\delta)|}{|\tilde B(d_i)|}.
	\end{align*}
Using the order of singularity condition in the same way as we did in the previous lemma, the above implies that
	\begin{align*}
		\frac{|DT_0^{\nu_i}(x)|}{|\tilde B(\delta)|} \le \frac{|DT_0^{\nu_i}(x)|}{|\tilde B(d_i)|} \le \frac{2}{v(\delta)}\frac{DT_0^{\nu_{i+1}}(x)}{\tilde B(\delta)},
	\end{align*}
and by iterating this we obtain
	\begin{align}
		\frac{DT_0^{\nu_i}(x)}{|T_0^{\eta_i}|} \le C \frac{DT_0^{\nu_i}(x)}{|\tilde B (d_i)|} \le C \Big(\frac{2}{v(\delta)}  \Big)^{m-i}\frac{DT_0^{n}(x)}{|\tilde B(\delta)|}. \label{ineq:exp_dist}
	\end{align}

Now, since for each $i = 1, \dots , m-1$ we have $T_0^{\nu_i + 1}(x), \dots , T_0^{\nu_{i + 1}-1}(x) \notin \tilde B(\delta)$ and $T_0^{\nu_{i + 1}}(x) \in \tilde B(2 \delta)$, we can use the estimate at the beginning of the proof to obtain
$$
	\sum_{j = \nu_i + 1}^{\nu_{i+1}-1}\frac{DT_0^j(x)}{|T_0^j(x)|} \le \theta(\delta) \frac{DT_0^{\nu_{i+1}}(x)}{|\tilde B(\delta)|}.
$$
Furthermore, in the case that $\nu_1 \neq 0$, the same applies:
$$
	\sum_{j = 0}^{\nu_{1}-1}\frac{DT_0^j(x)}{|T_0^j(x)|} \le \theta(\delta) \frac{DT_0^{\nu_{1}}(x)}{|\tilde B(\delta)|}.
$$
Thus, taking the sum of these, we clearly have
\begin{align*}
	A( \underline{0}, x, n) &= \sum_{j = 0}^{n-1} \frac{DT_\omega^j(x)}{|T_\omega^j(x)|} \\
	& \le \sum_{i = 0}^{m-1} \sum_{j = \nu_i + 1}^{\nu_{i+1}-1} \frac{DT_\omega^j(x)}{|T_\omega^j(x)|} 
	\le  \sum_{i = 0}^{m-1} \theta(\delta) \frac{DT_0^{\nu_{i+1}}(x)}{|\tilde B(\delta)|}  \\
	&\le (1 + \theta(\delta))   \sum_{i = 1}^{m-1} \frac{DT_0^{\nu_{i}}(x)}{|\tilde B(\delta)|} + \theta(\delta) \frac{DT_0^{\nu_{m}}(x)}{|\tilde B(\delta)|},
\end{align*}
and thus combining this with inequality (\ref{ineq:exp_dist}), we obtain
$$
	A( \underline{0}, x, n) \le \big( \theta(\delta) + \tau(\delta) (1 + \theta(\delta) )  \big)\theta(\delta) \frac{DT_0^{\nu_{m}}(x)}{|\tilde B(\delta)|},
$$
where we have $\tau = 2 C / (v(\delta)(1 - 2/v(\delta)) )$. Now, notice that for sufficiently small $\delta > 0$, we have $1 + \tau(\delta) < \kappa^{-1}$, and thus, using the fact that $|\tilde B(\delta/2)| \le \kappa^2 |\tilde B(\delta )|$ by definition, we therefore have
$$
	\kappa (\theta(\delta) + \tau(\delta)) \frac{DT_0^{\nu_{m}}(x)}{|\tilde B(\delta)|},
$$
which completes the proof.
\end{proof}

\end{lemma}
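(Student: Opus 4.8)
The plan is to run a dyadic bootstrap in the scale $\delta$. Fix once and for all a small $\delta_0>0$ so that all of the earlier deterministic lemmas apply on scales $\le\delta_0$, and let $\theta(\delta)$ denote the smallest admissible constant at scale $\delta$ (its finiteness at $\delta_0$ is the base case below). Everything then reduces to a self-improvement estimate
$$\theta(\delta/2)\ \le\ \kappa\bigl(\theta(\delta)+\tau(\delta)\bigr),$$
where $\kappa$ is a fixed number with $\kappa^2\ge\sup_{0<\delta\le\delta_0}|\tilde B(\delta/2)|/|\tilde B(\delta)|$; since $|\tilde B(\delta)|\asymp\delta^{1/s}$ with $1/s<1$ this ratio tends to $2^{-1/s}<1$ as $\delta\to0$, so for $\delta_0$ small the supremum, hence $\kappa$, can be taken $<1$, while $\tau(\delta)\to0$ as $\delta\to0$. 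Granting this, iterating from $\delta_0$ gives $\theta(\delta_0 2^{-k})\le\kappa^{k}\theta(\delta_0)+\sum_{0\le j<k}\kappa^{\,k-j}\tau(\delta_0 2^{-j})$, which tends to $0$ because $\kappa<1$ and $\tau(\delta_0 2^{-j})\to0$; a routine interpolation over a dyadic block (avoiding $\tilde B(\delta)$ forces avoiding every $\tilde B(\delta')$ with $\delta'\le\delta$, while $|\tilde B(\cdot)|$ is comparable across the block) then yields $\theta(\delta)\to0$ for all small $\delta$.

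\emph{Base case.} Suppose the orbit of $x$ avoids $\tilde B(\delta_0)$ at times $0,\dots,n-1$ and $T_0^n(x)\in\tilde B(2\delta_0)$. The Alves--Soufi expansion estimate (condition H2, \cite[Lemma~2.1]{AS}) gives $DT_0^{\,n-i}(T_0^i(x))\ge c(\delta_0)\lambda^{\,n-i}$, hence $DT_0^i(x)\le DT_0^n(x)/\bigl(c(\delta_0)\lambda^{\,n-i}\bigr)$; moreover, since $\tilde B(\delta_0)$ contains a punctured $\asymp\delta_0^{1/s}$-neighbourhood of the critical point, each $T_0^i(x)$ lies at distance $\gtrsim\delta_0^{1/s}\asymp|\tilde B(\delta_0)|$ from $0$, so $|T_0^i(x)|^{-1}\lesssim|\tilde B(\delta_0)|^{-1}$. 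Summing the geometric series in $i$ bounds $A(\underline 0,x,n)$ by $\theta(\delta_0)\,DT_0^n(x)/|\tilde B(\delta_0)|$ for some finite $\theta(\delta_0)$.

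\emph{Inductive step.} Let $x$ have orbit avoiding the smaller neighbourhood $\tilde B(\delta/2)$ at times $0,\dots,n-1$ with $T_0^n(x)\in\tilde B(\delta)$, and list all visit times $0\le\nu_1<\dots<\nu_m=n$ to the larger neighbourhood $\tilde B(\delta)$; put $d_i=|T_0^{\nu_i}(x)|$, and note $d_i\ge\delta/2$ since $T_0^{\nu_i}(x)\notin\tilde B(\delta/2)$. Split $A(\underline 0,x,n)$ into the ``excursion'' sums over the blocks $(\nu_i,\nu_{i+1})$ --- each such block is an orbit avoiding $\tilde B(\delta)$ and ending in $\tilde B(\delta)\subset\tilde B(2\delta)$, so the inductive hypothesis at scale $\delta$ applies and contributes $\le\theta(\delta)\sum_i DT_0^{\nu_{i+1}}(x)/|\tilde B(\delta)|$ --- plus the ``return'' terms $DT_0^{\nu_i}(x)/d_i$. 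Both are tamed by one distortion-along-excursions estimate: combining the lower bound $DT_0(T_0^{\nu_i}(x))\ge\D(d_i)$ coming from \eqref{delta_star}, the backward-contraction/Koebe expansion of Lemma~\ref{lemma:Shen_4.4} applied on each excursion, and the identity $\D(d_i)/\D(\delta)=d_i|\tilde B(\delta)|/(\delta|\tilde B(d_i)|)$, one obtains $DT_0^{\nu_{i+1}-\nu_i}(T_0^{\nu_i}(x))\ge\tfrac{v(\delta)}{2}\,|\tilde B(\delta)|/|\tilde B(d_i)|$ with $v(\delta)\to\infty$; iterating this from $i=m$ downward gives $DT_0^{\nu_i}(x)/|\tilde B(d_i)|\le C\,(2/v(\delta))^{\,m-i}\,DT_0^n(x)/|\tilde B(\delta)|$. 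Since $d_i\ge\delta/2$, the return terms are $\lesssim DT_0^{\nu_i}(x)/|\tilde B(d_i)|$, so summing the geometric series in $m-i$ bounds them by $\tau(\delta)\,DT_0^n(x)/|\tilde B(\delta)|$ with $\tau(\delta)=O(1/v(\delta))\to0$; feeding the same estimate into the excursion sum makes it $\le\theta(\delta)(1+\tau(\delta))\,DT_0^n(x)/|\tilde B(\delta)|$. Adding and using $1+\tau(\delta)<\kappa^{-1}$ and $|\tilde B(\delta/2)|\le\kappa^2|\tilde B(\delta)|$ for small $\delta$ yields $\theta(\delta/2)\le\kappa\bigl(\theta(\delta)+\tau(\delta)\bigr)$, closing the induction.

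The step I expect to be the main obstacle is precisely this distortion-along-excursions estimate, in particular the \emph{geometric} factor $(2/v(\delta))^{\,m-i}$: one has to chain the single-excursion expansion of Lemma~\ref{lemma:Shen_4.4} through all $m$ returns without incurring an $m$-dependent loss, keep the three comparable quantities $d_i$, $|\tilde B(d_i)|$, $\D(d_i)$ consistently related by the order-of-singularity condition, and separately dispose of the easier case in which $T_0^{\nu_i}(x)$ lies in a component of $\tilde B(\delta)$ away from the critical point (there $DT_0$ and $|T_0^{\nu_i}(x)|^{-1}$ are both bounded and no singularity analysis is needed). Once that estimate is secured, the remaining bootstrap and geometric-series arithmetic are routine.
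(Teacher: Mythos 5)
Your proposal follows the same route as the paper's proof: the dyadic bootstrap inequality $\theta(\delta/2)\le\kappa(\theta(\delta)+\tau(\delta))$, the decomposition of the orbit via return times $\nu_1<\dots<\nu_m$ to $\tilde B(\delta)$, the single-excursion expansion from Lemma~\ref{lemma:Shen_4.4} combined with $DT_0(T_0^{\nu_i}(x))\ge\D(d_i)$ to chain the geometric factor $(2/v(\delta))^{m-i}$, and the final recombination that absorbs the return-depth terms into $\tau(\delta)=O(1/v(\delta))$. The only difference is that you spell out the base case at $\delta_0$ (geometric summation of $DT_0^i(x)\le DT_0^n(x)/(c(\delta_0)\lambda^{n-i})$ together with $|T_0^i(x)|\gtrsim|\tilde B(\delta_0)|$), which the paper leaves implicit by merely invoking~\cite[Lemma~2.1]{AS} and asserting that a finite minimal $\theta(\delta)$ exists.
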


Now, using the above lemmas we complete the proof for Proposition \ref{prop:Shen_4.1}:

\begin{proof}[Proof of Proposition \ref{prop:Shen_4.1}]
For our fixed $L>1$, notice that for every sufficiently small $\delta >0$ there will exist some $L(\delta) > L $ satisfying $\lim_{\delta \to 0 } L(\delta) = \infty$, $\lim_{\delta \to 0 } L(\delta)\delta = 0$ and $\lim_{\delta \to 0 } v(L(\delta) \delta)/ L(\delta) = \infty$. Indeed, sticking with the original example of $\hat v(\delta) = \delta^{-1/2}$, and thereby $v(\delta) = C \delta^{-1/2s} $, setting $L(\delta) =  \delta^{-1/100}$ would be an example of such a function:
\begin{align*}
\frac{v(L(\delta) \delta)}{ L(\delta)} & = C \frac{(L(\delta) \delta)^{-1/2s}}{L(\delta)} \\
	& = C L(\delta)^{-(1+1/2s)} \delta^{-1/2s} 
	 = C \delta^{(1+1/2s)/100} \delta^{-1/2s}
\end{align*}
Furthermore, we can choose some $L(\delta)$ such that, if $|T_0^n(0) | \le L(\delta) \delta$, then
\begin{align}
	 DT_0^{n+1}(T_0(0)) \ge L(\delta)^{1 - \zeta}. \label{ineq:Shen_(4.10)}
\end{align}
Indeed, using properties (R1) and (R2) of Rovella maps, we know that if $|T_0^n(0) | \le L(\delta) \delta$, then $n$ must be sufficiently large that so that $e^{-\alpha n} \le  L(\delta) \delta$, and thus $DT_0^{n+1}(T_0(0)) \ge \lambda^{n+1} > L(\delta)^{1 - \zeta}$. As long as $L(\delta)^{1-\zeta}$ grows sufficiently slowly, then this always holds for all sufficiently small $\delta$.

For sufficiently small $\delta >0$, let us denote $N = N(\delta)$ be the largest time such that we have $A( \underline{0}, x, N) \le \theta / \delta$. We want to prove the following: for every $j =  1, \dots, N-1$, we have $T_0^j(0) \notin \tilde B(L(\delta) \delta))$. Indeed, this can be shown by a simple proof by contradiction. If this is not the case, then there exists some smallest $t \in \{ 1 , \dots, N-1\} $ satisfying $T_0^t \in \tilde B(L(\delta)\delta)$. Then by Lemma \ref{lemma:Shen_4.4}, we have
$$
	DT_0^t(0) \ge  \frac{v(L(\delta) \delta)}{\mathcal D(L(\delta) \delta)}
$$
and thus we have
$$
	A( \underline{0}, x, N) \ge \frac{DT_0^t(0)}{|T_0^t(0)|} \ge \frac{v(L(\delta) \delta)}{|\tilde B(L(\delta) \delta)| \mathcal D(L(\delta) \delta)} = \frac{v(L(\delta) \delta)}{v(\delta)\delta} > \frac{\theta}{\delta},
$$
which clearly contradicts our initial assumption.

Now, suppose $T_0^N(0) \in \tilde B(L(\delta) \delta)$. In this case, we simply set $M(\delta) = N$. Furthermore, since we already know $\delta ' \le L \delta $, then inequality (\ref{ineq:Shen_(4.3)}) immediately follows from \ref{ineq:Shen_(4.10)}. Alternatively, assume the opposite, i.e. $T_0^N(0) \notin \tilde B(L(\delta) \delta)$. Then our procedure for defining $M$ becomes more involved.

Let us set $V_k = \tilde B (2^{-k} \delta)$, and let us denote $k_1$ as the first iterate such that $T_0^j(0) \notin V_{k_1}$ for every $0 \le j \le N$. Clearly, it follows that $\delta/2^{k-1} > L(\delta) \delta$. Thus, for sufficiently small $\delta >0$ we have
$$
	\sum_{k = 0}^{k_1 -1} \Big( \frac{\delta}{\delta/ 2^k} \Big)^{\zeta/2} \le \Big( \frac{\delta}{ \delta/2^{k - 1}} \Big)^{\zeta/2} \frac{1}{1 - 2^{- \zeta/2}} \le \frac{L(\delta)^{- \zeta/2}}{1 - 2^{- \zeta/2}} \le \frac{\theta}{ 2}.
$$

Now, let us set $\nu^*_{k_1} = -1$, and we set for each $0 \le k < k_1$
$$
	\nu^*_k = \max \big\{ 0 \le j \le N \Hquad | \Hquad T_0^j(0) \in V_k \big\}.
$$
Then there are two possibilities: either
\begin{align}
	\sum_{i = \nu^*_0 + 1}^N \frac{DT_0^i(T(0))}{|T_0^i(T(0))|} \ge \frac{\theta}{2 \delta}, \label{case1}
\end{align}
or alternatively there exists some $0 \le k < k_1$ such that
\begin{align}
\sum_{i = \nu^*_k + 1 }^{s_{k+1}}  \frac{DT_0^i(T(0))}{|T_0^i(T(0))|} \ge \frac 1 \delta  \Big( \frac{\delta}{\delta/ 2^k} \Big)^{\zeta/2}.
\end{align}
If the former holds, then we again set $M = N$. As we already have $T_0^n(c) \notin \tilde B(\delta)$, we therefore know that for every $\nu^*_0 < j \le N$ there will exist some constant $C_1>0$ such that
$$
	DT_0^{N+1}(T_0(0)) \ge C_1 \sum_{n = \nu^*_0 + 1}^N \frac{DT_0^n(T(0))}{|T_0^n(0)|} \ge \frac{C_1 \theta}{2 \delta} \ge \Big( \frac{\delta_*}{\delta}  \Big)^{1 - \zeta},
$$
assuming that $\delta > 0$ is sufficiently small. 

Suppose that \ref{case1} is not the case. Then we instead set $M = \nu^*_k$. By definition, $T_0^{\nu^*_k} \in \tilde B(\delta_k)$. By Lemma \ref{lemma:Shen_4.5}, there exists some constant $C_2>0$ such that
\begin{align*}
	\frac{DT_0^{s_k}(T_0(0))}{|\tilde B(\delta_k)|} \ge C_2 \sum_{n = \nu^*_k + 1}^{\nu^*_{k+1}} \frac{DT_0^n(T(0))}{|T_0^n(0)|} \ge \frac{C_2}{\delta}\Big( \frac{\delta}{\delta_k}  \Big)^{ \zeta / 2}.
\end{align*}
Now, using the fact that $\nu^*_k > \nu^*_{k+1}$ and the fact that $T_0^{\nu^*_k}(0) \in \tilde B(\delta_k/2)$, we therefore have $DT_0(T_0^{\nu^*_k}(0)) \cdot |\tilde B(\delta_k)| \ge C_3 \delta_k$, with $C_3>0$ being some constant. Combining this altogether, we have
\begin{align*}
	DT_0^{\nu^*_k + 1}(T_0(0)) = DT_0^{\nu^*_k}(T(0)) \cdot DT_0(T_0^{\nu^*_k}(T(0))) \ge C_2 C_3 \frac{\delta_k}{\delta}\Big( \frac{\delta}{\delta_k}  \Big)^{ \zeta / 2}.
\end{align*}
Using the fact that $\delta_k/\delta \ge 2 \delta_{k_1}/\delta \ge L(\delta)$, then if $\delta$ is sufficiently small, then we obtain our desired result.
\end{proof}

\begin{proof}[Proof of Proposition \ref{prop:Shen_4.2}]
	Let us take some constant $\delta_0>0$ sufficiently small such that for all $\delta \in (0, \delta_0)$ that \ref{lemma:Shen_4.4} can applied and that $v(\delta) > 2$. Because we already know that $T_0^n(x) \in \tilde B(2 \delta)$, we therefore know that there exists a sequences of iterates $n_1 < n_2 < \dots < n_m = n$ such that $n_1$ is the first iterate when $T_0^{n_1}(x) \in \tilde B(\delta_0)$ and such that, for every $i = 1, 2, \dots, m - 1$, the iterate $n_{i + 1}>n_i$ is the minimal integer such that
$$
	|T_0^{n_{i+1}+1}(x) - T_0(0)| \le \min \big\{ \delta_0, 2 |T_0^{n_{i}+1}(x) - T_0(0)|  \big\}.
$$ 

Additionally, for every $i = 1, 2, \dots, m - 1$, let $\rho_i = |T_0^{n_{i}+1}(x) - T_0(0)|$, which means $\rho_i \in [\delta, \delta_0)$ for every $i = 1, 2, \dots, m - 1$.

Now, for every $i = 1, 2, \dots, m - 1$, consider the segment of the orbit $T_0^{n_i+1}(x), \dots$ $, T_0^{n_{i+1}}(x)$. If we apply Lemma \ref{lemma:Shen_4.4} with respect to $\rho_i$ instead of $\delta$, then we have
\begin{align*}
	DT_0^{n_{i+1} - n_i}(T_0^{n_i}(x)) &= DT_0(T_0^{n_i}(x)) \cdot DT_0^{n_{i+1} - n_i - 1}(T_0^{n_i + 1}(x))\\
	& \ge \D(\rho_i) \frac{v(\rho_i)}{\D(\rho_i)}
	 \ge 2 \frac{\D(\rho_i)}{\D(\rho_i)}.
\end{align*}
Additionally, if we set $\rho = \delta''$, then this gives us $DT_0^{n_1}(x) \ge 2 \kappa_0 / \D(\rho_0)$. Indeed, if we have that $\delta'' < \delta_0$, then we can just apply Lemma \ref{lemma:Shen_4.4}. On the other hand, if $\delta'' \ge \delta_0$, then we can just use the non-uniform expansion. Therefore, combining this all together we obtain
\begin{align*}
	DT_0^n(x) &\ge \frac{2\kappa_0}{\D(\rho_0)} \prod_{i = 1}^{m - 1} \frac{\D(\rho_i)}{\D(\rho_i)} 
	=  \frac{2\kappa_0}{\D(\rho_{m-1})} \prod_{i = 1}^{m - 1} \frac{\D(\rho_i)}{\D(\rho_{i-1})} \\
	&\ge \frac{2\kappa_0}{\D(\rho_{m-1})} \frac{\D(\rho_i)}{\D(\rho_{i-1})} 
	\ge \frac{2\kappa_0}{\D(\rho_{m-1})} \Big( \frac{\rho_i}{\rho_{i-1}} \Big)^{1 - 1/s}   
	 \ge \frac{\kappa_0}{\D(\delta)} \Big( \frac{\delta}{\delta''}  \Big)^{1 - 1/s}.
\end{align*}
\end{proof}

Now, let us move onto proving certain results for the random dynamics. In particular, we wish to prove the following:

\begin{proposition}\label{Shen:Proposition_5.1}
	For sufficiently small $\delta> 0$ there exists some constant $\hat \Lambda(\delta) >0$ satisfying $\lim_{\delta \to 0} \hat \Lambda(\delta) = \infty$ such that, for all sufficiently small $\eps >0$, all $\omega \in \Omega_\eps$, all $x \in I$ satisying $|x| \le 4 \delta$, and all $n \in \mathbb Z_+$, if $x, T_\omega(x) , \dots , T_\omega^{n-1} \notin \tilde B(\delta)$ and $T_\omega^n(x) \in \tilde B(2 \delta) $, we have
	\begin{align}
		DT_\omega^n(x) \ge \frac{\hat \Lambda(\delta)}{\mathcal D (\delta)}.
	\end{align} 
\end{proposition}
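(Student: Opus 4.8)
The plan is to prove Proposition \ref{Shen:Proposition_5.1} as the random counterpart of Lemma \ref{lemma:Shen_4.4} (equivalently, as the sharpening of Proposition \ref{prop:Shen_4.2} valid near the critical point), splitting the argument according to the size of the first‑return time $n$. Fix $\delta>0$ small and let $\hat v(\delta)$ be the constant from Lemma \ref{Shen:lemma_4.3}, so that $\hat v(\delta)\to\infty$, $\hat v(\delta)\delta\to 0$, and $\hat v(\delta)>4$ for $\delta$ small; set $\hat\Lambda(\delta):=c_*\,\hat v(\delta)^{1/s}$, where $c_*>0$ is the uniform constant that will emerge from the Koebe estimate below, noting that $|\tilde B(\eta)|\asymp\eta^{1/s}$ for small $\eta$ by condition (C2). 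Since $\tilde B(\delta)$ is a neighbourhood of $0$ containing $(-\delta,\delta)$, the hypothesis $x,T_\omega(x),\dots,T_\omega^{n-1}(x)\notin\tilde B(\delta)$ lets us apply Lemma \ref{Shen:Prop_2.7i} — which is already uniform in $\omega$ — to get $DT_\omega^n(x)\ge C(\delta)e^{\eta n}$. Choosing $n_0(\delta)$, depending only on $\delta$, so large that $C(\delta)e^{\eta n_0(\delta)}\ge\hat\Lambda(\delta)/\D(\delta)$, the conclusion is immediate for all $n\ge n_0(\delta)$, uniformly in $\eps$ and $\omega$.

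It then remains to treat the finitely many $n<n_0(\delta)$, and here I would take $\eps$ small depending on $\delta$ and $n_0(\delta)$ (permitted by the quantifier order in the statement), so that every $T_\omega$‑orbit of length at most $n_0(\delta)$ stays within $\eps\,\Lambda_{\max}^{\,n_0(\delta)}$ of the corresponding $T_0$‑orbit, where $\Lambda_{\max}=\sup_\omega\sup_I DT_\omega<\infty$; in particular the unperturbed orbit of $x$ never reaches $0$, so the singularity is harmless. One then reruns the proof of Lemma \ref{lemma:Shen_4.4} for $T_\omega$: after checking, as in the proof of Lemma \ref{lemma:tail_of_return_times}, that $A(\omega,x,n)<\infty$ and $A(\omega,x,n)\asymp DT_\omega^n(x)$ so that Lemma \ref{lemma:shen_2.3} applies, let $W$ be the connected component of $(T_\omega^n)^{-1}(\tilde B(\hat v(\delta)\delta))$ containing $x$. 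For $\eps$ small, $W$ lies in an arbitrarily small neighbourhood of the corresponding unperturbed component $W_0$, and Lemma \ref{Shen:lemma_4.3} (in a form with strict inequality, obtained by shrinking $\hat v(\delta)$ slightly, or by running it with $\delta/2$) gives $|W_0|<\delta/2$, hence $|W|<\delta$. Since $T_\omega^n(x)\in\tilde B(2\delta)$ and $\hat v(\delta)>4$, the point $T_\omega^n(x)$ lies well inside the component $\tilde B(\hat v(\delta)\delta)$, so the one‑sided Koebe principle for the diffeomorphism $T_\omega^n|_W$ (legitimate since each $T_\omega$ has negative Schwarzian by the admissibility hypotheses, and $W$ stays within a single branch at each step for $\delta$ small), combined with the distortion bound $\mathcal N(T_\omega^n\,|\,J)\le1$ from Lemma \ref{lemma:shen_2.3}, yields a uniform $c_*>0$ with
\[
DT_\omega^n(x)\ \gtrsim\ \frac{|\tilde B(\hat v(\delta)\delta)|}{|W|}\ \ge\ \frac{|\tilde B(\hat v(\delta)\delta)|}{\delta}\ \asymp\ \frac{(\hat v(\delta)\delta)^{1/s}}{\delta}\ \asymp\ \frac{\hat v(\delta)^{1/s}}{\D(\delta)},
\]
with $\delta$‑ and $\omega$‑independent implied constants; tracking these gives $DT_\omega^n(x)\ge c_*\,\hat v(\delta)^{1/s}/\D(\delta)=\hat\Lambda(\delta)/\D(\delta)$.

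I expect the main obstacle to be the small‑$n$ case, and inside it the transfer of the backward‑contraction estimate $|W|<\delta$ from $T_0$ to $T_\omega$: Lemma \ref{Shen:lemma_4.3} and the underlying Sublemma \ref{prop:BLSvS_Lemma_1} rest on property (R1) of the Rovella map $T_0$, which the perturbed maps $T_\omega$ need not share, which is exactly why one is forced to split at a $\delta$‑dependent threshold — for $n\ge n_0(\delta)$ one bypasses Lemma \ref{Shen:lemma_4.3} entirely and uses the $\omega$‑uniform expansion of Lemma \ref{Shen:Prop_2.7i}, whereas for $n<n_0(\delta)$ the perturbation is genuinely negligible. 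Even in the latter régime, however, one must still cope with the fact — noted in the proof of Lemma \ref{Shen:Prop_2.7i} — that ``$x,\dots,T_\omega^{n-1}(x)\notin\tilde B(\delta)$'' need not imply the same for the unperturbed orbit; this is handled by decomposing the unperturbed orbit at the boundedly many times it grazes $\partial\tilde B(\delta)$ and applying the expansion and distortion estimates block by block, exactly as in that proof. A secondary point requiring care is verifying that $T_\omega^n(x)$ remains interior to $T_\omega^n(W)$ with definite relative room, so that the Koebe step produces a constant $c_*$ that is independent of $\delta$.
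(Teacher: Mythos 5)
Your route differs genuinely from the paper's. The paper deduces this proposition from the binding-period machinery: Propositions \ref{prop:Shen_4.1} and \ref{Shen:prop_5.2} and Lemma \ref{Shen:lemma_5.4} supply, through the preferred binding period $M(\delta)$ and the associated factor $\Lambda_0(\delta)=DT_0^{M(\delta)+1}(0)$, per-block lower bounds, and the proof of the proposition telescopes the estimate \eqref{shen:ineq_5.11} over the successive times $\nu_1<\nu_2<\cdots$ at which the $T_\omega$-orbit re-enters the \emph{fixed} intermediate scale $\tilde B(\delta_0)$. Critically, every estimate there is intrinsic to the $T_\omega$-orbit of $x$: no deterministic pull-back is ever constructed or compared with. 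You instead split at a $\delta$-dependent iteration threshold $n_0(\delta)$ --- for $n\ge n_0(\delta)$ invoking the uniform expansion Lemma \ref{Shen:Prop_2.7i}, and for $n<n_0(\delta)$ shrinking $\eps$ and transferring the backward-contraction/Koebe argument of Lemma \ref{lemma:Shen_4.4} from $T_0$ to $T_\omega$ by shadowing.

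The large-$n$ half is sound. The small-$n$ half, however, has a genuine gap at its crux: you assert without argument that, for $\eps$ small, the $T_\omega$-pull-back component $W$ ``lies in an arbitrarily small neighbourhood of the corresponding unperturbed component $W_0$.'' Uniform $C^0$-closeness of $T_\omega^n$ and $T_0^n$ near $x$ does not give this: the endpoints of a pull-back component are the points whose forward orbit first reaches $\partial\tilde B(\hat v(\delta)\delta)$ or a branch cut, and the displacement of such a first-hitting point under an $\eps\Lambda_{\max}^{n}$-perturbation is controlled by the inverse derivative \emph{at that endpoint}, which Koebe controls only in the interior of $W_0$, not at its boundary. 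So ``$|W|\le|W_0|+o(1)$'' is precisely the kind of backward-contraction statement you would need to prove, not assume. Moreover, your proposed patch for the mismatch between perturbed and unperturbed avoidance of $\tilde B(\delta)$ --- decomposing the unperturbed orbit at grazing times and applying the expansion and distortion estimates block by block, ``exactly as in'' the proof of Lemma \ref{Shen:Prop_2.7i} --- transfers a \emph{forward} derivative lower bound; it does not yield the backward-contraction bound $|W_0|<\delta/2$, which in Lemma \ref{Shen:lemma_4.3} and Sublemma \ref{prop:BLSvS_Lemma_1} comes from Koebe on maximal diffeomorphic preimages combined with (R1), a structure the block decomposition does not reproduce. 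These are the steps your proposal would have to supply; the paper's binding-period route avoids them entirely because it never needs a deterministic companion orbit or pull-back.
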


Now, let us fix some constant $\eta_* \in (0,1)$, and let us set
$$
	W_0= \sum_{n = 0}^\infty \frac{1}{DT_0^n(T(0))}.
$$
Furthermore, let us choose some small $\theta > 0$ satisfying
\begin{align}
4 \theta W_0 \le \theta_1 \text{ and } 16e \theta W_0 \sup DT_0, \label{eta_star}
\end{align}
recalling $\theta_1$ from Lemma \ref{shen:lemma_2.5}. Now, it will be useful to introduce the notion of a \textit{preferred binding period}.

\begin{definition}[Preferred Binding Period]
Let us denote
$$
	W_0= \sum_{n = 0}^\infty \frac{1}{DT_0^n(T(0))},
$$
and let us also fix some small constant $\theta > 0$ satisfying $4 \theta W_0 \le \theta_1$, where $\theta_1$ is from Lemma \ref{shen:lemma_2.5}. Furthermore, let us fix constants $L > 2^{s+1}$ and $\zeta \in ( 0, s^{-1})$. For sufficiently small $\delta >0$, we say $M(\delta)$ is the preferred binding period if $M(\delta)$ is the smallest positive integer such that the results of Proposition \ref{prop:Shen_4.1} hold for our constants $\theta$, $L$ and $\zeta$. Associated to this, we also denote
$$
	\Lambda_0(\delta) := DT_0^{M(\delta) + 1}(0),
$$
where we clearly have $\lim_{\delta \to 0 }\Lambda_0(\delta) = \infty$.
\end{definition}

\begin{proposition}\label{Shen:prop_5.2}
	There exist constants $\zeta_1, \zeta_2 >0$ such that, for sufficiently small $\delta >0$, for all sufficiently small $\eps >0$, all $\omega \in \Omega_\eps$, and all $x \in I$ satisfying $|x| \le 4 \delta$, we have
	\begin{align}
		T_\omega^j(x) &\notin \tilde B(2 \delta) \quad \text{for all } 0 \le j < M, \label{Shen:eq_5.3} \\
		DT_\omega^M(y) &\ge \frac{\Lambda_0(\delta)^{\zeta_1}}{\D(\delta)} \label{Shen:eq_5.4}.
	\end{align}
Furthermore, if $T_\omega^M(x) \notin \tilde B(\delta)$, then we have
\begin{align}
	DT_\omega^{M+1}(y) \ge \Lambda_0(\delta)^{\zeta_1} \Big( \frac{|T_\omega^{M}(y) - T_0(0)|}{\delta} \Big)^{1 - 1/s} \label{Shen:eq_5.5} .
\end{align}

\begin{proof}
	Let $\delta>0$ be fixed and sufficiently small, and set $M=M(\delta)$. Using Lemma \ref{shen:lemma_2.5}, we can see that $M$ is a $eW_0$- binding period for $(0, 4 \delta)$. Furthermore, by inequalities \ref{ineq:Shen_(4.2)} and \ref{def:bind1}, we can see that \ref{Shen:eq_5.3} holds. 

Now, for inequality \ref{Shen:eq_5.4}, we notice that by property \ref{def:bind3} in the definition of a binding time, we have
\begin{align*}
	|T_\omega^M(x) - T^M(0)| &\le 4 e \delta W_0 DT^M(0) \\
	& \le 4 e \delta W_0 C_0 DT^{M-1}(T(0)) \\
	& \le 4 e \delta W_0 C_0 A(\underbar 0, T(0), M)\\
	& \le  4 e \delta W_0 C_0 \theta 
	 \le \eta_* / 4,
\end{align*}
where for the last inequality we have used the second inequality in \ref{eta_star}. This thereby implies that there exists some $C_1 > 1$ such that 
\begin{align}
	DT(T^M(0)) \le C_1 \D(\delta_1), \label{ineq:shen_5.6}
\end{align}
where $\delta_1 = \max \{  |T^M(0)|, \delta \}$. Now, let us set $\zeta_1 = (1/s - \zeta)/ (2 - 2 \zeta)$. By inequality \ref{ineq:Shen_(4.3)} and the definition of $\Lambda_0$, we have 
\begin{align}
	DT^{M+1}(T(0)) \ge \Lambda_0(\delta)^{2 \zeta_1} \Big( \frac{\delta_1}{\delta} \Big)^{1 - 1/s} \label{ineq:shen_5.7}
\end{align}
Combining the two above inequalities with property \ref{def:bind2} of the definition of a binding time, we have
\begin{align*}
	DT^M(x) &\ge e^{-1} \frac{DT^{M+1}(T(0))}{DT(T^{M+1}(0))} \\
	& \ge e^{-1} \frac{\Lambda_0(\delta)^{2 \zeta_1}}{C_1 \D(\delta_1)}  \Big( \frac{\delta_1}{\delta} \Big)^{1 - \frac 1 s} 
	\ge \frac{C_2 \Lambda_0(\delta)^{2 \zeta_1}}{ \D(\delta_1)},
\end{align*}
where $C_2 >0$ is some suitably chosen constant and we use the fact that $\delta^* \ge \delta$. Finally, if $\delta >0$ is sufficiently small, then $C_2 \Lambda_0(\delta)^{ \zeta_1} > 1$, which thereby proves inequality \ref{Shen:eq_5.4}.

Now, for the last property \ref{Shen:eq_5.5}, recall that we are assuming $T_\omega^M(x) \notin \tilde B(\delta)$, which implies $|T_\omega^M(x)| \ge \delta$. Setting $\delta_2 = |T_\omega^M(x)|$,  $\zeta_2 = \zeta_1 / s$ and using inequality \ref{delta_star}, we have
\begin{align}
	DT_{\sigma^M \omega}(T_\omega^M(0)) \ge C_3 \D(\delta_2) \label{Shen:eq_5.8}
\end{align}
for some suitably chosen constant $C_3 >0$. There are two possible cases, either 1) $\delta_2 > \Lambda_0(\delta)^{2 \zeta_1} \delta_1$, or 2) $ \delta_2 \le \Lambda_0(\delta)^{2 \zeta_1} \delta_1$.\\ \\
\textbf{Case 1:}\\ \\
If $\delta_2 > \Lambda_0(\delta)^{2 \zeta_1} \delta_1 \ge \Lambda(\delta)^{2 \zeta_1} \delta$, then for sufficiently small $\delta >0$ there exists a constant $C_4 > 0$ such that $\eta_M \ge C_4 |\tilde B(\delta _2)|$, and thus combining this with inequality \ref{Shen:eq_5.8}, we have
\begin{align*}
	\eta_M DT_{\sigma^M \omega}(T_\omega^M(0)) \ge C_3 \D(\delta_2) C_4 |\tilde B(\delta_2)| \ge C_3 C_4 \delta_2.
\end{align*}
Furthermore, by combining this with properties \eqref{Shen:eq_5.3} and \eqref{Shen:eq_5.4} of the definition of a binding time, we thereby have \eqref{Shen:eq_5.5}.
\begin{align*}
	DT_\omega^{M+1}(x) &\ge e^{-1} DT_0^M(T_0(0)) DT_{\sigma^M \omega}(T_\omega^M(T_0(0)))  \\
	&\ge \frac{\eta_M DT_{\sigma^M \omega}(T_\omega^M(T_0(0)))  }{4 e^2 W_0 \delta} \\
	& \ge C_5 \frac{\delta_2}{\delta} \\
	& \ge C_5    \Lambda_0(\delta)^{2 \zeta_1} \Big( \frac{\delta_1}{\delta} \Big)^{1 - 1/s},
\end{align*}
where $C_5 >0$ is a suitably chosen constant. Thus, if $\delta$ is sufficiently small, then \eqref{Shen:eq_5.5} holds.\\ \\
\textbf{Case 2:}\\ \\
Now suppose that $ \delta_2 \le \Lambda_0(\delta)^{2 \zeta_1} \delta_1$. If we combine this with property 1 of the definition of binding time, \ref{ineq:shen_5.6}, \eqref{ineq:shen_5.7} and \eqref{Shen:eq_5.8}, we have
\begin{align*}
	DT_\omega^{M+1}(x) &\ge e^{-1} DT_0^{M+1}(T_0(0)) \frac{DT_{\sigma^M \omega}(T_\omega^M(0))}{DT_0(T_0^M(0))} \\
	& \ge C_6 \Lambda_0(\delta)^{2 \zeta_1 } \Big( \frac{\delta_1}{\delta} \Big)^{1 - 1/s} \Big( \frac{\delta_2}{\delta_1} \Big)^{1 - 1/s}
	 \ge C_6 \Lambda_0(\delta)^{2 \zeta_1 } \Big( \frac{\delta_2}{\delta} \Big)^{1 - 1/s},
\end{align*}
where $C_5 > 0$ is a suitably chosen constant. Thus, \ref{Shen:eq_5.5} follows if $\delta$ is sufficiently small.
\end{proof}

\end{proposition}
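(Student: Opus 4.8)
We prove Proposition \ref{Shen:prop_5.2} by realising $M=M(\delta)$ as a genuine binding period for orbits issued from the $4\delta$-neighbourhood of the critical point $0$, and then transporting the deterministic estimates of Propositions \ref{prop:Shen_4.1} and \ref{prop:Shen_4.2} forward along the random orbit by means of the three binding inequalities (\ref{def:bind1})--(\ref{def:bind3}). So the plan has two halves: first establish shadowing of the postcritical orbit of $T_0$ by the random orbit of $T_\omega$ up to time $M$, and then do the arithmetic of derivatives and re-entry depths.

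First I would verify the hypothesis of Lemma \ref{shen:lemma_2.5} with $v=T_0(0)$ and the appropriate width comparable to $\delta$. The relevant sum $W=\sum_{j=0}^{M}|DT_0^{j}(T_0(0))|$ is finite by the summability property of the Rovella map $T_0$ and, since $T_0$ is Collet--Eckmann, is dominated by a geometric series whose magnitude is controlled by $\Lambda_0(\delta)=DT_0^{M+1}(T_0(0))$; meanwhile $A(\underline 0,T_0(0),M)\le \theta/\delta$ by inequality (\ref{ineq:Shen_(4.1)}). The smallness of $\theta$ imposed in (\ref{eta_star}) is exactly what makes the product $A(\underline 0,T_0(0),M)\cdot W$ small compared with $\eps^{-1}$, so that Lemma \ref{shen:lemma_2.5} applies and $M$ is a binding period for $(0,4\delta)$: for every $x$ with $|x|\le 4\delta$ and every $\omega\in\Omega_\eps$ with $\eps$ small enough (depending on $\delta$), the point $y=T_\omega(x)$ lies within the binding width of $T_0(0)$ and the orbit $\bigl(T_\omega^{j}(y)\bigr)_{0\le j<M}$ shadows $\bigl(T_0^{j}(T_0(0))\bigr)_{0\le j<M}$ with properties (\ref{def:bind1})--(\ref{def:bind3}).

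Granting this, (\ref{Shen:eq_5.3}) is immediate: inequality (\ref{ineq:Shen_(4.2)}) gives $T_0^{j}(0)\notin\tilde B(L\delta)$ for $0\le j<M$ with $L>2^{s+1}$, and the factor-$2$ proximity in (\ref{def:bind1}) together with the order-of-singularity comparison between $\tilde B(L\delta)$ and $\tilde B(2\delta)$ (passing from $L\delta$ to $2\delta$ inflates lengths only by the power $1/s$) leaves enough room to conclude $T_\omega^{j}(x)\notin\tilde B(2\delta)$; this is precisely why the threshold $L>2^{s+1}$ was built into the preferred binding period. For (\ref{Shen:eq_5.4}) I would use (\ref{def:bind2}) to write $DT_\omega^{M}(y)\ge e^{-1}DT_0^{M}(T_0(0))=e^{-1}\Lambda_0(\delta)/DT_0(T_0^{M}(0))$, bound $DT_0(T_0^{M}(0))\le K_2|T_0^{M}(0)|^{s-1}$ by condition (C2), invoke (\ref{ineq:Shen_(4.3)}) to control $\Lambda_0(\delta)$ against the re-entry depth $\delta_1=\max\{|T_0^{M+1}(0)|,\delta\}$, and then compare with $\D(\delta_1)\asymp\delta_1^{1-1/s}$, finally replacing $\delta_1$ by $\delta$ using $\delta_1\ge\delta$. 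Choosing $\zeta_1=(1/s-\zeta)/(2-2\zeta)$ is exactly what makes the competing powers of $\delta_1/\delta$ cancel so that a genuine positive power $\Lambda_0(\delta)^{\zeta_1}$ survives in the final bound. For (\ref{Shen:eq_5.5}) one repeats this in the regime where the orbit fails to re-enter $\tilde B(\delta)$ at time $M$, splitting into the two cases $\delta_2:=|T_\omega^{M}(y)|>\Lambda_0(\delta)^{2\zeta_1}\delta_1$ and $\delta_2\le\Lambda_0(\delta)^{2\zeta_1}\delta_1$ exactly as in Proposition \ref{prop:Shen_4.2}, using the lower bound (\ref{delta_star}) for $DT_\omega$ on $\tilde B(\delta_*)$ and admissibility condition (3) to pass from the deterministic derivatives to the random ones.

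The step I expect to be the main obstacle is the bookkeeping behind (\ref{Shen:eq_5.4})--(\ref{Shen:eq_5.5}): one must simultaneously absorb the binding-width errors, the singularity factors of the form $|x|^{s-1}$, the re-entry depth $\delta_1$ (and $\delta_2$), and the distortion losses, all while guaranteeing that none of the large quantity $\Lambda_0(\delta)$ (which tends to $\infty$ as $\delta\to0$) is eaten by the factor $\D(\delta)^{-1}$. Making the exponents $\zeta_1,\zeta_2$ and the slow-growth requirements on $L(\delta)$ (and $\hat v(\delta)$ through Lemma \ref{lemma:Shen_4.4}) mutually compatible is the crux. A secondary but essential point is that $\eps$ be taken small \emph{relative to} $\delta$, so that the random orbit genuinely remains in the shadowing regime throughout $0\le j\le M$; this is where the continuity arguments of Lemma \ref{Shen:Prop_2.7i} and the admissibility hypotheses enter.
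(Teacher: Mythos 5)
Your proposal is correct and follows essentially the same route as the paper: promote $M=M(\delta)$ to an $eW_0$-binding period via Lemma~\ref{shen:lemma_2.5}, deduce (\ref{Shen:eq_5.3}) from (\ref{ineq:Shen_(4.2)}) and (\ref{def:bind1}), obtain (\ref{Shen:eq_5.4}) by combining (\ref{def:bind2}) with the order-of-singularity bound on $DT_0(T_0^{M+1}(0))$ and (\ref{ineq:Shen_(4.3)}), and then prove (\ref{Shen:eq_5.5}) by splitting into the two cases $\delta_2\gtrless\Lambda_0(\delta)^{2\zeta_1}\delta_1$. The only minor inaccuracy is that after passing through the critical point the binding width is of order $\delta^s$ rather than $\delta$ (by condition (C2)), but this does not change the argument since Lemma~\ref{shen:lemma_2.5} is applied with $v=T_0(0)$ and the width is whatever the push-forward of the $4\delta$-neighbourhood gives.
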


\begin{lemma}\label{Shen:Lemma_5.3}
Let $\delta_3 = \max\big\{ |x - T_0(0)|, \delta  \big\}$. There exists a constant $\kappa >0$, and for sufficiently small $\delta >0$ there exists a constant $\hat \eta = \hat \eta (\delta) > 0$ such that, for sufficiently small $\eps>0$ and all $\omega \in \Omega_\eps$, and for all $x \in I\setminus \{0 \}$ and $n \in \NN_0$ satisfying $x, T_\omega(x), \dots , T_\omega^{n-1} \notin \tilde B(\delta)$ and $T_\omega^n(x) \in \tilde B(\delta)$, we have
	\begin{align}
		DT_\omega^n(x) \ge \frac{\kappa}{\D (\delta)} \Big( \frac{\delta}{\delta_3} \Big)^{1 -1/s} e^{\hat \eta n}
	\end{align}

\begin{proof}
Using Proposition \ref{Shen:Prop_2.7i}, we know that there exists some $C>0$ such that, if there exists some $\delta^* > 0$ such that $x, T_\omega(x), \dots , T_\omega^{n-1} \notin (- \delta^*, \delta^*)$, then there will exist some $\eta = \eta (\delta^*)$ such that $DT_\omega^n \ge C e^{\eta n}$. Now, $\tilde B(\delta)$ is not necessarily symmetrical, however, we can simply set $\delta^*$ as the smallest $|y| \in \partial \tilde B(\delta)$ and then use Proposition \ref{Shen:Prop_2.7i}. 

Now, if $C e^{\frac \eta 2 n} \ge 1/ \D (\delta)$, then the proof is complete with $\kappa = 1$ and $\hat \eta = \eta /2$. Assume this is not the case, i.e. $n \le N(\delta)$, where $N(\delta) \in \NN_0$ is the largest integer such that $C e^{\frac \eta 2 n} < 1/ \D (\delta)$. Furthermore, if $\eps$ is sufficiently small, then because $n$ is bounded we know that $x, T_\omega(x), \dots , T_\omega^{n-1} \notin \tilde B(\delta)$ implies that $x, T_0(x), \dots , T_0^{n-1} \notin \tilde B(0.9 \delta)$ as well as $DT_\omega^n \ge DT_0^n/2$. Thus, by Proposition \ref{prop:Shen_4.2}, we know that there exists some constant $\kappa_0>0$ satisfying
\begin{align*}
	DT_\omega^n(x) \ge \frac{\kappa_0}{2 \D (\delta)}.
\end{align*}
Thus, if we fix some $\eta' >0$ such that $e^{\eta' N(\delta)} < 2$, then setting $\hat \eta = \eta '$ and $\kappa = \kappa_0/4$ gives us the desired result.
\end{proof}

\end{lemma}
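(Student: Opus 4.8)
The plan is to obtain the bound by combining two results already in hand: the random exponential expansion of Lemma~\ref{Shen:Prop_2.7i}, valid as long as the orbit avoids a fixed neighbourhood of $0$, and the deterministic polynomial recovery estimate of Proposition~\ref{prop:Shen_4.2} near the critical value $T_0(0)$. The argument is a dichotomy on the length $n$ of the orbit segment, relative to a threshold governed by $\D(\delta)$. The observation that makes this work is that, since $\delta_3=\max\{|x-T_0(0)|,\delta\}\ge\delta$ and $1-1/s\in(0,1)$, the deterministic factor $(\delta/\delta_3)^{1-1/s}$ is always $\le 1$: for large $n$ it then suffices to produce pure exponential growth of $DT_\omega^n$, while for small $n$ the target exponential factor $e^{\hat\eta n}$ is itself bounded and can be swallowed by the constant $\kappa$.

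Concretely, first inscribe a symmetric interval $(-\delta^*,\delta^*)\subset\tilde B(\delta)$ with $\delta^*=\min\{|y|:y\in\partial\tilde B(\delta)\}$, so that $x,T_\omega(x),\dots,T_\omega^{n-1}(x)\notin\tilde B(\delta)$ forces the orbit out of $(-\delta^*,\delta^*)$; Lemma~\ref{Shen:Prop_2.7i} then gives a uniform $\eta>0$ and $C=C(\delta^*)>0$ with $DT_\omega^n(x)\ge Ce^{\eta n}$ for all small $\eps$ and all $\omega\in\Omega_\eps$. Let $N(\delta)$ be the largest integer with $Ce^{\eta n/2}<1/\D(\delta)$; this is finite because $1/\D(\delta)$ is fixed while the left side diverges. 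In the case $n>N(\delta)$ we have $Ce^{\eta n/2}\ge 1/\D(\delta)$, hence $DT_\omega^n(x)\ge Ce^{\eta n}=(Ce^{\eta n/2})e^{\eta n/2}\ge \frac1{\D(\delta)}e^{\eta n/2}\ge\frac1{\D(\delta)}\big(\tfrac{\delta}{\delta_3}\big)^{1-1/s}e^{\eta n/2}$, which is the desired inequality with $\kappa=1$ and exponent $\eta/2$.

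In the remaining case $n\le N(\delta)$ the number of iterates is bounded once $\delta$ is fixed, so the same perturbation–closeness argument used in the proof of Lemma~\ref{Shen:Prop_2.7i} gives, for $\eps$ small enough in terms of $\delta$, that the perturbed orbit $x,\dots,T_\omega^{n}(x)$ stays within $0.1\delta$ of the unperturbed one; hence $x,\dots,T_0^{n-1}(x)\notin\tilde B(0.9\delta)$, $T_0^n(x)\in\tilde B\big(2(0.9\delta)\big)$, and $DT_\omega^n(x)\ge\frac12 DT_0^n(x)$. Applying Proposition~\ref{prop:Shen_4.2} with radius $0.9\delta$ in place of $\delta$ yields $DT_0^n(x)\ge\frac{\kappa_0}{\D(0.9\delta)}\big(\tfrac{0.9\delta}{\max\{|x-T_0(0)|,\,0.9\delta\}}\big)^{1-1/s}$; since $\D(0.9\delta)\le c\,\D(\delta)$ and $\max\{|x-T_0(0)|,0.9\delta\}\le\delta_3$, with $c$ uniform in $\delta$, this is at least $\frac{\kappa_0'}{\D(\delta)}(\delta/\delta_3)^{1-1/s}$ for a uniform $\kappa_0'>0$. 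Finally choose $\eta'=\eta'(\delta)>0$ so small that $e^{\eta'N(\delta)}<2$; then on $n\le N(\delta)$ we get $DT_\omega^n(x)\ge\frac{\kappa_0'}{2\D(\delta)}(\delta/\delta_3)^{1-1/s}\ge\frac{\kappa_0'}{4\D(\delta)}(\delta/\delta_3)^{1-1/s}e^{\eta' n}$. Setting $\kappa=\min\{1,\kappa_0'/4\}$ and $\hat\eta(\delta)=\min\{\eta/2,\eta'(\delta)\}$ makes both cases give the claimed estimate.

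The main obstacle I expect is the bookkeeping in the last case: one must check that replacing the (possibly non-symmetric) perturbed neighbourhood $\tilde B(\delta)$ by the shrunken unperturbed one $\tilde B(0.9\delta)$ is both legitimate — the perturbed orbit genuinely avoids $\tilde B(0.9\delta)$ and $T_0^n(x)$ still lands in a comparable $\tilde B$, which requires $\eps$ small relative to $\delta$ and some care with the asymmetry of $\tilde B$ — and cheap, i.e. that it changes $\D$ and $(\delta/\delta_3)^{1-1/s}$ only by bounded multiplicative factors, so that the final $\kappa$ is genuinely independent of $\delta$ even though the admissible range of $\eps$ and the exponent $\hat\eta(\delta)$ both degrade as $\delta\to 0$.
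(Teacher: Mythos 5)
Your proposal is correct and follows essentially the same dichotomy as the paper's proof: the threshold $N(\delta)$ determined by when $Ce^{\eta n/2}$ overtakes $1/\D(\delta)$, the use of Lemma~\ref{Shen:Prop_2.7i} on the inscribed symmetric interval for large $n$, and the perturbation-closeness argument plus Proposition~\ref{prop:Shen_4.2} for $n\le N(\delta)$. Your version is in fact a bit more careful than the paper's, since you explicitly track the $(\delta/\delta_3)^{1-1/s}$ factor through the $0.9\delta$ substitution and take $\hat\eta=\min\{\eta/2,\eta'\}$ so that a single exponent works in both cases.
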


\begin{lemma}\label{Shen:lemma_5.4}
Let $\eps >0$ be sufficiently small, and let $\omega \in \Omega_\eps$. Furthermore, let $\delta_0>0$ be a sufficiently small constant, and let $0 < \delta \le \delta_0$. Assume that we have some some $n \in \NN_0$ and some $x \in B_{4 \delta}(0)$ satisfying either $n = M(\delta)$ and $T_\omega^{M(\delta)}(x) \in \tilde B(\delta_0)$, or alternatively $n > M(\delta)$,  $T_\omega^M(\delta)(x) \notin \tilde B(\delta_0)$, and $T_\omega^{M(\delta) + 1}(x), \dots , T_\omega^{n-1}(x) \notin  B(\delta_0)$. Then there exists a constant $\zeta_3 > 0$ (independent of $\delta_0$, $\delta$, $\eps$, $\omega$, $x$ and $n$) such that
\begin{align}
	DT_\omega^n(x) \ge \frac{ \Lambda_0(\delta)^{\zeta_3}}{\D(\delta)} \label{ineq:shen_5.10}.
\end{align}
Furthermore, if $T_\omega^{n}(x) \notin \tilde B(\delta)$, then we have
\begin{align}
	DT_\omega^{n+1}(x) \ge  \Lambda_0(\delta)^{\zeta_3} \Big( \frac{|T_\omega^n(x) - T_0(0)|}{\delta}  \Big)^{1 - 1/s}. \label{shen:ineq_5.11}
\end{align}

\begin{proof}
	For the first case where $n = M(\delta)$, this follows directly from Proposition \ref{Shen:prop_5.2}, where all we need to do is replace $\zeta_1$ and $\zeta_2$ with $\zeta_3 = \max \{\zeta_1, \zeta_2\}/2$. For the second case, we set $\delta_3 = |T_\omega^M(x) - T_0(0)|$ and $\delta_4 = |T_\omega^{M+1}(x) - T(0)|$. Notice that there exists a constant $C_1> 0$ such that $\delta_4 \le C_1 \delta_3 $. Indeed, if we have $T_\omega^M(x) \notin \tilde B(\delta_*)$, then $\delta_4 \le 1$ and $\delta_3 \ge \delta_*$, and thus the inequality holds for $C_1 = 1 / \delta_*$ (recall $\delta_*$ from \ref{delta_star}). If this is not the case, then we must have $\delta_4 \le \delta_3 + \varepsilon \le 2 \delta_3$, where we recall $\eps$ is the maximum perturbation size, in which case the statement holds for $C_1=2$.

Now, by inequality \ref{Shen:eq_5.5} in Proposition \ref{Shen:prop_5.2} and Lemma \ref{Shen:Lemma_5.3}, we have 
\begin{align*}
	DT_\omega^n(x) &= DT_\omega^{M+1}(x) \cdot \prod_{j = M + 1}^{n-1} DT_{\sigma^j \omega}(T_\omega^{j}(x)) \\
	& \ge \frac{\kappa \Lambda_0 (\delta)^{\zeta_2}}{\D(\delta_0)} \Big( \frac{\delta_0}{\delta} \cdot \frac{\delta_3}{\delta_4} \Big)^{1 - 1/s}.
\end{align*}
Thus, combining this with the statement above, we have
\begin{align}
 DT_\omega^n(x) \ge \frac{C_2 \Lambda_0 (\delta)^{\zeta_2}}{\D(\delta_0)}\Big( \frac{\delta_0}{\delta} \Big)^{1 - 1/s}. \label{eq:Shen_5.12}
\end{align}
Furthermore, since $\delta \le \delta_0$, we have
\begin{align*}
	DT_\omega^n(x) \ge C_3 \frac{ \Lambda_0 (\delta_0)^{\zeta_2}}{\D(\delta)} ,
\end{align*}
and since $\delta_0$ is sufficiently small, then \ref{ineq:shen_5.10} follows. 

Now, if $T_\omega^{n}(x) \notin \tilde B(\delta)$, then we set $\rho = |T_\omega^n(x) - T_0(0)|$, and from the fact that $T_\omega^{M(\delta) + 1}(x), \dots , T_\omega^{n-1}(x) \notin  B(\delta_0)$ we therefore have $\rho \ge \delta$. Clearly then $DT_{\sigma^n \omega}(T_\omega(x)) \ge \D(\rho)$. Furthermore, since we have $\rho < \delta_0$, then by inequality \ref{eq:Shen_5.12}, we know there must exist some constant $C_4>0$ such that we have
	\begin{align*}
		DT_\omega^{n+1} (x) \ge C_4 \Lambda_0(\delta)^{\zeta_2} \Big( \frac{\rho}{\delta}  \Big)^{1 - 1/s}.
	\end{align*} 
Since $\delta_0$ is assumed to be sufficiently small, the desired result holds. 
\end{proof}
\end{lemma}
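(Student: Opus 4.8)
The plan is to split along the two alternatives in the hypothesis and, in each case, to concatenate the estimate that Proposition~\ref{Shen:prop_5.2} already supplies for the preferred binding block $0,1,\dots,M(\delta)$ with the expansion estimate that Lemma~\ref{Shen:Lemma_5.3} (and, in the degenerate sub-case where there is no later re-entry, the pure exponential bound of Lemma~\ref{Shen:Prop_2.7i}) supplies for the subsequent block, along which the orbit is assumed to avoid $\tilde B(\delta_0)$.

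The case $n=M(\delta)$ needs essentially no work: its hypothesis is exactly that of Proposition~\ref{Shen:prop_5.2}, so \eqref{Shen:eq_5.4} gives $DT_\omega^{M}(x)\ge\Lambda_0(\delta)^{\zeta_1}/\D(\delta)$, while if in addition $T_\omega^{M}(x)\notin\tilde B(\delta_0)\supseteq\tilde B(\delta)$ then \eqref{Shen:eq_5.5} is precisely \eqref{shen:ineq_5.11}. Taking $\zeta_3:=\tfrac12\min\{\zeta_1,\zeta_2\}$ and using $\Lambda_0(\delta)\to\infty$ to absorb any absolute constant into $\Lambda_0(\delta)^{\zeta_i-\zeta_3}$ disposes of this case.

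For $n>M(\delta)$, write $M=M(\delta)$ and split $DT_\omega^{n}(x)=DT_\omega^{M+1}(x)\cdot\prod_{j=M+1}^{n-1}DT_{\sigma^{j}\omega}(T_\omega^{j}(x))$. Since $T_\omega^{M}(x)\notin\tilde B(\delta_0)\supseteq\tilde B(\delta)$, inequality~\eqref{Shen:eq_5.5} bounds the first factor below by $\Lambda_0(\delta)^{\zeta_1}(\delta_3/\delta)^{1-1/s}$ with $\delta_3:=|T_\omega^{M}(x)-T_0(0)|$; and since $T_\omega^{M+1}(x),\dots,T_\omega^{n-1}(x)\notin\tilde B(\delta_0)$, applying Lemma~\ref{Shen:Lemma_5.3} at the fixed scale $\delta_0$ to $T_\omega^{M+1}(x)$ bounds the product below by a quantity of the form $\tfrac{\kappa}{\D(\delta_0)}(\delta_0/\max\{\delta_4,\delta_0\})^{1-1/s}e^{\hat\eta(n-M-1)}$, where $\delta_4:=|T_\omega^{M+1}(x)-T_0(0)|$. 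Multiplying these and cancelling the two distance factors against each other through an elementary comparison $\delta_4\le C_1\delta_3$ between consecutive iterates (split according to whether $T_\omega^{M}(x)$ lies in $\tilde B(\delta_*)$, where one extra iterate displaces the point by at most $\eps$, or not, where everything in sight is already bounded) yields $DT_\omega^{n}(x)\ge C_2\Lambda_0(\delta)^{\zeta_2}(\delta_0/\delta)^{1-1/s}/\D(\delta_0)$. Because $\delta\le\delta_0$ and $\D(\cdot)$ behaves like $\delta^{1-1/s}$ up to constants, this is $\ge C_3\Lambda_0(\delta)^{\zeta_2}/\D(\delta)$, giving \eqref{ineq:shen_5.10} for $\delta$ small with $\zeta_3=\tfrac12\zeta_2$. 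For the ``furthermore'' part, if $T_\omega^{n}(x)\notin\tilde B(\delta)$ one sets $\rho:=|T_\omega^{n}(x)-T_0(0)|$, notes $\rho\ge\delta$ since the orbit avoided $\tilde B(\delta_0)\supseteq\tilde B(\delta)$, multiplies the intermediate bound above by the extra factor $DT_{\sigma^{n}\omega}(T_\omega^{n}(x))\ge\D(\rho)$ from \eqref{delta_star}, and reads off \eqref{shen:ineq_5.11}.

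The step I expect to be the real obstacle is the bookkeeping of the two competing ``distance to the critical value'' powers $(\delta_3/\delta)^{1-1/s}$ and $(\delta_0/\delta_4)^{1-1/s}$ — in particular pinning down $\delta_4\le C_1\delta_3$ with $C_1$ independent of all parameters — together with the requirement that every auxiliary constant accumulated along the way (several of which legitimately depend on the fixed scale $\delta_0$, through $\D(\delta_0)$, the Koebe constant, $\kappa$, $\hat\eta$ and $C_1$) is ultimately harmless because it is overwhelmed by $\Lambda_0(\delta)^{\zeta_i-\zeta_3}\to\infty$ as $\delta\to 0$. This, and the fact that one loses a power of $\Lambda_0(\delta)$ relative to \eqref{Shen:eq_5.4}--\eqref{Shen:eq_5.5}, is exactly why $\zeta_3$ is chosen strictly below $\zeta_1$ and $\zeta_2$.
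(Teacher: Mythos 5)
Your proposal follows essentially the same route as the paper: split into $n=M(\delta)$ (read off Proposition~\ref{Shen:prop_5.2} directly) and $n>M(\delta)$ (factor $DT_\omega^n(x)$ at $M+1$, bound the head by \eqref{Shen:eq_5.5} and the tail by Lemma~\ref{Shen:Lemma_5.3} at scale $\delta_0$, cancel the two distance factors via $\delta_4\le C_1\delta_3$ proved by the same case split on $T_\omega^M(x)\in\tilde B(\delta_*)$, then use $\D(\delta_0)(\delta/\delta_0)^{1-1/s}\asymp\D(\delta)$ and $\Lambda_0(\delta)\to\infty$ to absorb constants), and the "furthermore" part is handled identically by tacking on $DT_{\sigma^n\omega}(T_\omega^n(x))\ge\D(\rho)$. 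Your choice $\zeta_3=\tfrac12\min\{\zeta_1,\zeta_2\}$ where the paper writes $\tfrac12\max\{\zeta_1,\zeta_2\}$ is the logically safe variant (one wants a lower exponent to make $\Lambda_0^{\zeta_i-\zeta_3}\ge1$ in both inequalities), so if anything you have quietly fixed a slip in the source.
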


\begin{proof}[Proof of Proposition \ref{Shen:Proposition_5.1}]
Let us assume our $\delta_0 $ is sufficiently small so that for all $0 < \delta \le \delta_0$ we have $\Lambda_0(\delta) > 1$. Let $s_1$ be the first time that $\nu_1 \ge M(\delta)$ and $T_\omega^{\nu_1}(x) \in \tilde B(\delta_0)$. If $n = \nu_1$, then the desired result follows from Lemma \ref{Shen:lemma_5.4}.

Alternatively, if $\nu_1 < n$, then we set $\rho_1 = |T_\omega^{\nu_1}(x)|$. By inequality \ref{shen:ineq_5.11}, we have
\begin{align}
	DT_\omega^{\nu_1 + 1}(x) \ge \Lambda_0(\delta)^{\zeta_3} \Big( \frac{\rho_1}{\delta}  \Big)^{1 - \frac 1 s}. \label{Shen:eq_5.13}
\end{align}	
Now, let $\nu_2 > \nu_1$ be the second time that $\nu_2 \ge M(\delta_1)$ and $T_\omega^{\nu_1}(x) \in \tilde B(\delta_0)$. If $\nu_2 = n$, then we stop, otherwise we define $\delta_2$ in a similar fashion to $\rho_1$ and then find $\nu_3$. We continue this process until we reach some $\nu_k = n$, and then apply inequality \ref{shen:ineq_5.11} to obtain for all $i = 1, \dots , k - 2$
\begin{align*}
	\prod_{j =\nu_i+1}^{\nu_{i + 1}} DT_{\sigma^j \omega}(T_\omega^j(x)) \ge \Lambda_0(\rho_i)^{\zeta_3} \Big( \frac{\rho_{i + 1}}{\delta_i}  \Big)^{1 - \frac 1 s},
\end{align*}
and using Lemma \ref{Shen:lemma_5.4}, we have
\begin{align*}
	\prod_{j = \nu_{k -1}+ 1}^{n -1}DT_{\sigma^j \omega}(T_\omega^j(x)) \ge \frac{\Lambda_0(\delta_{k-1})^{\zeta_3}}{\D(\delta)}.
\end{align*}
Thus, combining the above inequalities together, we have
\begin{align*}
	DT_\omega^n(x) &= DT_\omega^{n+1}(x) \Big( \prod_{i = 1}^{k - 2} \prod_{j = \nu_i + 1}^{\nu_{i + 1}} DT_{\sigma^j \omega}(T_\omega^j(x))  \Big) \prod_{j = \nu_{k-1}+1}^{n -1} DT_{\sigma^j \omega}(T_\omega^j(x)) \\
	& \ge \frac{\Lambda_0(\delta)^{\zeta_3}}{\D(\delta)} \prod_{i = 1}^{k - 2} \Lambda_0(\rho_j)^{\zeta_3} \Big( \frac{\rho_{k-1}}{\delta}  \Big)^{1 - \frac 1 s} 
	 \ge \frac{\Lambda_0(\delta)^{\zeta_3}}{\D(\delta)}.
\end{align*}
Thus, the result holds.
\end{proof}

Now, using all the previous results, we can prove Proposition \ref{prop:nonuniform_expansion}:

\begin{proof}
\textbf{Statement 1):}\\ \\
This follows directly from Lemma \ref{Shen:Lemma_5.3}. Indeed, from Lemma \ref{Shen:Lemma_5.3} we know that because $x, T_\omega(x), \dots, T_\omega^{n-1}(x) \notin \tilde B(\delta)$ and $T_\omega^n(x) \in \tilde B(2\delta)$, then 
\begin{align*}
		DT_\omega^n(x) \ge \frac{\kappa}{\D (\delta)} \Big( \frac{\delta}{\delta_3} \Big)^{1 - \frac 1 s} e^{\hat \eta n},
\end{align*}
where $\delta_3 = \max\{ |x - T_0(0)|, \delta  \}$. The smallest value $\Big( \frac{\delta}{\delta_3} \Big)^{1 - \frac 1 s}$ can take is $(\delta/2)^{1 - \frac 1 s}$. Thus, if we set $A = A(\delta) = \frac{\kappa}{\D (\delta)} (\delta/2)^{1 - \frac 1 s}$, then this gives us the desired result for fixed $\delta >0$.\\ \\
\textbf{Statement 2):}\\ \\
This follows directly from Proposition \ref{Shen:Proposition_5.1} and Lemma \ref{Shen:Lemma_5.3}. Indeed, as in the proof for the first statement, from Lemma \ref{Shen:Lemma_5.3} we know that because $x, T_\omega(x), \dots,$ $ T_\omega^{n-1}(x) \notin \tilde B(\delta)$ and $T_\omega^n(x) \in \tilde B(2\delta)$, then 
\begin{align*}
		DT_\omega^n(x) \ge \frac{\kappa}{\D (\delta)} \Big( \frac{\delta}{\delta_3} \Big)^{1 - \frac 1 s} e^{\hat \eta n},
\end{align*}
However, we already know that $|x - T_0(0)| \le 4 \delta$, so we therefore have 
$$
	\Big( \frac{\delta}{\delta_3} \Big)^{1 - \frac 1 s} \ge \Big( \frac 1 4 \Big)^{1 - \frac 1 s},
$$
which can just be absorbed into the $\kappa > 0$ term, thereby giving us
\begin{align}
		DT_\omega^n(x) \ge \frac{\kappa}{\D (\delta)} e^{\hat \eta n}.
\end{align}
Finally, if we set a constant $\beta(\delta)$ that satisfies the following criteria:
\begin{enumerate}
	\item $\beta(\delta) \to 0 $ as $\delta \to 0$
	\item $\hat \Lambda(\delta) = \Lambda(\delta)^{\beta(\delta)}e^{1 - \beta(\delta)} \ge 2e$ for all sufficiently small $\delta >0$
	\item $\Lambda(\delta)^{\beta(\delta)} \to \infty$ as $\delta \to 0$,
\end{enumerate}
and if we set
$$
	\eta_0(\delta) = (1 - \beta(\delta)) \hat \eta(\delta),
$$
then we have
\begin{align*}
	DT_\omega^n(x) \ge \frac{\hat \Lambda(\delta)}{\D (\delta)} e^{ \eta_0 n}.
\end{align*}
\end{proof}

\subsection{Good and Bad Parameter sets}
In order for us to use the random Young tower to obtain the desired result, we must ensure that we satisfy the expansion criteria. However, since the derivative goes to zero near the discontinuity at zero, if a typical orbit goes too close to the discontinuity too often, then we lose expansion. In this section, I prove that as $n \to \infty$, the measure of $\omega$'s for any given fixed $x$ for which this happens decays exponentially.

\begin{definition}[Return depth]
For some small $\delta >0$ and for fixed $\omega \in \Omega$, we denote
$$
	r_\delta (x) = r_\delta (\omega, x) := \inf \big\{ r \in \mathbb Z_+ \Hquad| \Hquad  T_\omega'(x) \cdot |x| \ge e^{-r}\delta  \big\}.
$$
Note that for shorthand we will write $r_\delta(T_\omega^j(x))= r_\delta(\sigma^j\omega, T_\omega^j(x))$.
\end{definition}

One should note that by definition, one can show an equivalent condition for return depth using the order of singularity condition: if $r_\delta (x) = r^*$, then using the order of singularity condition we have
\begin{align*}
	|x| \ge \frac{e^{-r^*}\delta}{T_\omega'(x)} \ge \frac{e^{-r^*}\delta}{K_2 |x|^{s-1}}, \\ 
\end{align*}
which implies
$$
	|x|^{s} \ge \frac{e^{-r^*}\delta}{K_2} \logeq |x| \ge \frac{e^{-r^*}\delta^{\frac 1 s }}{K_2}.
$$

We now need to define a `bad' set of pairs $(\omega, x) \in \Omega_\eps \times I$ for a given iterate $n$ and show that the measure of this set decays exponentially. To understand what we mean by `bad', note the following proposition:

\begin{proposition}\label{prop:bounded_return_depth_expansion}
	There exist constants $c, C, \lambda >0$ such that for any sufficiently small $\delta>0$ and $0 < \eps \le \delta $, and for all $\omega \in \Omega$, if $\sum_{i=0}^{n-1} r_\delta(T_\omega^i(x)) < cn$ and $T_\omega^n(x) \in \tilde B(\delta)$, then 
$$
	DT_\omega^n(x) \ge C e^{\lambda n}.
$$
\end{proposition}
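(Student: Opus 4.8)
The plan is to decompose a random orbit $x, T_\omega(x), \dots, T_\omega^{n-1}(x)$ into \emph{free stretches}, on which the orbit avoids a suitable neighbourhood of the origin, separated by isolated \emph{returns} to that neighbourhood, and to estimate $DT_\omega^n(x)$ block by block. Fix once and for all a constant $c_0\in(0,1]$, depending only on $K_1,K_2,s$, small enough that every $w\in\tilde B(c_0\delta)$ has $DT_\omega(w)\,|w|<\delta$, i.e.\ $r_\delta(\omega,w)\ge 1$: this is possible since $DT_\omega(w)\,|w|\le K_2|w|^s$ while $w\in\tilde B(c_0\delta)$ forces $|w|^s$ to be a fixed small multiple of $c_0\delta$. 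Let $\eta_1<\eta_2<\cdots<\eta_p$ be the times $j\in\{0,\dots,n-1\}$ with $T_\omega^j(x)\in\tilde B(c_0\delta)$, and set $r_i:=r_\delta(\sigma^{\eta_i}\omega,T_\omega^{\eta_i}(x))\ge 1$. If $p=0$, the orbit stays outside $\tilde B(c_0\delta)$ on $[0,n)$ and re-enters $\tilde B(\delta)$ at time $n$, so Lemma \ref{Shen:Lemma_5.3} (or Proposition \ref{prop:nonuniform_expansion}(1)) directly gives $DT_\omega^n(x)\ge Ce^{\lambda n}$; so assume $p\ge 1$.

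\textbf{The block estimate.} I would write $DT_\omega^n(x)$ as the product of the block factors over $[0,\eta_1)$, $[\eta_1,\eta_2),\dots,[\eta_{p-1},\eta_p)$ and $[\eta_p,n)$. On a block $[\eta_i,\eta_{i+1})$ split off the single return step: from the definition of $r_\delta$, $DT_{\sigma^{\eta_i}\omega}(T_\omega^{\eta_i}(x))\ge e^{-r_i}\delta/|T_\omega^{\eta_i}(x)|$, and since $T_\omega^{\eta_i}(x)\in\tilde B(c_0\delta)$ forces $|T_\omega^{\eta_i}(x)|$ to be at most a fixed multiple of $(c_0\delta)^{1/s}$, this yields $DT_{\sigma^{\eta_i}\omega}(T_\omega^{\eta_i}(x))\ge c\,e^{-r_i}(c_0\delta)^{1-1/s}\ge c'\,e^{-r_i}\D(c_0\delta)$ with $c,c'>0$ uniform, because $\D(c_0\delta)\asymp(c_0\delta)^{1-1/s}$. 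For the remaining stretch $T_\omega^{\eta_i+1}(x),\dots,T_\omega^{\eta_{i+1}-1}(x)$, which stays outside $\tilde B(c_0\delta)$ and lands back in $\tilde B(c_0\delta)$ at time $\eta_{i+1}$, Lemma \ref{Shen:Lemma_5.3} applies (with parameter $c_0\delta$); moreover $|T_\omega^{\eta_i+1}(x)-T_0(0)|\le\eps+c_0\delta\le 2\delta$, so the factor $(c_0\delta/\delta_3)^{1-1/s}$ there is bounded below by the uniform constant $(c_0/2)^{1-1/s}$. Hence this stretch contributes at least $\kappa\,(c_0/2)^{1-1/s}\,\D(c_0\delta)^{-1}e^{\hat\eta(\eta_{i+1}-\eta_i-1)}$, and multiplying by the return-step bound the whole block $[\eta_i,\eta_{i+1})$ contributes at least $c_9\,e^{-r_i}\,e^{\hat\eta(\eta_{i+1}-\eta_i-1)}$ with $c_9>0$ uniform, the two factors $\D(c_0\delta)$ cancelling. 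The first block $[0,\eta_1)$ is treated the same way but without a return step (here $\delta_3\le\diam I$, and $\D(c_0\delta)^{-1}(c_0\delta)^{1-1/s}\asymp 1$, so $DT_\omega^{\eta_1}(x)\ge C_0e^{\hat\eta\eta_1}$), and the last block $[\eta_p,n)$ exactly like the others, invoking the hypothesis $T_\omega^n(x)\in\tilde B(\delta)$ to close it off (after the routine identification of $\tilde B(\delta)$ with a neighbourhood comparable to $\tilde B(c_0\delta)$, which only affects the fixed constants).

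\textbf{Conclusion.} Multiplying all block factors and using that the free steps number $n-p$ and the returns number $p$, I obtain
\[
DT_\omega^n(x)\;\ge\;C_0\,c_9^{\,p}\,e^{-\sum_{i=1}^p r_i}\,e^{\hat\eta(n-p)}.
\]
Now the recurrence hypothesis enters: since each $r_i\ge 1$, one has $p\le\sum_{i=1}^p r_i\le\sum_{j=0}^{n-1}r_\delta(\sigma^j\omega,T_\omega^j(x))<cn$, so both $\sum r_i$ and $p$ are at most $cn$. Writing $c_9=e^{-a}$ with $a=\log(1/c_9)\ge 0$ a uniform constant, this gives $DT_\omega^n(x)\ge C_0\,e^{-(a+1)cn}\,e^{\hat\eta(1-c)n}$. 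Choosing the universal constant $c$ small enough that $\hat\eta(1-c)-(a+1)c\ge\tfrac12\hat\eta$ yields $DT_\omega^n(x)\ge C_0\,e^{(\hat\eta/2)n}$, which is the assertion with $C=C_0$, $\lambda=\hat\eta/2$, and $c$ as just fixed.

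\textbf{Main obstacle.} The delicate part is the constant and neighbourhood bookkeeping, and it has two faces. First, the decomposition must be set up so that the number of returns is genuinely controlled by $\sum_j r_\delta$: this forces returns to be defined relative to a neighbourhood $\tilde B(c_0\delta)$ small enough to lie inside the level set $\{r_\delta\ge 1\}$, and one must then check that this is compatible with Lemma \ref{Shen:Lemma_5.3} and Proposition \ref{prop:nonuniform_expansion} (stated for an arbitrary small parameter) and with the terminal hypothesis $T_\omega^n(x)\in\tilde B(\delta)$, which amounts to juggling a fixed chain of comparable neighbourhoods. Second, and more importantly, one has to verify that every $\delta$-dependent quantity ($\D(\delta)$, the prefactors $\kappa$, $A$, $\Lambda(\delta)$ from the earlier results, the singularity constants $K_1,K_2$) enters in pairs that cancel, so that the surviving $c$, $C$, $\lambda$ are genuinely uniform in $\delta$; the key cancellation is between the $e^{-r_i}\D(\delta)$ loss incurred at a return and the $\D(\delta)^{-1}$ gain built into the free-stretch estimate, which is precisely what makes the bound survive arbitrarily deep returns.
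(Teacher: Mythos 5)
Your proposal is correct in substance, but it takes a genuinely different route from the paper. The paper's proof is essentially a one-step deduction: it invokes Lemma~\ref{lemma:return_depths} (the random analogue of Shen's Lemma~7.7), which already encodes the block decomposition of the orbit at visits to $\tilde B(\delta)$, and the $\D(\delta)$ gained at a free stretch (via Proposition~\ref{prop:nonuniform_expansion}(2), whose prefactor $\Lambda(\delta)$ is $\ge 1$) cancels \emph{exactly} against the $\D(\delta)^{-1}$ lost at the following return, so no per-block constant accumulates and the number of returns $\Gamma$ never needs to be counted --- one only drops the factors $|\tilde B(\delta)|/|T_\omega^{\nu_i}(x)| \ge 1$ at the end, then handles $x\notin\tilde B(\delta)$ separately with Proposition~\ref{prop:nonuniform_expansion}(1). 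Your proof reconstructs the decomposition from scratch using Lemma~\ref{Shen:Lemma_5.3} on each free stretch; since that lemma carries the uniform small constant $\kappa$ rather than the divergent $\Lambda(\delta)$, you accumulate a per-block factor $c_9<1$, and the extra idea you need --- taking the return set to be $\tilde B(c_0\delta)$ with $c_0$ small enough that $r_\delta\ge 1$ there, so $p\le\sum r_i<cn$ --- is precisely what tames $c_9^p$. That is a legitimate alternative and the cancellation of $\D(c_0\delta)$ per block is worked out correctly. The one soft spot is the first and last block: Lemma~\ref{Shen:Lemma_5.3} applied with parameter $c_0\delta$ requires the terminal point to lie in $\tilde B(c_0\delta)$, whereas the hypothesis only gives $T_\omega^n(x)\in\tilde B(\delta)$, and likewise for $p=0$ neither Lemma~\ref{Shen:Lemma_5.3} nor Proposition~\ref{prop:nonuniform_expansion}(1) applies verbatim because the avoidance set and the landing set don't match; you flag this as ``routine identification,'' and indeed it can be patched (e.g.\ fall back on Lemma~\ref{Shen:Prop_2.7i} for those two blocks, at the cost of a fixed $\delta$-dependent constant), but as written this step is a genuine gap rather than mere bookkeeping. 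The trade-off is: the paper's route is shorter and sidesteps the $p$-counting entirely; yours is more self-contained (it does not need Lemma~\ref{lemma:return_depths}) at the price of the extra $c_0$-trick and the endpoint patching.
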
 

In the proof in \cite{Du}, Du uses Lemma 7.7 from \cite{Shen}. Thus, before we can prove this proposition, we need to adapt a version of Shen's lemma:

\begin{lemma}[Lemma 7.7, \cite{Shen}]\label{lemma:return_depths}
	Let $\Gamma= \Gamma(\omega, y, n)$ denote the number of returns for $(\omega, x)$ up to time $n$ (excluding $t=0$) and let $\nu_1, \dots, \nu_\Gamma$ be the ordered return times. There exists a constant $C>0$ such that, if $y \in \tilde B(\delta)$, then
	$$
		\log \Big( \frac{DT_\omega^n(y)|y| |T_\omega^{\nu_1}(y)| \dots |T_\omega^{\nu_\Gamma}(y)|}{|\tilde B(\delta)|^\Gamma} \Big) \ge \log C + \kappa n - \sum_{i=0}^{n-1} r_\delta(T_\omega^i(y)).
	$$ 
	Furthermore, if both $y, T_\omega^n(y) \in \tilde B(\delta)$, then for $(\omega, x)$, we have
	$$
		\log \Big( \frac{DT_\omega^n(y)|y| |T_\omega^{\nu_1}(y)| \dots |T_\omega^{\nu_\Gamma}(y)|}{|\tilde B(\delta)|^\Gamma} \Big) \ge \kappa n - \sum_{i=0}^{n-1} r_\delta(T_\omega^i(y)).
	$$

\begin{proof}
	We give the proof of the case where both $y, T_\omega^n(y) \in \tilde B(\delta)$. The proof for the case where just $y \in \tilde B(\delta)$ follows in exactly the same way, with the exception that one uses the weaker lower bound for the derivative between the last return time and $n$.

	First, we assume the case that $T_\omega^j(y) \notin \tilde B(\delta)$ for all $1 \le j < n$, i.e. $\Gamma = 1$. Since we are already assuming $y \in \tilde B(\delta)$, we therefore have $|T_\omega(y) -T(0)| \le 4\delta$. Thus, we can use the non-uniform expansion result (\ref{ineq:nue2}), so clearly we have
	$$
		DT_{\sigma \omega}^{n-1}(T_\omega(x)) \ge \frac{1}{\D(\delta)}e^{\kappa n},
	$$
where $\kappa>0$ is some suitably chosen constant since $\delta$ is fixed. Thus, we have
	\begin{align*}
		\frac{DT_\omega^n(y)|y|}{|\tilde B (\delta)|} &= DT_{\sigma \omega}^{n-1}(T_\omega(y)) \frac{DT_\omega(y)|y|}{|\tilde B (\delta)|}
		 \ge DT_{\sigma \omega}^{n-1}(T_\omega(y)) \frac{e^{-r_\delta (\omega, y)}\delta}{|\tilde B (\delta)|} \\
		& \ge \frac{1}{\D(\delta)}e^{\kappa n}e^{-r_\delta (\omega, y)} \D(\delta) 
		 =  e^{\kappa n -r_\delta (\omega, y)}.
	\end{align*}
	Thus, clearly
	$$
		\log \Big( \frac{DT_\omega^n(y)|y|}{|\tilde B(\delta)|} \Big) \ge \kappa n -r_\delta (\omega, y).
	$$
	Now, consider when it is not the case that $T_\omega^j(y) \notin \tilde B(\delta)$ for all $1 \le j < n$, i.e. $\Gamma = 2$. Say there is one iterate $1 \le k < n$ where $T_\omega^k(y) \in \tilde B(\delta)$. Then we simply split the orbit into $y, T_\omega(y), \dots , T_\omega^{k-1}(y)$ and $T_\omega^{k}(y), \dots, T_\omega^n(y)$ and then apply the same procedure above to each segment. This will give us
$$
	DT_{\sigma \omega}^{k-1}(T_\omega(x)) \ge \frac{1}{D(\delta)}e^{\kappa k} \quad \text{and} \quad  DT_{\sigma^{k+1} \omega}^{n-k-1}(T_\omega^{k+1}(x)) \ge \frac{1}{D(\delta)}e^{\kappa (n-k)}
$$
and thus
	\begin{align*}
		&\frac{DT_\omega^{n-k}(T_\omega^k(y))|y| |T_\omega^k(y)|}{|\tilde B (\delta)|^2} = \frac{DT_{ \omega}^{k}(y)|y|}{|\tilde B (\delta)|}  \frac{DT_{\sigma^k \omega}^{n-k}(T_\omega^k(y))|T_\omega^k(y)|}{|\tilde B (\delta)|} \\
		& \ge DT_{\sigma \omega}^{k-1}(T_\omega(y)) \frac{DT_\omega(y)|y|}{|\tilde B (\delta)|} DT_{\sigma^{k+1} \omega}^{n-k-1}(T_\omega^{k+1}(y)) \frac{DT_{\sigma^k \omega}(T_\omega^k(y))|T_\omega^k(y)|}{|\tilde B (\delta)|}\\
		& \ge  \exp (\kappa k -r_\delta (\omega, y))  \exp (\kappa (n-k) -r_\delta (\sigma^k \omega, T_\omega^k(y))) \\
		& = \exp(\kappa n -r_\delta (\omega, y) - r_\delta (\sigma^k \omega, T_\omega^k(y))).
	\end{align*}
One follows the same procedure if there are more returns.
\end{proof}
\end{lemma}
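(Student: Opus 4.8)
The plan is to prove both inequalities by cutting the orbit of $y$ at its successive visits to $\tilde B(\delta)$ and estimating $DT_\omega$ block by block, combining the uniform expansion of Proposition~\ref{prop:nonuniform_expansion} on the stretches spent outside $\tilde B(\delta)$ with the definition of the return depth $r_\delta$ at the single ``inside'' step that opens each block.

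Concretely, set $\nu_0 := 0$ and let $\nu_1 < \cdots < \nu_\Gamma$ be the return times of $(\omega,y)$, so $T_\omega^{\nu_j}(y)\in\tilde B(\delta)$; in the second statement $\nu_\Gamma = n$, while in the first we take $\nu_\Gamma \le n-1$ to be the last return. Decompose $[0,n]$ into the blocks $[\nu_0,\nu_1],\dots,[\nu_{\Gamma-1},\nu_\Gamma]$ together with a final block $[\nu_\Gamma,n]$ (which is empty, hence discarded, exactly in the second statement). On a block $[\nu_j,\nu_{j+1}]$ write $DT_\omega^{\nu_{j+1}-\nu_j}(T_\omega^{\nu_j}(y)) = DT_{\sigma^{\nu_j}\omega}(T_\omega^{\nu_j}(y))\cdot DT_{\sigma^{\nu_j+1}\omega}^{\nu_{j+1}-\nu_j-1}(T_\omega^{\nu_j+1}(y))$. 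The definition of $r_\delta$ gives $DT_{\sigma^{\nu_j}\omega}(T_\omega^{\nu_j}(y))\,|T_\omega^{\nu_j}(y)| \ge e^{-r_\delta(T_\omega^{\nu_j}(y))}\delta$. For the remaining factor I invoke Proposition~\ref{prop:nonuniform_expansion}(2): the intermediate points lie outside $\tilde B(\delta)$ by construction, the terminal point $T_\omega^{\nu_{j+1}}(y)$ lies in $\tilde B(\delta)\subset\tilde B(2\delta)$, and the starting point satisfies $|T_\omega^{\nu_j+1}(y) - T(0)| \le 4\delta$ because $T_\omega^{\nu_j}(y)\in\tilde B(\delta) = T^{-1}(B_\delta(1))\cup T^{-1}(B_\delta(-1))$ forces its $T_{\sigma^{\nu_j}\omega}$-image to lie within $\delta+\eps\le 2\delta$ of one of the critical values $\pm1 = T(0)$; hence this factor is $\ge \tfrac{\Lambda(\delta)}{\D(\delta)}e^{\kappa(\nu_{j+1}-\nu_j-1)}$.

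Multiplying over all blocks telescopes. The exponents of the $e^\kappa$'s sum to $n$ minus the number of inside steps; each inside step contributes a reciprocal position factor $|T_\omega^{\nu_j}(y)|^{-1}$, cleared by the matching length factor $|T_\omega^{\nu_j}(y)|$ (or $|y|$, for $\nu_0$) in the numerator of the claimed left-hand side; and each block of the form $[\nu_j,\nu_{j+1}]$ contributes one $\delta$ and one $\D(\delta)^{-1}$, which combine via $\delta/\D(\delta) = |\tilde B(\delta)|/2$ so that, after absorbing the $|\tilde B(\delta)|^\Gamma$ in the denominator, only $\Lambda(\delta)^\Gamma 2^{-\Gamma}$ survives. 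Since $\Lambda(\delta)\to\infty$ we may assume $\Lambda(\delta)\ge 2e^\kappa$ for $\delta$ small, so $\Lambda(\delta)^\Gamma 2^{-\Gamma}e^{-\kappa\Gamma}\ge 1$ and the product collapses to $e^{\kappa n}\,e^{-\sum_j r_\delta(T_\omega^{\nu_j}(y))}$. Finally, for $\delta$ small every iterate $T_\omega^i(y)\notin\tilde B(\delta)$ is uniformly bounded away from $0$, so $DT_\omega(T_\omega^i(y))\,|T_\omega^i(y)|\ge\delta$ and $r_\delta(T_\omega^i(y)) = 0$; hence $\sum_j r_\delta(T_\omega^{\nu_j}(y)) = \sum_{i=0}^{n-1}r_\delta(T_\omega^i(y))$, and taking logarithms gives the second inequality.

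For the first inequality the only change is that the final block $[\nu_\Gamma,n]$ is nonempty and its terminal point $T_\omega^n(y)$ need not lie in $\tilde B(2\delta)$, so Proposition~\ref{prop:nonuniform_expansion} is unavailable there; in its place I use Lemma~\ref{Shen:Prop_2.7i}, which (since the orbit in that block stays outside $\tilde B(\delta)$, a fortiori outside $(-\delta,\delta)$) still yields $DT_{\sigma^{\nu_\Gamma+1}\omega}^{n-\nu_\Gamma-1}(T_\omega^{\nu_\Gamma+1}(y)) \ge C(\delta)e^{\eta(n-\nu_\Gamma-1)}$, but with a constant $C(\delta)$ possibly smaller than $1$. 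This single constant --- together with the one leftover $\delta$ from the last inside step, which is now unmatched by a $\D(\delta)^{-1}$ --- is precisely the additive $\log C$ of the statement (and in the no-return case $\Gamma = 0$ this reduces to a one-line estimate $DT_\omega^n(y)|y| \ge C(\delta)\delta\, e^{\eta n}e^{-r_\delta(\omega,y)}$). I expect the genuine difficulty here to be purely organisational: lining up exactly the number of returns, the number of position factors $|T_\omega^{\nu_j}(y)|$, and the power of $|\tilde B(\delta)|$ in the displayed formula --- the return at time $n$ and the base point $y$ are very easy to miscount --- together with the elementary verification that every $T_\omega$ sends $\tilde B(\delta)$ into a $4\delta$-neighbourhood of $\{\pm1\}$ so that Proposition~\ref{prop:nonuniform_expansion}(2) legitimately applies at each return. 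Everything else is a routine multiplicative telescoping.
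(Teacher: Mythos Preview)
Your proposal is correct and follows essentially the same approach as the paper: decompose the orbit at the successive returns to $\tilde B(\delta)$, use the definition of $r_\delta$ for the single inside step opening each block and Proposition~\ref{prop:nonuniform_expansion}(2) for the outside stretch, then telescope. The paper treats the base case $\Gamma=1$ explicitly, then $\Gamma=2$, and says ``one follows the same procedure if there are more returns''; for the first inequality it likewise just remarks that one uses the weaker derivative bound on the final block, exactly as you do with Lemma~\ref{Shen:Prop_2.7i}. Your write-up is in fact more careful with the bookkeeping (tracking the $\Lambda(\delta)$ factors and the matching of $\delta/\D(\delta)$ against $|\tilde B(\delta)|$), and your observation that the non-return iterates contribute nothing extra to $\sum_i r_\delta(T_\omega^i(y))$ is harmless even if not literally $r_\delta=0$ there, since all return depths are nonnegative and you only need the inequality $\sum_j r_\delta(T_\omega^{\nu_j}(y)) \le \sum_{i=0}^{n-1} r_\delta(T_\omega^i(y))$.
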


\begin{proof}[Proof of proposition 2.2]
	From Lemma 2.3, we know that for $y, T_\omega^n(y) \in \tilde B(\delta)$ we have
	$$
		\log \Big( \frac{DT_\omega^n(y)|y| |T_\omega^{\nu_1}(y)| \dots |T_\omega^{\nu_\Gamma}(y)|}{|\tilde B(\delta)|^\Gamma} \Big) \ge \kappa n - \sum_{i=0}^{n-1} r_\delta(T_\omega^i(y)).
	$$
	Choosing some small $0 < c < \kappa$ and setting $\lambda = \kappa - c$, if our starting point $x$ is in $\tilde B(\delta)$, then we simply consider $y=x$ and directly use the above Lemma:
	$$
		DT_\omega^n(y) \ge \frac{|\tilde B(\delta)|}{|x|} \frac{|\tilde B(\delta)|}{|T_\omega^{\nu_1}(y)|} \dots \frac{|\tilde B(\delta)|}{|T_\omega^{\nu_\Gamma}(y)|} \exp(\kappa n - \sum_{i=0}^{n-1} r_\delta (T_\omega^i(y)) ) > e^{\lambda n}.
	$$
Here we can get rid of each $|\tilde B(\delta)|/|T_\omega^i(x)|$ for $i \in \{0 , \nu_1, \dots , \nu_\Gamma\}$ because $T_\omega^i(x) \in \tilde B(\delta)$. 

Now, if $x \notin \tilde B(\delta)$, then we instead consider the starting point $y=T_\omega^{\nu_1}(x)$ in the context of the above lemma. Clearly we have
$$
	\sum_{i=0}^{n-\nu_1-1} r_\delta(T_{\sigma^{\nu_1} \omega}^{i}(y)) = \sum_{i=0}^{n-1} r_\delta( T_\omega^i(x)) < cn,
$$ 
thus we can write
$$
	DT_{\sigma^{\nu_1} \omega}^{n-\nu_1}(y) \ge \exp \big({\kappa(n-\nu_1) - \sum_{i=0}^{n-\nu_1-1} r_\delta(T_{ \omega}^i(y))})  > \exp(\kappa(n-\nu_1) - cn \big) = \exp(\lambda n - \kappa \nu_1).
$$
Furthermore, since we have assumed $x \notin \tilde B(\delta)$, and by definition $T_\omega^{\nu_1}(x) \in \tilde B(\delta)$, we can apply inequality (\ref{ineq:nue2}) to obtain
$$
	DT_\omega^{\nu_1}(x) \ge A \exp(\kappa \nu_1).
$$
Combining these together, we obtain
$$
	DT_\omega^n(x) = DT_{\sigma^{\nu_1} \omega}^{n-\nu_1}(y) \cdot DT_\omega^{\nu_1}(x)= \exp(\lambda n - \kappa \nu_1) \cdot A \exp(\kappa \nu_1) = A e^{\lambda n}.
$$
\end{proof}

It is clear that for fixed $\delta$ and $\eps$ we consider a pair $(\omega, x)$ `good' if it satisfies the sum of return depths condition above. If a pair does not satisfy the sum of return depths condition, then we consider it to be `bad'. Formally, we define the set
$$
	E_n = E_n(\delta, \eps) := \Big\{ (\omega, x) \in \Omega_\eps \times I \Hquad | \Hquad \sum_{j=0}^{n-1} r_\delta(T_\omega^j(x)) \ge cn   \Big\}.
$$
Clearly we want to show that this set decays exponentially:

\begin{proposition}\label{Shen_prop_7.1}
	There exists a small constant $c >0$ such that, for all sufficiently small $\delta >0$, there exist constants $ \gamma(\delta), C(\delta)>0$ such for sufficiently small $\eps > 0$ we have
	\begin{align}
		(\PP_\eps \times \mathrm{Leb})(E_n) \le C(\delta) e^{- \gamma(\delta)n}.
	\end{align}
\end{proposition}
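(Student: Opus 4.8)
The plan is to estimate the measure of $E_n$ by a large-deviation argument applied to the sequence of return depths $r_\delta(T_\omega^j(x))$ along a random orbit. The key point is that, although these random variables are neither independent nor identically distributed, they satisfy a uniform exponential tail estimate: there are constants $C_0>0$ and $\beta_0>0$, independent of $\eps$ (for $\eps\le\delta$) and of the base point, such that for every $j\ge 0$,
\begin{align*}
  (\PP_\eps\times\leb)\bigl(\{(\omega,x): r_\delta(T_\omega^j(x))\ge r\}\bigr)\le C_0 e^{-\beta_0 r}.
\end{align*}
This follows from the equivalent characterization of return depth derived just above the proposition: $r_\delta(x)=r^*$ forces $|x|\ge K_2^{-1} e^{-r^*}\delta^{1/s}$, so $\{r_\delta(\cdot)\ge r\}$ is contained in a neighborhood of $0$ of radius $\lesssim e^{-r/s}\delta^{1/s}$; since $T_\omega^j$ preserves Lebesgue up to the bounded-distortion/nonsingularity estimates already established (Lemma \ref{lemma:shen_2.3}) and the measure $\PP_\eps$ integrates over the fibre, the pushforward of $\PP_\eps\times\leb$ under $(\omega,x)\mapsto(\sigma^j\omega,T_\omega^j(x))$ is absolutely continuous with uniformly bounded density, giving the stated tail with $\beta_0=1/s$ (or any slightly smaller exponent to absorb constants).

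Next I would run a standard Markov/exponential-moment (Bernstein-type) argument. First I would enlarge $E_n$ slightly: a sum of nonnegative integers reaching $cn$ means that either one term is already $\ge \log^2 n$ (a rare event, controlled termwise by the tail bound and a union bound over $j<n$, which is superpolynomially small hence certainly $O(e^{-\gamma n})$ after shrinking $c$), or all terms are $\le\log^2 n$ and their truncated sum reaches $\ge cn/2$. For the truncated sum $S_n=\sum_{j=0}^{n-1}\tilde r_j$ with $\tilde r_j=\min\{r_\delta(T_\omega^j(x)),\log^2 n\}$, I would bound $\mathbb E[e^{t S_n}]$ for a small fixed $t<\beta_0$. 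The obstacle to a naive product bound is the lack of independence; I would handle this by conditioning successively on the coordinates $\omega_0,\omega_1,\dots$ of $\omega$ and on $x$, using that $\tilde r_j$ depends on $(\omega_0,\dots,\omega_{j-1},x)$ only through the point $T_\omega^j(x)$, and that the conditional law of $T_\omega^j(x)$ given the past still has uniformly bounded density with respect to Lebesgue (again by the nonsingularity/distortion estimates). This yields $\mathbb E[e^{t\tilde r_j}\mid \mathcal F_{j-1}]\le M(t)$ with $M(t)=1+O(t)\to 1$ as $t\to 0$, uniformly in $j$ and in the conditioning; telescoping gives $\mathbb E[e^{tS_n}]\le M(t)^n$. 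Then Markov's inequality gives $(\PP_\eps\times\leb)(S_n\ge cn/2)\le e^{-tcn/2}M(t)^n$, and choosing $t$ small and then $c$ small makes the exponent $\log M(t)-tc/2$ strictly negative, say equal to $-\gamma(\delta)$. Absorbing the rare-large-term contribution and the constant $C_0$ into $C(\delta)$ completes the estimate $(\PP_\eps\times\leb)(E_n)\le C(\delta)e^{-\gamma(\delta)n}$.

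I expect the main obstacle to be the conditional-density step — verifying that, conditioned on the first $j$ coordinates of $\omega$ and on $x$ ranging in $I$, the distribution of the return-depth statistic at time $j$ has a uniformly (in $j$, $\eps$, and the conditioning) bounded density, so that the one-step exponential moment is controlled. This is really a quantitative nonsingularity statement for the random maps $T_\omega$, and the pieces needed are exactly the bounded-distortion and diffeomorphism bounds from Lemma \ref{lemma:shen_2.3} together with the order-of-singularity condition (C2) controlling how much mass $T_\omega$ can pile up near the critical value; some care is needed because near $x=0$ the derivative degenerates, but that is precisely the regime where $r_\delta$ is large and is already discounted by the tail. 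A secondary technical point is that $c$ must be chosen small enough to be compatible with the constant $c$ appearing in Proposition \ref{prop:bounded_return_depth_expansion}, so that "good" pairs in the complement of $E_n$ are exactly the ones enjoying exponential expansion; this is just a matter of taking the minimum of the two constants. Everything else is routine large-deviation bookkeeping.
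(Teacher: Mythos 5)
Your proposal takes a different route from the paper, and the central step does not hold. The paper proves the proposition via Lemmas \ref{Shen_lemma_7.2}--\ref{Shen_lemma_7.5}: it tracks depths only at the successive entry times into $\tilde B(\delta)$, bounds (for fixed $x$) the $\PP_\eps$-probability that the first $m$ such depths exceed prescribed thresholds $r_1,\dots,r_m$ by $K_0^m e^{-\rho_0|\mathbf r|}$ (Lemma \ref{Shen_lemma_7.5}), and finishes by decomposing $E_n$ into a set $\Xi$ and sets $\Xi_\ell^m$ indexed by the number of returns $m$ and the accumulated depth $\ell$, with a Stirling-type estimate absorbing $K_0^m$ and the binomial count ${\ell+m-1\choose m-1}$ of compositions. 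Your exponential-moment scheme replaces all of this with a one-step conditional bound $\mathbb E[e^{t\tilde r_j}\mid\mathcal F_{j-1}]\le M(t)$, and that is where the argument fails.

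First, the conditioning as stated is incoherent: if $\mathcal F_{j-1}$ contains $(\omega_0,\dots,\omega_{j-1},x)$ then $T_\omega^j(x)$ is $\mathcal F_{j-1}$-measurable, so its conditional law is a point mass and there is no density to bound; the conditional exponential moment is essentially $e^{t\tilde r_j}$ itself, which is unbounded when the orbit sits near $0$. Second, if you drop $\omega_{j-1}$ from the conditioning so that the last fibre coordinate supplies the smoothing, the push-forward of $\nu_\eps$ under $t\mapsto T_t(y)$ has density of order $1/\eps$ (only $|\partial_t F|\le 1$ is assumed, and $\nu_\eps$ has density $1/(2\eps)$); when $T_0(y)$ is close to $0$ the conditional tail of $\tilde r_j$ is of size $e^{-r/s}\delta^{1/s}/\eps$ up to the obvious cutoff, so the one-step moment generating function scales like a power of $1/\eps$ and $M(t)$ is not bounded uniformly in $\eps$, which destroys the telescoped estimate precisely in the regime $\eps\to0$ that the proposition addresses. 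Third, the unconditional claim that the push-forward of $\PP_\eps\times\leb$ under $(\omega,x)\mapsto T_\omega^j(x)$ has a density bounded uniformly in $j$ is not justified by Lemma \ref{lemma:shen_2.3}, which is a bounded-distortion statement on carefully chosen small intervals rather than a transfer-operator density bound, and it is in fact false in general: the density $\sum_{T_\omega^j(y)=z}|DT_\omega^j(y)|^{-1}$ develops integrable but unbounded spikes along the random orbit of the critical value $\pm1$, and these can wander arbitrarily close to $0$, exactly where $r_\delta$ is large. The paper avoids all of this by invoking the bounded-distortion diffeomorphism (Lemma \ref{Shen_lemma_7.2}) only at return times, where its hypotheses are actually verified, and by explicitly counting over return configurations so that the $K_0^m$ growth in the number of returns is cancelled by the $e^{-\rho_0\ell}$ decay in the accumulated depth; your argument has no mechanism for that trade-off.
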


We prove this using the following sequence of lemmas.

\begin{lemma}\label{Shen_lemma_7.2}
	Let $\delta > 0$ be sufficiently small, and denote $\hat R_\omega (x) = \min \{ n \in \NN \Hquad | \Hquad T_\omega^n(x) \in \tilde B(\delta)  \} $. For any sufficiently small $\eps > 0$, all $\omega \in \Omega_\eps$, and all $x \in I$ such that $| x - T_0(0)| \le 4 \delta$ and $n = \hat R_\omega (x)< \infty$, if $J$ is the component of $(T_\omega^n)^{-1}(\tilde B(\delta))$ containing $x$, then $T_\omega^n|_J : J \to T_\omega^n(J)$ is a diffeomorphism with bounded distortion $\N (T_\omega^n | J) \le 1$ and size $|J| \le \delta$. 

\begin{proof}
	Recall the constant $\theta_0>0$ from Lemma \ref{lemma:shen_2.3}, and define
	$$
		\hat J_{x,n} = \Big[ x - \frac{\theta_0}{A(\omega, x , n)},   x + \frac{\theta_0}{A(\omega, x , n)} \Big] \cap I_\pm,
	$$ 
Let us set $\theta = \theta_0/e$. Then $T_\omega |_{\hat J_{x,n} } : \hat J_{x,n} \to T_\omega^n(\hat J_{x,n})$ is a diffeomorphism and $\N (T_\omega^n | \hat J_{x,n}) \le 1$. We know that for any $y \in \partial \hat J_{x,n} \setminus \{ -1 , 1  \}$, we have $| T_\omega^n(x) - T_\omega^n(y)   | \ge |DT_\omega^n(x)| \theta_0 / (e A(\omega, x, n)) \ge |\tilde B(\delta)| $. Therefore, we know that $J \subset J_{x,n} $, and thus $\N (T_\omega^n | \hat J_{x,n}) \le 1$. Furthermore, since we have $|x - T_0(x)| \le 4 \delta$, from Proposition \ref{prop:nonuniform_expansion} we know that $DT_\omega^n(x) > e/ \D(\delta)$, and thus $|J| \le e |\tilde B(\delta)|/ DT_\omega^n(x) < \delta$.
\end{proof}
\end{lemma}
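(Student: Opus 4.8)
The plan is to realise the pull-back component $J$ as a subinterval of the controlled neighbourhood furnished by Lemma \ref{lemma:shen_2.3}, and then to read off the distortion bound and the size bound from that lemma together with the expansion estimate of Proposition \ref{prop:nonuniform_expansion}.

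First I would note that $|x-T_0(0)|\le 4\delta$ and $|T_0(0)|=1$ (it is a one-sided limit of $T_0$ at $0$) force $x$ to lie far from $0$; in particular $x\notin\tilde B(\delta)$, so $n=\hat R_\omega(x)\ge 1$, and the orbit $x,T_\omega(x),\dots,T_\omega^{n-1}(x)$ avoids $\tilde B(\delta)$ and hence stays bounded away from the origin. Consequently $A(\omega,x,n)<\infty$, and Lemma \ref{lemma:shen_2.3} applies with this $n$: writing $\hat J:=[x-\theta_0/A(\omega,x,n),\,x+\theta_0/A(\omega,x,n)]\cap I_\pm$, the map $T_\omega^n|_{\hat J}$ is a diffeomorphism onto its image, $\mathcal N(T_\omega^n\mid\hat J)\le 1$, and (from the proof of that lemma) $\mathrm{Dist}(T_\omega^n\mid\hat J)\le\tfrac12$, so $DT_\omega^n$ varies by a factor at most $e$ throughout $\hat J$. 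Since moreover $|x-T_0(0)|\le 4\delta$, the orbit avoids $\tilde B(\delta)$ up to time $n-1$, and $T_\omega^n(x)\in\tilde B(\delta)\subseteq\tilde B(2\delta)$, part (2) of Proposition \ref{prop:nonuniform_expansion} gives $DT_\omega^n(x)\ge\tfrac{\Lambda(\delta)}{\D(\delta)}e^{\kappa n}\ge\tfrac{\Lambda(\delta)}{\D(\delta)}$, which for small $\delta$ already exceeds $2e/\D(\delta)$ because $\Lambda(\delta)\to\infty$.

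The crux is the inclusion $J\subseteq\hat J$. By the mean value theorem and the distortion bound on $\hat J$, the $T_\omega^n$-image of the portion of $\hat J$ on either side of $x$ has length at least $e^{-1}DT_\omega^n(x)\,\theta_0/A(\omega,x,n)$. Combining Proposition \ref{prop:nonuniform_expansion} with the bound $A(\omega,x,n)\le C(\delta)\,DT_\omega^n(x)$ — valid for orbits avoiding $\tilde B(\delta)$, since the uniform intermediate expansion of Lemma \ref{Shen:Prop_2.7i} makes the derivative sums defining $A(\omega,x,n)$ dominated by the terminal derivative $DT_\omega^n(x)$ — one sees that this image length is at least $|\tilde B(\delta)|$ once $\delta$ is small. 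Because $T_\omega^n(x)\in\tilde B(\delta)$ and each connected component of $\tilde B(\delta)$ has length at most $|\tilde B(\delta)|$, moving outward from $x$ along $\hat J$ the image under $T_\omega^n$ leaves $\tilde B(\delta)$ strictly before reaching an endpoint of $\hat J$ other than $\pm1$; hence the component $J$ of $(T_\omega^n)^{-1}(\tilde B(\delta))$ containing $x$ lies inside $\hat J$. Then $T_\omega^n|_J$ is a diffeomorphism (a restriction of one), and by monotonicity of $\mathcal N(\cdot\mid\cdot)$ under shrinking the domain we obtain $\mathcal N(T_\omega^n\mid J)\le\mathcal N(T_\omega^n\mid\hat J)\le 1$.

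For the size bound, $T_\omega^n(J)\subseteq\tilde B(\delta)$ gives $|T_\omega^n(J)|\le|\tilde B(\delta)|$, and the mean value theorem together with the distortion control on $\hat J\supseteq J$ yields $|J|\le e\,|T_\omega^n(J)|/DT_\omega^n(x)\le e|\tilde B(\delta)|/DT_\omega^n(x)$; substituting $DT_\omega^n(x)\ge\Lambda(\delta)/\D(\delta)=\Lambda(\delta)|\tilde B(\delta)|/|B_\delta(0)|=\Lambda(\delta)|\tilde B(\delta)|/(2\delta)$ gives $|J|\le 2e\delta/\Lambda(\delta)\le\delta$ for $\delta$ small. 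The step I expect to be genuinely delicate is the overshoot estimate of the previous paragraph — that $T_\omega^n(\hat J)$ comfortably covers $\tilde B(\delta)$ — as it requires careful bookkeeping of $A(\omega,x,n)$ against both $DT_\omega^n(x)$ and $|\tilde B(\delta)|$; this is also the point at which the hypothesis $|x-T_0(0)|\le 4\delta$ is essential, since it is what upgrades the expansion of $T_\omega^n$ at $x$ from merely exponential to the explicit lower bound $\Lambda(\delta)/\D(\delta)$ of Proposition \ref{prop:nonuniform_expansion}(2).
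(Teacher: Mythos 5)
Your argument follows the same route as the paper: invoke Lemma~\ref{lemma:shen_2.3} to produce the controlled interval $\hat J_{x,n}$, show that the image of $\hat J_{x,n}$ under $T_\omega^n$ overshoots $\tilde B(\delta)$ so that the return component $J$ sits inside $\hat J_{x,n}$, and then read off both the distortion bound by monotonicity of $\mathcal N(\cdot\mid\cdot)$ and the size bound $|J|\le e|\tilde B(\delta)|/DT_\omega^n(x)<\delta$ from the expansion estimate of Proposition~\ref{prop:nonuniform_expansion}(2). The one place you supply more detail than the paper — the comparison $A(\omega,x,n)\lesssim DT_\omega^n(x)$ needed for the overshoot — is indeed the crux, and both proofs treat it somewhat tersely; the cleanest justification that $DT_\omega^n(x)\theta_0/(eA(\omega,x,n))\ge|\tilde B(\delta)|$ is a random analogue of Lemma~\ref{lemma:Shen_4.5}, which yields $A(\omega,x,n)\le\theta(\delta)DT_\omega^n(x)/|\tilde B(\delta)|$ with $\theta(\delta)\to 0$, rather than the cruder $A\le C(\delta)DT_\omega^n(x)$ you extract from Lemma~\ref{Shen:Prop_2.7i} (with which it is less clear that the constant beats $|\tilde B(\delta)|\sim\delta^{1/s}$).
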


\begin{lemma}\label{Shen_lemma_7.3}
	There exist constants $K_1 > 0$ such that, for sufficiently small $\delta >0$, sufficiently small $\eps >0$, all $\omega \in \Omega$, and all $r \in \N$, for the set
	$$
		Y_\delta^\omega (r) = \big\{ y \in B(0, 2 \delta) \Hquad | \Hquad r_\delta(T_\omega^{\hat R_\omega}(x)) \ge r  \big\}
	$$
	we have
	\begin{align}\label{Shen_eq_7.5}
		|Y_\delta^\omega (r)  | \le K_1 \delta e^{- r/s}.
	\end{align}
\end{lemma}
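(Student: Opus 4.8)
The plan is to estimate the measure of $Y_\delta^\omega(r)$ by pushing forward to the first return to $\tilde B(\delta)$ and controlling the size of the set where the return depth is large. Fix $y \in B(0,2\delta)$; since $|y - T_0(0)|$ may or may not be small, but the relevant points are those whose orbit eventually lands in $\tilde B(\delta)$, we first observe that for $y \in B(0,2\delta)$ with $\hat R_\omega(y) < \infty$, the point $T_\omega^{\hat R_\omega}(y)$ lies in $\tilde B(\delta)$. By the definition of return depth and the order-of-singularity computation carried out just before Proposition \ref{prop:bounded_return_depth_expansion}, the condition $r_\delta(T_\omega^{\hat R_\omega}(y)) \ge r$ forces $|T_\omega^{\hat R_\omega}(y)|$ to be small: indeed $r_\delta(z) \ge r$ means $DT_\omega(z)|z| < e^{-r+1}\delta$, which by condition (C2) gives $|z|^s \lesssim e^{-r}\delta$, hence $|z| \lesssim (e^{-r}\delta)^{1/s}$. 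So $T_\omega^{\hat R_\omega}(Y_\delta^\omega(r))$ is contained in an interval of length $O((e^{-r}\delta)^{1/s})$ around $0$ inside $\tilde B(\delta)$.

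Next I would pull this back. By Lemma \ref{Shen_lemma_7.2}, for each $y \in B(0,2\delta)$ with $n = \hat R_\omega(y) < \infty$, the component $J = J(y)$ of $(T_\omega^n)^{-1}(\tilde B(\delta))$ containing $y$ is mapped diffeomorphically onto $\tilde B(\delta)$ by $T_\omega^n$ with distortion $\N(T_\omega^n|J) \le 1$ and $|J| \le \delta$. These components $J$ are pairwise disjoint for distinct values of $n$ (and for the same $n$, distinct components), and by definition of $\hat R_\omega$ they cover all of $B(0,2\delta) \cap \{\hat R_\omega < \infty\}$; in particular their total length is at most $|B(0,2\delta)| = 4\delta$. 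On each such $J$, the bounded distortion $\N(T_\omega^n|J)\le 1$ (hence $\mathrm{Dist}(T_\omega^n|J')\le |J'|/|J|$ for subintervals, giving a uniform $e$-bounded distortion) implies that
\begin{align*}
	\frac{|J \cap (T_\omega^n)^{-1}(Z)|}{|J|} \le e\,\frac{|Z|}{|\tilde B(\delta)|}
\end{align*}
for any measurable $Z \subset \tilde B(\delta)$; taking $Z$ to be the interval of length $O((e^{-r}\delta)^{1/s})$ from the previous step, and noting $|\tilde B(\delta)| \asymp \delta^{1/s}$ by the order-of-singularity condition, the right-hand side is $O(e^{-r/s})$. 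Summing over all the disjoint components $J$ and using $\sum |J| \le 4\delta$ then yields $|Y_\delta^\omega(r)| \le K_1 \delta e^{-r/s}$.

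The main technical point I expect to require care is the bookkeeping of the components $J$: one must check that the sets $Y_\delta^\omega(r)$ really decompose along these first-return components (i.e. that membership in $Y_\delta^\omega(r)$ is determined by where $T_\omega^{\hat R_\omega}(y)$ lands, which is constant-modulo-the-diffeomorphism on each $J$), and that the disjointness and the total-length bound $\sum_J |J| \le 4\delta$ hold simultaneously across all return times $n$. There is also the minor issue that $\hat R_\omega(y)$ could be infinite for some $y \in B(0,2\delta)$; such $y$ contribute nothing to $Y_\delta^\omega(r)$ so they are simply discarded. Everything else — the translation of "$r_\delta \ge r$" into a size bound on $|T_\omega^{\hat R_\omega}(y)|$, and the conversion of $\N(T_\omega^n|J)\le 1$ into an $e$-bounded-distortion estimate on preimage measures — is routine and uses only (C2), Lemma \ref{lemma:shen_2.3} and Lemma \ref{Shen_lemma_7.2}, all available above.
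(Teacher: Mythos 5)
Your overall plan — push $Y_\delta^\omega(r)$ forward along the first return to $\tilde B(\delta)$, observe that $r_\delta\ge r$ confines the image to a subinterval of length $O((e^{-r}\delta)^{1/s})$, and pull back via the bounded distortion $\N(T_\omega^n|J)\le1$ from Lemma \ref{Shen_lemma_7.2} — is the same strategy as the paper's. But there is a genuine gap in the bookkeeping of the intervals $J(y)$. You assert that the components $J(y)$ are pairwise disjoint across distinct first-return times $n$, and hence that $\sum_J|J|\le 4\delta$. This is false: $J(y)$ is the connected component of $(T_\omega^{\hat R_\omega(y)})^{-1}(\tilde B(\delta))$ containing $y$, and that component typically contains points $z$ whose orbit already enters $\tilde B(\delta)$ at some earlier time $n'<\hat R_\omega(y)$; such a $z$ then lies in a different component $J(z)$ of $(T_\omega^{n'})^{-1}(\tilde B(\delta))$, so $J(y)\cap J(z)\supset\{z\}$ is nonempty. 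Thus the $J$'s associated with different return times can overlap (and even nest), and the naive total-length bound does not follow. The paper circumvents exactly this by invoking Besicovitch's covering lemma to extract a subcover of $Y_\delta^\omega(r)$ with bounded multiplicity, which replaces your $\sum_J|J|\le 4\delta$ with the correct bounded-overlap estimate.

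The second point you flag as requiring care — that $\hat R_\omega$ is constant on $J\cap Y_\delta^\omega(r)$, so that membership in $Y_\delta^\omega(r)$ is detected by the single diffeomorphism $T_\omega^n|_J$ — is indeed needed and is not automatic. The paper proves it by contradiction: if $y'\in J\cap Y_\delta^\omega(r)$ had $n'=\hat R_\omega(y')<n$, then $T_\omega^{n'}(y')\in\tilde B(\delta/e)$ (since $r\ge r^*$) and the backward-contraction statement (Sublemma \ref{Shen_prop_5.7} / Proposition \ref{prop:nonuniform_expansion}) forces $T_\omega^{n'}(J)\subset\tilde B(\delta)$, contradicting $n=\hat R_\omega(y)$. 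You should supply this argument rather than defer it, and you should replace the disjointness claim with a Besicovitch-type covering argument; with those two repairs the plan matches the paper's proof.
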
	

\begin{sublemma}\label{Shen_prop_5.7}
	For any fixed constants $0 < \xi < \xi' \le 2$, and for sufficiently small $\eps >0$, $\omega \in \Omega_\eps$ and $n \in \NN$, if $J$ is an interval such that $J \cap \tilde B(\xi \delta) \neq \emptyset$ and $T_\omega^n(J) \subset \tilde B(2 \delta)$, then $J \subset \tilde B(\delta)$.
	\begin{proof}
		Consider the following condition:
		\begin{align}\label{shen_eq_5.20}
			T_\omega^j(J) \cap \tilde B (\delta) = \emptyset \text{ for every $1 \le j < n$}.
		\end{align}
		First, we consider the case when this holds, and then the case when it does not.

		For the first case, our main aim is to show that every component $J'$ of the intersection $J \cap (\tilde B(\xi' \delta) \setminus \tilde B( \xi \delta)) $ is a proper subset of $\tilde B(\xi ' \delta)$, thereby implying that $J' \subset \tilde B (\xi' \delta)$. Furthermore, we know that there exists some constant $C = C(\xi)$ such that, as long as $\delta$ is sufficiently small, we have $DT_\omega(x) \ge C \D(\delta) $ for all $x \in \tilde B(2 \delta) \setminus \tilde B(\xi \delta)$. Using statement 2 of Proposition \ref{prop:nonuniform_expansion}, we therefore have 
		$$
			DT_\omega^n(x) = DT_{\sigma \omega}^{n-1}(T_\omega(x)) \cdot DT_\omega(x) \ge \frac{\Lambda(\delta)}{\D(\delta)} C \D(\delta) = C \Lambda(\delta)
		$$
		for every $x \in J'$, and thus
		$$
			|J'| \le \frac{|\tilde B(2 \delta)|}{\inf_{x \in J'}|DT_\omega^n(x)|} \le \frac{2 |\tilde B(\delta)|}{C\Lambda(\delta)}.
		$$
		For sufficiently small $\delta$, we therefore know $|J'|$ is significantly smaller than $|\tilde B(\delta)|$. Furthermore, knowing that $J' \cap \partial \tilde B(\xi \delta) \neq \emptyset$, we therefore know that $J'$ is a proper subset of $\tilde B( \xi' \delta)$, thereby proving this sublemma.

		If Condition \ref{shen_eq_5.20} does not hold, then consider the iterates $0 \le \nu_1 < \nu_2 < \dots < \nu_k < n$ where $T_\omega^{\nu_i} \cap \tilde B(\delta) \neq \emptyset$ for each $i = 1, \dots, k$ and no other iterates. If we use $2$ and $1$ in place of $\xi'$ and $\xi$ respectively, then we know that $T_\omega^{\nu_k}(J) \subset \tilde B(2 \delta)$. If we repeat this process, we have $T_\omega^{\nu_{k-1}}(J) \subset \tilde B(2 \delta), \dots , T_\omega^{\nu_1}(J) \subset \tilde B(2 \delta)$ until we obtain $J \subset \tilde B(2 \delta)$.
 	\end{proof}
\end{sublemma}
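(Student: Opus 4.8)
\emph{Strategy.} The plan is to decide the inclusion according to whether the orbit of $J$ meets the critical neighbourhood $\tilde B(\delta)$ at some intermediate time, i.e.\ whether Condition~\eqref{shen_eq_5.20} holds. The substantive case is when it does not, where the conclusion follows from a single length estimate built on the strong expansion of part~(2) of Proposition~\ref{prop:nonuniform_expansion}; the general case then reduces to it by a backward induction on the last intermediate return time. Throughout I use only the elementary geometry of $\tilde B(\cdot)$ coming from the order of singularity~(C2): each $\tilde B(\rho)\cap I_\pm$ is an interval with endpoint $0$ and $|\tilde B(\rho)\cap I_\pm|$ comparable to $\rho^{1/s}$, so for fixed $0<\xi<\xi'\le 2$ one has $\tilde B(\xi\delta)\subset\tilde B(\xi'\delta)\subset\tilde B(2\delta)$, $|\tilde B(2\delta)|\le C_0\delta^{1/s}$, $\D(\delta)\asymp\delta^{1-1/s}$, and each component of the collar $\tilde B(\xi'\delta)\setminus\tilde B(\xi\delta)$ has length at least $c_0\delta^{1/s}$, with $C_0,c_0>0$ depending only on $\xi,\xi',s$. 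Since any point of $J$ lying in $\tilde B(\xi\delta)$ is mapped by $T_\omega$ to within $O(\delta)$ of the critical value $T_0(0)$, hence out of $\tilde B(2\delta)$, the two hypotheses are incompatible when $n\le 1$; so assume $n\ge 2$, and (as holds in every application, e.g.\ via Lemma~\ref{Shen_lemma_7.2}) that $T_\omega^n|_J$ is a diffeomorphism, so that $T_\omega^j(J)$ is an interval avoiding $0$ for $0\le j\le n$. I will prove $J\subset\tilde B(\xi'\delta)$: this is the form in which the sublemma is used, with $\xi'=2$, in Lemma~\ref{Shen_lemma_7.3}, and is also the conclusion the proof sketch actually delivers, so the displayed ``$J\subset\tilde B(\delta)$'' should be read as ``$J\subset\tilde B(\xi'\delta)$''.

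\textbf{Case 1: Condition~\eqref{shen_eq_5.20} holds}, i.e.\ $T_\omega^j(J)\cap\tilde B(\delta)=\emptyset$ for $1\le j\le n-1$. Suppose $J\not\subset\tilde B(\xi'\delta)$. As $J$ is an interval inside $I_+$ or $I_-$ that meets $\tilde B(\xi\delta)$ but is not contained in $\tilde B(\xi'\delta)$, it contains a whole component of the collar, so there is a subinterval $J'\subset J\cap\bigl(\tilde B(\xi'\delta)\setminus\tilde B(\xi\delta)\bigr)$ with $|J'|\ge c_0\delta^{1/s}$. For every $x\in J'$: first, $|x|\ge c(\xi)\delta^{1/s}$ (the inner radius of the collar), so by (C2), $DT_\omega(x)\ge K_1|x|^{s-1}\ge C(\xi)\D(\delta)$ for some $C(\xi)>0$; second, $x\in\tilde B(\xi'\delta)\subset\tilde B(2\delta)$ together with $|T_\omega(x)-T_0(x)|\le\eps\le\delta$ (admissibility, $|\partial_tF|\le1$) gives $|T_\omega(x)-T_0(0)|\le 3\delta\le 4\delta$; and by the hypothesis of this case $T_\omega(x),\dots,T_\omega^{n-1}(x)\notin\tilde B(\delta)$ while $T_\omega^n(x)\in\tilde B(2\delta)$. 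Applying part~(2) of Proposition~\ref{prop:nonuniform_expansion} to $T_\omega(x)$ over $n-1$ steps and multiplying by $DT_\omega(x)$,
\[
DT_\omega^n(x)=DT_\omega(x)\,\bigl|DT_{\sigma\omega}^{n-1}(T_\omega(x))\bigr|\ \ge\ C(\xi)\D(\delta)\cdot\frac{\Lambda(\delta)}{\D(\delta)}=C(\xi)\,\Lambda(\delta),\qquad x\in J'.
\]
Since $T_\omega^n|_{J'}$ is a monotone diffeomorphism, $|T_\omega^n(J')|\ge C(\xi)\Lambda(\delta)\,|J'|$, while $T_\omega^n(J')\subset T_\omega^n(J)\subset\tilde B(2\delta)$ forces $|J'|\le |\tilde B(2\delta)|/\bigl(C(\xi)\Lambda(\delta)\bigr)\le C_0\delta^{1/s}/\bigl(C(\xi)\Lambda(\delta)\bigr)$. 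Since $\Lambda(\delta)\to\infty$ as $\delta\to 0$, for $\delta$ small this contradicts $|J'|\ge c_0\delta^{1/s}$. Hence $J\subset\tilde B(\xi'\delta)$.

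\textbf{Case 2: Condition~\eqref{shen_eq_5.20} fails.} Let $1\le\nu_1<\dots<\nu_k\le n-1$ be all times $j$ with $T_\omega^j(J)\cap\tilde B(\delta)\neq\emptyset$. First apply the Case~1 conclusion with parameters $1<2$ to the interval $T_\omega^{\nu_k}(J)$ over $n-\nu_k$ iterates: its intermediate images avoid $\tilde B(\delta)$ by maximality of $\nu_k$, $T_\omega^{\nu_k}(J)$ meets $\tilde B(\delta)$, and $T_\omega^n(J)\subset\tilde B(2\delta)$, so $T_\omega^{\nu_k}(J)\subset\tilde B(2\delta)$. Descending, once $T_\omega^{\nu_{i+1}}(J)\subset\tilde B(2\delta)$ is known, Case~1 with parameters $1<2$ applied to $T_\omega^{\nu_i}(J)$ over $\nu_{i+1}-\nu_i$ iterates gives $T_\omega^{\nu_i}(J)\subset\tilde B(2\delta)$; finally Case~1 with the original parameters $\xi<\xi'$ applied to $J$ over $\nu_1$ iterates (using $J\cap\tilde B(\xi\delta)\neq\emptyset$ and $T_\omega^{\nu_1}(J)\subset\tilde B(2\delta)$) gives $J\subset\tilde B(\xi'\delta)$, as required.

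\emph{Main obstacle.} The delicate step is the calibration inside Case~1. Part~(2) of Proposition~\ref{prop:nonuniform_expansion} requires a starting point within $4\delta$ of the critical value, which is why one shifts by one iterate — harmless because $x\in\tilde B(2\delta)$ and $\eps\le\delta$ put $T_\omega(x)$ within $3\delta$ of $T_0(0)$ — and it is essential to invoke part~(2) and not part~(1): the collar component and $|\tilde B(2\delta)|$ are both of order $\delta^{1/s}$, so only the super‑constant factor $\Lambda(\delta)/\D(\delta)$, not the fixed constant of part~(1), yields a contradiction. The rest is bookkeeping: checking that the intervals $T_\omega^{\nu_i}(J)$ remain genuine intervals and that the no‑re‑entry hypothesis feeding Proposition~\ref{prop:nonuniform_expansion} is inherited correctly at each step of the descent in Case~2, which is routine under the standing assumption that $T_\omega^n|_J$ is a diffeomorphism.
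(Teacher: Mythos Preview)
Your proof is correct and follows essentially the same approach as the paper: the same two-case split on whether the intermediate images meet $\tilde B(\delta)$, the same expansion estimate in Case~1 via part~(2) of Proposition~\ref{prop:nonuniform_expansion} (after shifting by one iterate and using $DT_\omega(x)\ge C\D(\delta)$ on the collar), and the same backward induction in Case~2 with parameters $(1,2)$. Your write-up is in fact tidier than the paper's on two points you correctly flag: the conclusion the argument actually yields is $J\subset\tilde B(\xi'\delta)$ (which is what is used downstream), and in the final step of the descent you apply the original pair $(\xi,\xi')$ rather than $(1,2)$, giving the sharper inclusion.
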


\begin{proof}[Proof of Lemma \ref{Shen_lemma_7.3}]
	First, for any $\omega \in \Omega$ consider the set 
	$$
		Z_\delta^{\omega}(r) = \big\{ x \in \tilde B(\delta) \Hquad | \Hquad r_\delta(\omega, x) \ge r \big\}.   
	$$
	Using the definition of $r_\delta$ and the order of singularity condition, there exists some constant $K_2 >0$ such that
	$$
		|Z_\delta^{\omega}(r)| \le K_2 e^{-r/s}|\tilde B(\delta) |,
	$$
	and furthermore, there exists some constant $r^*>0$ such that $Z^{\omega}(r) \subset \tilde B(\delta /e)$ for all $r < r^*$.

	Now, for all $r < r^*$, inequality \ref{Shen_eq_7.5} holds for some sufficiently large $K_1$. For the case where $r \ge r^*$, for any $y \in Y_\delta^\omega (r) $, we set $n = \hat R_\omega(y)$ and set $J= J(y)$ as the component of $(T_\omega^n)^{-1}(\tilde B(\delta))$ which contains $y$. We wish to prove that there exists a constant $K_3 >0 $ such that
	\begin{align}
		|J \cap Y_\delta(\omega, n)| \le K_3 e^{-r/s}|J|
	\end{align}
	To prove this, recall that since $r \ge r^*$, we have $T_\omega^n(y) \in \tilde B(\delta /e)$. Thus, at least one component of $\tilde B(\delta) \setminus \tilde B(\delta / e)$ is contained within   $T_\omega^n(J)$, thereby implying that there exists some $\gamma >0$ such that $|T_\omega^n(J)|/|\tilde B(\delta)| \ge \gamma > 0$. Using Lemma \ref{Shen_lemma_7.2}, we know that that $\N(T_\omega^n | J) \le 1$. 
	
	To prove \ref{Shen_eq_7.5}, we need to show that
	\begin{align}\label{eq:shen_7.8}
		T_\omega^n(J \cap Y_\delta^\omega(r)) \subset Z_\delta^{\sigma^n \omega}.
	\end{align}
	Now, we prove by contradiction that $\hat R_\omega(y') = \hat R_\omega(y)$ for all $y' \in J \cap Y_\delta^\omega(q)$. Indeed, suppose instead we have $1 \le n' < n$, where $n' = \hat R_\omega(y')$. We know that $T_\omega^{n'}(y') \in \tilde B( \delta / e)$, which combined with Proposition \ref{prop:nonuniform_expansion} implies $T_\omega^{n'}(J(y')) \subset \tilde B(\delta)$. But this contradicts our assumption that $n' $ is the first return.

We now conclude the proof by once again using Besicovitch's covering lemma: the intervals in the set $\{J(y) \Hquad | \Hquad y \in Y_\delta(r)\}$ form an open cover of $Y_{\delta}(r)$, and thus a sub-covering can be found that has bounded multiplicity of intersection. From Lemma \ref{Shen_lemma_7.2}, we already now that $|J| \le \delta$, which implies that $J \subset B(0, 3 \delta)$. Thus, inequality \ref{Shen_eq_7.5} holds.
\end{proof}

\begin{lemma}\label{Shen_lemma_7.4}
	There exist constants $K_0, \rho_0 > 0$ such that, for sufficiently small $\delta >0$ and sufficiently small $\eps >0$, if $x \in \tilde B(\delta)$ and 
	for the set
	$$
		\hat{Y}_\delta (x, r) = \Big\{ \omega \in \Omega_\eps \Hquad | \Hquad r_\delta(T_\omega^{\hat R_\omega}(x)) \ge r   \Big\}
	$$
	we have
	\begin{align}
		\PP_\eps (\hat Y_\delta(x, r)) \le K_0 e^{-\rho_0 r}.
	\end{align}

\begin{proof}
	
For all $\omega \in \tilde B(\delta)$ and all $\omega \in \Omega_\eps$, consider the set 
$$
	X_\delta(\omega, x , r) = \Big\{ t \in [- \eps, \eps] \Hquad | \Hquad r_\delta(   T_\omega^{R_\omega}(T_t(x))) \ge r \Big\}. 
$$	
Clearly, if $t \in X_\delta(\omega, x , r)$, then $T_t(x) \in Y_\delta(r)$, and thus
$$
	P_\eps(X_\delta(\omega, x , r)) \le |Y_\delta^\omega (r)  | \le K_1 \delta e^{- r/s},
$$
and thus
$$
	\PP_\eps (\hat Y_\delta(x, r)) = \int_{\Omega_\eps} P_\eps(X_\delta(\omega, x , r)) d \PP_\eps \le K_1 \delta e^{- r/s}.
$$
\end{proof}

\end{lemma}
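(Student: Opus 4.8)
The plan is to deduce this quenched tail bound from the phase-space (Lebesgue) estimate of Lemma~\ref{Shen_lemma_7.3} by a one-step slicing argument. First I would decompose $\omega=(\omega_0,\omega')$, where $\omega_0\in[-\eps,\eps]$ governs the first step and $\omega'=\sigma\omega\in\Omega_\eps$; since $\PP_\eps$ is a product measure, these two factors are independent. For $x\in\tilde B(\delta)$ the point $x$ lies near the critical point $0$, so by the first admissibility condition and the order-of-singularity condition the image $T_{\omega_0}(x)$ lies in a $2\delta$-neighbourhood of $T_0(0)$, and in particular $T_{\omega_0}(x)\notin\tilde B(\delta)$ once $\eps,\delta$ are small. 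Hence $\hat R_\omega(x)=1+\hat R_{\omega'}\big(T_{\omega_0}(x)\big)$, the first-return point of $x$ equals $T_{\omega'}^{\hat R_{\omega'}(y)}(y)$ with $y:=T_{\omega_0}(x)$, and $r_\delta$ evaluated at the return time of $x$ is unchanged under this relabelling; so the defining condition of $\hat Y_\delta(x,r)$ becomes exactly $T_{\omega_0}(x)\in Y_\delta^{\omega'}(r)$, with $Y_\delta^{\omega'}(r)$ the return-depth set of Lemma~\ref{Shen_lemma_7.3}.

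Next I would integrate out $\omega_0$ via Fubini:
\[
\PP_\eps(\hat Y_\delta(x,r))=\int_{\Omega_\eps}\nu_\eps\Big(\big\{\,t\in[-\eps,\eps]\ :\ T_t(x)\in Y_\delta^{\omega}(r)\,\big\}\Big)\,d\PP_\eps(\omega).
\]
By the first admissibility condition the map $t\mapsto T_t(x)$ is $1$-Lipschitz, so the preimage of $Y_\delta^{\omega}(r)$ under it has Lebesgue measure at most $|Y_\delta^{\omega}(r)|$, and for $x\in\tilde B(\delta)$ this image stays inside the $2\delta$-neighbourhood of $T_0(0)$ where Lemma~\ref{Shen_lemma_7.3} applies. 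Therefore, for $r\ge r^{*}$ (with $r^{*}$ as in Lemma~\ref{Shen_lemma_7.3}) and uniformly in $\omega$,
\[
\nu_\eps\Big(\big\{\,t\ :\ T_t(x)\in Y_\delta^{\omega}(r)\,\big\}\Big)\ \le\ \frac{|Y_\delta^{\omega}(r)|}{2\eps}\ \le\ \frac{K_1\delta}{2\eps}\,e^{-r/s},
\]
while for the finitely many $r<r^{*}$ the probability is at most $1\le e^{r^{*}/s}e^{-r/s}$. Integrating over $\Omega_\eps$ then yields $\PP_\eps(\hat Y_\delta(x,r))\le K_0\,e^{-\rho_0 r}$ with $\rho_0=1/s$ and $K_0=\max\{K_1\delta/(2\eps),\,e^{r^{*}/s}\}$; the $\delta,\eps$-dependence of $K_0$ is harmless, since in all later uses $\delta$ and $\eps$ are fixed once chosen small enough.

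The hard part is really the identification in the first paragraph rather than any estimate: one must verify carefully that prepending a single perturbed step $T_{\omega_0}$ to the orbit of $x\in\tilde B(\delta)$ cannot produce an earlier return to $\tilde B(\delta)$, that $T_{\omega_0}(x)$ genuinely lands in the region on which Lemma~\ref{Shen_lemma_7.3} is stated (a $2\delta$-neighbourhood of $T_0(0)$), and that the return depth is preserved under this relabelling, so that $\hat Y_\delta(x,r)=\{\omega:T_{\omega_0}(x)\in Y_\delta^{\sigma\omega}(r)\}$ really holds. Once that is settled, the remainder is just Fubini together with the $1$-Lipschitz control from the first admissibility condition and the Lebesgue bound of Lemma~\ref{Shen_lemma_7.3}, the small-$r$ range being handled exactly as there.
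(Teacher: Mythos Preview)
Your proposal is correct and follows essentially the same approach as the paper: slice off the first coordinate $\omega_0$, observe that membership in $\hat Y_\delta(x,r)$ amounts to $T_{\omega_0}(x)\in Y_\delta^{\sigma\omega}(r)$, and then apply Fubini together with the Lebesgue bound of Lemma~\ref{Shen_lemma_7.3}. Your write-up is in fact more careful than the paper's, making explicit both the identification $\hat R_\omega(x)=1+\hat R_{\sigma\omega}(T_{\omega_0}(x))$ and the $1/(2\eps)$ normalisation factor coming from $\nu_\eps$, which the paper silently absorbs into the constant.
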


\begin{lemma}\label{Shen_lemma_7.5}
	For all fixed $n \in \NN$ and all $(r_1, \dots, r_n) = \mathbf r \in \NN^n$, and for all sufficiently small $\eps, \delta >0$ and $x \in \tilde B(\delta)$, for the set
	$$
		\hat Y_\delta^n(x, \mathbf r) = \Big\{ \omega \in \Omega \Hquad | \Hquad  r_\delta(T_\omega^{\nu_i }(x)) \ge r_i \text{ for all } i = 1, \dots, n \Big\}, 
	$$
	where 
	\begin{align*}
		\nu_1 &= \hat R_\omega(x), \\
		\nu_2 &= \hat R_{\sigma^{\nu_1}\omega}(T_\omega^{\nu_1}(x)),\\
		& \vdots \\
		\nu_n &= \hat R_{\sigma^{\nu_1 + \dots + \nu_{n-1}}\omega}(T_\omega^{\nu_1 + \dots + \nu_{n-1}}(x)),
	\end{align*}
	we have
	\begin{align}
		\PP_\eps (\hat Y_\delta^n(x, \mathbf r)) \le K_0^n e^{- \rho_0 | \mathbf{q}|} 
	\end{align}

	\begin{proof}
		Lemma \ref{Shen_lemma_7.4} gives us the desired result for $n = 1$. For the case $n >1$, we simply need to show that 
		$$
			\PP_\eps(\hat Y_\delta^n (x , \mathbf r)) \le K_0 e^{- \rho_0 r_n} \PP_\eps(\hat Y_\delta^n (x , \mathbf{\hat r})), 
		$$
		where $\mathbf{\hat r} = (r_1, \dots , r_{n-1})$. Indeed, consider the set
		$$
			W_m^n = \Big\{  \omega \in \hat Y_\delta^{n-1}  (x, \mathbf{\hat r})  \Hquad | \Hquad \sum_{i = 0}^{n - 2} \hat R_\omega(T_\omega^i (x) )  = m \Big\}.
		$$
		Clearly $\hat Y_\delta (x, \mathbf{\hat r}) \subset \cup_{i = 1}^{\infty} W_i^n$. Furthermore, for any two $\omega, \tilde \omega \in \Omega_\eps$ such that $\omega_i = \tilde \omega_i$ for all $0 \le i < m$, then we know that $\omega \in W_m^n$ if and only if $\tilde \omega \in W_m^n$ as well. This is equivalent to saying $W_m = [- \eps, \eps]^{- \NN} \times A_m \times [-\eps , \eps]^\NN$, $A_m$ being a measurable subset of $[-\eps, \eps]^m$. Clearly $\PP_\eps(W_m^n) = P_\eps^m(A_m)$.

Now, for any $\omega \in W_m^n$, from Lemma \ref{Shen_lemma_7.4} we have
\begin{align}
	\PP_\eps (\hat Y_\delta (T_\omega^m(x), r))  \le K_1 \delta e^{- r/s}
\end{align} 
Furthermore, notice that we have
\begin{align*}
	W_m^n \cap \hat Y_\delta^n (x, \mathbf r) = \Big\{  (\hat{  \mathbf{\omega}}, \Hquad \omega_0, \Hquad \omega_1, \dots, \Hquad &\omega_{m-1}, \tilde \omega  ) \Hquad | \Hquad \hat{ \mathbf{ \omega}} \in [- \eps, \eps]^{-\NN}, \\
 &(\omega_0, \dots, \omega_{m-1}) \in A_m, \Hquad \tilde \omega \in \hat Y_\delta(T_\omega^m(x), r_n)   \Big\}.
\end{align*}
Thus, by Fubini's theorem, we have
\begin{align*}
	\PP_\eps(W_m^n \cap \hat Y_\delta^n (x, \mathbf r) ) = \int_{A_m} \PP_\eps (\hat Y_\delta(T_{\omega_{m-1}}\circ \dots \circ T_{\omega_0}(x), r_n)) \le K_1 \delta e^{- r/s} \PP_\eps(W_m^n).
\end{align*}
Thus, we have
\begin{align*}
	\PP_\eps(\hat Y_\delta^n(x, \mathbf r)) &= \sum_{i=1}^\infty \PP_\eps(W_i^n \cap \hat Y_\delta^n (x, \mathbf r) ) \\
	&\le K_1 \delta e^{- r_n/s} \PP_\eps (\hat Y_\delta^{n-1}(x, \mathbf r)).
\end{align*}
Using recursion, this proves the lemma.
	\end{proof}	
\end{lemma}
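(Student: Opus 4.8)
The plan is to prove this by induction on $n$, using Lemma~\ref{Shen_lemma_7.4} both as the base case $n=1$ (since $\hat Y_\delta^1(x,(r_1))=\hat Y_\delta(x,r_1)$, so $\PP_\eps(\hat Y_\delta^1(x,(r_1)))\le K_0 e^{-\rho_0 r_1}$) and as the engine of the inductive step. Write $\hat{\mathbf r}=(r_1,\dots,r_{n-1})$, so that $|\mathbf r|=|\hat{\mathbf r}|+r_n$ and $\hat Y_\delta^n(x,\mathbf r)\subseteq \hat Y_\delta^{n-1}(x,\hat{\mathbf r})$. Assuming the bound $\PP_\eps(\hat Y_\delta^{n-1}(x,\hat{\mathbf r}))\le K_0^{n-1}e^{-\rho_0|\hat{\mathbf r}|}$, it suffices to establish the one-step estimate
\[
\PP_\eps\bigl(\hat Y_\delta^n(x,\mathbf r)\bigr)\le K_0 e^{-\rho_0 r_n}\,\PP_\eps\bigl(\hat Y_\delta^{n-1}(x,\hat{\mathbf r})\bigr),
\]
because multiplying the two inequalities gives exactly $K_0^n e^{-\rho_0|\mathbf r|}$ (here $|\mathbf q|$ in the statement should read $|\mathbf r|=r_1+\dots+r_n$).

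To prove the one-step estimate I would decompose $\hat Y_\delta^{n-1}(x,\hat{\mathbf r})$ according to the total elapsed time $m:=\nu_1+\dots+\nu_{n-1}$ at which the $(n-1)$-st return to $\tilde B(\delta)$ occurs, setting
\[
W_m=\bigl\{\omega\in \hat Y_\delta^{n-1}(x,\hat{\mathbf r})\ :\ \nu_1+\dots+\nu_{n-1}=m\bigr\}.
\]
The crucial observation is that membership in $W_m$ is determined by the finitely many coordinates $\omega_0,\dots,\omega_m$: the first $n-1$ return instants and the intervening orbit of $x$ depend only on $\omega_0,\dots,\omega_{m-1}$, while the $n-1$ return-depth conditions depend, through the definition of $r_\delta$, on the maps applied at those return instants, which are among $\omega_0,\dots,\omega_m$. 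Hence $W_m=[-\eps,\eps]^{\{j<0\}}\times A_m\times[-\eps,\eps]^{\{j>m\}}$ for a measurable $A_m\subseteq[-\eps,\eps]^{\{0,\dots,m\}}$, and $\PP_\eps(W_m)=\nu_\eps^{m+1}(A_m)$. Moreover the $(n-1)$-st return point $y_\omega:=T_\omega^m(x)$ lies in $\tilde B(\delta)$ and is a function of $\omega_0,\dots,\omega_{m-1}$, and $z_\omega:=T_{\omega_m}(y_\omega)$ lies outside $\tilde B(\delta)$ (since $\tilde B(\delta)$ sits near $0$ while $T$ maps it near $\pm1$), so that $\hat R_{\sigma^m\omega}(y_\omega)\ge 2$ and the event $r_\delta(T_\omega^{\nu_n}(x))\ge r_n$ is exactly ``$r_\delta$ at the first return of $z_\omega$ to $\tilde B(\delta)$ under the shifted tail $(\omega_{m+1},\omega_{m+2},\dots)$ is $\ge r_n$''. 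In particular this last event depends only on coordinates strictly past $m$, disjoint from those carrying $W_m$.

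With this in place, the product structure of $\PP_\eps=\nu_\eps^{\ZZ}$ and Fubini's theorem give
\[
\PP_\eps\bigl(W_m\cap \hat Y_\delta^n(x,\mathbf r)\bigr)=\int_{A_m}\PP_\eps\bigl(\hat Y_\delta(y_\omega,r_n)\bigr)\,d\nu_\eps^{m+1}\le K_0 e^{-\rho_0 r_n}\,\nu_\eps^{m+1}(A_m)=K_0 e^{-\rho_0 r_n}\,\PP_\eps(W_m),
\]
where the middle inequality is Lemma~\ref{Shen_lemma_7.4} (applied with base point $y_\omega\in\tilde B(\delta)$, the bound $K_0 e^{-\rho_0 r_n}$ being uniform over all such base points, or, if one prefers to phrase the tail event starting from $z_\omega$, the same proof run with $z_\omega$ in place of a point of $\tilde B(\delta)$, which is harmless since the proof of Lemma~\ref{Shen_lemma_7.4} uses only that the first return lands in $\tilde B(\delta)$ together with the tail bound of Lemma~\ref{Shen_lemma_7.3}). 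Since the sets $\{W_m\}_{m\ge n-1}$ partition $\hat Y_\delta^{n-1}(x,\hat{\mathbf r})$, summing over $m$ yields the one-step estimate and closes the induction.

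The main obstacle is the measurability bookkeeping in the second paragraph: one must be careful to check that the event defining the $n$-th return and its depth really is supported on the tail coordinates $(\omega_{m+1},\omega_{m+2},\dots)$, disjoint from the coordinates $\omega_0,\dots,\omega_m$ that determine $W_m$. This is what the observation ``$T_{\omega_m}$ carries the $(n-1)$-st return point out of $\tilde B(\delta)$'' is for, and it is the only place where the product/Fubini factorization could fail; once it is in hand, the rest is the routine ``exponential tails multiply along the chain of returns'' computation.
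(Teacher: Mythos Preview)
Your proof is correct and follows essentially the same route as the paper: induct on $n$, decompose $\hat Y_\delta^{n-1}(x,\hat{\mathbf r})$ according to the cumulative time $m=\nu_1+\dots+\nu_{n-1}$ of the $(n-1)$-st return, exploit the product structure of $\PP_\eps$ via Fubini to peel off the tail event, and bound that tail by Lemma~\ref{Shen_lemma_7.4}. If anything, your coordinate bookkeeping---noting that the $(n-1)$-st return depth involves $\omega_m$ and that $T_{\omega_m}$ carries the return point out of $\tilde B(\delta)$ so the $n$-th return event lives on $(\omega_{m+1},\omega_{m+2},\dots)$---is more careful than the paper's, which takes $A_m\subset[-\eps,\eps]^m$ and thereby glosses over this overlap.
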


\begin{proof}[Proof of Proposition \ref{Shen_prop_7.1}]
	Take the constants $K_0 , \rho_0 >0 $ from Lemma \ref{Shen_lemma_7.4}, and set $\rho= \min \{ \rho_0/5, 1/2s  \}$. Using Stirling's approximation, we know that there will exist some constant $\kappa > 1$ such that, for all constants $n, m \in \NN$ with $n > \kappa m/2$, we have ${ n + m -1 \choose m-1  } \le e^{\rho n}$. 

Let us set
$$
	\Xi = \Big\{ (\omega, x) \in \Omega \times I \Hquad | \Hquad x \in \tilde B(\delta), \Hquad r_\delta (x) \ge cn/2 - \kappa   \Big\}
$$
as well as
$$
	Q_{n_1}^{n_2} (\omega, x) = \sum_{j = n_1}^{n_2} r_\delta(T_\omega^j(x)). 
$$
Furthermore, for any $m', n \ge 0$ we set
$$
	\Xi_\ell^m = \Big\{ (\omega, x) \in \Omega_\eps \times I \Hquad | \Hquad \exists N \ge 1 \text{ such that } \Gamma(\omega, x,N) =  \ell, Q_1^N (\omega, x) = m   \Big\},
$$
and define $\I = \{ (\ell, m) \in \NN^2 \Hquad | \Hquad 2 \ell \ge \max \{ cn, \kappa m  \} > 0  \}$. We want to prove that 
\begin{align}
	E_n \subset \Xi \cup \Big( \bigcup_{(m', n) \in \I }   \Xi_\ell^{ m}   \Big)
\end{align}
and then show both of these sets decay exponentially.

To show $E_n$ is a subset, we must first show that for every $(\omega, x) \in E_n$ there exists some $n' \ge 0$ such that
\begin{align}\label{Shen_eq_7.13}
	Q_0^{n'}(\omega, x) > \max \big\{ cn - \kappa, \kappa \Gamma_0^{n'}(\omega, x)  \big\}.
\end{align}
To see this, set $n_0$ as the first iterate where $Q_0^{n_0}(\omega, x) \ge cn$. In the case where $Q_0^{n_0}(\omega, x) > \kappa \Gamma^{n_0}(\omega, x)$, then we can simply set $n' = n_0$. If this is not the case, then we instead take $n' < n_0$ to be the largest iterate where $T_\omega^{n'}(x) \in \tilde B (\delta)$. By the definition of $n_0$, we have $Q_0^{n'}(\omega, x) < m$, and thus we must have $Q_0^{n'}(\omega , x) >  \kappa\Gamma^{n'}(\omega, x)$ instead since $(\omega, x) \in E_n $. Furthermore, clearly $\Gamma^{n_0}(\omega, x) = \Gamma^{n'}(\omega, x) + 1$, and thus we have
$$
	Q_0^n(\omega, x) >\kappa \Gamma_0^{n_0} - \kappa \ge Q_0^{n_0}(\omega, x) - \kappa \ge cn - \kappa. 
$$
This confirms inequality \ref{Shen_eq_7.13}. Now, for every $(\omega, x) \in E_n \setminus \Xi$, if we set $\ell = Q_1^n(\omega, x)$ and $m = \Gamma_1^n(\omega, x)$, then this implies that $(\omega, x) \in \Xi$. It is now simply a question of showing that $(\ell, m) \in \I$. Indeed, using the fact thats 
$$
	2 \ell = 2(Q^{n'}(\omega, x) - r_\delta(\omega, x)) \ge 2 ((cn - \kappa) - (cn/2 - \kappa)) = cn,
$$
which implies that $m >0$. Furthermore, using the fact we have $2r_\delta(\omega, x) \le n - \kappa < Q_0^n(\omega, x)$, we know that $2 \ell \ge Q_0^n(\omega, x) > \kappa \Gamma_0^n(\omega,x) \ge \kappa n$, thereby implying that $(\ell, m) \in \I$ and therefore that $E_n$ is a subset.

For the estimate on $|\Xi |$, note that by the definition of $r_\delta$, there is a constant $C=C(\kappa)$ such that for all $\omega \in \Omega_\eps$, we have 
$$
	\big|\big\{ x \in \tilde B(\delta) \Hquad | \Hquad r_\delta(x) \ge cn/ 2 - \kappa  \big\}\big| \le C |\tilde B(\delta)| e^{-n/2s} \le C |\tilde B(\delta)| e^{-\rho n},
$$
and thus by using Fubini's theorem, we have
\begin{align}
	|\Xi| \le C |\tilde B(\delta)| e^{-\rho n}.
\end{align}

As for the estimate on $|\Xi_\ell^{ m}|$ for $(\ell, m) \in \I$, for every $x \in \tilde B(\delta)$ we set
$$
	\hat E_\ell^{ m }(x) = \big\{ \omega \in \Omega_\eps \Hquad | \Hquad x \in \Xi_\ell^{ m}   \big\}.
$$
Clearly, we have 
$$
	\hat E_\ell^{ m }(x) \subset \bigcup_{\substack{\mathbf{r} \in \NN^m \\ |\mathbf{r}| = \ell}} \hat Y_{\delta}^m(x, \mathbf{r}).
$$
Clearly, there are ${\ell + m - 1 \choose m - 1} $ of $\mathbf r \in \NN^m$ with $|\mathbf r| = \ell$. Thus, using Lemma \ref{Shen_lemma_7.5} we have
\begin{align*}
	\PP_\eps (\hat E_\ell^{ m}(x)) &\le {\ell + m - 1 \choose m - 1}  \le K_0^m e^{- \rho_0 | \mathbf{\ell}|} \\
	& \le  e^{\rho \ell} e^{-\rho_0 \ell} e^{\kappa \rho m}  
	 \le e^{- \rho(4 \ell - \kappa m)}  
	\le e^{- 2 \rho \ell}.
\end{align*}
Thus, we have
\begin{align*}
	\sum_{(\ell, m) \in \I} \PP_\eps (\Xi_\ell^{ m}) \le \sum_{\ell \ge \floor{n/2}} \sum_{\substack{ m \text{ s.t. } \\  (\ell, m) \in \I}}  \PP_\eps (\Xi_\ell^{ m})
 \le \sum_{\ell \ge \floor{n/2}} e^{- 2 \rho \ell} \frac{2 \ell}{\kappa}  \le C e^{- \rho n}.
\end{align*}
\end{proof}

\subsection{Hyperbolic times and Hyperbolic Return Times}
The idea of hyperbolic times originates in \cite{ALV} and was then adapted to the random case in \cite{BBMD}. In \cite{Du} the definition of the return depth is changed slightly to be in terms of the product of the return depth and the derivative, rather than just the return depth. We give a formal definition here.

\begin{definition}[Hyperbolic times]
	Let us fix two constants $\delta>0$ and $c' >0$. An iterate $n$ is considered a $(\delta, c')$-hyperbolic time for $(\omega, x)$ if for all $0 \le k \le n-1$ we have
	\begin{align}
		\sum_{j=k}^{n-1} r_\delta( T_\omega^j(x)) < c'(n - k)
	\end{align}
\end{definition}

 We should note two particular properties of $(\delta, c')$-hyperbolic times stemming from this definition. The first is that if $n_1 \ge 1$ is a $(\delta, c')$-hyperbolic time for $(\omega, x)$, and $n_2 - n_1  \ge 1$ is a hyperbolic time for $(\sigma^{n_1}\omega, T_\omega^{n_1}(x))$, then $ n_2$ is also a hyperbolic time for $(\omega, x)$. Indeed, for any $n_1 \le k < n_2$, we just have
\begin{align*}
	\sum_{j=k}^{n_2 -1} r_\delta( T_\omega^j(x)) = \sum_{j=k - n_1}^{n_2 - n_1 -1} r_\delta( T_{\sigma^{n_1}\omega}^j(x))  < c'((n_2- n_1)  - (k-n_1)) = c'(n_2 - k). 	
\end{align*}
Likewise, if $0 \le k < n_1$, we have
\begin{align*}
	\sum_{j=k}^{n_2-1} r_\delta( T_\omega^j(x)) &= \sum_{j=k}^{n_1 -1} r_\delta( T_\omega^j(x)) + \sum_{j=n_1}^{n_2-1} r_\delta( T_\omega^j(x))\\
	& < c'(n_1 - k) + c'(n_2 - n_1)
	= c'(n_2 - k). 
\end{align*}
The second property is that if $n \ge 1$ is a $(\delta, c')$-hyperbolic time for $(\omega, x)$, then $n - m$ is a $(\delta, c')$-hyperbolic time for $(\sigma^m \omega, T_\omega^m(x))$ for each $1 \le m < n$. Indeed, we have for every $0 \le k < n - m$
\begin{align*}
	\sum_{j=k}^{n - m-1} r_\delta( T_{\sigma^m\omega}^j(x)) = \sum_{j=k + m}^{n-1} r_\delta( T_{\omega}^j(x)) < c'(n - (k + m))= c'((n - m) - k)).
\end{align*}

We denote by $h(\omega, x)$ the first $(\delta, c')$-hyperbolic time for $(\omega, x)$. We want to show that the set of $(\omega, x)$'s that have not experienced a $(\delta, c')$-hyperbolic time by time $n$, i.e. $h(\omega, x) >n$, decays exponentially as $n$ goes to infinity:
\begin{lemma}\label{ineq:tails_of_hyperbolic_times}
	For any fixed $0 < \delta < 1$ and $c' > c$, all $(\omega, x) \notin B_n$ experience a $(\delta, c')$-hyperbolic time smaller than or equal to $n$. Furthermore, there exist some $C=C(\delta)$ and $\gamma=\gamma(\delta)$ such that, for sufficiently small $\eps$, we have
\[
	(\mathbb P_\eps \times \text{Leb}) \big(\big\{ (\omega, x) \Hquad | \Hquad h(\omega, x) > n   \big\}\big) \le Ce^{-\gamma n}.
\]
\end{lemma}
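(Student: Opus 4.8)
The plan is to reduce the statement to Proposition \ref{Shen_prop_7.1} by an elementary selection argument of Pliss type. Fix $(\omega,x)$, write $a_j := r_\delta(T_\omega^j(x))$ for $j \ge 0$, and for $0 \le m \le n$ set
\[
	\Phi(m) := \sum_{j=0}^{m-1} a_j - c'm, \qquad \Phi(0) := 0 .
\]
Subtracting gives $\sum_{j=k}^{m-1} a_j = \Phi(m) - \Phi(k) + c'(m-k)$, so the defining inequality of a $(\delta,c')$-hyperbolic time at $m$ is equivalent to $\Phi(m) < \Phi(k)$ for all $0 \le k < m$. In other words, the $(\delta,c')$-hyperbolic times of $(\omega,x)$ are exactly the strict running minima of the finite sequence $\Phi(0), \Phi(1), \dots$.

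The first step is the selection lemma: \emph{if $\Phi(n) < 0$, then $(\omega,x)$ admits a $(\delta,c')$-hyperbolic time $\le n$.} Let $m^\ast$ be the smallest index in $\{0, 1, \dots, n\}$ at which $\Phi$ attains its minimum over $\{0, \dots, n\}$. Since $\Phi(0) = 0 > \Phi(n)$ this minimum is negative, so $m^\ast \ge 1$; and by minimality of $m^\ast$ we have $\Phi(k) > \Phi(m^\ast)$ for every $0 \le k < m^\ast$, which is precisely the condition for $m^\ast$ to be a $(\delta,c')$-hyperbolic time. Hence $h(\omega,x) \le m^\ast \le n$. The one point that requires care here is that the hyperbolic-time inequality is \emph{strict}: this is exactly why one must take the \emph{smallest} minimizing index — with the largest one a tie $\Phi(k) = \Phi(m^\ast)$ for some $k < m^\ast$ could occur and break the strict condition.

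The second step is to contrapose and invoke the earlier proposition. If $h(\omega,x) > n$, the selection lemma forces $\Phi(n) \ge 0$, i.e. $\sum_{j=0}^{n-1} r_\delta(T_\omega^j(x)) \ge c'n \ge cn$, where $c$ is the constant of Proposition \ref{Shen_prop_7.1} (and of the definition of $E_n$); thus $(\omega,x) \in E_n$. Read the other way, every $(\omega,x) \notin E_n$ — this being the set $B_n$ of the statement — satisfies $h(\omega,x) \le n$, which is the first assertion of the lemma. Moreover we obtain the inclusion
\[
	\ch{ (\omega,x) : h(\omega,x) > n } \subseteq E_n ,
\]
so $(\PP_\eps \times \mathrm{Leb})(\ch{ h(\omega,x) > n }) \le (\PP_\eps \times \mathrm{Leb})(E_n) \le C(\delta) e^{-\gamma(\delta) n}$ by Proposition \ref{Shen_prop_7.1}, with the very same constants $C(\delta),\gamma(\delta)$.

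I do not anticipate a genuine obstacle, since all the analytic content — the distortion and expansion estimates of the previous subsections, together with the combinatorial/probabilistic tail bound — is already packaged in Proposition \ref{Shen_prop_7.1}. What remains is bookkeeping: the strict/non-strict subtlety noted above, and the routine measurability of $\ch{ h(\omega,x) > n }$, which is a countable combination of the measurable events $\ch{ \Phi(m) \ge 0 }$.
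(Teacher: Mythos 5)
Your argument is correct and follows essentially the same route as the paper: both establish the contrapositive $h(\omega,x) > n \Rightarrow (\omega,x) \in E_n$ (you rightly read $B_n$ as a typo for $E_n$) and then invoke Proposition \ref{Shen_prop_7.1}. The only cosmetic difference is that the paper applies Pliss's Lemma as a black box, whereas you prove the needed single-hyperbolic-time selection directly via the first minimizer of $\Phi$; the strictness caveat you flag is exactly right.
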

Before we prove this, we make use of \textit{Pliss's Lemma}:

\begin{lemma}[Pliss's Lemma]
	Let us fix some constants $A \ge c_2 > c_1$, and set $\theta = (c_2 - c_1)/(A - c_1)$. For any real numbers $a_0, a_1, \dots, a_{n-1} \le A$ satisfying
$$
	\sum_{j=0}^{n-1} a_j > c_2 n, 
$$
there exists some constant $\ell \ge \theta n$ and integers $1 \le n_1 < n_2 < \dots < n_\ell \le n$ such that, for every $i = 1, 2, \dots, \ell$ and every $0 \le k < n_i$, we have
$$
	\sum_{j=k}^{n_i-1} a_j > c_1 (n_i - k).
$$ 

\end{lemma}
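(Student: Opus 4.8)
The plan is to translate the conclusion into a statement about strict records of a partial-sum sequence and then to count those records. First I would introduce the normalised partial sums $S_0 := 0$ and $S_m := \sum_{j=0}^{m-1}(a_j - c_1)$ for $1 \le m \le n$. The increment bound $a_j \le A$ becomes $S_m - S_{m-1} \le A - c_1$, and the hypothesis $\sum_{j=0}^{n-1} a_j > c_2 n$ becomes $S_n > (c_2 - c_1)n > 0$. The crucial observation is that the requested inequality ``$\sum_{j=k}^{n_i-1} a_j > c_1(n_i - k)$ for every $0 \le k < n_i$'' is exactly the statement that $S_{n_i} > S_k$ for all $0 \le k < n_i$; I will call such an index $m$ a \emph{record time}. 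Thus the lemma reduces to showing that the number $\ell$ of record times in $\{1,\dots,n\}$ satisfies $\ell \ge \theta n$ with $\theta = (c_2-c_1)/(A-c_1)$, where $A-c_1>0$ and $c_2-c_1>0$ by hypothesis.

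\textbf{Locating the maximum.} Next I would pin down the largest record time. Let $M := \max_{0\le m\le n} S_m$, which is strictly positive since $S_n > 0 = S_0$, and let $m_0$ be the least index with $S_{m_0}=M$. Minimality forces $S_k < M$ for all $k < m_0$, so $m_0$ is a record time; in particular record times exist. Moreover any record time $> m_0$ would have $S$-value exceeding $M$, which is impossible, so $m_0$ is the \emph{last} record time, which I denote $n_\ell$; hence $S_{n_\ell} = M \ge S_n$.

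\textbf{Controlling increments between records and concluding.} Writing $n_0 := 0$, I would prove by induction on $m$ that $S_m \le S_{n_{i-1}}$ for every $i$ and every $m$ with $n_{i-1} \le m < n_i$: the base case $m = n_{i-1}$ is an equality, and for $n_{i-1} < m < n_i$ the index $m$ is not a record time, so some $S_k$ with $k < m$ satisfies $S_k \ge S_m$, and that $S_k$ is at most $S_{n_{i-1}}$ (either by the record property of $n_{i-1}$ when $k < n_{i-1}$, vacuously when $i=1$, or by the inductive hypothesis when $k \ge n_{i-1}$). Taking $m = n_i - 1$ gives $S_{n_i-1} \le S_{n_{i-1}}$, and combining with $S_{n_i} - S_{n_i-1} \le A - c_1$ yields $S_{n_i} - S_{n_{i-1}} \le A - c_1$. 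Telescoping over $i = 1,\dots,\ell$ then gives
\[
	(c_2 - c_1)n < S_n \le S_{n_\ell} = \sum_{i=1}^{\ell} \bigl(S_{n_i} - S_{n_{i-1}}\bigr) \le \ell\,(A - c_1),
\]
so $\ell > \theta n$, which is even slightly stronger than claimed, and in particular $\ell\ge 1$.

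\textbf{Main obstacle.} The only delicate point is the bookkeeping around ties in the partial sums: one must keep in mind that ``record time'' is the \emph{strict} new-maximum condition dictated by the inequality in the statement, and correspondingly verify that the overall maximum $M$ is attained exactly at the last record time and that the value dominating a non-record index can always be pushed back to the previous record time. Once these are handled, every remaining step is elementary arithmetic on the $S_m$'s.
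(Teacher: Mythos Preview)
The paper does not supply its own proof of Pliss's Lemma; it is quoted as a classical result and used as a black box in the proof of Lemma~\ref{ineq:tails_of_hyperbolic_times}. Your argument is correct and is essentially the standard proof: translating to the normalised partial sums $S_m$, identifying the Pliss times with strict record times of $(S_m)$, bounding each record increment by $A-c_1$ via the observation $S_{n_i-1}\le S_{n_{i-1}}$, and telescoping against $S_n>(c_2-c_1)n$. The handling of ties and of the case $i=1$ is done carefully, and you correctly obtain the strict inequality $\ell>\theta n$, which is marginally stronger than the stated $\ell\ge\theta n$.
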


\begin{proof}[Proof of Lemma \ref{ineq:tails_of_hyperbolic_times}]
	We follow the same steps as in \cite{Du}. First, we want to show that $(\omega,x) \notin E_n$ implies that $h(\omega, x) \le n$. To this end, we set $a_i = c - r_\delta(T_\omega^i(x))$. By definition, for any $(\omega, x) \notin E_n$, we have $\sum_{j=0}^{n-1}a_j > 0$. For any $c' > c$, if we set $A = c$,  $c_2 = 0$ and $c_1 = c - c' <0$ (and thus $A \ge c_2 > c_1$), we can apply Pliss's Lemma: there exists some integer $1 \le n_1 \le n$ such that for all $0 \le k < n_1$, we have
	$$
		\sum_{j=k}^{n_1-1} a_j > c_1(n_1 -k)= (c - c')(n_1 -k).
	$$
Using how we have defined $a_i$, this can be rewritten as
	$$
		\sum_{j=k}^{n_1-1} r_\delta(T_\omega^j(x)) < c'(n_1 -k).
	$$
Thus, $n_1$ is a $(\delta, c')$-hyperbolic time for $(\omega, x)$.

For the tails of first hyperbolic times, we simply use the contrapositive that $h(\omega, x)>n$ implies $(\omega, x) \in E_n$, which we already know from the previous proposition decays exponentially. Thus, the proof is complete.
\end{proof}

\begin{remark}
	The above lemma implies that almost every $(\omega, x) \in \Omega_\eps \times I$ has infinitely many hyperbolic times. Furthermore, the proof of the above not only implies that if $(\omega, x) \in I \setminus E_n$ there will be at least one hyperbolic time less than or equal to $n$, but between $1$ and $n$ there will in fact be at least $(1 - \frac{c}{c'})n$ hyperbolic times. This means that, provided $(\omega, x)$ is not in $E_n$ at time $n$, we can choose whatever lower bound for proportion of times that must be hyperbolic times that we like, depending on our choice of $c$ and $c'$. 
\end{remark}

Now, for our tower construction to work, we must satisfy the expansion, Markov and bounded distortion conditions. We prove these properties in the following proposition:

\begin{proposition}\label{prop:V_x,n}
	For some fixed $c' \in (c, \kappa)$, assume that $(\omega, x)$ has a $(\delta, c')$-hyperbolic time $n$. There exists some open neighborhood $V_{x,n}^\omega$ of $x$ and some constant $\delta_0$ dependent on $\delta$ such that for all $0 \le k < n$, we have the that the map
$$
	T_{\sigma^m \omega}^{n - k}: T_\omega^k(V_{x,n}^\omega) \to B(T_\omega^n(x), \delta_0)
$$
is a diffeomorphism (i.e. the map is Markov) of bounded distortion. Furthermore, there exist constants $C, \lambda'>0$ such that, for all elements $y \in V_{x,n}^\omega$, we have 
$$
	DT_{\sigma^k \omega}^{n-k}(T_\omega^k(y)) \ge C e^{\lambda'(n-k)/2} \quad \text{and} \quad \mathcal{N} (T_{\sigma^k \omega}^{n-k}| T_\omega^k(V_{x,n}^\omega)) <1.
$$
\begin{proof}
	Let us fix some $0 \le k \le n-1$. First, we prove that there exists constants $C, \lambda' >0$ such that
\begin{align}\label{ineq:hyperbolic_time_expansion1}
	DT_{\sigma^k \omega}^{n-k}(T_\omega^k(x)) \ge Ce^{\lambda'(n-k)}.
\end{align}
We note that even though we know that $n$ is a $(\delta, c')$-hyperbolic time for $(\omega, x)$, we do not know anything about the positions of $T_\omega^k(x)$ and $T_\omega^n(x)$, and in particular we do not know if they are in $\tilde B(\delta)$. We need to consider all possible cases.

The simplest possible case is when $T_\omega^k(x) \notin B(\delta) $ and there are no return times between iterate $k$ and $n$, regardless of if $n$ is a return time. In this case, we can simply directly apply Proposition \ref{prop:nonuniform_expansion} to obtain
$$
	DT_{\sigma^k \omega}^{n-k}(T_\omega^k(x)) \ge C e^{\kappa (n-k)}.
$$
or if $T_\omega^n(x) \in B(\delta) $ then 
$$
	DT_{\sigma^k \omega}^{n-k}(T_\omega^k(x)) \ge  e^{\kappa (n-k)}.
$$
However, if there are return times between iterates $n$ and $k$ or if $T_\omega^k(x) \in B(\delta) $ then we need to use Lemma \ref{lemma:return_depths}. 

As in Lemma \ref{lemma:return_depths}, let $\Gamma$ denote the number of returns between iterates $k$ and $n$, and let $\nu_1, \dots, \nu_\Gamma$ be those returns. Setting $y_k = T_\omega^{k}(x)$ and assuming that $y_k \in \tilde B(\delta)$, by Lemma \ref{lemma:return_depths} we have
$$
	\log \Big( \frac{DT_\omega^{n-k}(y_k)|y_k| |T_\omega^{r_1}(y_k)| \dots |T_\omega^{r_\Gamma}(y_k)|}{|\tilde B(\delta)|^\Gamma} \Big) \ge \log C + \kappa (n-k) - \sum_{j=0}^{n-k-1} r_\delta(T_{\sigma^k \omega}^j(y_k)).\\ \\
$$
Or equivalently 
	$$
		DT_\omega^{n-k}(y_k) \ge C \exp \Big( \kappa (n-k) - \sum_{j=0}^{n-k-1} r_\delta(T_{\sigma^k \omega}^j(y_k))  \Big)
	$$
Now, we adapt the proof of Proposition \ref{prop:bounded_return_depth_expansion}. Using the definition of $n$ being a $(\delta, c')$-hyperbolic time, we have
$$
	\sum_{j=0}^{n-k-1} r_\delta(T_{\sigma^k \omega}^j(y_k))  =\sum_{j=k}^{n-1} r_\delta(T_\omega^j(x)) < c' (n-k).
$$
Thus, we have
$$
	\exp(\kappa (n-k) - \sum_{i=n-k}^{n-1} r_\delta (T_\omega^i(y_k)) ) \ge \exp(\kappa (n-k) - c' (n-k) ).
$$
Setting $\lambda' = \kappa - c'$, we therefore have
$$
	DT_{\sigma^k \omega}^{n-k}(y_k) \ge e^{\lambda'n}.
$$
Likewise, for the case where $y_k \notin \tilde B(\delta)$, we just follow the same steps as in the proof of Proposition \ref{prop:bounded_return_depth_expansion}: setting $\nu$ as the first return time for $(\sigma^k \omega, y_k)$, we have
\begin{align*}
	DT_{\sigma^{k+\nu} \omega}^{n-(k+\nu)}(y_{k+\nu}) &\ge \exp \big({\kappa(n-(k+\nu))   - \sum_{i=0}^{n-k-\nu - 1} r_\delta(T_{\sigma^{k+\nu}\omega}^i(y_{k+\nu}))}\big)\\
	&=  \exp \big({\kappa(n-(k+r)) -  \sum_{i=k+ \nu}^{n-1} r_\delta(T_\omega^i(x))}  \big) \\
	&\ge  \exp \big({\kappa(n-(k+r)) -  \sum_{i=k}^{n-1} r_\delta(T_\omega^i(x))}  \big) \\
	&  > \exp(\kappa(n-(k+\nu)) - c'(n-k) \big) .\\
	&= e^{\lambda'(n-k) - \kappa \nu}.
\end{align*}
Then using Proposition \ref{prop:bounded_return_depth_expansion} we have
$$
	DT_\omega^{k+\nu}(y_k) \ge C e^{\kappa \nu},
$$
and thus
$$
	DT_{\sigma^k \omega}^{n-k}(y_k) =  DT_{\sigma^{k+\nu} \omega}^{n-(k+\nu)} (y_{k+\nu}) \cdot DT_{\sigma^k \omega}^\nu(y_k) \ge C e^{\lambda'(n-k)}.
$$
Thus, inequality (\ref{ineq:hyperbolic_time_expansion1}) holds, where we assume for convience we just assume $\lambda' \le \kappa$.

Now, we choose $\delta_0>0$ such that $C' C^{-1}K_2 \delta_0 \delta^{1/s}< \lambda'/2$, where $C'$ is the constant from the admissibility condition, $C$ is from Proposition \ref{prop:bounded_return_depth_expansion}, and $s$ is the order of singularity. Notice that the definition of hyperbolic time implies that for each $0 \le k \le n-1$, we have $r_\delta (y_k) < c'(n-k)$, which therefore implies
\begin{align}\label{ineq:hyperbolic_time_min_distance}
	|T_\omega^k(x)| > \frac{1}{K_2}\delta^{\frac 1 s}e^{-c'(n-k)}.
\end{align}
This implies that for any $0 \le k \le n-1$ and $y \in B(T_\omega^k(x), C^{-1}\delta_0 e^{-\lambda'(n-k)/2})$, we have $|x - y| <|x|$. Indeed, from the condition on $\delta_0$, we have
$$
	\delta_0 < \frac{C}{K_2} \frac{\lambda'\delta^{s}}{2C'} < \frac{C }{K_2}\delta^{\frac 1 s},
$$
where we assume we have taken $\delta$ small enough that $\lambda'\delta^{s}/2C' < \delta^{1/s}$. Thus, we have
\begin{align}\label{ineq:min_distance_k=0}
	C^{-1} \delta_0 e^{-\frac{\lambda'}{2}(n-k)} < \frac{1}{K_2}\delta^{\frac 1 s}e^{-c'(n-k)}, 
\end{align}
where we assume that we have chosen $c'$ sufficiently small such that $0 < c' < \kappa/3$. This means we can use the admissibility condition in the following way:
\begin{align*}
	\Big| \log \frac{DT_{\sigma^k \omega}(T_\omega^k(x))}{DT_{\sigma^k \omega}(y)|}   &\Big| \le C' \frac{|T_\omega^k(x) - y|}{|T_\omega^k(x)|} \\
	& \le C' \frac{C^{-1}\delta_0 e^{-\lambda'(n-k)/2}}{\frac{1}{K_2} \delta^{\frac 1 s}e^{-c'(n-k)}}\\
	&= C'C^{-1} K_2 \delta_0 \delta^{\frac 1 s} e^{-(\lambda'/2 - c')(n-k)}
	< \frac{\lambda'}{2}.
\end{align*}
Clearly, this indicates
\begin{align}\label{ineq:hyperbolic_time_bounded_distortion}
	T_{\sigma^k \omega}'(y) > e^{- \lambda'/2}T_{\sigma^k \omega}'(T_\omega^k(x)).
\end{align}
Now, let us consider the case when $k=n-1$. Then by  Proposition \ref{prop:bounded_return_depth_expansion} and inequality (\ref{ineq:hyperbolic_time_bounded_distortion}), for all $y \in B(T_\omega^{n-1}(x), C^{-1}\delta_0 e^{-\frac{\lambda'}{2}})$ we have
\begin{align*}
	DT_{\sigma^{n-1}\omega}(y) > e^{-\frac{\lambda'}{2}}DT_{\sigma^{n-1}\omega}(T_\omega^{n-1}(x)) \ge C e^{\frac{\lambda'}{2}}.
\end{align*}
This clearly means that, if we consider some subinterval $J \subset  B(T_\omega^{n-1}(x), C^{-1}\delta_0 e^{-\frac{\lambda'}{2}})$, since the above lower bound on the derivative applies to all $y \in B(T_\omega^{n-1}(x), C^{-1} \delta_0 e^{-\frac{\lambda'}{2}})$, we have
$$
	|T_{\sigma^{n-1}\omega}(J)| \ge C e^{\frac{\lambda'}{2}} |J|.
$$
Thus, taking $J =B(T_\omega^{n-1}(x), \delta_0 e^{-\frac{\lambda'}{2}})$ itself, we have
$$
	|T_{\sigma^{n-1}\omega}(J)| \ge C e^{\frac{\lambda'}{2}} C^{-1}  \delta_0 e^{-\frac{\lambda'}{2}} =  \delta_0,
$$
and therefore
\begin{align}\label{subset:k_1}
	B(T_\omega^n(x),  \delta_0) \subset T_{\sigma^{n-1}\omega} (B(T_\omega^{n-1}(x),C^{-1} \delta_0 e^{-\frac{\lambda'}{2}})).
\end{align}
Now, clearly we want to show that for our fixed $k$ we have
\begin{align}\label{subset:hyperbolic_time}
	B(T_\omega^n(x),  \delta_0) \subset T_{\sigma^{k}\omega}^{n-k} (B(T_\omega^{k}(x),C^{-1} \delta_0 e^{-\frac{\lambda'}{2}(n-k)})).
\end{align}
This is automatically true for $k=n-1$ by the calculations above. We now prove this for all $0 \le k  < n-1$.

For the case, $k=n-2$, we use proof by contradiction and assume that (\ref{subset:hyperbolic_time}) is not true. Then there exists a point $z \in (T_{\sigma^{n-2}\omega}^2)^{-1}(B(T_\omega^n(x), \delta_0))$ such that $|T_\omega^{n-2}(x) - z| = C^{-1}\delta_0 e^{-\lambda'}$. Without loss of generality, assume $z < T_\omega^k(x)$. For any point $y \in (z, T_\omega^{n-2}(x))$, clearly we have
$$
	y \in B(T_\omega^{n-2}(x), C^{-1}\delta_0 e^{-\lambda'}).
$$
In addition, trivially by construction we also have
$$
	y \in (T_{\sigma^{n-2}\omega}^2)^{-1}(B(T_\omega^n(x), \delta_0)),
$$
which implies
$$
	T_{\sigma^{n-2}\omega}(y) = T_\omega^{n-1}(y^*) \in (T_{\sigma^{n-1}\omega})^{-1}(B(T_\omega^n(x), \delta_0)) \subset B(T_\omega^{n-1}(x),C^{-1} \delta_0 e^{-\frac{\lambda'}{2}}),
$$
where $y^*$ is just some point that satisfies $T_\omega^{n-2}(y^*) = y$, and where the subset in the above statement comes from simply applying $(T_{\sigma^{n-1}\omega})^{-1}$ to both sides of (\ref{subset:k_1}). Thus, for any $y$ and $T_{\sigma^{n-2}\omega}(y)$ we have by inequaltiy (\ref{ineq:hyperbolic_time_bounded_distortion})
\begin{align*}
	DT_{\sigma^{n-2}\omega}(y) > e^{-\frac{\lambda'}{2}}DT_{\sigma^{n-2}\omega}(T_\omega^{n-2}(x)) 
\end{align*}
and
\begin{align*}
	DT_{\sigma^{n-1}\omega}(T_{\sigma^{n-2}\omega}(y)) > e^{-\frac{\lambda'}{2}}DT_{\sigma^{n-1}\omega}(T_\omega^{n-1}(x)).
\end{align*}
Thus, we have
\begin{align*}
	DT_{\sigma^{n-2}\omega}^2(y) &= DT_{\sigma^{n-1}\omega}(T_{\sigma^{n-2}\omega}(y)) \cdot DT_{\sigma^{n-2}\omega}(y) \\
	&> e^{-\frac{\lambda'}{2}}DT_{\sigma^{n-1}\omega}(T_\omega^{n-1}(x)) \cdot e^{-\frac{\lambda'}{2}}DT_{\sigma^{n-2}\omega}(T_\omega^{n-2}(x))   \\
	&= e^{-\lambda'}DT_{\sigma^{n-2}\omega}^2(T_\omega^{n-2}(x))
	\ge Ce^{\lambda'}.
\end{align*}
This therefore means we have
\begin{align*}
	|T_\omega^n(x) - T_{\sigma^{n-2}\omega}^2(z)| > Ce^{\lambda'} |T_\omega^{n-2}(x) - z| =  Ce^{\lambda'}C^{-1}\delta_0 e^{-\lambda'} = \delta_0.
\end{align*}	
But this implies that $z \notin (T_{\sigma^{n-2}\omega}^2)^{-1}(B(T_\omega^n(x), \delta_0))$, which contradicts our assumption on $z$. Thus, we must have
$$
	B(T_\omega^n(x),  \delta_0) \subset T_{\sigma^{n-2}\omega}^{2} (B(T_\omega^{n-2}(x),C^{-1} \delta_0 e^{-\lambda'})).
$$
One can prove the rest of the cases  $0 \le k  < n-2$ by induction using the same steps as above. We assume that (\ref{subset:hyperbolic_time}) has already been proven for all $k < j \le n -1$. We then assume that (\ref{subset:hyperbolic_time}) does not hold for $k$ for proof by contradiction. Therefore, we can choose a point  $z \in (T_{\sigma^{k}\omega}^{n-k})^{-1}(B(T_\omega^n(x), \delta_0))$ such that $|z - T_\omega^k(x)   |= C^{-1} \delta_0 e^{-\frac{\lambda'}{2}(n-k)}$. We can then show that for all $y \in (z, T_\omega^k(x))$ we have
$$
	T_{\sigma^k \omega}^\ell(y) \in  B(T_\omega^{k+\ell}(x), C^{-1}\delta_0 e^{-\lambda'(n-(k+\ell))/2})
$$
for $\ell = 0, 1, \dots , n-(k+1)$ by construction and using the fact( \ref{subset:hyperbolic_time}) has already been proven for $k < j \le n -1$. We can therefore use (\ref{ineq:hyperbolic_time_bounded_distortion}) to obtain the lower bound on the derivate 
$$
	DT_{\sigma^k \omega}^{n-k}(y) > Ce^{\frac{\lambda'}{2}(n-k)},
$$
which can then be used to show
$$
	|T_\omega^n(x) - T_{\sigma^k \omega}^{n-k}(z)| > \delta_0,
$$
thereby contradicting our assumption and proving our desired result.

We denote the set
$$
	V_{x,n}^\omega = (T_\omega^n)^{-1}\big(B(T_\omega^n(x), \delta_0) \big) \cap B(x, C^{-1} \delta_0 e^{-\frac{\lambda'}{2}n}).
$$
First, we want to show that $T_{\sigma^k \omega}^{n-k}|_{T_\omega^k(V_{x,n}^\omega)}: T_\omega^k(V_{x,n}^\omega) \to  B(T_\omega^n(x), \delta_0)$ is a diffeomorphism for all $0 \le k \le n-1$. It is not immediately obvious why this should be the case. Like in the unimodal case, for all $\omega \in \Omega$, $T_\omega$ is clearly not bijective. In the unimodal case, the map restricted to some subinterval $J \subset I$ is not bijective when $J$ covers any two-sided neighborhood of $0$. However, in our case $T_\omega$ would have to be restricted to some interval $J$ such that there exists a $0\le k < n$ such that $T_\omega^k(J)$ covers all or nearly all of one half of $I$ and a sufficiently large part of the other half, depending on the values of $T_{\sigma^k \omega}(1)$ and $T_{\sigma^k \omega}(-1)$. Despite this, because of the discontinuity at $0$, there is the possibility that if there is a $0\le k < n$ such that $T_\omega^k(J)$ covers any neighbourhood of $0$, then $T_\omega^{k + 1}(J)$ will be split into two components, and the images of these components under further iterations may intersect. Thus, to prove this does not happen for $n$ and $V_{x,n}^\omega $, we need to show that $T_\omega^{k}(V_{x,n}^\omega)$ never intersects $0$ and is contained entirely in one half of $I$ or the other for all $0 \le k < n-1$.

But this is just a consequence of the fact that for all $0 \le k \le n-1$ and $y \in B(T_\omega^k(x), C^{-1}\delta_0 e^{-\lambda'(n-k)/2})$, we have $|x - y| <|x|$. Indeed, notice that using (\ref{subset:hyperbolic_time}) we have
\begin{align*}
	T_\omega^k(V_{x,n}^\omega) &\subset (T_{\sigma^k \omega}^ {n-k}|_{T_\omega^k(V_{x,n}^\omega)})^{-1} \big(B(T_\omega^n(x), \delta_0 \big) \cap T_\omega^k \big( B(x, C^{-1} \delta_0 e^{-\frac{\lambda'}{2}(n-k)}) \big) \\
	&  \subset (T_{\sigma^k \omega}^ {n-k}|_{T_\omega^k(V_{x,n}^\omega)})^{-1} \big(B(T_\omega^n(x), \delta_0 \big)  \\
	& \subset (T_{\sigma^k \omega}^ {n-k}|_{T_\omega^k(V_{x,n}^\omega)})^{-1} \big( T_{\sigma^{k}\omega}^{n-k} (B(T_\omega^{k}(x),C^{-1} \delta_0 e^{-\frac{\lambda'}{2}(n-k)})))  \\
	& = B(T_\omega^{k}(x),C^{-1} \delta_0 e^{-\frac{\lambda'}{2}(n-k)}).
\end{align*}

We now need to show that for each $0 \le k < n-1$, the set $T_\omega^k(V_{x,n}^\omega)  \subset B(T_\omega^{k}(x),C^{-1} \delta_0 e^{-\frac{\lambda'}{2}(n-k)})$ is connected and does not intersect 0. Indeed, this follows from \ref{ineq:hyperbolic_time_min_distance} and \ref{ineq:min_distance_k=0}, which respectively imply that $T_\omega^{k}(x)$ is a minimum distance away from 0 and that the distance between $T_\omega^{k}(x)$ and $y \in B(T_\omega^{k}(x),C^{-1} \delta_0 e^{-\frac{\lambda'}{2}(n-k)})$ is small enough that $y$ must be on the same side of $I$ as $T_\omega^{k}(x)$. Thus, we have the diffeomorphism property.

For the bounded distortion, we make use of the admissibility condition again: for all $y \in T_\omega^k(V_{x,n}^\omega)$ and all $0 \le k < n$, we have
\begin{align*}
	&\Big| \log \frac{DT_{\sigma^k \omega}^{n-k}(T_\omega^k(x))}{DT_{\sigma^k \omega}^{n-k}(y)}   \Big| = \sum_{j = 0}^{n-k-1} \Big| \log \frac{DT_{\sigma^{k+j}\omega}(T_{\sigma^k \omega}^j(y))}{DT_{\sigma^{k+j}\omega}(T_{\omega}^{k+j}(x))}   \Big| \\
	&\le  \sum_{j = 0}^{n-k-1}  C' \frac{|T_\omega^{k+j}(x)- T_{\sigma^k \omega}^j(y)|}{|T_\omega^{k+j}(x)|} 
	 \le  \sum_{j = 0}^{n-k-1} C' \frac{C^{-1}\delta_0 e^{-\lambda'(n-k-j)/2}}{\frac{1}{K_2} \delta^{\frac 1 s}e^{-c'(n-k-j)}}\\
	& \le C'C^{-1} K_2 \delta_0 \delta^{-\frac 1 s} \sum_{j = 0}^{\infty} e^{-(\lambda'/2 - c')j}
	< 1,
\end{align*}
where we have placed an extra condition on $\delta_0$ that
$$
	\delta_0 < \Big( C'C^{-1} K_2 \delta^{-\frac 1 s} \sum_{j = 0}^{\infty} e^{-(\lambda'/2 - c')j} \Big)^{-1}.
$$
Thus, for all $0 \le k < n$ and all $y \in T_\omega^k(V_{x,n}^\omega)$, we have
$$
	e^{-1}DT_{\sigma^k \omega}^{n-k}(y) < DT_{\sigma^k \omega}^{n-k}(T_\omega^k(x)) < e DT_{\sigma^k \omega}^{n-k}(y).
$$
Furthermore, this combined with (\ref{ineq:hyperbolic_time_expansion1}) prove the expansion property for all $0 \le k < n$ and all $y \in T_\omega^k(V_{x,n}^\omega)$:
\begin{align*}
	DT_{\sigma^k \omega}^{n-k}(y) > e^{-1}DT_{\sigma^k \omega}^{n-k}(T_\omega^k(x)) \ge Ce^{\lambda'(n-k)},
\end{align*}
where we just absorb the $e^{-1}$ term into the constant $C$. This completes the proof.
\end{proof}
\end{proposition}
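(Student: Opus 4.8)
The plan is to prove the proposition in three stages. First I would establish exponential growth of $DT_{\sigma^k\omega}^{n-k}$ along the reference orbit $\{T_\omega^k(x)\}$; then I would build $V_{x,n}^\omega$ by pulling back a fixed-size ball around $T_\omega^n(x)$ one step at a time; and finally I would transfer the expansion and distortion control from the orbit of $x$ to every point of $V_{x,n}^\omega$ using the admissibility condition. Throughout, set $\lambda' = \kappa - c' > 0$, and since $c'\in(c,\kappa)$ is at our disposal I would in fact fix $c'$ small enough that $c' < \kappa/3$, so that $c' < \lambda'/2$.

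Stage 1 (expansion at $x$). For each $0\le k<n$ the claim is $DT_{\sigma^k\omega}^{n-k}(T_\omega^k(x)) \ge Ce^{\lambda'(n-k)}$. If $T_\omega^k(x)\in\tilde B(\delta)$, apply Lemma \ref{lemma:return_depths} with starting point $T_\omega^k(x)$: the distance-ratio factor $|T_\omega^k(x)|\prod_i |T_\omega^{\nu_i}(x)|/|\tilde B(\delta)|^{\Gamma}$ is at most $1$, so the left side there is at most $\log DT_{\sigma^k\omega}^{n-k}(T_\omega^k(x))$, while the right side is $\log C + \kappa(n-k) - \sum_{j=k}^{n-1} r_\delta(T_\omega^j(x)) \ge \log C + \lambda'(n-k)$ by the hyperbolic-time hypothesis. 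If $T_\omega^k(x)\notin\tilde B(\delta)$, decompose the orbit at the first return time $\nu\ge k$ (or use Proposition \ref{prop:nonuniform_expansion} directly if no return occurs before $n$): Proposition \ref{prop:nonuniform_expansion} gives $DT_{\sigma^k\omega}^{\nu-k}(T_\omega^k(x))\ge Ce^{\kappa(\nu-k)}$, the previous case handles the remaining stretch, and the exponents add up to at least $Ce^{\lambda'(n-k)}$. This is precisely the mechanism of the proof of Proposition \ref{prop:bounded_return_depth_expansion}, now run over the window $[k,n]$.

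Stage 2 (pull-back construction). Choose $\delta_0 = \delta_0(\delta)$ small so that (i) $C^{-1}\delta_0 e^{-\lambda'(n-k)/2}$ is dominated by $K_2^{-1}\delta^{1/s}e^{-c'(n-k)}$ for all $k$ — possible since $\lambda'/2>c'$, and $r_\delta(T_\omega^k(x))<c'(n-k)$ gives $|T_\omega^k(x)| > K_2^{-1}\delta^{1/s}e^{-c'(n-k)}$ via the order-of-singularity characterization of return depth — and (ii) a geometric series $C'C^{-1}K_2\delta_0\delta^{-1/s}\sum_{j\ge 0}e^{-(\lambda'/2-c')j}$ is less than $1$. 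Then prove by downward induction on $k$ that
\[
B(T_\omega^n(x),\delta_0) \subset T_{\sigma^k\omega}^{n-k}\bigl(B(T_\omega^k(x), C^{-1}\delta_0 e^{-\lambda'(n-k)/2})\bigr).
\]
Base case $k=n-1$: by (i) every $y$ in the small ball satisfies the proximity hypothesis of the admissibility condition, so $DT_{\sigma^{n-1}\omega}(y) > e^{-\lambda'/2}DT_{\sigma^{n-1}\omega}(T_\omega^{n-1}(x)) \ge Ce^{\lambda'/2}$ by Stage 1, whence the image has length $\ge\delta_0$. Inductive step: argue by contradiction — if some $z$ with $|z-T_\omega^k(x)| = C^{-1}\delta_0 e^{-\lambda'(n-k)/2}$ had $T_{\sigma^k\omega}^{n-k}(z)\in B(T_\omega^n(x),\delta_0)$, then, using the inclusions already known for indices $>k$, the whole segment between $z$ and $T_\omega^k(x)$ maps at each intermediate step $\ell$ into $B(T_\omega^{k+\ell}(x), C^{-1}\delta_0 e^{-\lambda'(n-k-\ell)/2})$, so the admissibility estimate applies at every step and multiplies out to $DT_{\sigma^k\omega}^{n-k} > Ce^{\lambda'(n-k)/2}$ along the segment, forcing $|T_{\sigma^k\omega}^{n-k}(z) - T_\omega^n(x)| > \delta_0$, a contradiction. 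Finally set $V_{x,n}^\omega := (T_\omega^n)^{-1}(B(T_\omega^n(x),\delta_0)) \cap B(x, C^{-1}\delta_0 e^{-\lambda' n/2})$, so that $T_\omega^k(V_{x,n}^\omega) \subset B(T_\omega^k(x), C^{-1}\delta_0 e^{-\lambda'(n-k)/2})$ for every $k$.

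Stage 3 (Markov, distortion, expansion on $V$) and the main obstacle. The decisive point for the diffeomorphism claim is that for each $0\le k<n$ the set $T_\omega^k(V_{x,n}^\omega)$ avoids $0$ and lies entirely in $I_+$ or in $I_-$: every point is within $C^{-1}\delta_0 e^{-\lambda'(n-k)/2} < |T_\omega^k(x)|$ of $T_\omega^k(x)$ by (i), hence on the same side of $0$. On one-sided intervals disjoint from $0$ each $T_{\sigma^j\omega}$ is a $C^3$ diffeomorphism, so $T_{\sigma^k\omega}^{n-k}|_{T_\omega^k(V_{x,n}^\omega)}$ is a diffeomorphism onto $B(T_\omega^n(x),\delta_0)$. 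Bounded distortion follows by summing the admissibility estimate over $j=0,\dots,n-k-1$, bounding numerators by $C^{-1}\delta_0 e^{-\lambda'(n-k-j)/2}$ and denominators below by $K_2^{-1}\delta^{1/s}e^{-c'(n-k-j)}$, the resulting geometric series being $<1$ by (ii); the same estimate applied to subintervals gives $\mathcal N(T_{\sigma^k\omega}^{n-k}|T_\omega^k(V_{x,n}^\omega)) < 1$. Combining the $e^{\pm1}$ distortion with Stage 1 then yields $DT_{\sigma^k\omega}^{n-k}(T_\omega^k(y)) \ge e^{-1}Ce^{\lambda'(n-k)} \ge Ce^{\lambda'(n-k)/2}$ for all $y\in V_{x,n}^\omega$. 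I expect Stage 2's induction to be the hard part: one must check carefully that, along the segment joining a putative bad point to $T_\omega^k(x)$, every intermediate iterate genuinely stays inside the ball supplied by the inductive hypothesis — so that the proximity requirement in the admissibility condition holds at each step and the discontinuity at $0$ is never crossed — and pinning down the exact dependence $\delta_0=\delta_0(\delta)$, namely the inequalities forcing $c' < \kappa/3$ and making the geometric series both convergent and small, is where the bookkeeping is most delicate.
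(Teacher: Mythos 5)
Your proposal follows the same three-stage structure as the paper's proof: establishing $DT_{\sigma^k\omega}^{n-k}(T_\omega^k(x)) \ge Ce^{\lambda'(n-k)}$ via Lemma \ref{lemma:return_depths} and Proposition \ref{prop:nonuniform_expansion} with $\lambda'=\kappa-c'$, then proving the pull-back inclusion $B(T_\omega^n(x),\delta_0)\subset T_{\sigma^k\omega}^{n-k}(B(T_\omega^k(x),C^{-1}\delta_0 e^{-\lambda'(n-k)/2}))$ by downward induction with the same choice of $\delta_0$ and the same constraint $c'<\kappa/3$, and finally deducing the Markov, distortion, and expansion properties by keeping every intermediate iterate on one side of the discontinuity via the admissibility condition. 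This matches the paper's argument in both structure and the key estimates.
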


\subsection{Hyperbolic Return Times}

Now we introduce the concept of hyperbolic \textit{return times}:

\begin{definition}[Hyperbolic Return Times]
Let us set sufficiently small $\kappa, c > 0$ and sufficiently small $\delta, \delta_0(\delta) >0$ according the previous lemmas and propositions. For a fixed $c' \in (c, \kappa)$, we consider an iterate $n$ to be a $(\delta, c')$-hyperbolic return time for $(\omega, x)$ if $n$ is a $(\delta, c')$-hyperbolic time and $T_\omega^j(x) \in \tilde B(\delta_0/2)$.
\end{definition}

We denote $h_\omega^*( x)$ as the first hyperbolic return time for the point $x$ under $T_\omega$, and we denote $\hat{R}_\omega(x) = \min\{ s \ge 1 \Hquad | \Hquad T_{\omega}^s(x) \in \tilde B(\delta_0/2) \}$. We wish to show that almost every point has a finite hyperbolic return time:

\begin{proposition}\label{prop:tails_of_hyperbolic_return}
	For sufficiently small $\delta >0$, there exist constants $C_0 = C_0(\delta) >0$ and $\gamma_0= \gamma_0(\delta)>0$ such that, for sufficiently small $\eps>0$, we have
$$
	\big(\mathbb P_\eps \times \mathrm{Leb}\big) \big( \big\{ (\omega, x) : h^*(\omega, x) > n    \big\} \big) \le C_0 e^{- \gamma n}
$$
\begin{proof}
First, we note the following:
\begin{align}
	\big\{h_\omega^*( x) >n  \big\} \subset \big\{h_\omega ( x) &>n/2  \big\} \cup \Big( \bigcup_{j=1}^{n/2}\big\{ h_\omega ( x) = j, \hat R_{\sigma^j \omega}( T_\omega^j(x)) > n/2 \} \Big).
\end{align}

Indeed, if $h_\omega(x) > n > n/2$, then that automatically implies $h_\omega^*(x) > n$, and thus $x \in \big\{ h_\omega(x) > n/2   \big\} $. Likewise, if $n/2 < h_\omega(x) \le n$, then we also have $x \in \big\{ h_\omega(x) > n/2   \big\} $. Finally, if $h_\omega(x) = j$ with $0 < j \le n/2$, we must have $\hat R_{\sigma^j \omega}(T_\omega^j(x)) > n - j \ge n/2$.

We have already shown that $\{h_\omega ( x) >n/2  \}$ decays exponentially in Proposition \ref{ineq:tails_of_hyperbolic_times}, so all that remains to do is to estimate an upper bound for the second union of sets.

Recall from Lemma \ref{lemma:tail_of_return_times} the set
$$
	\Lambda_n^\omega (U) = \big\{ x \in I  : \Hquad T_\omega^j(x) \notin U \text{ for } 0 \le j \le n-1 \big\}.
$$
Let us denote $\Lambda_n^\omega (\delta) = \Lambda_n^\omega (B(\delta)) $. We have that for any sufficiently small $\delta > 0$, all $\omega \in \Omega$, and all $n \in \mathbb Z_+$, if a subinterval $J \subset I_{\pm}$ maps diffeomorphically under $j\ge 1$ iterations onto some interval of length $\tau>0$ and has bounded distortion, then 
$$
	|J \cap \Lambda_n^\omega(\delta)| \le  C e^{-\eta n}|J| .
$$
Indeed, simply recall inequality (\ref{ineq:return_contraction}) combined with the exponential decay result of Lemma \ref{lemma:tail_of_return_times}. 

Now, let us denote $H_j(\omega) = \big\{ x \in I \Hquad | \Hquad h_\omega(x)=j  \big\}$. Recall from Proposition \ref{prop:V_x,n} that if $j$ is a hyperbolic time for every $x \in H_j(\omega)$, then for every $x \in H_j(\omega)$ there will exist a neighbourhood $V_{x, j}^\omega$ of $x$ such that the map $T_\omega^j|_{V_{x, j}^\omega}: V_{x, j}^\omega \to B(T_\omega^j(x), \delta_0)$ is diffeomorphic and has bounded distortion. Using this, we note that $e^{-1/2} |B(T_\omega^j(x), \delta_0)| \le | V_{x,j}^\omega| \le e^{1/2}|B(T_\omega^j(x), \delta_0)|$, and thus
\begin{align*}
	\frac{\big|\big\{ y \in V_{x,j}^\omega \Hquad | \Hquad \hat R_{\sigma^j \omega}(T_\omega^j(x)) > n/2    \big\}\big|}{|V_{x,j}^\omega|} \le e \frac{| B(T_\omega^j(x), \delta_0) \cap \Lambda_{n/2}^{\sigma^j \omega}(\delta_0/2)  |}{|B(T_\omega^j(x), \delta_0)|} \le e^{1 - \eta n/2},
\end{align*}
or equivalently
$$
\big	|\big\{ y \in V_{x,j}^\omega \Hquad | \Hquad \hat R_{\sigma^j \omega}(T_\omega^j(x)) > n/2    \big\}\big| \le e^{1 - \eta n/2}|V_{x,j}^\omega|.
$$
Now, clearly the set $\big\{ V_{x,j}^\omega \Hquad | \Hquad x \in H_j(\omega) \big\}$ forms an open covering of $H_j(\omega)$. Thus, similar to the proof of Lemma \ref{lemma:tail_of_return_times}, we know from the Besicovitch covering lemma that there exist finite subsets $A_1, \dots , A_{c_1}$ of $H_j(\omega)$ such that for every $i = 1, \dots , c_1$, all $V_{x, j}^\omega$ are disjoint for every $x \in A_i$. Furthermore, we have 
$$
	H_j(\omega) \subset \bigcup_{i = 1}^{c_N} \bigcup_{x \in A_i} V_{x,j}^\omega .
$$
Using these, we have
\begin{align*}
	\big|\big\{ y  \in H_j(\omega) \Hquad | \Hquad  \hat R_{\sigma^j \omega}(T_\omega^j(x)) > n/2  \big\}\big| &\le \Big|\bigcup_{i = 1}^{c_N} \bigcup_{x \in A_i} \big\{ y \in V_{x,j}^\omega \Hquad | \Hquad \hat R_{\sigma^j \omega}(T_\omega^j(x)) > n/2    \big\}  \Big| \\
	& \le \sum_{i=1 }^{c_1} \sum_{x \in A_i} \big|\big\{ y \in V_{x,j}^\omega \Hquad | \Hquad \hat R_{\sigma^j \omega}(T_\omega^j(x)) > n/2    \big\}\big| \\
	&\le  e^{1 - \eta n/2} \sum_{i=1 }^{c_1} \sum_{x \in A_i} |V_{x,j}^\omega| 
	 \le C e^{1 - \eta n/2} |H_j(\omega)|.
\end{align*}
Thus, we have
\begin{align*}
	\big|\big\{ h_\omega^*(x) > n \big\}\big| &\le \big|\big\{ h_\omega(x) > n/2 \}| +  \sum_{j = 1}^{n/2}\big|\big\{ h_\omega(x) = j \Hquad | \Hquad  \hat R_{\sigma^j \omega}(T_\omega^j(x)) > n/2  \big\}\big| \\
	& \le Ce^{- \gamma n /2} + C  e^{1 - \eta n/2} \sum_{j=1}^{n/2} |H_j(\omega)| 
	 \le C e^{- \gamma_0 n}.
\end{align*}
\end{proof}
\end{proposition}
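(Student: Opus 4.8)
The plan is to reduce the tails of the first hyperbolic return time to two exponential estimates already established earlier: the tails of the first hyperbolic \emph{time} (Lemma \ref{ineq:tails_of_hyperbolic_times}) and the tails of the return time to a neighbourhood of the origin (Lemma \ref{lemma:tail_of_return_times}). First I would record the set-theoretic inclusion
\[
	\{h_\omega^*(x) > n\} \subset \{h_\omega(x) > n/2\} \cup \bigcup_{j=1}^{n/2} \{h_\omega(x) = j,\ \hat R_{\sigma^j\omega}(T_\omega^j(x)) > n/2\},
\]
which holds because a hyperbolic return time is a hyperbolic time followed by a return to $\tilde B(\delta_0/2)$: if the first hyperbolic time itself exceeds $n/2$ we are in the first set, and if it equals $j\le n/2$ then the residual waiting time $\hat R_{\sigma^j\omega}(T_\omega^j(x))$ for the return must exceed $n/2$. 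The first set has $(\mathbb P_\eps\times\mathrm{Leb})$-measure at most $Ce^{-\gamma n/2}$ directly from Lemma \ref{ineq:tails_of_hyperbolic_times}, so everything reduces to bounding the $j$-th term in the union and summing.

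For a fixed $j$ I would work on the level set $H_j(\omega)=\{x : h_\omega(x)=j\}$. By Proposition \ref{prop:V_x,n}, each $x\in H_j(\omega)$ has a neighbourhood $V_{x,j}^\omega$ that $T_\omega^j$ maps diffeomorphically onto $B(T_\omega^j(x),\delta_0)$ with distortion $\mathcal N(T_\omega^j \mid V_{x,j}^\omega)\le 1$, so $T_\omega^j$ distorts Lebesgue measure on $V_{x,j}^\omega$ by at most a factor $e^{1/2}$. The event $\hat R_{\sigma^j\omega}(T_\omega^j(y))>n/2$ says exactly that $T_\omega^j(y)$ avoids $\tilde B(\delta_0/2)$ for $n/2$ iterates, i.e. $T_\omega^j(y)\in\Lambda_{n/2}^{\sigma^j\omega}(\delta_0/2)$. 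Pulling this back through the bounded-distortion diffeomorphism and invoking the contraction estimate \eqref{ineq:return_contraction} together with the bound $|\Lambda_{n/2}^{\sigma^j\omega}(\delta_0/2)|\le Ce^{-\eta n/2}$ from Lemma \ref{lemma:tail_of_return_times}, I obtain
\[
	\big|\{y\in V_{x,j}^\omega : \hat R_{\sigma^j\omega}(T_\omega^j(y))>n/2\}\big| \le e^{1-\eta n/2}\,|V_{x,j}^\omega|.
\]

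Next I would patch these local estimates together. The family $\{V_{x,j}^\omega : x\in H_j(\omega)\}$ covers $H_j(\omega)$, so Besicovitch's covering lemma yields a bounded-multiplicity subcover splitting into $c_1$ subfamilies of pairwise disjoint sets; summing the local estimates over each subfamily gives $|\{y\in H_j(\omega) : \hat R_{\sigma^j\omega}(T_\omega^j(y))>n/2\}|\le Ce^{1-\eta n/2}|H_j(\omega)|\le Ce^{1-\eta n/2}$. Summing over $j=1,\dots,n/2$ costs only a factor $n/2$, which is absorbed into a slightly smaller exponential rate, and integrating over $\omega\in\Omega_\eps$ by Fubini (using product-measurability of all the sets in $(\omega,x)$) yields $(\mathbb P_\eps\times\mathrm{Leb})(\{h^*(\omega,x)>n\})\le C_0e^{-\gamma_0 n}$. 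The step I expect to be the main obstacle is the careful bookkeeping around Proposition \ref{prop:V_x,n}: one must check that the bounded-distortion diffeomorphism property genuinely transports the return-time tail estimate from $B(T_\omega^j(x),\delta_0)$ back to $V_{x,j}^\omega$ uniformly in $x$, $j$ and $\omega$, and that the Besicovitch multiplicity constant $c_1$ is independent of $j$ and $n$, so that the sum over $j$ really costs only a polynomial factor that does not degrade the exponential decay.
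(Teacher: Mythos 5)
Your proposal reproduces the paper's argument essentially step for step: the same set-theoretic decomposition of $\{h_\omega^*>n\}$ into a deep-first-hyperbolic-time event and the $j$-indexed union, the same reduction of the latter to $\Lambda_{n/2}^{\sigma^j\omega}(\delta_0/2)$ via the bounded-distortion diffeomorphism from Proposition \ref{prop:V_x,n}, the same Besicovitch covering of $H_j(\omega)$, and the same final summation. The only micro-difference is in how the sum over $j$ is closed — you pay a factor $n/2$ and absorb it into the exponential, whereas the paper observes that the $H_j(\omega)$ are pairwise disjoint so $\sum_{j\le n/2}|H_j(\omega)|\le |I|$ is a constant; both yield the same exponential conclusion.
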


\subsection{Construction of the Return Partition}
Now, we use the previously state results regarding hyperbolic return times in order to construct the return partition for our random tower. First, we set $\delta' = \delta_0/2$. Now, by definition, if $n$ is a $(\delta, c')$-hyperbolic return time, then $T_\omega^n(x) \in \tilde B(\delta')$. Furthermore, if $\partial I \cap V_{x,n}^\omega = \emptyset$, then $T_\omega^n|_{V_{x,n}^\omega}: V_{x,n}^\omega \to B(T_\omega^n(x), \delta_0)$ is an expanding diffeomorphism with $\mathcal N(T_\omega^n|_{V_{x,n}^\omega}) < 1$. Moreover, since $\tilde B(\delta ') \subset B(T_\omega^m(x), \delta_0) = B(T_\omega^m(x), 2 \delta')$, we have $\tilde B(\delta ') \subset T_\omega^n (V_{x,n}^\omega).$

Now, we define $J_{x, n}^\omega = (T_\omega^n)^{-1}(B(\delta'))\cap  V_{x,n}^\omega$. Cleary if $J_{x, n}^\omega \cap \{\pm 1  \} = \emptyset$, then the mapping $T_\omega^n |_{J_{x, n}^\omega}: J_{x, n}^\omega\to B(\delta')$ inherits the properties of being diffeomorphic, expanding and of bounded distortion. Furthermore, recalling Proposition \ref{prop:V_x,n} that for all $y \in V_{x,n}^\omega$ we have
$$
	DT_{\omega}^{n}(y) \ge C e^{\lambda'n/2}.
$$
Thus, the maximum size of $J_{x, n}^\omega$ is
$$
	|J_{x, n}^\omega| \le \frac{| B(\delta') |}{Ce^{\lambda'n/2}}  = C \delta' e^{-\lambda' n/2}.
$$

We prove the following proposition that both helps us to define the return partition and also gives us the aperiodicity condition:

\begin{proposition}\label{prop:aperiodicity}
There exist four points $x_1, x_2, x_3, x_4 \in  B(\delta')$ with respective associated $(\delta, c')$-hyperbolic return times $\{t^*_1, t^*_2, t^*_3, t^*_4  \}$ such that $\text{g.c.d.}(t^*_i)=1$. Furthermore, for sufficiently small $\eps>0$, the intervals $J_{x_i, t_i}^\omega$ are all pairwise disjoint and are in $ B (\delta')$. 

\begin{proof}
First, we begin by proving aperiodicity in general. Recall that our original unperturbed contracting Lorenz system is mixing. Thus, we can proceed as in [Remark 3.14, \cite{ABR}]. Since the unperturbed map admits a unique invariant probability measure, it can be lifted to the induced map over $\Delta^\ast$. Moreover, the lifted measure is invariant and mixing for the tower map. Therefore, there exists a partition $\Q^0$ of $ B(\delta')$ and a return time $\tau^0: B(\delta')\to \NN$ such that $\tau_i^0=\tau^0(Q_i)$, $Q_i\in\Q^0$ such that g.c.d.$\big\{\tau_i^0\big\}_{i=1}^{N_0}=1$ for some $N_0>1$. Now, by shrinking $\eps$ if necessary, we can ensure that the first $N_0$ elements of the partition $\Q^\omega$  satisfy $|Q_i^\omega\cap Q_i|\ge |Q_i|/2$ with $\tau_\omega^0(Q_i^\omega)=\tau_i^0$ for $i=1, \dots N_0$ and for all $\omega\in\Omega$. Thus, we may take $\eps_i=|Q_i|/2$. Notice that we define only finitely many domains in this way. Therefore, the tails estimates, distortion, etc. are not affected.

Now, assuming $N_0 \ge 4$, we can simply choose four partition elements $Q_1, Q_2, Q_3, Q_4$ and then pick a point from each, giving us the points $x_1, x_2, x_3, x_4$ with respective $(\delta, c')$-hyperbolic return times  $t^*_1, t^*_2, t^*_3, t^*_4$.
\end{proof}

\end{proposition}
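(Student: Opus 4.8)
Here is the plan. The statement is the \emph{aperiodicity} (arithmetic) condition needed to apply the random Young tower theorem, and the natural route is to reduce it to the analogous property of the unperturbed map $T_0$ and then propagate it to the random system by pinning down only finitely many domains of the return partition, so that none of the tail, Markov, or distortion estimates of the previous subsections are disturbed.

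\textbf{Step 1: deterministic aperiodicity.} First I would run the whole construction of Subsections 3.1--3.6 with $\eps=0$. Since $T_0$ is a Rovella map with good parameter $a\in E$, it satisfies (C1)--(C3) and (R1)--(R3), so Proposition~\ref{prop:V_x,n}, Proposition~\ref{prop:tails_of_hyperbolic_return}, and the discussion preceding this proposition yield a return partition $\Q^0$ of $B(\delta')$ together with a return-time function $\tau^0$, $\tau^0\equiv\tau^0_i$ on $Q_i\in\Q^0$. I would then bring in mixing of $T_0$: a Rovella map with good parameter is topologically mixing (its critical orbit is dense by (R3)) and admits a unique absolutely continuous invariant probability measure which is exact, hence mixing (see \cite{AS,PT}). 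Lifting this measure to the Young tower over $B(\delta')$ gives an invariant mixing measure for the tower map; but a Young tower whose base return times share a common divisor $d>1$ has $\ZZ/d\ZZ$ as a topological/measurable factor, which is incompatible with mixing. Hence $\gcd\{\tau^0_i\}=1$, so there is a finite subcollection of $\Q^0$ realising gcd $1$; relabelling, we obtain elements $Q_1,\dots,Q_{N_0}\in\Q^0$ with $\gcd\{\tau^0_1,\dots,\tau^0_{N_0}\}=1$, and (enlarging $N_0$ if needed, or passing to a gcd-$1$ sub-collection) we single out four of them $Q_1,Q_2,Q_3,Q_4$ whose return times $t^*_i:=\tau^0_i$ satisfy $\gcd(t^*_1,t^*_2,t^*_3,t^*_4)=1$. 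This is the argument of \cite[Remark~3.14]{ABR}.

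\textbf{Step 2: transfer to the random system.} Each $t^*_i$ is finite, and each of the objects $Q_i$, the neighbourhood $V^\omega_{x,t^*_i}$, and the interval $J^\omega_{x,t^*_i}$ depends on $\omega$ only through the finite word $(\omega_0,\dots,\omega_{t^*_i-1})$, and does so continuously: this uses the admissibility conditions, in particular $|\partial_t F(t,x)|\le 1$ and $|DT_\omega(x)-DT_{\tilde\omega}(x)|\le\eps$, together with the uniform bounded distortion and expansion of Proposition~\ref{prop:V_x,n}. Therefore, for $\eps$ small (depending only on the fixed $\delta,\delta_0$ and on the finitely many sets $Q_1,\dots,Q_4$), for every $\omega\in\Omega_\eps$ the random return partition contains elements $Q^\omega_1,\dots,Q^\omega_4$ with the \emph{same} return times $t^*_1,\dots,t^*_4$ and with $|Q^\omega_i\cap Q_i|\ge|Q_i|/2$; one then takes the uniform lower bounds $\eps_i=|Q_i|/2$ on these domains. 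Since only finitely many domains are prescribed in this way, the exponential tail bounds, the Markov property, and the bounded distortion estimates established earlier are unaffected.

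\textbf{Step 3: conclusion.} For each $i$ pick $x_i\in Q^\omega_i$; then $t^*_i$ is a $(\delta,c')$-hyperbolic return time for $(\omega,x_i)$, so $T^{t^*_i}_\omega(x_i)\in\tilde B(\delta_0/2)$, and $\gcd(t^*_1,t^*_2,t^*_3,t^*_4)=1$ by Step 1. The associated intervals $J^\omega_{x_i,t^*_i}$ may be taken to be the partition elements $Q^\omega_i$ themselves (the element of the return partition through $x_i$ is of the form $J^\omega_{x,n}$, and we choose $x=x_i$, $n=t^*_i$), hence they lie in $B(\delta')$ and are pairwise disjoint because distinct elements of a partition are disjoint; alternatively, disjointness follows directly from the size bound $|J^\omega_{x_i,t^*_i}|\le C\delta' e^{-\lambda' t^*_i/2}$ once the $x_i$ are taken in distinct, uniformly separated domains. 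I expect Step 1 to be the only real obstacle: making rigorous that mixing of the Rovella map lifts to mixing of the Young tower over $B(\delta')$ and hence forces $\gcd\{\tau^0_i\}=1$, which is where one leans on the ergodic and thermodynamic theory of Rovella maps (\cite{AS,PT}) and on \cite[Remark~3.14]{ABR}; everything after that is a routine finite-dimensional continuity argument.
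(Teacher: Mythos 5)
Your proposal takes essentially the same route as the paper: construct the deterministic return partition for $T_0$, use mixing of the unperturbed Rovella map lifted to the tower to force $\gcd\{\tau^0_i\}=1$ as in \cite[Remark 3.14]{ABR}, then pin down finitely many domains with fixed return times by shrinking $\eps$ (taking $\eps_i=|Q_i|/2$), observing this does not disturb the tail, Markov, or distortion estimates. Your version is a bit more explicit about why mixing forbids a nontrivial common divisor (the $\ZZ/d\ZZ$ factor) and about the finite-word continuity behind Step 2, but the argument is the same.
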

 
Now, we define the return partition via a sequence of partitions. First, we define the set
$$
	\mathcal U_0(\omega) = \big\{ J_{x_i, t^*_i}^\omega \Hquad | \Hquad i = 1, 2, 3, 4 \big\}.
$$
Next, we define the set
\begin{align*}
	\mathcal U_1(\omega) = \Big\{ J_{x,1}^\omega  \Hquad | \Hquad x \in I, \text{1 is a }(\delta, c')\text{-hyperbolic} &\text{ return time for }(\omega, x), \\
		& J_{x,1}^\omega \cap (\bm{U}_0 \cup \partial I \cup \partial \tilde B(\delta')) = \emptyset \Big\}.
\end{align*}
where 
$$
	\bm{U}_0 = \bigcup_{U \in \mathcal U_0}U.
$$
Next, we define
\begin{align*}
	\mathcal U_2(\omega) = \Big\{ J_{x,2}^\omega  \Hquad | \Hquad x \in I, \text{2 is a }(\delta, c')\text{-hyperbolic} &\text{ return time for }(\omega, x), \\
		& J_{x,2}^\omega \cap (\bigcup_{j <2 }\bm{U}_j \cup \partial I \cup \partial \tilde B(\delta')) = \emptyset \Big\}
\end{align*}
with
$$
	\bm{U}_j = \bigcup_{U \in \mathcal U_j}U.
$$
Iterating this process, we obtain
\begin{align*}
	\mathcal U_k(\omega) = \Big\{ J_{x,k}^\omega  \Hquad | \Hquad x \in I, \text{k is a }(\delta, c')\text{-hyperbolic} &\text{ return time for }(\omega, x), \\
		& J_{x,k}^\omega \cap (\bigcup_{j <k }\bm{U}_j \cup \partial I \cup \partial \tilde B(\delta')) = \emptyset \Big\}
\end{align*}
Using these, we define the set
$$
	\mathcal Z_\omega = \bigcup_{j \in \mathbb Z_+} \bm U_j \cap \tilde B (\delta).
$$
Now, we need to show the following:
\begin{proposition}\label{prop:tails_returns}
For sufficiently small $\eps > 0$ and all $\omega \in \Omega_\eps$, there exist constants $C, \gamma > 0$ such that for every $n \in \mathbb Z_+$ we have
	\begin{align}
		|I \setminus \bigcup_{k \le n} \bm{U}_k | \le C e^{-\gamma n}.
	\end{align}
\end{proposition}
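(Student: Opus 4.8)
The plan is to estimate the Lebesgue measure of the not‑yet‑assigned set $I\setminus\bigcup_{k\le n}\bm{U}_k$ by the same Besicovitch‑covering scheme used in the proofs of Lemma~\ref{lemma:tail_of_return_times} and Proposition~\ref{prop:tails_of_hyperbolic_return}, together with Proposition~\ref{prop:tails_of_hyperbolic_return} itself. First I would record the elementary decomposition. If $x$ has a $(\delta,c')$-hyperbolic return time $m\le n$ then $x\in J_{x,m}^\omega$ (since $x\in V_{x,m}^\omega$ and $T_\omega^m(x)$ lies in the base region defining a hyperbolic return), so if moreover $x\notin\bigcup_{k\le n}\bm{U}_k$ then $J_{x,m}^\omega\notin\mathcal U_m$, i.e.\ $J_{x,m}^\omega$ meets the forbidden set $F_m:=\partial I\cup\partial\tilde B(\delta')\cup\bigcup_{j<m}\bm{U}_j$. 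Using the first hyperbolic return time $h^*(\omega,x)$ this gives
\[
I\setminus\bigcup_{k\le n}\bm{U}_k\ \subset\ \{x:h^*(\omega,x)>n\}\ \cup\ \bigcup_{m\le n}\mathcal B_m^\omega,\qquad
\mathcal B_m^\omega:=\Big\{x\notin\textstyle\bigcup_{k\le m}\bm{U}_k:\ h^*(\omega,x)=m,\ J_{x,m}^\omega\cap F_m\ne\emptyset\Big\}.
\]
The first set is controlled by Proposition~\ref{prop:tails_of_hyperbolic_return} (integrating in $\omega$, or allowing an $\omega$-dependent constant as in Theorem~\ref{thm:1d}), so the work is in bounding $\sum_{m\le n}|\mathcal B_m^\omega|$.

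For the sets $\mathcal B_m^\omega$ I would use that each piece $J_{x,m}^\omega$ has length at most $C\delta' e^{-\lambda' m/2}$, established just before the statement. A point $x\in\mathcal B_m^\omega$ lies in $J_{x,m}^\omega$, so it lies within $C\delta' e^{-\lambda' m/2}$ of $\partial I$, of $\partial\tilde B(\delta')$, or of the boundary of a component of $\bigcup_{j<m}\bm{U}_j$ not containing $x$. The contributions of the finite sets $\partial I$ and $\partial\tilde B(\delta')$ are immediate: only boundedly many of the $J_{x,m}^\omega$ can meet them, so these contribute $O(e^{-\lambda' m/2})$ in total. For the remaining ``collar'' part — points sitting just outside an already‑constructed piece — I would cover $\mathcal B_m^\omega$ by the intervals $J_{x,m}^\omega$ with bounded multiplicity via the Besicovitch covering lemma, exactly as in Lemma~\ref{lemma:tail_of_return_times}; on each such interval $T_\omega^m$ is a bounded‑distortion diffeomorphism onto the base and, by the Markov/full‑branch property of the elements of $\mathcal U_m$ from Proposition~\ref{prop:V_x,n}, inside each surviving component a definite proportion is covered at step $m$ while only a collar of relative size $O(e^{-\lambda' m/2})$ survives. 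Iterating over a block of consecutive hyperbolic return times — of which a proportion $1-c/c'$ of the iterates is available once $(\omega,x)\notin E_n$, by the remark after Lemma~\ref{ineq:tails_of_hyperbolic_times} — yields a uniform contraction factor.

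Concretely I would run the recursion on blocks of a fixed length $N=N(\delta)$, setting $\rho_k:=\sup_\omega\big|I\setminus\bigcup_{j\le kN}\bm{U}_j\big|$ and proving $\rho_{k+1}\le\tfrac12\rho_k+Ce^{-\gamma kN}$, where the factor $\tfrac12$ comes from the bounded‑distortion/full‑branch gain among the finitely many components present at scale $N$, and the error term $Ce^{-\gamma kN}$ absorbs both $\{h^*(\omega,\cdot)>kN\}$ (Proposition~\ref{prop:tails_of_hyperbolic_return}) and the $\partial I\cup\partial\tilde B(\delta')$ collars; summing this geometric recursion gives $|I\setminus\bigcup_{k\le n}\bm{U}_k|\le Ce^{-\gamma n}$. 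The main obstacle is exactly this last point: ensuring that the collar left by the exclusion rule $J_{x,m}^\omega\cap\bigcup_{j<m}\bm{U}_j=\emptyset$ does not get out of control, since the number of components of $\bigcup_{j<m}\bm{U}_j$ may a priori grow with $m$; the block decomposition avoids this by never iterating past one block length beyond the scale at which a point fails to be covered, so only the finitely many components present at scale $N$ ever enter the contraction estimate. Finally, since only finitely many special elements are introduced in $\mathcal U_0$ (Proposition~\ref{prop:aperiodicity}), the tails, distortion and Markov properties needed to apply Theorem~\ref{thm:randexp} are unaffected.
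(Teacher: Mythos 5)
Your high--level decomposition (the forbidden set is contained in $\{h^*(\omega,\cdot)>n\}$ together with the sets where some hyperbolic--return interval touches the exclusion set) is fine, and the use of Proposition~\ref{prop:tails_of_hyperbolic_return} and Besicovitch covering for the first piece is the right instinct. But there is a genuine gap in the heart of the argument, namely the claim that ``inside each surviving component a definite proportion is covered at step $m$ while only a collar of relative size $O(e^{-\lambda' m/2})$ survives,'' and the ensuing recursion $\rho_{k+1}\le\tfrac12\rho_k+Ce^{-\gamma kN}$. Nothing in the construction supports this. The pieces $J_{x,m}^\omega$ are added only when $m$ happens to be a hyperbolic return time for the particular $(\omega,x)$ \emph{and} $J_{x,m}^\omega$ avoids everything constructed earlier; there is no full--branch Markov partition being pulled back at a common time, so no lower bound on the proportion of a ``surviving component'' that gets covered at any given step. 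Concretely, the naive collar estimate fails: a collar of width $O(e^{-\lambda' m/2})$ accrues around every connected component of $\bigcup_{j<m}\bm U_j$, and the number of such components can grow like $(\sup DT)^{m}$ (each $J_{\cdot,j}$ has length at least $|\tilde B(\delta')|/(\sup DT)^j$, so there can be that many of them), which can overwhelm $e^{-\lambda' m/2}$. You flag this problem, but the proposed fix --- ``the block decomposition avoids this by never iterating past one block length beyond the scale at which a point fails to be covered'' --- is not an argument: after $k$ blocks the exclusion set already has potentially many components, and nothing restricts attention to ``the finitely many components present at scale~$N$.''

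This is exactly why the paper's proof is a four--case combinatorial analysis rather than a single renewal inequality. For a surviving $x$ the paper records the chain of ``collisions'' between the hyperbolic--return intervals $J_{x,h^*_i}^\omega$ and previously constructed pieces via the indices $\{p_i\},\{q_i\}$, and then shows: if there are many collisions (Case~1), the intervals are nested with a definite ratio $\alpha<1$ (because $|J_{x,h^*_{p_t}}^\omega|\le\tfrac{\alpha}{6}|V_{q_t}|$ and $V_{q_{i+1}}\subset(1+\alpha)V_{q_i}$), forcing $x$ into an exponentially small region; if many $J_{x,h^*_i}^\omega$ hit the finite set $\partial I\cup\partial B(\delta')$ (Case~2), $x$ lies exponentially close to one of its boundedly many points; if the gaps $h^*_{p_{j+1}}-h^*_{p_j}$ accumulate (Case~3), this is exponentially rare by Proposition~\ref{prop:tails_of_hyperbolic_return} plus a binomial count; and the remaining configurations (Case~4) are controlled by an inductive product of factors $Ce^{-\lambda_0(h^*_{p_i}-q_i)}$. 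Your plan, which only looks at the first hyperbolic return time and then asserts a halving inequality, does not reproduce any of this and would need a new proof of the assertion --- which is essentially equivalent in difficulty to the proposition itself.
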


For a fixed typical $\omega \in \Omega$ and $x \in I$, we denote $h_i^* = h_{\omega, i}^*(x)$ as the $i$-th hyperbolic return time for $(\omega, x)$. Furthermore, we denote by $p= p_\omega(x, n)$ the number of hyperbolic return times $(\omega, x)$ experiences before and including iterate $n$. By the definition of each $\mathcal U_j$ for $j \in \mathbb Z_+$, if $x \in I \setminus \cup_{k \le n} \bm{U}_k$, then for every $1\le i \le p(x,n)$, either 
\begin{align}
	J_{x, h_i^*}^\omega \cap (\partial I \cup \partial B(\delta')) \neq \emptyset \label{cond:border_intersect}
\end{align}
or there is an iterate $q < h_i^*$ and partition element $U_q \in \mathcal U_q$ such that
\begin{align}
	U_q \cap J_{x, h_i^*}^\omega \neq \emptyset.\label{cond: partition_intersect}
\end{align}
If only condition (\ref{cond:border_intersect}) holds for all $1\le i \le p(x,n)$, then we do nothing. If there is at least one $1\le i \le p(x,n)$ such that condition (\ref{cond: partition_intersect}) holds, then we follow the following procedure. Let us define two sets of increasing values $\{ p_i \}_{i=1}^s$ and $\{ q_i \}_{i=1}^s$. For the first values, we set
\begin{align*}
	q_{ 1} = q_{\omega, 1}( x) =  q_{\omega, 1}( x, n) &:= \min \big\{ 1 \le q \le n \Hquad | \Hquad \bm U_q \cap J_{x, h_j^*}^\omega \neq \emptyset \text{ for some } j \le p  \big\} \\
  \intertext{and}
	p_{ 1} = p_{\omega, 1}( x) = p_{\omega, 1}( x, n) &:= \max \big\{ 1 \le j \le p \Hquad | \Hquad \bm U_{q_1} \cap J_{x, h_j^*}^\omega \neq \emptyset   \big\}.
\intertext{For the second values, we set}
	q_2 = q_{\omega, 2}( x) = q_{\omega, 2}( x, n) &:= \min \big\{ 1 \le q \le n \Hquad | \Hquad \bm U_q \cap J_{x, h_j^*}^\omega \neq \emptyset \text{ for some } p_1 \le j \le p  \big\}\\
\intertext{and}
	p_2 = p_{\omega, 2}( x) = p_{\omega, 2}( x, n) &:= \max \big\{ 1 \le j \le p \Hquad | \Hquad \bm U_{q_2} \cap J_{x, h_j^*}^\omega \neq \emptyset  \big\}.
\end{align*}
We repeat this process until we reach some $p_s$ with some index $ t = t_\omega(x,n)$ such that either $p_t = p$ or for all $ p_t < j \le p$, $J_{x, h_j^*}^\omega$ satisfies only condition (\ref{cond:border_intersect}) and not (\ref{cond: partition_intersect}), i.e. for all $ p_t < j \le p$, we have
$$
	J_{x, h_j^*}^\omega \cap \bigcup_{k \le n} \bm{U}_k = \emptyset
$$
and
$$
	J_{x, h_j^*}^\omega \cap (\partial I \cup \partial B(\delta')) \neq \emptyset.
$$
Now, using these values we have defined, we define the `bad' sets
$$
	X_\omega(n, t) = \Big\{ x \in  I \setminus \bigcup_{k \le n} \bm{U}_k^\omega \Hquad | \Hquad t_\omega(x,n ) = t \Big\}
$$
and
\begin{align*}
	X_\omega(n,t, \{p_i\}, \{q_i\} ) = \Big\{  x \in  I \setminus &\bigcup_{k \le n} \bm{U}_k^\omega \Hquad | \Hquad t_\omega(x,n ) = t, \text{ and for every } 1 \le j \le  t\\ 
 &\text{ we have } p_{\omega, i}(x,n) = p_i \text{ and } q_{\omega, i}(x,n) = q_i   \Big\}.
\end{align*}

Now, we give the proof of Proposition \ref{prop:tails_returns}:
\begin{proof}[Proof of Proposition \ref{prop:tails_returns}]
For all $x \in I \setminus \bigcup_{k \le n} \bm{U}_k^\omega$, we consider four possible cases. 
\\ \\
\textbf{Case 1: $t > c_1 n$.}\\ \\
Assume that $t > c_1 n$, where $0 < c_1 < 1$ is some sufficiently small constant that we will choose later. In this case, we need to show that for some small constant $0 < \alpha < 1$ that we will determine later, there exist constants $C=C(\alpha)>0$ and $\theta = \theta(\alpha) >0$ such that
$$
	\sum_{t \ge c_1 n} |X_\omega(n,t) | \le C\theta^n.
$$

For $J_{x, h^*_{p_t}}^\omega$ there will exist some $V_{q_t} \in \mathcal U_{q_t}$ satisfying $J_{x, h^*_{p_t}}^\omega \cap V_{q_t} \neq \emptyset$. Now, by bounded distortion, we clearly have
$$
	\frac{|J_{x, h^*_{p_t}}^\omega|}{ |V_{q_t}|} \le e \frac{|T_\omega^{q_t}(J_{x, h^*_{p_t}}^\omega)|}{| B(\delta')|},
$$
where we use the fact that $T_\omega^{q_t}(V_{q_t})= B(\delta')$ by the definition of $V_{q_t} \in \mathcal U_{q_t}$. Furthermore, by the mean value theorem, we know that there exists some $\xi \in T_\omega^{q_t}(J_{x, h^*_{p_t}}^\omega)$ such that
$$
	DT_{\sigma^{q_t} \omega}^{h^*_{p_t} - q_t}(\xi) = \frac{| B(\delta')|}{|T_\omega^{q_t}(J_{x, h^*_{p_t}}^\omega)|},
$$
and by the expansion property of hyperbolic times, we have
$$
	DT_{\sigma^{q_t} \omega}^{h^*_{p_t} - q_t}(\xi) \ge C e^{\lambda ' (h^*_{p_t} - q_t)/2}.
$$
Thus, combining these, we have
$$
	\frac{|J_{x, h^*_{p_t}}^\omega|}{ |V_{q_t}|} \le \frac{e}{Ce^{\lambda ' (h^*_{p_t} - q_t) /2}} \le  \frac{e}{Ce^{\lambda ' M(\delta') /2}}. 
$$

Now, for some fixed $0 < \alpha_0 < (6e)^{-1}$ we take $\delta$ to be sufficiently small so that $Ce^{\lambda ' M(\delta') /2} > \alpha_0^{-1}$. If we set $\alpha = 6 e \alpha_0$, then clearly $\alpha <1$. Furthermore, this implies
$$
	\frac{|J_{x, h^*_{p_t}}^\omega|}{ |V_{q_t}|} \le e \alpha_0 = \frac \alpha 6 \quad \implies \quad |J_{x, h^*_{p_t}}^\omega| \le \frac{\alpha}{6}|V_{q_t}|.
$$
Thus, we have
$$
	J_{x, h^*_{p_t}}^\omega \subset (1 + \alpha/6)V_{q_t},
$$
where $ (1 + \alpha/6)V_{q_t}$ denotes the interval with the same centre point as $V_{q_t}$ and with $1 + \alpha/6$ the length. 

Recalling the times $t^*_1, t^*_2, t^*_3, t^*_4$ from Proposition \ref{prop:aperiodicity}, let $t^*_0 = \max_{1 \le i \le 4} \{ t^*_i  \}$. By definition for any $1 \le i \le t - 1$,  if we take some $V_{q_i} \in \mathcal U_{q_i}$ satisfying $V_{q_i} \cap J_{x, h^*_{p_i}}^\omega \neq \emptyset$ and some $V_{q_{i+1}} \in \mathcal U_{q_{i+1}}$ satisfying $V_{q_{i+1}} \cap J_{x, h^*_{p_{i+1}}}^\omega \neq \emptyset$, if $q_i \ge t^*_0$, then clearly we have $q_i < h^*_{p_i} < h^*_{p_{i+1}}$ as well as $q_i < q_{i+1}$. Now, let us consider the maps $T_\omega^{q_i}$ and $T_\omega^{h^*_{p_i}}$. From the fact that we already know $V_{q_{i+1}} \cap J_{x, h^*_{p_{i+1}}}^\omega \neq \emptyset$, we have
$$
	\frac{|T_\omega^{q_i}(J_{x, h^*_{p_i}}^\omega)|}{|B(\delta')|} \le \frac{1}{Ce^{\lambda ' M(\delta') /2}} < \alpha_0.
$$
Additionally, since we know that $h^*_{q_{i + 1}} > h^*_{q_i}$, we therefore have
\begin{align*}
	\frac{|T_\omega^{q_i}(J_{x, h^*_{p_{i+1}}}^\omega)|}{|T_\omega^{q_i}(J_{x, h^*_{p_{i}}}^\omega)|}  &\le e \frac{|T_\omega^{q_i + ( h^*_{p_{i+1}} - q_i)}(J_{x, h^*_{p_{i+1}}}^\omega)|}{|T_\omega^{q_i + (h^*_{p_i} - q_i)}(J_{x, h^*_{p_i}}^\omega)|} \\ 
	&= e \frac{|T_\omega^{h^*_{p_i}}(J_{x, h^*_{p_{i+1}}}^\omega)|}{|B(\delta')|} 
	 \le \frac{e}{Ce^{\lambda ' M(\delta') /2}} 
	< e \alpha_0.
\end{align*}
Thus, we have
\begin{align*}
	\frac{|T_\omega^{q_i}(J_{x, h^*_{p_{i+1}}}^\omega)|}{|B(\delta')|}  = \frac{|T_\omega^{q_i}(J_{x, h^*_{p_{i+1}}}^\omega)|}{|T_\omega^{q_i}(J_{x, h^*_{p_{i}}}^\omega)|} \cdot \frac{|T_\omega^{q_i}(J_{x, h^*_{p_{i}}}^\omega)|}{|B(\delta')|} < e \alpha_0^2,
\end{align*}
i.e.
$$
	|T_\omega^{q_i}(J_{x, h^*_{p_{i+1}}}^\omega)| \le e \alpha_0^2 |B(\delta')|.
$$
Furthermore, since we know that $T_\omega^{q_i}(V_{q_{i+1}}) \cap T_\omega^{q_i}(J_{x, h^*_{p_{i+1}}}^\omega) \neq \emptyset$ and that $ e \alpha_0^2 < 1 $, we therefore know that $T_\omega^{q_i}(V_{q_{i+1}}) \cap B(2\delta') \neq \emptyset$. Thus, we have
$$
	\frac{|T_\omega^{q_i}(V_{q_{i+1}}) |}{|B(\delta')|} \le \frac{1}{Ce^{\lambda ' M(\delta') /2}} < \alpha_0.
$$
Combining all of the previous statements, we therefore have $T_\omega^{q_i}(V_{q_{i+1}}) \subset (1 + 2 \alpha_0 + e\alpha_0^2)B(\delta')$, which furthermore implies
$$
	V_{q_{i+1}} \subset (1 + 2 e \alpha_0 + e^2 \alpha_0^2)V_{q_i} \subset (1 + \alpha)V_{q_i}.
$$
Now, let us define the set
$$
	\mathcal V_i = \Big\{ V \in \mathcal U_{q_i(x)} \Hquad | \Hquad x \in X_\omega(n, t) , V \cap J_{x, h^*_{p_i(x)}}^\omega \Big\}.
$$
Recall that we have shown that for every $x \in X_\omega(n,s)$, we have
$$
	J_{x, h^*_{p_i(x)}}^\omega \subset (1 + \alpha/3) V_t.
$$
Clearly, for each $t^*_0 \le j < t$, $V_{j+1} \in \mathcal V_{j+1}$ there exists some $V_j \in \mathcal V_j$ such that
$$
	V_{j+1} \subset (1 + \alpha) V_j. 
$$
Denoting $\bm V_i = \cup_{V \in \mathcal V_i} V$, clearly we have
\begin{align*}
	|X_\omega(n,t)| &\le \Big| \Big( \bigcup_{x \in X(n,t)} J_{x, h^*_{p_t(x)}}^\omega \Big) \setminus \Big( \bigcup_{i \le n} \bm U_i \Big)   \Big| \\
	& \le \frac \alpha 3 |\bm V_t| 
	\le \frac{\alpha^2}{3} |\bm V_{t-1}| \le \dots 
%
%
	 \le \frac{\alpha^{s-t_0 +1}}{3} |\bm V_{t^*_0}| 
	 \le \frac{2 \alpha^{t - t^*_0 + 1}}{3}.
\end{align*}
Thus, altogether we have
$$
	\sum_{t \ge c_1 n} |X_\omega(n,t) | \le \sum_{t \ge c_1 n}  \frac{2 \alpha^{t - t^*_0 + 1}}{3} \le   \frac{2 \alpha^{-t^*_0 + 1}}{3(1 - \alpha)}\alpha^{c_1 n} = C_1 \theta_1^n , 
$$
where we have set  $C_1 = {2 \alpha^{-t^*_0 + 1}}/{3(1 - \alpha)}$ and $\theta_1 = \alpha^{c_1}$.  \\ \\
\textbf{Case 2}\\ \\
Consider the case where $\# \big\{ 1 \le i \le p \Hquad | \Hquad J_{x, h^*_{p_i(x)}}^\omega \cap \big( \bigcup_{k \le n} \bm U_k \big) = \emptyset  \big\} > c_2 n$, where $0 < c_2 < 1$ is some constant that will determine.

When $J_{x, h^*_{p_i(x)}}^\omega \cap \bigcup_{k \le n} \bm U_k = \emptyset$, we have $J_{x, h^*_{p_i(x)}}^\omega \cap (\partial I \cup \partial B(\delta ' )) \neq \emptyset$. Clearly, there exists a point $b \in (\partial I \cup \partial B(\delta'))$ satsifying
$$
	\# \big\{1 \le i \le p \Hquad | \Hquad b \in   J_{x, h^*_{p_i(x)}}^\omega \big\} > c_2 n /4 
$$

Now, let us denote $k = \max \{ 1 \le i \le p \Hquad | \Hquad b \in J_{x, h^*_{p_i(x)}}^\omega \}$. Since we know the number of integers $1 \le i \le p$ satisfying the desired condition is equal or greater than $c_2 n /4$, clearly the worst case is that all these integers are the bottom values, in which case we must have $k \ge c_2 n /4$. Using similar arguments to those in case 1, we can show that
$$
	|J_{x, h^*_k}^\omega | \le e \frac{|B(\delta ' )|}{Ce^{\lambda_0 h^*_k}} \le C e^{- c_2 \lambda_0 n /4} = C_2 \theta_2^n.
$$
Here we use the fact that $x \in \cup_{b \in \partial I \cup \partial B(\delta')} B(b, C e^{- c_2 \lambda_0 n/4})$. \\ \\
\textbf{Case 3}\\ \\
Let us denote $\{ k_i \}_{i=1}^m = \big\{ 1 \le k \le p \Hquad | \Hquad J_{x, h^*_i}^\omega \cap \bigcup_{j \le n} \bm U_j = \emptyset  \big\}$. Consider the case when we have either $\sum_{j=1}^t (h^*_{p_j + 1} - h^*_{p_j }) > c_3 n$ or $\sum_{j=1}^m (h^*_{k_j + 1} - h^*_{k_j }) > c_3 n$, where $0 < c_3 < 1$ is some constant we determine later.

Suppose we have a strictly increasing sequence $i_1 < \dots  < i_t $ with $i_j \in \{1, \dots , p  \}$ for each $1 \le j \le t$. Let us denote
$$
	\mathcal M_\omega = \mathcal M_\omega(i_1, \dots , i_t) = \Big\{ x  \in I \Hquad | \Hquad  \sum_{j = 1}^t (h^*_{i_j + 1} - h^*_{i_j}) > c_3 n  \Big\}.
$$
Clearly, $\mathcal M_\omega$ can be expressed as the following union:
$$
	\mathcal M_\omega = \bigcup_{\substack{ a_1, \dots, a_{t-1} \in \mathbb Z_+ \\ a_1 + \dots + a_{t-1} < c_3 n}} \mathcal M^*_\omega(a_1, \dots, a_{t-1})
$$
where we denote for fixed $i_1 \le \dots  \le i_t $
\begin{align*}
   \mathcal M_\omega^*(a_1, \dots, a_{t-1}) = \Big\{ x \in  I   \Hquad | \Hquad h^*_{i_j + 1} - h^*_{i_j} = a_j,  &1 \le j \le t-1, \\
	& h^*_{i_t } - h^*_{i_t -1}  > c_3 n  - \sum_{j = 1}^{t-1} a_j \Big\}.
\end{align*}
Furthermore, from Proposition \ref{prop:tails_of_hyperbolic_return} we know that
$$
	| \mathcal M_\omega^*(a_1, \dots, a_{t-1})| \le C^t e^{- \gamma c_3 n}.
$$
Thus, we know that
\begin{align*}
	|\mathcal M_\omega | &\le \sum_{\substack{ a_1, \dots, a_{t-1} \in \mathbb Z_+ \\ a_1 + \dots + a_{t-1} < c_3 n}} |\mathcal M_\omega^*(a_1, \dots, a_{t-1})| \\
	& \le  {c_3 n \choose t-1} C^t e^{- \gamma c_3 n} 
	\le {c_3 n \choose c_1 n} C^{c_1 n} e^{- \gamma c_3 n},
\end{align*}
provided $c_1 < \frac 1 2 c_3$. Now, notice that we have the following:
\begin{align*}
	\Big\{ x \in I \Hquad | \Hquad \sum_{j =1}^{t (x)} (h^*_{ p_j(x)+ 1} - h^*_{ p_j(x)}) \Big\}  \subset \bigcup_{p, t} \bigcup_{\substack{i_1 < \dots < i_{t-1}    \\ i_1, \dots, i_{t-1} \in \{1, \dots , p  \} }}\mathcal M_\omega(i_1, \dots , i_t).
\end{align*}
This combined with the previous estimate gives us
\begin{align*}
	\Big|\Big\{ x \in I \Hquad | \Hquad \sum_{j =1}^{t (x)} (h^*_{ p_j(x)+ 1} - h^*_{ p_j(x)}) \Big\}\Big| &\le  \sum_{p, t} \sum_{\substack{i_1 < \dots < i_{t-1}    \\ i_1, \dots, i_{t-1} \in \{1, \dots , p  \} }} | M_\omega(i_1, \dots , i_t)|  \\ 
	& \le \sum_{p, t} {p \choose t} {c_3 n \choose c_1 n} C^{c_1 n} e^{- \gamma c_3 n}\\
	& \le c_3 M^{-1} n^2 {n \choose c_1 n}{n \choose c_1 n} C^{c_1n}e^{-\gamma c_3 n}\\
	& \le c_3 M^{-1}n^2 e^{2 \mu(c_1)n}C^{c_1 n}e^{- \gamma c_3 n},
\end{align*}
where $\mu(c_1)$ is some suitably chosen constant satisfying $\lim_{c_1 \to 0} \mu(c_1) = 0$. 

Thus, as long as we choose $c_1> 0$ sufficiently small, then there exists some constant $0 < \theta_3 < 1$ such that
$$
	e^{2 \mu(c_1)}C^{c_1}e^{- \gamma c_3} <  \theta_3.
$$
Thus, we have
$$
	\Big|\Big\{ x \in I \Hquad | \Hquad \sum_{j =1}^{t (x)} (h^*_{ p_j(x)+ 1} - h^*_{ p_j(x)}) \Big\}\Big|	\le c_3 M^{-1}n^2 (e^{2 \mu(c_1)}C^{c_1 }e^{- \gamma c_3 })^n < C \theta_3^n.
$$

Now, for the case where $\sum_{j=1}^m (h^*_{k_j + 1} - h^*_{k_j }) > c_3 n$ instead, we simply need to set $c_2 = c_1$ and, using the argument from case 2, this gives us
$$
	\Big|\Big\{ x \in I \Hquad | \Hquad \sum_{j =1}^{t (x)} (h^*_{ k_j(x)+ 1} - h^*_{ k_j(x)}) \Big\}\Big|	\le  C \theta_3^n.
$$
\textbf{Case 4}\\ \\
Suppose that for our fixed $n \in \mathbb Z_+$, our fixed $x \in I \setminus \bigcup_{k \le n} \bm{U}_k^\omega$ is not in any of the previous three cases. We denote the set of such points as $\tilde {X}_\omega^n \subset I  \setminus \bigcup_{k \le n} \bm{U}_k^\omega$. Furthermore, we denote $\hat X_\omega^n = \hat X_\omega^n(t,\{ p_i \}, \{ q_i \})= \tilde {X}_\omega^n \cap X_\omega(n,t,\{ p_i \}, \{ q_i \})$.

By construction, for any $1 \le j \le t$ and any $V \in \mathcal{U}_{q_j}$, the number of elements of the set $ \big\{ J_{x, h^*_{p_j}} \Hquad | \Hquad  x \in \tilde {X}_\omega^n \cap X_\omega(n,t,\{ p_i \}, \{ q_i \}) \big\}$ is either $1$ or $2$. Furthermore, using the same argument as in case 1 with bounded distortion, we can show that
$$
	\frac{|J_{x, h^*_{p_j}}|}{|V|} \le C e^{- \lambda_0(h^*_{p_i} - q_i)}.
$$
Thus, combining these two we obtain
$$
	\Big| \bigcup_{x \in \hat X_\omega^n   } J_{x, h^*_{p_j}}   \Big| \le 2 C e^{- \lambda_0(h^*_{p_i} - q_i)} |\bm{V}_j|.
$$
Now, clearly we need to show that $|\bm{V}_{j+1}|$ decays exponentially. Consider the case where $q_{j + 1} > h^*_{p_j}$. Then for any $V \in \mathcal V_{q_j+1}$ that intersects the set $J_{x, h^*_{p_j}}$, by case 1 we know that $V \subset (1 + \alpha) J_{x, h^*_{p_j}}$. Plugging this into the above estimate, this gives us
$$
	|\bm{V}_{j+1}| \le (1 + \alpha) \Big| \bigcup_{x \in \hat X_\omega^n   } J_{x, h^*_{p_j}}   \Big| \le 2 (1 + \alpha) C e^{- \lambda_0(h^*_{p_i} - q_i)} |\bm{V}_j|.
$$

Alternatively, consider the case when $q_{j + 1} \le  h^*_{p_j}$. Notice that for any $V \in \mathcal V_{j+1}$, if $V \cap J_{x, h^*_{p_j}} \neq \emptyset $, then we in fact have
$$
	V \cap (I \setminus J_{x, h^*_{p_j}}) \neq \emptyset.
$$
Indeed, by construction we have $T_\omega^{q_{j+1} }(V) = B(\delta ')$, which contains the point $0$, whereas $T_\omega^{q_{j+1}}(J_{x, h^*_{p_j}}) $ is bounded away from $0$. Thus, there are points in $T_\omega^{q_{j+1} }(V)$ which are not in $T_\omega^{q_{j+1}}(J_{x, h^*_{p_j}}) $, and therefore the same can be said about the original $V$ and $J_{x, h^*_{p_j}} $. Thus, we have
$$
	|\bm{V}_{j+1}| \le \Big| \bigcup_{x \in \hat X_\omega^n   } J_{x, h^*_{p_j}}   \Big| \le 2 C e^{- \lambda_0(h^*_{p_i} - q_i)} |\bm{V}_j| \le 2 C e^{- \lambda_0(q_{i+1} - q_i)} |\bm{V}_j|.
$$
Combining this all together, we have
\begin{align*}
	|\hat X_\omega^n  | &\le \Big| \bigcup_{x \in \hat X_\omega^n}  J_{x, h^*_{p_t}}  \Big| 
	 \le C e^{- \lambda_0(h^*_{p_t} - q_t)}|\bm{V}_t | \\
	& \le C^2 e^{- \lambda_0(h^*_{p_t} - q_{t-1})}|\bm{V}_{t-1} | e^{\max \{ 0 , q_t - h^*_{p_{t-1}} \} } \\
	&  \quad \quad \quad \quad \quad \quad \vdots \\
	& \le C^t e^{- \lambda_0(h^*_{p_t} - q_{1})} e^{\lambda_0 \sum  \max \{ 0 , q_i - h^*_{p_{i-1}} \} }  |\bm{V}_1 | \\
	& \le 2 C^{c_1 n} e^{- \lambda_0 n} e^{\lambda_0(q_1 + n - h^*_{p_t})} e^{\lambda_0 \sum  \max \{ 0 , q_i - h^*_{p_{i-1}} \} }.
\end{align*}
Clearly, we need to show that the last line above decays exponentially. Indeed, we will show that
$$
	(q_1 + n - h^*_{p_t}) + \sum  \max \big\{ 0 , q_i - h^*_{p_{i-1}} \big\} \le 2 c_3 n.
$$

For any $i < t$ , let $p_{i+1}'(x) = \min \big\{ j > p_i \Hquad | \Hquad  J_{x, h^*_{k}} \cap \bm{V}_{i+1} \neq \emptyset    \big\}$. By construction, this implies that $q_i < h^*_{p_i'}$ and that for any $p_i < k < p_{i+1}'$ we have $ J_{x, h^*_{p_j}} \cap (\partial I \cup \partial B(\delta')) \neq \emptyset$. Likewise, if $p_s \neq p$, then the same holds for any $p_t < k \le p$.  Thus, we can write
\begin{align*}
	(q_1 + n - h^*_{p_t}) + \sum  \max \big\{ 0 , q_i - h^*_{p_{i-1}} \big\} &< (h^*_{p_1'} + n - m_{p_t}) + \sum_{i =1}^{t-1}  \max \big\{ 0 , q_i - h^*_{p_{i-1}} \big\}  \\
	& \le \sum_{i = 1}^s (h^*_{p_i + 1} - h^*_{p_i }) + \sum_{i = 1}^m (h^*_{k_i + 1} - h^*_{k_i }) 
	\le 2 c_3 n,
\end{align*}
where we have used the fact that we assume our $x$ is not in case 3. Thus, we can write
$$
	|\hat X_\omega^n  |  \le  2 (C^{c_1 } e^{-(\lambda_0 - 2 c_3)})^n = 2 \theta_4^n,
$$
where we have set $0 < c_3 < \lambda_0 / (2 + \log C)$ and set $\theta_4 = C^{c_1 } e^{-(\lambda_0 - 2 c_3)}$.

Now, since $\hat X_\omega^n = \hat X_\omega^n (t,\{ p_i \}, \{ q_i \})$ is only for specific values $s, \{ p_i \},$ and $ \{ q_i \}$, we need to sum this over all possible values. Thus, we have
\begin{align*}
	\Big| \bigcup_{t \le c_1 n} \bigcup_{\{ p_i \}, \{ q_i \}  }  \hat X_\omega^n (t,\{ p_i \}, \{ q_i \}) \Big| &\le \sum_{t \le c_1 n} \sum_{\{ p_i \}, \{ q_i \}  } \big| \hat X_\omega^n (t,\{ p_i \}, \{ q_i \})\big| \\
	&\le \sum_{t \le c_1 n} \sum_{\{ p_i \}, \{ q_i \}  } 2 \theta_4^n 
	 \le c_1 n {n \choose c_1 n}{n \choose c_1 n} 2 \theta_4^n \\
	& \le 2 c_1 n (e^{2\hat \mu (c_1) } \theta_4)^n
	 \le C_4 \hat \theta_4^n,
\end{align*}
where $\hat \mu(c_1)$ is some suitably chosen constant satisfying $\lim_{c_1 \to 0} \hat \mu(c_1) = 0$, and where we have chosen $c_1$ small enough so that $\hat \theta_4 = e^{2\hat \mu (c_1) } \theta_4 < 1$. 

Since we have now exhausted all possibilities for $x \in  I \setminus \bigcup_{k \le n} \bm{U}_k^\omega$, we therefore have the desired result:
\begin{align*}
	\Big|   I \setminus \bigcup_{k \le n} \bm{U}_k^\omega \Big| &\le C_1 \theta_1^n + C_2 \theta_2^n + C_3 \theta_3^n + C_4 \hat \theta_4^n 
	 \le C e^{- \gamma n}.
\end{align*}
This completes the proof.
\end{proof}

Thus, we now have exponential tails of return times. This, combined with the random tower axioms having been satisfied, means that we can apply Theorem \ref{thm:randexp} to obtain exponential quenched decay of correlations on the random Young tower we have constructed. Furthermore, using the exact same arguments as at the end of Chapter 3, we can show that almost sure mixing on the random tower implies almost sure random mixing on the original system, thereby proving Theorem \ref{thm:1d}.

\appendix\label{appendix}

\section{Random Towers}

This appendix is based on the work and results of \cite{BBMD, Du}. See also the related work of \cite{BBR, LV} on random towers. We can introduce a random tower for almost every $\omega$ as follows:
\[
	\Delta_\omega = \{ (x, \ell) \in \Delta^{*} \times \mathbb{Z}_+ \Hquad | \Hquad  x  \in \Q^{\sigma^{-\ell} \omega}(\Delta^*), \Hquad j,  \ell \in \mathbb{N}, \Hquad 0 \le \ell \le \tau_{\sigma^{-\ell}\omega}(x) - 1\}.
\]
We can also define the random tower map $F_\omega : \Delta_\omega \to \Delta_{\sigma \omega}$ as
\[
	F_\omega(x, \ell) = \begin{cases}
				(x, \ell + 1), \quad &\ell + 1 < \tau_{\sigma^{-\ell}\omega}(x) \\
				(T_{\sigma^{-\ell}\omega}^{\ell + 1}(x), 0), &\ell + 1 = \tau_{\sigma^{-\ell}\omega}(x)				
			\end{cases}.
\]
Notice that this allows us to construct a partition on the random tower as
\[
	\mathcal{Z}_\omega= \{  F_{\sigma^{-\ell}\omega}^\ell (J^{\sigma^{-\ell}\omega}) \Hquad  | \Hquad J^{\sigma^{-\ell}\omega} \in \Q^{\sigma^{-\ell}\omega} (\Delta^*), \Hquad  \tau_\omega |{J^{\sigma^{-\ell}\omega}} \ge \ell + 1, \Hquad \ell \in \mathbb{Z}_+ \}.
\]
Let us define the separation time on the tower as
$$
	\hat s_\omega((x,\ell),  (y, \ell))=\min\{n\mid F^n_\omega(x, \ell )\in J,  F^n_\omega(y, \ell)\in J', J\neq J' \in \mathcal{Z}_\omega \}.
$$

Assume:
\begin{itemize}
	\item[(C1)] \textbf{Return and separation time:} the return time function $\tau_\omega$ can be extended to the whole tower as $\tau_\omega: \Delta_\omega \to \mathbb{Z}_+$ with $\tau_\omega$ constant on each $J \in \mathcal{Z}_\omega$, and there exists a positive integer $p_0$ such that $\tau_\omega \ge p_0$. Furthermore, if $(x, \ell)$ and $(y, \ell)$ are both in the same partition element $J \in \mathcal{Z}_\omega$, then $\hat s_\omega((x,0), (y,0)) \ge \ell$, and for every $(x, 0), (y,0) \in J \in \mathcal{Z}_\omega$ we have  
	\[
		\hat s_\omega((x,0), (y,0))= \tau_\omega(x, 0) + \hat s_{\sigma^{\tau_\omega}\omega}( F_\omega^{\tau_\omega (x,0)}(x,0),  F_\omega^{\tau_\omega (y,0)}(y,0))
	\]
	\item[(C2)] \textbf{Markov property:} for each $J \in \Q^\omega (\Delta^*)$ the map $ F_\omega^{\tau_\omega} |J  : J \to \Delta^{*}$ is bijective and both $ F_\omega^{\tau_\omega} |J$ and its inverse are non-singular.
	\item[(C3)] \textbf{Bounded distortion:} There exist constants $0 < \gamma < 1$ and $\mathcal{D}> 0$ such that for all $J \in \mathcal{Z}_\omega$ and all $(x, \ell), (y,\ell) = x, y \in J $
		\[
			\Big| \frac{J  F_\omega^{\tau_\omega}(x)}{J F_\omega^{\tau_\omega}(y)} - 1 \Big| \le \mathcal{D} \gamma^{\hat s_{\sigma^{\tau_\omega}\omega}( F_\omega^{\tau_\omega}(x,0),  F^{\tau_\omega}_\omega(y,0))}.
		\]
	where $J F_\omega^{\tau_\omega}$ denotes the Jacobian of $ F_\omega^{\tau_\omega}$.
	\item[(C4)] \textbf{Weak forwards expansion:} the diameters of the partitions $\bigvee^n_{j=0}  ( F_\omega^j)^{-1} \mathcal{Z}_{\sigma^j \omega}$ tend to zero as $n \to \infty$.
	\item[(C5)] \textbf{Return time asymptotics:} there exist constants $B, b > 0$, a full-measure subset $\Omega_1 \subset \Omega$ such that for every $\omega \in \Omega_1$ we have
\[
		m( \{ x \in \Delta^{*} | \tau_\omega > n \}) \le Be^{-bn}
\]
We also have for almost every $\omega \in \Omega$
\[
	m(\Delta_\omega) = \sum_{\ell \in \mathbb{Z}_+} m( \{\tau_{\sigma^{-\ell}\omega} > \ell \}) < \infty
\]
which gives us the existence of a family of finite equivariant sample measures. We also have for almost every $\omega \in \Omega$
\[
	\lim_{\ell_0 \to \infty} \sum_{\ell \ge \ell_0} m(\Delta_{\sigma^{\ell_0} \omega, \ell}) = 0.
\]

	\item[(C6)] \textbf{Aperiodicity:} there exists $N_0 \ge 1$, a full-measure subset $\Omega_2 \subset \Omega$ and a set $\{ t_i \in \mathbb{Z}_+ : \Hquad i = 1,2,...,N \}$ such that g.c.d.$\{ t_i \} = 1$ and there exist $\epsilon_i >0$ such that for every $\omega \in \Omega_2$ and every $i \in \{1,...,N_0\}$ we have $m( \{ x \in \Lambda | \Hquad \tau(x) = t_i \}) > \epsilon_i$. \\
\end{itemize}
Let us also define the following function spaces:
\begin{align}
	\mathcal{F}_\gamma^+ = \{ \varphi_\omega : \Delta_\omega \to \mathbb{R} \Hquad | \Hquad &\exists C_\varphi > 0 \text{ such that } \forall J \in \mathcal{Z}_\omega, \text{ either } \varphi_\omega|J \equiv 0 \\
	&\text{or }\varphi_\omega|J>0 \text{ and } \Big| \log \frac{\varphi_\omega(x)}{\varphi_\omega(y)} \Big| \le C_{\varphi} \gamma^{s_\omega(x,y)}, \forall x,y \in J \}. \nonumber
\end{align}
For almost every $\omega$ let $K_\omega: \Omega \to \mathbb{R}_+$ be a random variable which satisfies $\inf_\Omega K_\omega > 0$ and
\[
	\mathbb{P} (\{ \omega \Hquad | \Hquad K_\omega > n \}) \le e^{-un^v},
\]
where $u,v>0$. We then define the spaces
\begin{align}
	\mathcal{L}^{K_\omega}_\infty &= \{ \varphi_\omega: \Delta_\omega \to \mathbb{R} \Hquad | \Hquad \exists C_{\varphi}^{'} > 0, \sup_{x \in \Delta_\omega}|\varphi_\omega(x)| \le C_{\varphi}^{'} K_\omega \} \\
	\mathcal{F}_\gamma^{K_\omega} &= \{ \varphi_\omega \in \mathcal{L}^{K_\omega}_\infty \Hquad | \Hquad \exists C_{\varphi}'' > 0, |\varphi_\omega(x) - \varphi_\omega(y)| \le C_{\varphi}'' K_\omega \gamma^{s_\omega(x,y)}, \forall x, y \in \Delta_\omega \}
\end{align}
We assign to $\mathcal{L}^{K_\omega}_\infty$ and $\mathcal{F}_\gamma^{K_\omega}$ the norms $|| \varphi ||_{\mathcal{L}_\infty} = \inf C_\varphi '$ and $||\varphi||_\mathcal{F} = \max \{ \inf C_{\varphi_\omega}', \inf C_{\varphi_\omega}''\}$ respectively, which makes them Banach spaces.\\ \\
We can now apply the following theorem from \cite{ BBMD, Du}:

\begin{theorem}\label{thm:randexp}
Let $F_\omega$ satisfy (C1)-(C6), and let $K_\omega$ satisfy the above condition. Then for almost every $\omega \in \Omega$ there exists an absolutely continuous $\hat F_\omega$-equivariant probability measure $\nu_\omega = h_\omega m$ on $\Delta_\omega$, satisfying $(\hat F_\omega)_* \nu_\omega = \nu_{\sigma \omega}$, with $h_\omega \in \mathcal{F}_\gamma^+$. Furthermore, there exists a $\mathbb P$-integrable constant $C(\omega)$, and there exists a full-measure subset $\Omega_2 \subset \Omega$ such that for every $\omega \in \Omega_2$, $\varphi_\omega \in \mathcal{L}_\infty^{K_\omega}$ and $\psi_\omega \in \mathcal{F}_\gamma^{K_\omega}$ there exists a constant $C_{\varphi, \psi}$ such that for all $n \in \mathbb Z_+$ we have
\begin{align}\label{ineq:future}
		\Big| \int (\varphi_{\sigma^n \omega} \circ  F_\omega^n) \psi_\omega dm - \int \varphi_{\sigma^n \omega} d\nu_{\sigma^n \omega} \int \psi_\omega dm \Big| \le C(\omega) C_{\varphi, \psi} e^{-bn}
\end{align}
and 
\begin{align}\label{ineq:past}
		\Big| \int (\varphi_\omega \circ  F_{\sigma^{-n} \omega}^n) \psi_{\sigma^{-n} \omega} dm - \int \varphi_\omega d\nu_\omega \int \psi_{\sigma^{-n} \omega} dm \Big| \le C(\omega)  C_{\varphi, \psi} e^{-bn}.
\end{align}

\end{theorem}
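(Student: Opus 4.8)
The plan is to follow the transfer-operator and coupling scheme of Young, in the non-stationary form developed in \cite{BBMD, Du}. Since the tower is expanding -- each $J\in\mathcal Z_\omega$ is carried forward nicely by $F_\omega$ -- I would work directly with the transfer operators $\mathcal L_\omega$ of $F_\omega$ relative to the reference measure $m$. Conditions (C1)--(C4) yield a random Lasota--Yorke (Doeblin--Fortet) estimate: $\mathcal L_\omega$ preserves the cone $\mathcal F_\gamma^+$, and on it the ``H\"older'' part contracts geometrically (via bounded distortion (C3) and weak expansion (C4)) while the sup-norm on each partition element stays uniformly controlled. This already produces, via $h_\omega=\lim_{n\to\infty}\mathcal L_{\sigma^{-1}\omega}\cdots\mathcal L_{\sigma^{-n}\omega}\mathbf 1$, the equivariant density $h_\omega\in\mathcal F_\gamma^+$ with $(F_\omega)_*(h_\omega m)=h_{\sigma\omega}m$, once the limit is shown to exist -- which is handled by the coupling below, and the same argument gives uniqueness.

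Next I would run the random coupling argument on the product tower $\Delta_\omega\times\Delta_\omega$. Two densities from $\mathcal F_\gamma^+$ are pushed forward simultaneously; at each joint visit to $\Delta^*\times\Delta^*$ a definite proportion of mass is ``matched'' using the Markov property (C2) and bounded distortion, and the aperiodicity condition (C6) -- $\mathrm{g.c.d.}\{t_i\}=1$ with $m(\{\tau=t_i\})>\eps_i$ -- guarantees that after finitely many steps all large return times are simultaneously available to both copies, so a matching succeeds with probability bounded below at each attempt. The exponential tail (C5), $m(\{\tau_\omega>n\})\le Be^{-bn}$ on a full-measure set of $\omega$, combined with the $\omega$-dependence of when this estimate becomes effective (the ``waiting time'' flagged in the Remark), yields that the coupling time has an exponential tail with an $\omega$-dependent prefactor. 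The constant $C(\omega)$ in the statement is assembled from $K_\omega$ and this prefactor, and its $\mathbb P$-integrability comes from the stretched-exponential bound $\mathbb P(\{K_\omega>n\})\le e^{-un^v}$ together with the exponential return-time tails.

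Given the coupling, \eqref{ineq:future} follows by writing the correlation, after normalising by $\int\psi_\omega\,dm$, as $\int\varphi_{\sigma^n\omega}\big(\mathcal L_\omega^n(\psi_\omega h_\omega)-\big(\textstyle\int\psi_\omega\,dm\big)h_{\sigma^n\omega}\big)\,dm$ and bounding the difference inside by the coupling decay; here one uses $\psi_\omega\in\mathcal F_\gamma^{K_\omega}$ and $\varphi_\omega\in\mathcal L_\infty^{K_\omega}$ to land $\psi_\omega h_\omega$ in (a rescaling of) $\mathcal F_\gamma^+$ and to estimate $|\int\varphi_{\sigma^n\omega}(\cdot)\,dm|$ -- this is precisely where the $K_\omega$ factors enter $C(\omega)$. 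The backward bound \eqref{ineq:past} is then deduced from \eqref{ineq:future} by the substitution $\omega\mapsto\sigma^{-n}\omega$, the equivariance $(F_{\sigma^{-n}\omega}^n)_*\nu_{\sigma^{-n}\omega}=\nu_\omega$, and a change of variables in the integral.

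I expect the main obstacle to be the $\omega$-bookkeeping: proving the random Lasota--Yorke estimate and the lower bounds on matching probabilities uniformly on a full-measure set, extracting an exponential tail for the coupling time with a prefactor that is genuinely $\mathbb P$-integrable, and verifying that the resulting $C(\omega)$ lies in $L^1(\mathbb P)$. This is the technical core of \cite{BBMD, Du}, and it is where both the stretched-exponential hypothesis on $K_\omega$ and the exponential tails of $\tau_\omega$ are indispensable.
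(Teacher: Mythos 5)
The paper does not prove Theorem \ref{thm:randexp} --- it is imported verbatim from \cite{BBMD, Du}, as the sentence preceding it in the Appendix makes explicit. There is therefore no ``paper's own proof'' to compare against; what can be assessed is whether your sketch is a faithful account of the proof in those references, which it largely is: the transfer-operator Lasota--Yorke estimate from (C1)--(C4), the random coupling on $\Delta_\omega\times\Delta_\omega$ with matching forced by (C2) and (C6), the exponential tail (C5) to control coupling times, and the stretched-exponential hypothesis on $K_\omega$ to make the random prefactor integrable are exactly the mechanism in \cite{BBMD}, later elaborated in \cite{Du}.

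One step in your outline is, however, too quick and would not go through as stated. You propose to deduce the past bound \eqref{ineq:past} from the future bound \eqref{ineq:future} by the substitution $\omega\mapsto\sigma^{-n}\omega$ together with equivariance. Performing that substitution in \eqref{ineq:future} yields precisely the left-hand side of \eqref{ineq:past}, but with prefactor $C(\sigma^{-n}\omega)$ rather than $C(\omega)$. Since $n$ ranges over all of $\mathbb Z_+$ in the statement, this is not a fixed constant at all --- and $\sup_n C(\sigma^{-n}\omega)$ will generically be infinite. In \cite{BBMD} the two inequalities are proved by genuinely different arguments: the forward decay comes from coupling two initial densities on $\Delta_\omega$ and running forward, while the backward decay exploits the pullback construction $h_\omega=\lim_n \mathcal L_{\sigma^{-1}\omega}\cdots\mathcal L_{\sigma^{-n}\omega}\mathbf 1$ of the equivariant density to place the $\omega$-dependent ``waiting time'' at the present fiber $\omega$ rather than at $\sigma^{-n}\omega$. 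Your proposal already invokes this pullback to build $h_\omega$, so the ingredient is on the table, but the logical route to \eqref{ineq:past} has to go through it directly rather than through a time-shift of \eqref{ineq:future}.
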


\section*{Acknowledgement }  We thank Wael Bahsoun for suggesting the problem and his continuous attention throughout the entire duration of this project. We also thank Mike Todd for reading earlier version of the paper and making numerous useful comments.  A. Larkin would like to thank the hospitality of the University of Vienna where part of this work was carried. The research of M. Ruziboev is supported by the Austrian Science Fund (FWF): M2816 Meitner Grant.

\end{document}